\documentclass[a4paper]{article}

\usepackage{blindtext}
\usepackage[utf8]{inputenc}
\usepackage{amsmath}
\usepackage{amsthm}
\usepackage{amssymb}
\usepackage{csquotes}
\usepackage[toc,page]{appendix}

\usepackage[
backend=biber,
style=alphabetic,
%firstinits=true,
sorting=nyt,
giveninits=true
]{biblatex}
\addbibresource{references.bib}

\usepackage[english]{babel}
\usepackage{amsfonts}
\usepackage{tikz-cd}
\usepackage{tikz}
\usepackage{tikz}
\usetikzlibrary{matrix}
\usepackage[T1]{fontenc}
\usepackage{float}
\usepackage{graphicx}
\usepackage{caption}
\usepackage{subcaption}
\usepackage{diagbox}
\usepackage{url}
\usepackage{mathrsfs}
\usepackage{bm}
\usepackage[left=2.5cm,right=2.5cm, top=2.5cm,bottom=2.5cm]{geometry}
\usepackage[inline]{enumitem}

\usepackage{hyperref}
\hypersetup{
unicode=true
}

\newcommand{\nocontentsline}[3]{}
\let\origcontentsline\addcontentsline
\newcommand\stoptoc{\let\addcontentsline\nocontentsline}
\newcommand\resumetoc{\let\addcontentsline\origcontentsline}

\newcommand*{\QEDB}{\hfill\ensuremath{\square}}

\numberwithin{equation}{subsection}
\newtheorem*{thrm*}{Theorem}
\newtheorem{thrm}[equation]{Theorem}
\newtheorem{prop}[equation]{Proposition}
\newtheorem{lemma}[equation]{Lemma}

\newtheorem{cor}[equation]{Corollary}

\theoremstyle{definition}
\newtheorem*{dfn*}{Definition}
\newtheorem{dfn}[equation]{Definition}
\newtheorem{dfn_prop}[equation]{Definition-Proposition}
\newtheorem{notation}[equation]{Notation}

\newtheorem{rmrk}[equation]{Remark}
\newtheorem{exmpl}[equation]{Example}

\makeatletter
\newcommand\level[1]{%
  \ifcase#1\relax\expandafter\chapter\or
    \expandafter\section\or
    \expandafter\subsection\or
    \expandafter\subsubsection\else
    \def\next{\@level{#1}}\expandafter\next
  \fi}
\newcommand{\@level}[1]{%
  \@startsection{level#1}
    {#1}
    {\z@}%
    {-3.25ex\@plus -1ex \@minus -.2ex}%
    {1.5ex \@plus .2ex}%
    {\normalfont\normalsize\bfseries}}

\newcounter{level4}[subsubsection]
\@namedef{thelevel4}{\thesubsubsection.\arabic{level4}}
\@namedef{level4mark}#1{}
\count@=4
\loop\ifnum\count@<100
  \begingroup\edef\x{\endgroup
    \noexpand\newcounter{level\number\numexpr\count@+1\relax}[level\number\count@]
    \noexpand\@namedef{thelevel\number\numexpr\count@+1\relax}{%
      \noexpand\@nameuse{thelevel\number\count@}.\noexpand\arabic{level\number\numexpr\count@+1\relax}}
    \noexpand\@namedef{level\number\numexpr\count@+1\relax mark}####1{}}
  \x
  \advance\count@\@ne
\repeat
\makeatother
\setcounter{secnumdepth}{100}
%end of stuff found at tex.stackexchange

\newcommand\DMO[2]
        {\DeclareMathOperator{#1}{#2}}
\DMO{\Ker}{Ker}
\DMO{\Coker}{Coker}
\DMO{\lar}{\longrightarrow}
\DMO{\lal}{\longleftarrow}
\DMO{\hocolim}{hocolim}
\DMO{\holim}{holim}
\DMO{\coeq}{coeq}
\DMO{\Eq}{eq}
\DMO{\sBar}{Bar}
\DMO{\coker}{Coker}
\DMO{\Ab}{Ab}

\DeclareMathOperator{\ari}{\hookrightarrow}

\DeclareMathOperator{\sm}{\setminus}

\DeclareMathOperator{\eps}{\varepsilon}
\DeclareMathOperator{\PSh}{\mathbf{PSh}}
\DeclareMathOperator{\Top}{\mathbf{Top}}
\DeclareMathOperator{\id}{id}
\DeclareMathOperator{\op}{op}
\DeclareMathOperator{\Mu}{\mathcal{M}}
\DeclareMathOperator{\rk}{rk}
\DeclareMathOperator{\codim}{codim}
\DeclareMathOperator{\im}{im}
\DeclareMathOperator{\Mod}{Mod}
\DeclareMathOperator{\Atoms}{Atoms}
\DeclareMathOperator{\MVHC}{MVHC}
\DeclareMathOperator{\MVHD}{MVHD}
\DeclareMathOperator{\MV}{MV^\circ}
\DeclareMathOperator{\Gr}{Gr}
\DeclareMathOperator{\Dec}{Dec}
\DeclareMathOperator{\Tot}{Tot}
\DeclareMathOperator{\OS}{OS}
\DeclareMathOperator{\bbk}{\Bbbk}
\DeclareMathOperator{\Ch}{\mathbf{Ch}}
\DeclareMathOperator{\Cat}{Cat}

\DeclareMathOperator{\zgqis}{\overset{\sim}{\leftrightsquigarrow}}
%leftrightsquigarrow

\DeclareMathOperator{\Hom}{Hom}
\DeclareMathOperator{\Ho}{Ho}
\DeclareMathOperator{\Vect}{\mathbf{Vect}}
\DeclareMathOperator{\Mon}{\mathbf{Mon}}
\DeclareMathOperator{\CMon}{\mathbf{CMon}}
\DeclareMathOperator{\CDGA}{\mathbf{CDGA}}
\DeclareMathOperator{\DGA}{\mathbf{DGA}}
\DeclareMathOperator{\NCD}{\mathbf{NCD}}
\DeclareMathOperator{\cs}{\mathbf{cs}}

\DeclareMathOperator{\Sh}{\mathbf{Sh}}

\DeclareMathOperator{\Sm}{\mathbf{Sm}}
\DeclareMathOperator{\PL}{PL}

\DeclareMathOperator{\dR}{dR}
\DeclareMathOperator{\Cech}{\text{\v{C}ech}}
\DeclareMathOperator{\MHC}{\mathbf{MHC}}
\DMO{\MHS}{\mathbf{MHS}}
\DMO{\Th}{th}
\DeclareMathOperator{\MHD}{\mathbf{MHD}}
\DeclareMathOperator{\ChF}{\boldsymbol{\mathcal{C}}\boldsymbol{\mathit{h}}}
\DeclareMathOperator{\Fun}{Fun}

\DeclareMathOperator{\chr}{char}
\DeclareMathOperator{\Var}{{\mathbf{Var}_{sm}}}

\DeclareMathOperator{\AP}{AP}
\DeclareMathOperator{\MVC}{MV^{\check{C}}}
\DeclareMathOperator{\MVCC}{MV^{\check{C}}_{com}}

\makeatletter
\iffalse
\newcommand{\colim@}[2]{%
  \vtop{\m@th\ialign{##\cr
    \hfil$#1\operator@font colim$\hfil\cr
    \noalign{\nointerlineskip\kern1.5\ex@}#2\cr
    \noalign{\nointerlineskip\kern-\ex@}\cr}}%
}
\newcommand{\colim}{%
  \mathop{\mathpalette\colim@{\rightarrowfill@\textstyle}}\nmlimits@
}
\newcommand{\prlim@}[2]{%
  \vtop{\m@th\ialign{##\cr
    \hfil$#1\operator@font lim$\hfil\cr
    \noalign{\nointerlineskip\kern1.5\ex@}#2\cr
    \noalign{\nointerlineskip\kern-\ex@}\cr}}%
}
\newcommand{\prlim}{%
  \mathop{\mathpalette\prlim@{\leftarrowfill@\textstyle}}\nmlimits@
}
\fi
\DMO{\colim}{colim}
\DMO{\prlim}{lim}
\newcommand\mb[1]
        {\mathbb{#1}}
\newcommand\os[2]
        {\overset{#1}{#2}}

\newcommand\ul[1]
        {\underline{#1}}
\newcommand\ol[1]
        {\overline{#1}}
\newcommand\mc[1]
        {\mathcal{#1}}

\newcommand{\scup}{\mbox{ }\cup\mbox{ }}
\newcommand{\ssqcup}{\mbox{ }\sqcup\mbox{ }}

\newcommand{\ar}[1][]
{\xrightarrow{#1}}
\newcommand{\al}[1][]
{\xleftarrow{#1}}

\newcommand{\lto}{\mathrel{\leadsto}}
%   Generate the environment for the abstract:
\newcommand\summaryname{Abstract}
    {\small\begin{center}%
    \bfseries{\summaryname} \end{center}}

\newcommand{\RNum}[1]{\uppercase\expandafter{\romannumeral #1\relax}}

\binoppenalty=10000
\relpenalty=10000
\title{Rational Homotopy Type Of Complements Of Submanifold Arrangements}
\author{Alexander Zakharov}
\date{}
\begin{document}
\linespread{1.0}\selectfont 

\thispagestyle{plain}%
\setcounter{page}{1}
\maketitle
\begin{abstract}
In this work we provide an explicit cdga that controls
the rational homotopy type of the complement $X-\cup_i Z_i$, 
	where $X$ is a smooth compact algebraic variety and 
	$\{Z_i\}$ is a collection of subvarieties such that all 
	set-theoretical intersections are smooth.
The model is given in terms of the cohomology 
	of all intersections of $Z_i$'s, and the natural 
	maps induced by the inclusions.	
Our construction is inspired by the work of J.~Morgan, who covered the fundamental case where $\{Z_i\}$ is a divisor with 
normal crossings, and it is built on developments of the theory 
	of mixed Hodge diagrams by Cirici-Horel. 
We avoid any explicit reduction to the normal crossings divisor case, e.g. via
the wonderful compactification of De Concini-Procesi.
As an application of our approach we recover and generalize 
a few separate results on the complements of arrangements
	in a uniform manner. 
These include the 
Kritz-Totaro model for graph configuration spaces, 
Yuzvinsky's model for 
affine subspace arrangements and 
Dupont's model for complements of hypersurfaces with 
	hyperplane-like intersection.
\end{abstract}
%\tableofcontents
%\maketitle

%\newpage
%\mbox{}
%\newpage

\section*{Introduction}
\addcontentsline{toc}{section}{Introduction}
%\input{french}
%\section*{Introduction}
\subsection{Main results}\label{main:intro}
%\addcontentsline{toc}{subsection}{Main results}

\begin{dfn*}\label{dfn:arrangement} A \textit{(smooth compact algebraic) arrangement} $(X,Z_*)$
is a smooth compact algebraic variety $X$ over $\mb{C}$
and a collection $\{Z_i\subset X,i\in N\}$ of subvarieties parametrized by 
a finite set $N$ such that the set-theoretical 
intersections 
$Z_I=\bigcap\limits_{i\in I}Z_i,I\subset N$ and $Z_\emptyset=X$
are smooth $C^\infty$-submanifolds. (In other words, the corresponding 
scheme-theoretic intersections with the reduced scheme structure are smooth subschemes.)
The complement of the arrangement $(X,Z_*)$ is the 
variety $U=X-\cup_i Z_i$.
\end{dfn*}
Let $A_{\PL}(-)\colon \Top\to \CDGA_{\mb{Q}}$ denote the Sullivan functor (e.g.\ {\cite[\S 10]{FHT}}).
%with $A_{\PL}(U)$ equal to cdga formed by rational polynomial differential forms 
%on $U$ (on its singular simplicial set in fact).
A (cdga) {\it model} of $U$ is a cdga $B^\bullet$ quasi-isomorphic to 
$A_{\PL}(U)$ in 
$\CDGA_{\mb{Q}}$, meaning there is a zig-zag of quasi-isomorphisms $B^\bullet\zgqis A_{\PL}(U)$.
In this paper, by the rational, respectively complex homotopy type of a space $X$ we mean the quasi-isomorphism type of $A_{\PL}(X)$, respectively of its complex analog.

%We study the rational and complex homotopy type of the complement $U$ in terms of the arrangement $(X,Z_*)$ by means of a cdga model of $U$.

\begin{dfn*}
	A collection $\{D_i\subset X\}$ is called an \textit{NCD}
	if all $D_i$ are smooth and $\cup_i D_i$ is a 
	divisor with normal crossings.
\end{dfn*}
Consider an arrangement $(X,D_*)$ where $D_*$ is an NCD.
Let $U=X-\cup_i D_i, i\leq n$ and $j\colon U\to X$ be the inclusion.
Denote by $\ul{\mb{Q}}$ the constant sheaf over $X$.
The following theorem 
is due to J. Morgan \cite{M}.
It describes the $\mb{Q}$-homotopy type of the complement of 
an NCD-arangement:
\begin{thrm}[Morgan]\label{thrm:Morgan}
	Let $E^{pq}_2=H^p(X;R^q j_*j^* \ul{\mb{Q}})$ be
	the second sheet of the Leray spectral sequence. 
	Then $E^{**}_2$ is a  model of $U$, i.e.
	there is a quasi-isomorphism 
	$E^{**}_2\zgqis A_{\PL}(U)\in \CDGA_{\mb{Q}}$.
\end{thrm}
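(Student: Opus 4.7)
The plan is to follow Morgan's original strategy: compute the sheaves $R^q j_* j^*\ul{\mb{Q}}$, upgrade the $E_2$-page to a cdga, and then produce a zig-zag of cdga quasi-isomorphisms between $E^{**}_2$ and $A_{\PL}(U)$ via the logarithmic de Rham complex together with its weight filtration.

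First I would compute the higher direct images locally. Around a point in the interior of a stratum $D_I$ with $|I|=q$, the pair $(X,\cup_i D_i)$ is analytically isomorphic to $(\mb{C}^n,\{z_1\cdots z_q=0\})$, so the intersection of $U$ with a small neighborhood of that point is homotopy equivalent to $(S^1)^q$. Reading off the stalks then gives $R^q j_*\ul{\mb{Q}}\cong\bigoplus_{|I|=q}(\iota_I)_*\ul{\mb{Q}}_{D_I}$ and hence $E^{pq}_2=\bigoplus_{|I|=q}H^p(D_I;\mb{Q})$. The bigraded vector space $E^{**}_2$ then carries a canonical cdga structure: the product of $\alpha\in H^p(D_I)$ and $\beta\in H^{p'}(D_{I'})$ is the signed cup product of their restrictions in $H^{p+p'}(D_{I\cup I'})$, the sign coming from a fixed ordering of $N$, and the differential $d_2\colon E^{p,q}_2\to E^{p+2,q-1}_2$ is assembled from the Gysin maps of the codimension-one inclusions $D_I\hookrightarrow D_{I\sm\{i\}}$.

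The heart of the argument is constructing a zig-zag $E^{**}_2\zgqis A_{\PL}(U)$ in $\CDGA_{\mb{Q}}$. Over $\mb{C}$, Deligne's logarithmic complex $\Omega^\bullet_X(\log D)$ sits inside $Rj_*\Omega^\bullet_U$ as a quasi-isomorphic sub-cdga, and the weight filtration by pole order has Poincar\'e residues $\Gr^W_q\Omega^\bullet_X(\log D)\simeq\bigoplus_{|I|=q}(\iota_I)_*\Omega^{\bullet-q}_{D_I}$. The associated weight spectral sequence is isomorphic to the Leray spectral sequence of $j$, and Deligne's theorem that it degenerates at $E_2$ yields an isomorphism of $\mb{C}$-cdgas between $E^{**}_2\otimes\mb{C}$ and $H^*(U;\mb{C})$ compatible with the Leray structure.

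The main obstacle is descending from $\mb{C}$ to $\mb{Q}$: the logarithmic complex lives only over $\mb{C}$, whereas $E^{**}_2$ and $A_{\PL}(U)$ are rational. To bridge the two one needs a rational cdga $M$ fitting into a diagram of quasi-isomorphisms $E^{**}_2\leftarrow M\rightarrow A_{\PL}(U)$. This can be done by combining Morgan's bigraded minimal model (or Navarro Aznar's rational mixed Hodge cdgas) with the formality of each graded piece $\Gr^W\Omega^\bullet_X(\log D)$, which rectifies the complex chain of quasi-isomorphisms to a rational one and completes the proof.
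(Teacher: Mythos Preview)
The paper does not prove this theorem itself; it is stated with attribution to Morgan. The paper's own machinery does, however, specialize to give a proof in the NCD case, and that route differs from yours: instead of Morgan's minimal-model argument, the paper applies the Cirici--Horel theorem (Theorem~\ref{thrm:mhd:E_1_qis}), which supplies a \emph{functorial} lax symmetric monoidal quasi-isomorphism $K_{\mb{Q}}\zgqis {}_{W}E_1(K_{\mb{Q}})$ for any mixed Hodge diagram $(K,F,W)$. Applied to the Navarro Aznar diagram $\mc{K}_{U/X}$ (whose weight $E_1$ agrees with the Leray $E_2$ via d\'ecalage, as the paper notes after Theorem~\ref{thrm:main_cubical_intro}), this yields the rational statement directly, with no detour through $\mb{C}$ and no minimal models. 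What this buys over Morgan's original argument is functoriality over $\mb{Q}$.

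Your outline follows Morgan's original strategy and is broadly on target, but one step is not right as written. You say that Deligne's degeneration theorem ``yields an isomorphism of $\mb{C}$-cdgas between $E^{**}_2\otimes\mb{C}$ and $H^*(U;\mb{C})$.'' Degeneration of a multiplicative spectral sequence gives only an algebra isomorphism $E_2\cong \Gr^W H^*(U)$; it does not by itself produce a cdga quasi-isomorphism $E_2\zgqis A_{\PL}(U)$, which is what the theorem actually asserts and is strictly stronger (the complement $U$ is not claimed to be formal). Bridging that gap is precisely where the substance lies: one needs either Morgan's construction of a mixed Hodge structure on the minimal model together with the Deligne splitting over $\mb{C}$ and his separate descent argument (his Theorem~10.1) over $\mb{Q}$, or the Cirici--Horel functorial splitting. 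Your final paragraph names these tools, but the preceding paragraph makes it sound as though degeneration alone has already done the job; it has not.
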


%The quasi-isomorphism of the theorem 
%is non-canonical, although there is 
%a natural one
%after tensoring with $\mb{C}$.
%Here by $A\zgqis B$ we mean a zig-zag of quasi-isomorphisms (of dg-algebras).
%
In particular Theorem \ref{thrm:Morgan} implies that 
$E_3=E_\infty$. 
Note that if $(X,Z_*)$ is a smooth 
arrangement with some $Z_i$ 
of codimension $>1$, then
the Leray spectral sequence does not 
degenerate at $E_3$. An example is
$(\mb{P}^2,pt)$.

In this work we will generalize 
Theorem~\ref{thrm:Morgan} to the case of an arbitrary
smooth arrangement 
$(X,Z_*)$ in a compact algebraic $X$, 
providing a model for the $\mb{Q}$-homotopy type of 
the complement $U=X-\cup_i Z_i$.
The corresponding model turns out to be functorial in the arrangement 
$(X,Z_*)$ (see Definition \ref{dfn:arrangement_posets:morphism}).

To state the results let us introduce some notation.
In order to simplify sign issues
and make the product structure in what follows clear, 
it will be convenient to order the set $N$ and to 
use the monomials
$d\nu_I=d\nu_{i_1}\wedge\ldots\wedge d\nu_{i_k}$ for
$I=\{i_1<\ldots<i_k\}$, viewed as degree $-|I|$ elements of
the Grassmann algebra $\Lambda\langle d\nu_i\mid i\in N\rangle$.
Let $\iota_i$ be the derivation of 
$\Lambda \langle d\nu_i\mid i\in N \rangle$
of degree $1$ 
determined by 
$\iota_i d\nu_j=\delta_{ij}$.
Note that the one dimensional $\mb{Z}$-module $\mb{Z}\cdot d\nu_I$ 
and the derivation 
$\iota_i$ of $\Lambda\langle d\nu_i\mid i\in N\rangle$
are independent of the ordering of $N$.

The default coefficient ring for cohomology groups is 
$\mb{Q}$ and will usually be omitted from notation. 
Let $(X,Z_*)$ be a smooth compact algebraic 
arrangement.
For $i\in I \subset N$, set $H^*_{Z_I}(X)=H^*(X,X-Z_{I})$, and let
$$g_{I-\{i\},I}:H^*_{Z_I}(X)\to H^*_{Z_{I-\{i\}}}(X)$$
denote the natural map induced by the embedding 
$Z_I\ari Z_{I-\{i\}}$.
%
%The fundamental results on minimal models of algebraic varieties 
%due to Morgan \cite{M} and
%further developed by Cirici and Horel \cite{CH20} 
%will allow us to prove 
%the following:
The main result is the following
\begin{thrm}[On the homotopy type of the complement,
	Corollary \ref{thrm:main_cubical}]
\label{thrm:main_cubical_intro}
	There is a second quadrant 
	multiplicative spectral sequence $(E^{pq}_r,d_r)$ with these properties:
	\begin{enumerate}
		\item We have $E^{pq}_1=
		\bigoplus\limits_{|I|=-p}H^{q}_{Z_I}(X)$, and 
		$$E^{**}_1=\bigoplus\limits_{I}H^*_{Z_I}(X)\otimes 
			\mb{Z}\cdot d\nu_I,$$
		is a cdga with the differential 
		$$d_1=\sum\limits^{i,I}_{i\in I} g_{I-\{i\},I}\otimes 
		\iota_i,$$
		and the straightforward multiplication obeying the usual
		sign rule.
		\item The terms $(E_r,d_r)$ are functorial
			in the arrangement $(X,Z_*)$ for $r>0$.

		\item We have $E_2=E_\infty$.

		\item There is a functorial quasi-isomorphism 
			$$E^{**}_1\zgqis A_{\PL}(U)\in \CDGA_{\mb{Q}},$$
			where both sides are considered as 
				functors of $(X,Z_*)$.
	\end{enumerate}
\end{thrm}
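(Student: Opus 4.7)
The plan is to realise $U$ as (a model of) the total complex of a punctured cubical diagram indexed by the subsets $I\subset N$, whose vertex at $I$ carries a cdga modelling the pair $(X,X-Z_I)$ — so that its cohomology is $H^*_{Z_I}(X)$ — together with strict cdga maps $A_I\to A_J$ for $J\subset I$ realising the restrictions $g_{J,I}$. The spectral sequence of the theorem will then be the one produced by filtering this total complex by $|I|$.

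Concretely, I would proceed in four steps. \emph{First,} construct functorial cdgas $A_I$ with $H^*(A_I)=H^*_{Z_I}(X)$ and strict cdga restrictions $A_I\to A_J$ for $J\subset I$; here $A_\emptyset$ is a model of $A_{\PL}(X)$, and $A_I$ for $I\neq\emptyset$ is a relative PL-forms model of the pair $(X,X-Z_I)$ (equivalently, a Thom-algebra model for the normal bundle of $Z_I$ in $X$). \emph{Second,} assemble the total cdga
$$B^\bullet=\bigoplus_{I\subset N} A_I\otimes \mb{Z}\cdot d\nu_I,\qquad d_{\mathrm{tot}}=d_{\mathrm{int}}+\sum_{i\in I}g_{I-\{i\},I}\otimes\iota_i,$$
with multiplication induced by the wedge of $d\nu_I$'s on the Grassmann side, the cup product on $A_I$, and the Koszul sign rule, and verify $d_{\mathrm{tot}}^{\,2}=0$ and Leibniz. \emph{Third,} exhibit a zig-zag of cdga quasi-isomorphisms $B^\bullet\zgqis A_{\PL}(U)$ via an iterated Mayer-Vietoris / \v{C}ech descent along the closed cover of $\cup_i Z_i$ by the pieces $Z_i$; the numerical skeleton is the long exact sequence of the pair $(X,U)$ combined with inclusion-exclusion expressing $H^*_{\cup_i Z_i}(X)$ in terms of the $H^*_{Z_I}(X)$. \emph{Fourth,} filter $B^\bullet$ by $|I|$ to identify the $E_1$-page and the differential $d_1$ as in the statement, inherit functoriality from Step 1, and prove $E_2=E_\infty$ by purity: Poincar\'e-Lefschetz together with the Thom isomorphism give $H^q_{Z_I}(X)$ pure of weight $q$ as a Hodge structure on the cohomology of the smooth compact subvariety $Z_I$, so any non-zero $d_r$ with $r\ge 2$ would have to shift weight by $1-r\neq 0$ and therefore must vanish by strictness of morphisms of (mixed) Hodge structures.

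The principal obstacle lies jointly in Steps~1 and~3: building \emph{strictly} commutative algebra models of the pairs $(X,X-Z_I)$ together with strictly commuting restrictions, and then upgrading the numerical descent into a zig-zag of cdga quasi-isomorphisms. Morgan's NCD proof achieves this using global logarithmic forms on $X$, which are simply unavailable here because some $Z_i$ may have codimension $>1$ and the $Z_I$ need not meet transversally. The substitute will have to be a local construction around each $Z_I$ (tubular neighbourhoods, or perhaps an embedded resolution reducing each $Z_I$ locally to an NCD situation) glued by a \v{C}ech procedure on $X$; simultaneously arranging strict commutativity of the whole cubical diagram of cdgas and a compatible Hodge-theoretic filtration for the purity argument of Step~4 is where I expect essentially all of the technical weight of the theorem to sit.
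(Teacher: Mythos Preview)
Your outline has the right skeleton (filtered cdga model of $U$, identify $E_1$, degenerate by purity) but misses the step that carries the actual weight of part~(4), and your proposed construction is dual to the one the paper uses.

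\textbf{The genuine gap.} Your Step~3 yields $B^\bullet\zgqis A_{\PL}(U)$, and Step~4 yields $E_2=E_\infty$. Neither of these, nor both together, gives $E^{**}_1\zgqis A_{\PL}(U)$ in $\CDGA_{\mb Q}$. Degeneration only says $H^*(E_1)\cong \Gr_W H^*(U)$ as bigraded rings; it does not produce a zig-zag of cdga quasi-isomorphisms between $E_1$ and $B^\bullet$. This step is an \emph{$E_1$-formality} statement, and it is not formal: over $\mb C$ one can get it from Deligne's splitting on a minimal model (Morgan), but the functorial $\mb Q$-statement the theorem asserts uses the Cirici--Horel theorem (Theorem~\ref{thrm:mhd:E_1_qis} in the paper), which says that for any mixed Hodge diagram $(K,F,W)$ there is a \emph{functorial} zig-zag $K_{\mb Q}\zgqis {}_W E_1(K_{\mb Q})$ of cdgas. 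To invoke it, your $(B^\bullet,W')$ must be an honest mixed Hodge diagram, not just a filtered cdga whose $E_1$ terms happen to be pure. Your purity argument in Step~4 is the right input for degeneration, but it is not the mechanism that produces the cdga quasi-isomorphism.

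\textbf{How the paper's construction differs from yours.} You propose to model the pairs $(X,X-Z_I)$ directly and then glue. The paper works dually: it models the \emph{open complements} $U_I=X-\bigcup_{i\in I}Z_i$, for which a functorial commutative MHD $\mc{K}_{U_I/X}$ already exists off-the-shelf (Navarro Aznar, via colimit over NCD resolutions and log forms; Corollary~\ref{cor:functorial_mhd}). These form a $\mc{Q}^{\op}$-diagram of MHDs. A \v{C}ech-type construction $\check{C}$ (Section~\ref{sect:cech_model}) converts this into a partial $\mc{Q}$-algebra whose vertex at $I$ is quasi-isomorphic to $(i_*i^!)_{Z_I}G(\ul{\mb Q})$, and a M\"obius-inversion lemma (Lemma~\ref{lemma:mobius_inversion_multiplication}) endows the convolution $\check{C}(\mc{K}^{\mc{Q}^{\op}})*\OS(\mc Q)$ with a genuine strict cdga structure. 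The rank filtration $W'$ on this convolution is then shown to make it an MHD (Lemma~\ref{lemma:two_weight_filtrations}), and Cirici--Horel applies. Your suggestion of tubular neighbourhoods or local embedded resolutions would have to reproduce all of this structure (strict commutativity, functorial Hodge and weight filtrations, compatibility across the cube) from scratch; the paper avoids that by borrowing the MHD structure from the open side, where it is already built.
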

See Example \ref{exmpl:thrm_for_quadric} of Appendix~\ref{first_example} 
for an explicit computation using this result.

To clarify the functoriality of the result, 
assume $f\colon X'\ar X$ is a morphism
and $g\colon N\ar N'$ is an injective map such that 
$f^{-1}Z_i\subset Z'_{g(i)}$for all $i\in N$.
This data provides a morphism of arrangements $(X',Z'_*)\ar (X,Z_*)$ (see Definition \ref{dfn:arrangement_posets:morphism}), inducing a natural morphism $E^{**}_1(X,Z_{i\in N})\ar E^{**}_1(X',Z'_{i\in N'})$.
On the other hand $f$ restricts to a map of complements $U'\ar U$.
The zig-zag provided by the Theorem 
commutes with these morphisms.
In particular, if $(X,Z_*)$ admits a group action 
which permutes the arrangement $Z_*$, 
we obtain an expression for 
the action on the rational homotopy type.

In the case of an NCD-complement this spectral sequnce 
appears already in \cite{M} and
is related to the Leray spectral sequence in Theorem \ref{thrm:Morgan} 
via d\'ecalage (see Proposition \ref{prop:decalage_ss}). 

\smallskip

It is natural to look for a similar model for the rational homotopy type 
which would better reflect 
the combinatorics of the arrangement. One way to do this is 
to consider for example the corresponding intersection poset. 
The notion of a locally geometric lattice 
was applied in \cite{JCW} to describe the cohomology ring of some
arrangement complements in terms of the Orlik-Solomon algebra and
the cohomology of smooth intersections. 
We will prove here similar results about the rational homotopy type and without requiring 
the intersections to be clean, i.e.\ to locally look like a 
vector subspace arrangement.
Let $[X,L,\Mu]$ be a smooth compact algebraic 
\textit{multiplicative arrangement}
on $X$ given by a presheaf $(L,\leq)^{\op}\ar 2^X$, 
and suppose $\mc{M}$ is a corresponding \textit{OS-algebra}
see Definitions \ref{dfn:arrangement_poset} and \ref{dfn:os_algebra}.
For simplicity one can assume $(L,\le)$ is a geometric lattice, 
and take $\Mu=\OS(L)$ to be the usual Orlik-Solomon algebra of $(L,\le)$ 
viewed as an $L$-graded object, so that $\Mu_x,x\in L$ is concentrated in 
cohomological degree $-r(x)$.
Let us mention that arrangements form a category 
(Definition \ref{dfn:arrangement_posets:morphism}).

An example of $(L,\leq)$ is the cubical arrangement $\mc{C}$
of all intersections $\mc{C}_I=\cap_{i\in I}Z_i$
as in Theorem \ref{thrm:main_cubical_intro}. 
For $x\in L$ we denote by $L_x$ the value of the presheaf on $x$. Note that $L_0=X$ and $x\leq y\in L$ implies $L_x\supset L_y\subset X$. 
We require that 
$(L,\leq)$ should be graded by a rank function $r$ 
such that $r(0)=0$ and 
$r(x)=r(y)+1$ if $x$ lies over $y$. The last condition is 
denoted by 
$x:>y$.
For each $y<x\in L$ there is a morphism
$g_{yx}:H^*_{L_x}(X)\to H^*_{L_y}(X)$ corresponding to 
the embedding $L_x\hookrightarrow L_y\subset X$, and for each
$y<:x$ we have the \textit{structure map} 
$\partial_{yx}:\mc{M}_x\to \mc{M}_y[1]$.

Set $U=X-\bigcup\limits^{x\in L}_{x>0}L_x$ to be the complement. The following result is similar to Theorem 1.3 in \cite{JCW}, 
and it also appears in the case of a clean hypersurface arrangement
in \cite{Dupont}:
\begin{thrm}[On the cohomology of the complement, theorem
	\ref{thrm:mv_spectral_sequence}]
	\label{thrm:main_coh_intro}
	There is a multiplicative spectral sequence, 
	functorial in the multiplicative arrangement $[X,L,\Mu]$,
	of pure Hodge structures with $E^{pq}_1$ of weight $q$
	with these properties:
	\begin{enumerate}
		\item  We have $$E^{pq}_1=
		\bigoplus\limits^{x\in L}_{r(x)=-p}
			H^q_{L_X}(X)\otimes \mc{M}_x[-r(x)],$$
		and $(E^{**}_1,d_1)$ is a cdga
		with the differential given by
		$$d_1=\sum\limits^{y,x}_{y<:x}g_{yx}\otimes 
				\partial_{yx}$$
		\item We have $E_2=E_\infty$.
		\item There is a natural multiplicative isomorphism 
			of Hodge structures: 
			$E^{pq}_2\simeq 
			\Gr^{\Dec\ W}_q H^{p+q}(U)$.
		\item 
  There is a multiplicative splitting 
  $H^{*}(U)\simeq \Gr^{\Dec W}H^*(U)$ of algebras. 
  In particular, there is an isomorphism of algebras 
			$E^{**}_2\simeq H^*(U)$ which 
			is functorial in
			$[X,L,\Mu]$
	\end{enumerate}
\end{thrm}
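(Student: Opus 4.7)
The plan is to construct a mixed Hodge complex modelling $A_{\PL}(U)$ whose underlying cdga realizes the proposed $E_1^{**}$ via a Mayer--Vietoris construction generalizing the cubical one of Theorem \ref{thrm:main_cubical_intro}, and then to derive parts (2)--(4) from the purity of $E_1$ essentially formally.

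For part (1), I would assemble the cdga $E_1^{**} = \bigoplus_x H^*_{L_x}(X) \otimes \Mu_x$ and verify the cdga axioms. The product is the tensor of the cup product on cohomology with the multiplication in the OS-algebra $\Mu$, the differential $d_1 = \sum_{y <: x} g_{yx} \otimes \partial_{yx}$ is a derivation because each $\partial_{yx}$ is compatible with the multiplication on $\Mu$ (an OS-algebra axiom, Definition \ref{dfn:os_algebra}) and $g_{yx}$ is compatible with the cup product, and $d_1^2 = 0$ follows from $\partial^2 = 0$ in $\Mu$ combined with the compatibilities $g_{yz} = g_{yx} \circ g_{xz}$ of the restriction maps. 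The cubical $E_1^{**}(\mc{C})$ of Theorem \ref{thrm:main_cubical_intro} is the special case where $L = 2^N$ and $\Mu$ is the Grassmann algebra on atoms; a natural comparison between it and the present $E_1^{**}$ is induced by the OS-algebra structure — the derivations $\iota_i$ correspond to the boundary maps $\partial_{yx}$ — and this comparison transports the quasi-isomorphism with $A_{\PL}(U)$ from the cubical case, yielding the cdga form of Theorem \ref{thrm:mv_spectral_sequence} invoked in the statement.

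For parts (2) and (3), I would invoke purity. Each $L_x \subset X$ is a smooth closed subvariety of a smooth compact complex algebraic variety, hence smooth projective; by the Thom isomorphism $H^q_{L_x}(X) \cong H^{q-2c(x)}(L_x)(-c(x))$ as mixed Hodge structures, which is pure of weight $q$ by Deligne. Hence $E_1^{pq}$ is pure of weight $q$, and so is every subquotient. The differentials $d_r$ for $r \geq 2$ map between pure Hodge structures of different weights ($q$ and $q - r + 1$) and must vanish by strictness of MHS morphisms, yielding $E_2 = E_\infty$. The identification $E_2^{pq} \simeq \Gr^{\Dec W}_q H^{p+q}(U)$ then follows by realizing the spectral sequence as the weight spectral sequence of a mixed Hodge complex computing $H^*(U)$, with $W$ indexed by rank $r(x) = -p$; the décalage converts $E_\infty$ to $\Gr^{\Dec W} H^*(U)$ as in Proposition \ref{prop:decalage_ss}.

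Part (4), the multiplicative splitting, then follows from purity together with the multiplicative structure: since each $\Gr^{\Dec W}_q H^*(U)$ is pure of weight $q$ and the multiplication on $H^*(U)$ is a morphism of MHS, the Deligne canonical splitting $H^*(U) \simeq \Gr^{\Dec W} H^*(U)$ in the category of MHS is automatically multiplicative, pure Hodge structures of a fixed weight forming a full abelian subcategory closed under the relevant tensor operations. The main obstacle is step (1): assembling a sufficiently functorial mixed Hodge complex refining $A_{\PL}(U)$ whose $E_1$ realizes the OS-algebra formula, and verifying that the OS-algebra axioms are exactly what is required for the generalized Mayer--Vietoris construction to close up into a cdga quasi-isomorphic to $A_{\PL}(U)$. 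Once this mixed Hodge complex is in place, parts (2)--(4) are formal consequences of purity and the Deligne splitting principle.
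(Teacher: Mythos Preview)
Your overall plan matches the paper's strategy: build a mixed Hodge complex whose rank-filtration spectral sequence has the stated $E_1$, and deduce (2)--(4) from purity. The description of $E_1$ and the weight argument for $E_2=E_\infty$ are essentially what the paper does (via Theorem~\ref{thrm:ss_for_lattices} and Theorem~\ref{thrm:ss_degeneration_mhc}).

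There is, however, a genuine gap in your argument for part (4). The Deligne splitting (Proposition~\ref{prop:deligne_splitting}) is a canonical functorial decomposition of $V_{\mb{C}}$, not of $V_{\mb{Q}}$; it therefore yields at best a functorial multiplicative isomorphism $H^*(U;\mb{C})\simeq \Gr^{\Dec W}H^*(U;\mb{C})$. Over $\mb{Q}$ the weight filtration of a general MHS does \emph{not} split canonically, and the Morgan--Sullivan splitting is non-canonical, hence not functorial. The paper obtains the functorial $\mb{Q}$-splitting from Corollary~\ref{cor:cirici_splitting}, which is a consequence of the Cirici--Horel theorem (Theorem~\ref{thrm:mhd:E_1_qis}); the remark immediately following Theorem~\ref{thrm:mv_spectral_sequence} emphasizes precisely this point. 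Your sentence ``the Deligne canonical splitting \ldots\ in the category of MHS is automatically multiplicative'' is not correct as stated: MHS is not semisimple, and the Deligne bigrading lives only on the complexification.

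A second, more technical point you elide: the spectral sequence you want is the one associated with the \emph{rank} filtration $W'$ on the Mayer--Vietoris complex, but the mixed Hodge structure on $H^*(U)$ comes with its own weight filtration $W$. That $W$ and $W'$ induce the same filtration on cohomology is the content of Lemma~\ref{lemma:two_weight_filtrations}, and is what makes the identification $E^{pq}_2\simeq \Gr^{\Dec W}_q H^{p+q}(U)$ go through. Your sketch ``realizing the spectral sequence as the weight spectral sequence of a mixed Hodge complex \ldots\ with $W$ indexed by rank'' conflates these two filtrations; a priori the rank filtration is not the weight filtration of any MHC, and one must check separately that $(\MVHC,F,W')$ is an MHC (which uses that each $H^q_{L_x}(X)$ is pure of weight $q$) and that $W=W'$ on $\mb{H}^*$.
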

\begin{rmrk}
	Here $\Gr^{\Dec\ W}_q H^n(U)$ denotes the usual 
	weight $q$ piece of the mixed Hodge structure on 
	$H^n(U)$, see Remark \ref{rmrk:decalage} for the 
	filtration d\'ecal\'ee $\Dec\ W$.
\end{rmrk}
This spectral sequence is called the \textit{Mayer-Vietoris 
spectral sequence} of the arrangement $[X,L,\Mu]$, and it also exists
in general topological setting (see Theorem \ref{thrm:ss_for_lattices}).
We will refer to the spectral sequence in Theorem \ref{thrm:main_cubical_intro}
as the \textit{cubical} Mayer-Vietoris spectral sequence.

\begin{rmrk}
The above theorem is analogous to the fact that 
the Leray spectral sequence of an open embedding 
has a mixed Hodge structure (see \cite{A}).
Both results agree in the above-mentioned case of an open embedding of the 
complement of an NCD arrangement.
In the general case the weight condition implies that the cdg-algebra $E^{**}_r,r>0$ 
can be considered as a pure Hodge 
complex (a Hodge diagram in fact).
Considering $H^*(U)$ as a mixed Hodge structure one can rephrase the statement of the theorem by saying that 
there is a natural isomorphism of Hodge structures 
$E^{**}_2\simeq \Gr^{\Dec W} H^*(U)$.
\
We emphasize however that $H^*(U)$ itself is 
rarely pure, and one cannot in general recover 
the mixed Hodge structure on $H^*(U)$
from our model.
\end{rmrk}

Assume the arrangement $[X,L,\OS(L)]$ is locally geometric 
(Definition \ref{dfn:arrangement_posets:geometric}). 
Our next result is an analog of 
Theorem \ref{thrm:main_cubical_intro} that gives a more economical model which captures 
the combinatorial structure of the given arrangement $(L,\leq)$.
We state it as an extension of Theorem \ref{thrm:main_coh_intro}.
\begin{thrm}[The lattice model for the homotopy type of the complement, 
	Theorem \ref{thrm:main_lattices}]
	\label{thrm:main_lattices_intro}
	We keep the notation of Theorem~\ref{thrm:main_coh_intro}. For a 
 locally geometric arrangement $[X,L]$, there is a quasi-isomorphism:
	$$E^{**}_1\zgqis A_{\PL}(U)\in \CDGA_{\mb{Q}}$$
	which is functorial in $[X,L]$.
\end{thrm}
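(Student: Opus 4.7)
The plan is to reduce the statement to Theorem~\ref{thrm:main_cubical_intro} by establishing a natural zig-zag of cdga quasi-isomorphisms between the lattice cdga $E^{**}_1([X, L, \OS(L)])$ and the cubical cdga $E^{**}_1(X, \mc{C})$ attached to the atomic cubical arrangement $\mc{C} = \{L_a\}_{a \in \Atoms(L)}$. Composing with the cubical zig-zag of Theorem~\ref{thrm:main_cubical_intro}(4) will then yield the desired
$$E^{**}_1([X, L, \OS(L)]) \zgqis A_{\PL}(U),$$
and functoriality will follow from naturality at each step.

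First I would construct a comparison map using the morphism of multiplicative arrangements induced by the join $2^{\Atoms(L)} \to L$, $I \mapsto \bigvee I$, together with the classical surjection $\Lambda\langle d\nu_i \mid i\in \Atoms(L)\rangle \twoheadrightarrow \OS(L)$ on the multiplicative side. The locally geometric hypothesis enters precisely here: it ensures the geometric identifications $L_{\bigvee I} = \bigcap_{i\in I} L_i$ are compatible with the poset map, so that the relative cohomology groups $H^*_{L_{\bigvee I}}(X)$ on the two sides agree. A subtlety is that the cubical cdga is graded by $p = -|I|$ while the lattice cdga is graded by $p = -r(\bigvee I)$; these agree only on independent subsets, so the comparison map must be built either by first replacing the cubical cdga with a quasi-isomorphic one in which the dependent part has been collapsed, or equivalently by factoring through an intermediate cdga carrying both gradings naturally. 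Compatibility with differentials amounts to matching the contractions $\iota_i$ of the cubical $d_1$ with the OS structure maps $\partial_{yx}$ of the lattice $d_1$, and multiplicativity is built in via the quotient $\Lambda\langle d\nu_i\rangle \twoheadrightarrow \OS(L)$.

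Next I would show the comparison is a quasi-isomorphism. By Theorem~\ref{thrm:main_coh_intro}(3) applied to each arrangement (both share the same complement $U$), both $E_2$-pages are naturally isomorphic to $\Gr^{\Dec W} H^*(U)$, and both spectral sequences degenerate at $E_2$. The naturality of these identifications with respect to the morphism of arrangements inducing the comparison forces it to realize the identification on $E_2$, hence to be a quasi-isomorphism at the $E_1$-level and therefore a cdga quasi-isomorphism.

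The main obstacle is the regrading/collapse step: one must show that the pieces of the cubical cdga indexed by dependent subsets (those with $|I| > r(\bigvee I)$) form an acyclic subcomplex whose removal does not change the quasi-isomorphism class. This reduces, via a filtration over $L$, to a local computation at each $x \in L$ on the geometric sublattice $L_{\leq x}$, where the acyclicity is precisely the classical Orlik-Solomon statement identifying the top cohomology of the broken-circuit complex with $\mc{M}_x$. It is exactly at this local step that the locally geometric hypothesis is indispensable, and once it is in place the global zig-zag and its functoriality in $[X, L]$ assemble formally.
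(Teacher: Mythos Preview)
Your overall strategy---reduce to the cubical theorem and then connect the two $E_1$-cdgas via the Feichtner--Yuzvinsky acyclicity statement---is exactly the paper's approach, and the key ingredient you identify (the local Orlik--Solomon computation at each $x\in L$) is precisely Theorem~\ref{thrm:atomic_complex} on the formality of the atomic complex $p_*\OS(\mc{Q})\to\OS(L)$.

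Two points where your execution differs from the paper and could be tightened. First, your claim that the summands indexed by \emph{dependent} $I$ form an acyclic subcomplex is not literally correct: if $I$ is a circuit then $\iota_i d\nu_I$ lands in the independent part, so the dependent span is not closed under $d_1$. The paper sidesteps this by working with the contraction $p\colon\mc{Q}\to L$ and the projection formula (Lemma~\ref{lemma:lattice:projection_formula}): one rewrites the cubical $E_1$ as $\Tot\big(H^*_{L_\bullet}(X)\otimes p_*\OS(\mc{Q})\big)$, and then the Feichtner--Yuzvinsky quasi-isomorphism $p_*\OS(\mc{Q})\to\OS(L)$ of $(L,\le)$-chain algebras gives the desired cdga quasi-isomorphism directly after totalization. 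Your ``filtration over $L$'' is the right instinct, but the projection-formula packaging makes the argument a clean one-liner.

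Second, your step of verifying the comparison is a quasi-isomorphism by matching the $E_2$-pages with $\Gr^{\Dec W}H^*(U)$ is unnecessary: once you have the Feichtner--Yuzvinsky map as a quasi-isomorphism of $(L,\le)$-chain complexes, tensoring with $H^*_{L_\bullet}(X)$ and totalizing already gives a quasi-isomorphism of cdgas. The detour through Theorem~\ref{thrm:main_coh_intro}(3) works but adds nothing.
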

We illustrate this result in Example \ref{exmpl:thrm:for_lines} of Appendix~\ref{first_example}.

Since the cubical lattice is a geometric arrangement, 
Theorem \ref{thrm:main_cubical_intro} is a formal corollary of 
Theorem \ref{thrm:main_lattices_intro}.
In the course of the proof we will do the opposite and deduce 
Theorem \ref{thrm:main_lattices_intro} from 
Theorem \ref{thrm:main_cubical_intro}.
%We illustrate this theorem in~Example \ref{exmpl:thrm:for_lines}.

\smallskip

The proof is based on the notion of a mixed Hodge diagram 
introduced by Morgan in \cite{M} and then developed in 
a functorial form by Navarro Aznar in \cite{Aznar}. 
It is the cdga version of the mixed Hodge complex invented by 
Deligne \cite{D}.

A non-trivial property of a mixed Hodge diagram $(K,F,W)$ 
is that it 
produces a mixed Hodge 
\textit{structure} $(M,F,\Dec W)$ on the minimal model $M$ of 
the cdga $K$ \cite{M},\cite{JC}. 
In particular its weight
filtration admits 
a natural Deligne splitting (Proposition \ref{prop:deligne_splitting})
over $\mb{C}$, thus providing a natural
quasi-isomorphism  $E_1(K_{\mb{C}},W)\zgqis E_0(K_{\mb{C}},\Dec W)$ with 
$E_0(M,\Dec W)\otimes \mb{C}\simeq M\otimes\mb{C}$.
By a general result due to Morgan-Sullivan (e.g.\ \cite[Theorem 10.1]{M}) 
we have a non-canonical quasi-isomorphism
$E_1(K_{\mb{Q}},W)\zgqis K_{\mb{Q}}$.

It is possible however to avoid using minimal models and 
upgrade the statement to a lax symmetric 
monoidal quasi-isomorphism
of functors $(K,F,W)\ar E_1(K_{\mb{Q}},W)$ and $(K,F,W)\ar K_{\mb{Q}}$
from $(\MHC,\otimes)$ to $(\Ch(\Vect_{\mb{Q}}),\otimes)$.
We will adopt this approach due to Cirici and Horel \cite{CH20}, see
Theorem \ref{thrm:mhd:E_1_qis}.

Let $j\colon U\to X$ be the inclusion of the complement, let 
$\ul{\mb{Q}}_X$ denote the constant sheaf over $X$ and let $[X,L,\Mu]$
be a multiplicative arrangement (e.g.\ a cubical one).
The sheaf $Rj_*j^*\ul{\mb{Q}}_X$ has a natural 
resolution $\MV(L)\in \Mon(\Ch^{\le 0}(\Sh_X))$ in terms of the arrangement, 
called the Mayer-Vietoris 
complex (Definition \ref{dfn:mv_complex}). The Mayer-Vietoris complex 
is equipped with 
the \emph{rank filtration} $W'$, such that $E^{pq}_1(\MV(L),W')$
is described by Theorem \ref{thrm:main_lattices_intro}.
Though $E^{pq}_1(\MV(L),W')$
is a cdga, $\MV(L)$ is non-commutative, and it does not admit a 
structure of a Hodge complex on the nose.

The initial aim of this work 
is to construct a Hodge diagram $(\MVHD,F,W')$ based on this resolution
and such that $E^{pq}_1(\MVHD,W')=E^{pq}_1(\MV(L),W')$. 
Once this is done, Theorem \ref{thrm:main_lattices_intro} follows from Theorem \ref{thrm:mhd:E_1_qis}.

\smallskip

Let us mention some other approaches and related difficulties.
First of all, there is no available analog for 
the log-de Rham complex for 
an inclusion $X-\cup Z_i\to X$ when $\codim Z_i/X>1$.
On the other hand, Dupont \cite{Dupont} defined a version of the
log-de Rham complex 
$\Omega^\bullet\langle D_i\rangle\subset \Omega^\bullet_X$
in the case $\codim D_i/X=1$ and a hypersurface arrangement $D_i$
is clean but not neccessary an NCD, i.e. 
$\bigcup D_i\subset X$ locally looks like an 
arrangement of hyperplanes. 

\iffalse
The intersection poset of such an arrangement of hypersurfaces 
is not necessary an NCD, but 
defines a locally geometric arrangement $(L,\le)$. 
Dupont \cite{Dupont} defined a version of log-de Rham complex 
$\Omega^\bullet\langle D_i\rangle\subset \Omega^\bullet_X$, which in this case
models $Rj_* \mb{C}$ for
$j\colon X-\cup D_i\to X$. 
In fact, $\Omega^\bullet\langle D_i\rangle$
is equipped with filtrations $F,W$ forming a mixed Hodge diagram. 
The corresponding term ${}_{W}E^{pq}_1(\Omega^\bullet\langle D_i\rangle)$
coincides with ${}_{W'}E^{pq}_1$ in Theorem \ref{thrm:main_coh_intro} applied
for the intersection poset $(L,\le)$ of $D_*$.
%:w
In particular, ${}_{W'}E^{pq}_1$ models the rational homotopy type 
in the case of hypersurfaces. 
\fi

In other concrete examples of interest, such as configuration spaces  
$U=F(M,n)\subset M^n$ (see \cite{FM}, \cite{K}), 
there is a direct approach which is to construct a compactification 
$\ol{U}$ of $U$ with an NCD complement 
$\ol{U}-U=\cup D_i$ explicitly. 
In this case the complex part of 
the Hodge diagram is given by the usual 
log-de Rham complex $\Omega^*(\ln \ol{U}-U)$.

Note that in general 
a realization of $U=X-\cup Z_i$ as an NCD-complement $\ol{U}-\cup D_j$ 
uses
Hironaka's resolution of singularities. 
In the case of clean intersections one can also use wonderful compactification \cite{CP} 
introduced by de Concini-Procesi. These are more explicit in principle but still difficult to handle in examples.

To summarize, our approach is homological in nature and 
does not use an 
explicit construction of the resolution.
Thus in the case of an arbitrary arrangement with smooth 
set-theoretical intersections
(i.e.\ regardless of whether or not the intersection poset 
is locally geometric) we have a model for 
the rational homotopy type of the complement 
given by Theorem~\ref{thrm:main_cubical}.

\subsection{Applications}
%\addcontentsline{toc}{subsection}{Applications}
\label{sect:intro:app}
We will now describe a few applications of our results. 

Consider an arrangement of affine subspaces in 
$\mb{C}^n$.
Let $(L,\leq)$ be the corresponding intersection poset
and $U=\mb{C}^n-\bigcup\limits_{x>0}L_x$ be the complement. 
In Theorem \ref{thrm:model_subspaces} we describe a model for the rational homotopy type of $U$.
The result generalize Yuzvinsky's \emph{relative atomistic} model for linear subspaces complements 
\cite{YuzAtomic}
to the affine case.

Assume now that $(L,\leq)$ is a locally geometric lattice.
This is equivalent to 
requiring that for all $p\in \mb{C}^n$ the lattice of subspaces
$\{x\in L\mid L_x\ni p\}\subset L$ is geometric.
This condition is satisfied for example in case of \emph{$C$-equal codimension arrangement}\label{equal_codimension}, 
i.e.  $C=\codim(Z_i/\mb{C}^n)$ and $\codim(L_x/\mb{C}^n)$ is divisible by $C$ for all $i,x$.
For instance $C=1$ corresponds to an arrangement of hyperplanes.
Using the model for $U$ we will show that
\begin{thrm}[Theorem \ref{thrm:model_subspaces}]
	The complement $U$ is 
	formal. 
\end{thrm}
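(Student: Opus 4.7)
The plan is to apply Theorem \ref{thrm:main_lattices_intro} to obtain an explicit cdga model of $A_{\PL}(U)$ and then show this model is formal via a weight/purity argument in the spirit of Deligne, Morgan, and Sullivan.

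First I would compactify: embed $\mathbb{C}^n\hookrightarrow\mathbb{P}^n$ and pass to the arrangement in $\mathbb{P}^n$ whose members are the projective closures $\overline{L_x}$ together with the hyperplane at infinity $H_\infty$, so that $U$ remains the complement. One checks that this projective arrangement is still locally geometric: at a point of $H_\infty$ the local intersection lattice is determined by $H_\infty$ together with the ``direction'' subspaces $\overline{L_x}\cap H_\infty$, which inherit local geometricity from the original lattice. Applying Theorem \ref{thrm:main_lattices_intro} then yields a cdga quasi-isomorphism $E^{**}_1\zgqis A_{\PL}(U)$, where $E^{**}_1$ is bigraded with $d_1$ of bidegree $(1,0)$ preserving the second index $q$, and (by Theorem \ref{thrm:main_coh_intro}) $E^{**}_2\simeq H^*(U)$ as algebras with $E_2^{pq}$ pure of weight $q$.

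The crucial input is the purity statement $E_2^{pq}\neq 0 \Rightarrow q=-2p$, equivalent to $H^k(U,\mathbb{Q})$ being pure of weight $2k$. I would derive this by a weight computation in $E_1$: since every $\overline{L_x}$ is a projective subspace $\mathbb{P}^{n-r(x)}$, the Thom isomorphism gives $H^q_{\overline{L_x}}(\mathbb{P}^n)\simeq H^{q-2r(x)}(\mathbb{P}^{n-r(x)})(-r(x))$, so $E_1^{pq}=0$ unless $q$ is even and $-2p\leq q\leq 2n$. Purity of $E_2$ then amounts to showing that the off-diagonal classes ($q>-2p$) are killed by $d_1$, which is a combinatorial computation involving the Gysin maps $g_{yx}$ and the OS-algebra differential $\partial_{yx}$; alternatively one may invoke a known purity result for locally geometric subspace arrangement complements, classical in the hyperplane case via Orlik-Solomon.

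Granted purity, formality of $E^{**}_1$ follows by a standard weight argument. Since $d_1$ and the product both preserve $q$, the cdga $E^{**}_1$ splits as $\bigoplus_{w\text{ even}} A_w$ with $A_w:=E_1^{*,w}$, and purity forces the cohomology of $A_w$ to be concentrated in total degree $w/2$. Define a sub-cdga $C\subset E^{**}_1$ by $C_w^k=A_w^k$ for $k<w/2$, $C_w^{w/2}=\ker d_1$, and $C_w^k=0$ for $k>w/2$; the identity $w/2+w'/2=(w+w')/2$ together with the Leibniz rule makes $C$ closed under products, and the inclusion $C\hookrightarrow E^{**}_1$ is a quasi-isomorphism. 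The projection $C\twoheadrightarrow H^*(U)$ killing $C_w^k$ for $k<w/2$ is a cdga map inducing an isomorphism on cohomology, giving the required zig-zag $A_{\PL}(U)\zgqis H^*(U)$. The principal difficulty lies in the purity step, which isolates the Hodge-theoretic content from the combinatorial spectral sequence machinery already set up in the paper.
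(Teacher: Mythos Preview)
Your proposal has two genuine gaps.

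First, the claim that the projective arrangement obtained by adding $H_\infty$ to the closures $\overline{L_x}$ is still locally geometric is false. The paper gives an explicit counterexample immediately after stating the theorem: a line $l$ and a plane $\Pi$ in $\mb{A}^3$ with $l\cap\Pi=\emptyset$ form a (trivially) locally geometric arrangement, but after compactifying and adding $H_\infty$ the intersection poset $\tilde L$ is not even graded --- it is precisely the poset of Part~5 of Example~\ref{exmpl:arrangements}. So Theorem~\ref{thrm:main_lattices_intro} does not apply to the compactified arrangement, and one is forced to use the cubical model of Theorem~\ref{thrm:main_cubical_intro} instead.

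Second, your purity claim $E_2^{pq}\neq 0 \Rightarrow q=-2p$ is false for subspace arrangements that are not hyperplane arrangements. You write ``every $\overline{L_x}$ is a projective subspace $\mb{P}^{n-r(x)}$'', but this conflates the lattice rank $r(x)$ with $\codim L_x$; these agree for hyperplanes but not in general. Already for $U=\mb{C}^2\setminus\{0\}$ the atom $\{0\}$ has $r=1$ and $\codim=2$, and $H^3(U)=\mb{Q}(-2)$ has weight $4\neq 2\cdot 3$. Your truncation argument for formality needs the cohomology of each weight-$w$ piece to sit in total degree $w/2$, and this fails here.

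The paper's proof takes a different route: it applies the cubical theorem to $\tilde L\subset\mb{P}^n$ to get a model $K$, then by explicit chain-level manipulations (a contracting homotopy given by multiplication by $t\,d\nu_\infty$, together with the $H^*(\mb{P}^n)$-module structure) constructs a quasi-isomorphism from $K$ to the affine model $\mc{T}=\bigoplus_{x\in L}\tau^{\codim L_x}\cdot\OS(L)_x$. The differential on $\mc{T}$ vanishes because $H^*_{L_x}(\mb{C}^n)$ is concentrated in degree $2\codim L_x$ and $y<:x$ forces $\codim L_y<\codim L_x$, so $g_{yx}=0$ for degree reasons. This is a purity-type argument, but it is graded by codimension rather than lattice rank, and it is carried out on the affine model, not on the compactified $E_1$.
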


\begin{rmrk}
Note that an arrangement given by affine hyperplanes $\{f_i=0\}$ is always
locally geometric; In this case the quasi-isomorphism $H^*(U;\mb{C})$ and
$A^*_{\PL}(U;\mb{C})$ is induced by the inclusion of subalgebra generated by 1-forms 
$df_i/f_i$. 
More generally, the case of $C$-equal codimension arrangement was 
covered in \cite{CH20} by applying principle ``purity implies formality'' 
(see also \cite{DupontArrangements}), 
by noting that in this case the MHS
$H^n(U)\ne 0$ is pure of weight $\frac{2Cn}{2C-1}\in \mb{Z}$ (see also \ref{rmrk:purity}).
\end{rmrk}

The next theorem generalizes the Kriz-Totaro model \cite{K},\cite{Totaro} 
which describes a model of the rational homotopy type 
of the configuration space $F(M,n)$ of 
$n$-tuples of points on 
a smooth compact algebraic variety $M$. 
Namely, for any finite unoriented graph $G$ one can consider the 
\emph{chromatic 
configuration space} $F(M,G)$ of points 
$x_i\in M$ labeled by 
vertices 
$i\in V(G)$ with the condition $x_i\neq x_j$ for each edge $\{ij\}\in E(G)$. 
Let $\Delta_{ab}\subset M^n$ be the diagonal $x_a=x_b$
and $[\Delta_{ab}]\in H^{2\dim M}(M^n)$ the corresponding
cohomology class.
\begin{rmrk}
The word ``chromatic'' is justified by the fact that 
$\chi(F(M,G))=p_G(\chi(M))$, where $p_G$ is the chromatic polynomial of 
$G$ and $\chi$ is any motivic measure on the Grothendieck ring of varieties.
\end{rmrk}
\begin{thrm}[Theorem \ref{thrm:kriz_model}]
		The rational homotopy type of $F(M,G)$ 
        has a model equal to the cdga over $H^*(M^{V(G)})$ given by the quotient 
	$$H^*(M^{V(G)})\otimes 
	\Lambda^*\langle \tilde{\Delta}_{ab}\rangle/\mc{J}$$
        where $\langle \tilde{\Delta}_{ab}\rangle$ is the $\mathbb{Q}$-vector space spanned by generators
	$\tilde{\Delta}_{ab}$ of degree 
 $2\dim_{\mb{C}} M-1$ for all \emph{ordered} pairs $(a,b)\in E(G)$,
	and $\mc{J}$ is the ideal generated by the following relations
	\begin{enumerate}
    \item $\tilde{\Delta}_{ab}=\tilde{\Delta}_{ba}$;
    
		\item $\partial(
		\tilde{\Delta}_{i_1i_2}\wedge\ldots\wedge \tilde{\Delta}_{i_{k-1}i_k})=0$
		for each simple cycle $(i_1,i_2),\ldots,(i_{k-1},i_k),(i_k,i_1)\in E(G)$ in $G$;

		\item $p^*_a(\gamma)\cdot \tilde{\Delta}_{ab}=
			p^*_b(\gamma)\cdot \tilde{\Delta}_{ab}$ for all $\gamma\in H^*(M)$ and $(a,b)\in E(G)$.
	\end{enumerate}

	The differential $d$ is zero on $H^*(M^{V(G)})$, and we have
	$d\tilde{\Delta}_{ab}=[\Delta_{ab}]\in H^{2\dim M}(M^{V(G)})$ where $[\Delta_{ab}]$ 
	is the Poincar\'e dual class of the diagonal $[\Delta_{ab}]$.

    We define an action of $Aut(G)$ on our model by setting 
	$\sigma.\tilde{\Delta}_{ab}=\tilde{\Delta}_{\sigma(a)\sigma(b)}, \sigma\in Aut(G)$ and using the permutation action on $M^{V(G)}$. With this definition, the model is $Aut(G)$-equivariant in the sense that there is a zig-zag of $Aut(G)$-equivariant quasi-isomorphisms connecting the model and $A_{PL}(F(M,G))$.

\end{thrm}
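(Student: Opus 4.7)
The plan is to realize the chromatic configuration space as the complement of a smooth subvariety arrangement and then apply Theorem~\ref{thrm:main_lattices_intro}. Set $X=M^{V(G)}$ and, for each edge $\{a,b\}\in E(G)$, let $Z_{\{a,b\}}=\Delta_{ab}\subset X$ be the big diagonal $\{x_a=x_b\}$. Then
$F(M,G)=X-\bigcup_{\{a,b\}\in E(G)}\Delta_{ab}.$
For any $E'\subset E(G)$ the set-theoretic intersection $\bigcap_{\{a,b\}\in E'}\Delta_{ab}$ consists of tuples $(x_v)$ constant on each connected component of the subgraph $(V(G),E')$, so it is isomorphic to $M^{\pi}$ where $\pi$ is the partition of $V(G)$ into these components. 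In particular all intersections are smooth, which is exactly Definition~\ref{dfn:arrangement}, and the resulting intersection poset $L$ is the lattice of partitions of $V(G)$ obtained by taking connected components of edge-subgraphs. This is the lattice of flats of the graphic (cycle) matroid of $G$, which is a geometric lattice; hence $[X,L,\OS(L)]$ is a locally geometric multiplicative arrangement in the sense of Definition~\ref{dfn:arrangement_posets:geometric}, and Theorem~\ref{thrm:main_lattices_intro} applies.

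Next I would translate the resulting $E^{**}_1$ into the combinatorial description in the statement. By the Thom isomorphism, $H^*_{L_x}(X)\simeq H^{*-2c_x}(L_x)$ where $c_x=\dim_{\mb{C}}X-\dim_{\mb{C}}L_x$, and the natural embeddings $L_x\hookrightarrow L_y\subset X$ turn $\bigoplus_x H^*_{L_x}(X)$ into an $H^*(X)$-module presented exactly as $H^*(M^{V(G)})$ tensored with generators $\tilde{\Delta}_{ab}$ (one per edge, degree $2\dim_{\mb{C}}M-1$ after the $|I|$ shift coming from $d\nu_I$), modulo relation (3): $p_a^*(\gamma)\cdot\tilde{\Delta}_{ab}=p_b^*(\gamma)\cdot\tilde{\Delta}_{ab}$, which is just the statement that pullback along $\Delta_{ab}\hookrightarrow M^{V(G)}$ equalises the $a$th and $b$th factor pullbacks. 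For the Orlik--Solomon part, the circuits of the graphic matroid are exactly the simple cycles of $G$, and the Orlik--Solomon relations on $\OS(L)$ translate, via the differential $d_1=\sum g_{yx}\otimes\partial_{yx}$ of Theorem~\ref{thrm:main_coh_intro}, into relation (2): $\partial(\tilde{\Delta}_{i_1i_2}\wedge\dots\wedge\tilde{\Delta}_{i_{k-1}i_k})=0$ for every simple cycle. Relation (1), $\tilde{\Delta}_{ab}=\tilde{\Delta}_{ba}$, reflects the fact that $\Delta_{ab}=\Delta_{ba}$ as subvarieties, and the differential formula $d\tilde{\Delta}_{ab}=[\Delta_{ab}]$ is the $d_1$ of the cubical Mayer--Vietoris spectral sequence applied at the rank-one stratum, i.e.\ $g_{\emptyset,\{ab\}}$ sending the Thom class to the Poincar\'e dual class of $\Delta_{ab}$ in $H^{2\dim M}(M^{V(G)})$.

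Finally, for the $\mathrm{Aut}(G)$-equivariance I would invoke the functoriality clauses in Theorems~\ref{thrm:main_cubical_intro} and~\ref{thrm:main_lattices_intro}. Any $\sigma\in\mathrm{Aut}(G)$ induces an automorphism of the arrangement $(X,\Delta_{*})$ via permutation of factors of $M^{V(G)}$, sending $\Delta_{ab}$ to $\Delta_{\sigma(a)\sigma(b)}$; the promised zig-zag $E^{**}_1\zgqis A_{\PL}(F(M,G))$ is natural in morphisms of arrangements (Definition~\ref{dfn:arrangement_posets:morphism}), so it is automatically $\mathrm{Aut}(G)$-equivariant with the prescribed action on generators.

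The main obstacle I anticipate is the precise identification of the Orlik--Solomon relations of the graphic matroid with the cycle relations (2) in the form required, together with carefully tracking the signs coming from the $d\nu_I$ ordering and from Poincar\'e duality. In particular one must check that the combinatorial $\partial$ coming from the structure maps $\partial_{yx}\colon\Mu_x\to\Mu_y$ of $\OS(L)$ matches the topological formula $d\tilde{\Delta}_{ab}=[\Delta_{ab}]$ consistently when iterated around a simple cycle, so that relation (2) holds without spurious corrections; this is essentially the content of the Arnold/Kriz relation in the case $G=K_n$, and the general graph case follows once one has identified circuits with simple cycles.
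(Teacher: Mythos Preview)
Your proposal is correct and follows essentially the same route as the paper: realize $F(M,G)$ as the complement of the diagonal arrangement in $M^{V(G)}$, observe that the intersection poset is the (locally) geometric partition lattice of the graphic matroid, apply Theorem~\ref{thrm:main_lattices} to get $E^{**}_1\zgqis A_{\PL}(F(M,G))$, and then identify $E^{**}_1$ explicitly with the stated cdga, using functoriality for the $Aut(G)$-equivariance.

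The only organizational difference is that the paper separates out the explicit identification of $(E^{**}_1,d_1)$ with the generators-and-relations cdga into an independent result, Theorem~\ref{thrm:totaro_ss}, proved in Section~\ref{sect:mv_totaro} for arbitrary $\Bbbk$-oriented topological manifolds (not just smooth projective varieties). There the sign and Orlik--Solomon bookkeeping you flag as an obstacle is handled via Proposition~\ref{prop:even_os}: one shows that the map from the free algebra on the $\tilde\Delta_{ab}$'s to the Thom-class subalgebra $\mc{T}\subset E^{**}_1$ has kernel exactly the ideal generated by the cycle relations, by comparing with the Orlik--Solomon (and, for odd $m$, Orlik--Terao) algebra of the graphic matroid. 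In the algebraic case $m=2\dim_{\mb C}M$ is even, so this reduces to the standard Orlik--Solomon presentation, and Theorem~\ref{thrm:kriz_model} is then literally a one-line corollary of Theorem~\ref{thrm:totaro_ss} plus Theorem~\ref{thrm:main_lattices}. Your inline sketch of this identification is adequate for the even-dimensional case; just note that relation~(2) comes from the ideal $\mc J$ defining $\OS(L)$ (i.e.\ from the algebra structure), not from the differential $d_1$, which only contributes $d\tilde\Delta_{ab}=[\Delta_{ab}]$.
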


\begin{rmrk}
Assume the action of $Aut(G)$ on $F(M,G)$ is free, i.e.\ $G$ is the complete graph on $n=|V(G)|$ elements, and so $F(M,G)=F(M,n)$, the configuration space of ordered $n$-tuples of points of $M$. Then our model allows one to recover the $\Sigma_n$-equivariant rational homotopy type of $F(M,n)$, see~\cite{triantafillou}.
\end{rmrk}

\subsection{Organization of the work}
%\addcontentsline{toc}{subsection}{Organization of the work}
The work is organized as follows. In Section~\ref{sect:prelim} we recall 
basic facts about mixed Hodge complexes and 
state a result about $E_1$-formality of these, see 
Theorem \ref{thrm:mhd:E_1_qis}.
\
Then, in Section~\ref{sect:lattices} we review and/or prove a few facts about lattices which we need in order to 
treat the general case of our theorem beyond the case of the cubical lattice. 
There we introduce $(L,\leq)$-chain algebras and define 
Orlik-Solomon algebras in this context.
\
In Section~\ref{sect:aznar_mhd} we describe a functorial construction
of mixed Hodge diagram $(\mc{K}_{U/X},F,W)\in \MHD(X)$ associated with 
a variety $U$ over a smooth proper $X$
following Navarro Aznar \cite{Aznar}.

In Section~\ref{sect:construction} we construct the Mayer-Vietoris resolution 
in the topological setting. We describe the corresponding spectral sequence
in Theorem \ref{thrm:ss_for_lattices}, then
relate it to the Leray spectral sequence and to the lattice spectral sequence studied by Petersen and Tosteson \cite{Petersen}, \cite{To}. In Section~\ref{sect:compl:mvhc} we apply the $\Cech$ construction from Section~\ref{sect:cech_model} to construct a mixed Hodge diagram $\MVHD(X,\mc{C})\in \MHD(X)$, which we use to prove the main Theorem \ref{thrm:main_cubical}.  Finally, in Section~\ref{sect:applications} we prove the applications which we described above. 

Finally, we decided to move a technical lemma on the multiplicative structure of a certain homological M\"obius inversion formula into Appendix \ref{sect:tech_lemma}. 
In \ref{cup_product} the reader will find a simple result which allows, in the case of clean intersections, 
to describe our multiplicative model in terms of the arrangement cohomology of the form $H^*(Z)$ instead of $H^*_Z(X)$ via the Thom isomorphism. 
%In \ref{sect:lattice_colattice} we provide an uniform construction of the lattice and colattice spectral sequences studied independently by Petersen and Tosteson in \cite{Petersen}, \cite{To}, and by Denham-Suciu-Yuzvinsky in \cite{DSY}, respectively.
In \ref{first_example} we give two computational examples.

\subsection{Acknowlegements}
I would like to thank Dmitry Kaledin, Joana Cirici, Geoffroy Horel, Cl\'ement Dupont 
and Alexandru Suciu for their helpful comments on my thesis, the core part of which 
is presented here.
I owe a lot to stimulating discussions with Pierre Godfard, Dmitry Kubrak, Grigory Kondyrev and Grigory Papayanov.
Finally, I am especially grateful to my scientific advisors Alexey Gorinov 
and Julien March\'e
for
posing the problem, their encouragment and support, 
not to mention their
comments, remarks and criticism, without which this work would not have been
possible. 

%The study has been partially supported by the Laboratory of Algebraic Geometry and its Applications of HSE University, and Equipe Analyse Complexe et Géométrie de IMJ-PRG.

\subsection{Notation, conventions and terminology}\label{sec:notation_etc}
%\addcontentsline{toc}{subsection}{Notation, conventions and terminology}
\begin{enumerate}
        \item All complexes (of vector spaces, sheaves etc.) will be 
		assumed bounded below unless stated otherwise.
	\item $\sum\limits^{x\in S}_{f(x)}$ 
		denotes the sum 
		$\sum\limits_{\{x\in S\mid f(x)\}}$. 
		We use the same convention for direct sums etc.
	\item $S_1-S_2$ denotes the complement of $S_2\subset S_1$ in $S_1$.
        \item A {\it multi-index} is an ordered subset $I$ of 
		an ordered set $N$; 
		the order in $I$ is induced from $N$.
%????
	\item For multi-indices $I,J$ we write $IJ:=I\cup J$.
	\item If $I=\{i_1<\ldots<i_{|I|}\}$ is a multi-index in 
		an ordered set $N$, 
		then we write 
		$d\tau^I=d\tau^{i_1}\wedge\ldots \wedge d\tau^{i_{|I|}}$,
		$d\nu_I=d\nu_{i_1}\wedge\ldots \wedge d\nu_{i_{|I|}}$
		to denote the Grassmann monomials 
		of degress $|I|$ and $-|I|$ respectively.

	\item $\Bbbk\langle v_i\mid 1\leq i\in N\rangle$ is the
		free $\Bbbk$-module on the variables $v_i$.
		Similarly, for a multi-index $I\subset N$, 
		denote by $\Bbbk\cdot d\nu_I$ 
		the free $\Bbbk$-module (in degree $-|I|$) 
		generated by $d\nu_I$. 
		It is independent of the ordering of $N$.

	\item $U/X$ is the object corresponding to an arrow $U\to X$.
	\item $\Bbbk-\Sh_X$ and
  $\Bbbk-\PSh_X$ is the category of sheaves, respectively presheaves of 
		$\Bbbk$-modules on a space $X$.
        We always work with sheaves and presheaves of abelian groups 
	and we omit $\Bbbk$ from the
        notation in the case $\Bbbk=\mb{Z}$.
        
	\item The sheafification of a presheaf $F\in \PSh_X$ is 
		denoted by $\ul{F}\in \Sh_X$.
        \item We write ${(j_*j^*)}_V={j_V}_*j^*_V$
            and ${(i_*i^!)}_Z={i_Z}_* i^!_Z$ 
            for an open embedding $j_V\colon V\to X$ and a closed embedding $i_Z\colon Z\to X$.
	\item MHC (MHD) means a mixed Hodge complex (diagram).
    	\item $\MHC(X)$ (respectively $\MHD(X)$) is the category of 
		mixed Hodge complexes (respectively diagrams) of sheaves 
		over $X$.
	\item 	$\Mon(C)$ (respectively $\CMon(C)$) is 
		the category of monoids 
		(respectively commutative monoids) in a (symmetric) 
		monoidal category $C$.
		We will also work with non-unital 
		monoids denoted by $\Mon^{\circ}(C),\CMon^{\circ}(C)$.
  
        \item If $C$ is a category, then $\cs C$ denotes the 
		category of cosimplicial objects in $C$.
	
    	\item Unless stated otherwise, all filtrations are assumed to be decreasing. 
		If $W$ is a decreasing filtration on an object $K$, then its 
		$k$-th piece is denoted by 
		$W^k K=W_{-k}K$ where $W_*$ denotes the corresponding 
		increasing filtration. 
		Likewise, we agree to use this notation for the
		associated graded objects, e.g.
		$\Gr^k_W K={}_{W}\Gr^k K=\Gr^W_{-k} K=
		\Gr^k(K,W)=\Gr_{-k}(K,W)$ etc.
		
	\item All filtrations are assumed to be biregular, 
		exhaustive and separated.
	
	\item All cohomological spectral sequences 
		we consider will be induced by decreasing filtrations,
	and the terms are indexed accordingly.

	\item $\Ch(\mc{A})$ (resp. $\Ch_F(\mc{A})$) is the category of 
 cohomologically bounded
		cochain complexes 
		(respectively filtered cochain complexes)
		in an additive category $\mc{A}$.

	\item $\mc{C}$ is the cubical lattice with vertices corresponding 
		to the subsets of an ordered set $N$. 

	\item $\ChF_{F},\ChF_{W},\ChF_{F,W}$ ($\ChF$ for short) 
		denotes the category of chain 
		complexes 
		with a (bi)filtration together with a prescribed behavior of 
		the shift functor $[-]$, see Definition \ref{dfn:chf}.

	\item 	If not stated otherwise, 
		by a quasi-isomorphism in $\ChF$ we mean 
		a quasi-isomorphism \textit{compatible} with 
		filtrations. Howevewer we will also work with 
		a more natural notion of
		a \emph{filtered quasi-isomorphism}. 
		We will be explicit by separating the two notions, 
		see Remark \ref{rmrk:compatible_filtered}.
		An explicit zig-zag of quasi-isomorphisms is 
		denoted by $\zgqis$, see Definition
		\ref{dfn:zigzag}. 

	%\item $\Ho(\ChF)$ denotes the localization with 
	%	respect to quasi-isomorphisms.

	\item All topological spaces are assumed to 
		be paracompact, Hausdorff and locally contractible.
	\item $\Var\subset \Top$ denotes the subcategory 
		of topological spaces formed by 
		smooth algebraic varieties over $\mb{C}$ and regular maps.
	
\end{enumerate}

\setcounter{tocdepth}{2}
\tableofcontents
%%
%% Title page
%%
%Alexey Gorinov: Faculty of Mathematics, Higher School of Economics, 6 Usacheva ulitsa, Moscow, Russia 119048, email: \url{agorinov@hse.ru}, \url{gorinov@mccme.ru}
%\input{intro}
\newpage
\section{Preliminaries on Hodge Complexes}\label{sect:prelim}
In this section we recall basic facts on filtered complexes, 
and fix some notation. 
The main result is Theorem \ref{thrm:mhd:E_1_qis} is due to 
Cirici and Horel\cite{CH20} stating in particular that cdga $K_{\mb{Q}}$ 
arising from a mixed Hodge diagram $(K,F,W)\in \MHD$
can be modeled by ${}_{W}E_1(K_{\mb{Q}})$ in a functorial manner.

In the main Section \ref{sect:compl:mvhc} 
we will construct a mixed Hodge diagram of 
sheaves $(\MVHD(X,\mc{Q}),F,W')$ with a simple description of 
${}_{W'}E_1$-term.
The passage $\MHD(X)\ar \MHD$
is given by the
Navarro Aznar construction $\Gamma_{TW}$ 
involving the Thom-Whitney-Sullivan functor, 
which we mention in Theorem \ref{thrm:prelim:aznar}. 

In Section \ref{sect:prelim:mixed_cone} 
we introduce the category of (multi)filtered 
complexes $\ChF=\ChF(\Sh_X)$ with a prescribed behavior 
of the shift functor $[-]$. 
An object in $\ChF$ can be thought of as 
a straightforward abstraction of a 
mixed Hodge complex viewed as (bi)filtered complex.

\subsection{Filtrations}
Here we recall some general preliminaries which 
can be found in
\cite{PS},\cite{D}. 
The reader can skip this subsection and 
come back later.  

\iffalse
\begin{lemma}
	Let $A,B,C\subset E$ be a subobjects of $E$ in
	an abelian category, then 
	$$(A+B)\cap (B+C)=(A+B)\cap C +B$$
\end{lemma}
\begin{proof}
	Passing to a category of modules via Mitchells theorem, one
	can prove the equality in an element-wise manner:
	if $a+b=b'+c\in (A+B)\cap (B+C)$, then 
	$a+b=(a+b-b')+b'\in (A+B)\cap C+B$.
\end{proof}
\fi

Let $K^*$ be a complex with a filtration $W$.
Define
\begin{align}
	Z^{pq}_r&:=\ker[d\colon W^p K^{p+q}\to K^{p+q+1}/W^{p+r}K^{p+q+1}]\\
	B^{pq}_r&:=dW^{p-r+1}K^{p+q-1}+W^{p+1}K^{p+q}.
\end{align}
We note that:
$$dZ^{pq}_r\subset Z^{p+r,q-r+1}_r$$
$$dB^{pq}_r\subset B^{p+r,q-r+1}_r$$

There is a spectral sequence $E^{pq}_1={}_{W}E^{pq}_1(K^*)$
associated with filtration $W$:
\begin{gather}\label{eq:dfn:ss:dir}
	{}_{W}E^{pq}_r=\frac{Z^{pq}_r}{Z^{pq}_r\cap B^{pq}_r}\\
	\label{eq:dfn:ss:dirstar}
	\simeq \frac{Z^{pq}_r+B^{pq}_r}{B^{pq}_r}.
\end{gather}
with differential $d_r\colon E^{pq}_r\to E^{p+r,q-r+1}_r$.

One has a natural isomorphism:
\begin{equation}\label{eq:er_er}
H^*(E_r,d_r)\simeq E_{r+1}.
\end{equation}
A non trivial observation is that the realization of 
$E^{pq}_{r+1}$ as a sub-quotient of $E^{pq}_r$ 
is not consistent with either of the two given descriptions of $E_r$ in 
\eqref{eq:dfn:ss:dir},\eqref{eq:dfn:ss:dirstar} as a subquotient.
Namely, $Z_{r+1}\subset Z_{r}$ and $B_{r+1}\supset B_{r}$,
but it is in general not true that 
$Z_{r+1}\cap B_{r+1}\supset Z_{r}\cap B_r$
or $Z_{r+1}+B_{r+1}\subset Z_r+B_r$.

Let $F$ be another filtration on $K^*$.
%The two possible ways of identifying $E^{pq}_r={}_{W}E^{pq}_r(K)$
%result in two different filtrations induced by $F$ on a subquotient.
%Filtration $F$ on $K^*$ induces a filtration on a subquotient.
Set $F_{dir},F_{dir^*}$ to be the filtrations on $E^{pq}_r={}_{W}E^{pq}_r$ 
induced by $F$ which are
given by the identifications
\eqref{eq:dfn:ss:dir} and \eqref{eq:dfn:ss:dirstar} respectively.
On the other hand, since 
$E_{r+1}=\ker[E_r\os{d_r}{\longrightarrow} E_r ]/\im[E_r\os{d_r}{\longrightarrow}E_r]$
one can define a filtration $F_{ind}$ on each $E_r$ inductively using \ref{eq:er_er} 
starting from $F$ on $E_0$.
This defines three natural filtrations on $E^{pq}_r={}_{W}E^{pq}_r$ 
with respect to $d_r$.

\iffalse
\begin{lemma} 
	\begin{align} 
		\label{eq:fdir}
		F_{dir}^i E_r K^*&=\im[E_r F^i\to E_r K^*]\\
		\label{eq:fdirstar}
		F^i_{dir^*} E_r K^*&=\ker[E_r K^*\to E_r K^*/F^i]
	\end{align}
\end{lemma}
\begin{proof}
\begin{enumerate}
\item \eqref{eq:fdir}
	By definition 
	$$F_{dir}^i E_r K^*=\im[F^i\cap Z^{pq}_r\longrightarrow E^{pq}_r=
	\frac{Z^{pq}_r}{Z^{pq}_r\cap B^{pq}_r}]$$
 	On the other hand 
	$$\im[E_r F^i\to E_r]=\im[Z^{pq}_r(F^i)\to E^{pq}_r
	=\frac{Z^{pq}_r}{Z^{pq}_r\cap B^{pq}_r}]$$
	but $$Z^{pq}_r(F^i)=
	\ker(F^i\cap W^p\to K^*/W^{p+r})=Z^{pq}_r\cap F^i$$

\item \eqref{eq:fdirstar}
	We have:
	$$F^i_{dir^*}E_r=\im[F^i\cap (Z_r+B_r)\to E_r=\frac{Z_r+B_r}{B_r}]$$
	On the other hand:
	$$\ker[E_r K^*\to E_r K^*/F^i]=
	\im\left [\ker
		\left(Z_r+B_r\to \frac{K^*/F^i}{B_r(K^*/F^i)}\right)
		\longrightarrow E_r\right]=$$
	$$=\im[(Z_r+B_r)\cap (B_r+F^i)\to E_r]=\im[(Z_r+B_r)\cap F^i+B_r\to E_r]=$$
	$$=F^i_{dir^*}E_r$$
\end{enumerate}
\end{proof}
\fi
\begin{lemma}[
{\cite[Proposition 1.3.13]{D}}, {\cite[Lemma 3.11]{PS}}]
	For the filtrations introduced above the following holds:
	\begin{enumerate}
		\item $F_{dir}\subset F_{ind}\subset F_{dir^*}$.
		\item On $E_0,E_1$ all three filtrations coincide.
	\end{enumerate}
\end{lemma}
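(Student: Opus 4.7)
The plan is to establish both statements using the functoriality of the spectral sequence with respect to morphisms of filtered complexes, specifically the inclusion $F^i \hookrightarrow K$ and the projection $K \twoheadrightarrow K/F^i$. The preliminary step is to observe (directly from the definitions of $Z_r, B_r$, together with the modular law, essentially as in the commented-out lemma) the identifications
\begin{align*}
F_{dir}^i E_r &= \im\bigl[\,E_r F^i \longrightarrow E_r K\,\bigr], \\
F_{dir^*}^i E_r &= \ker\bigl[\,E_r K \longrightarrow E_r(K/F^i)\,\bigr].
\end{align*}

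With these in hand, part (1) reduces to an induction on $r$. The base case $r=0$ is trivial because on $E_0^{pq} = W^p K^{p+q}/W^{p+1}K^{p+q}$ all three filtrations visibly coincide with $(F^i \cap W^p + W^{p+1})/W^{p+1}$. For the inductive step, I would handle the two inclusions separately. For $F_{dir}^i E_{r+1} \subseteq F_{ind}^i E_{r+1}$, take a class in the image of $E_{r+1} F^i \to E_{r+1} K$, represented by some $\gamma \in E_r F^i \cap \ker d_r$; its image in $E_r K$ lies in $F_{dir}^i E_r \subseteq F_{ind}^i E_r$ by the inductive hypothesis and is killed by $d_r$, so its cohomology class lies in $F_{ind}^i E_{r+1}$ by definition of the latter. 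For $F_{ind}^i E_{r+1} \subseteq F_{dir^*}^i E_{r+1}$, take $[\alpha] \in F_{ind}^i E_{r+1}$ represented by some $\beta \in F_{ind}^i E_r \cap \ker d_r$; by the inductive hypothesis $\beta$ maps to $0$ in $E_r(K/F^i)$, and hence its cohomology class maps to $0$ in $E_{r+1}(K/F^i)$, which is exactly the claim.

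For part (2), the case $r=0$ is already covered by the base case above, and for $r=1$ it suffices by (1) to establish the reverse inclusion $F_{dir^*}^i E_1 \subseteq F_{dir}^i E_1$. For this I would use the short exact sequence
$$0 \longrightarrow \Gr_W^\bullet F^i \longrightarrow \Gr_W^\bullet K \longrightarrow \Gr_W^\bullet (K/F^i) \longrightarrow 0,$$
which, when checked on each graded piece, reduces to the elementary identification $W^p/(F^i \cap W^p + W^{p+1}) \simeq \Gr_W^p(K/F^i)$ and hence holds in full generality without any strictness hypothesis. Passing to cohomology with respect to $d_0$ gives a long exact sequence, and exactness at $E_1 K$ identifies $\im\bigl[E_1 F^i \to E_1 K\bigr]$ with $\ker\bigl[E_1 K \to E_1(K/F^i)\bigr]$, i.e.\ $F_{dir}^i E_1 = F_{dir^*}^i E_1$, squeezing $F_{ind}^i E_1$ in between.

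The main subtlety is the preliminary identification of $F_{dir^*}$ as the kernel of the functoriality map; this is a short but slightly delicate computation with the modular law (and the analogous identification for $F_{dir}$ as an image). Once these functorial descriptions are available, the rest of the argument is essentially formal and splits cleanly into an induction for (1) and an appeal to the long exact sequence for (2); no fine manipulation of cocycle representatives is required.
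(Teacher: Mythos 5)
Your proposal is essentially correct and follows the same overall architecture as the proof sketched in the source: first establish the functorial descriptions $F_{dir}^i E_r = \im[E_r F^i \to E_r K]$ and $F_{dir^*}^i E_r = \ker[E_r K \to E_r(K/F^i)]$ (which, as you note, needs the modular law), then prove part (1) by induction on $r$, bouncing the inductive hypothesis through these descriptions; your two inductive steps are sound. The one place where you take a genuinely different route is part (2) at $r=1$. The paper's argument is simply the observation that $B_1\subset Z_1$: indeed $B_1^{pq}=dW^p+W^{p+1}$ lies in $Z_1^{pq}$ because both $dW^p$ and $W^{p+1}$ are carried by $d$ into $W^{p+1}$; hence $Z_1\cap B_1=B_1$ and $Z_1+B_1=Z_1$, so the two descriptions of $E_1$ as a subquotient of $K$, namely $Z_1/(Z_1\cap B_1)$ and $(Z_1+B_1)/B_1$, coincide on the nose and therefore so do the filtrations $F_{dir}$ and $F_{dir^*}$ they induce. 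You instead use exactness of $0\to\Gr^\bullet_W F^i\to\Gr^\bullet_W K\to\Gr^\bullet_W(K/F^i)\to 0$ and the resulting long exact sequence of $d_0$-cohomology, whose exactness at $E_1 K$ identifies $F_{dir}^i E_1$ with $F_{dir^*}^i E_1$. Both are correct: the $B_1\subset Z_1$ observation is shorter and makes transparent why the statement must fail past $r=1$ (for $r\ge 2$ one has $B_r\not\subset Z_r$ in general, since $dW^{p-r+1}$ need not lie in $W^p$), while your long-exact-sequence argument is more conceptual and avoids handling the $Z_r,B_r$ bookkeeping directly.
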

\iffalse
\begin{proof}
Note that by definition the filtrations are equal
	on $E_0K^*={}_{W}\Gr\ K^*$.
	Further, since $B_1\subset Z_1$, one has 
	$F_{dir}^i E_1=F_{dir^*}^i E_1$.
\begin{itemize}
\item {$F_{dir}\subset F_{ind}$}

	We will proceed by induction. Assume
	$F_{dir}^i E_r\subset G^i E_r$, for a filtration $G$ on $E_r$.
	By the previous $$F_{dir}^i E_{r+1}=
	\im[\ker(E_r F^i\os{d_r}{\to} E_r)\to E_{r+1}]$$
	and by definition 
	$$G^i E_{r+1}=\im[\ker(G^i E_r\os{d_r}\to E_r)\to E_{r+1}]$$
	then, since $\im[E_r F^i\to E_r]\subset G^i E_r$, one gets:
	$$F_{dir}^i E_{r+1}\subset G^i E_{r+1}$$
	Putting $G=F_{ind}$ proves the claim.

\item {$F_{ind}\subset F_{dir^*}$}

	Assume $G^i E_r\subset F_{dir^*}E_r$.
	Then:
	$$G^i E_{r+1}=\im[\ker[G^i E_r\os{d_r}{\to} E_r]\to E_{r+1}]$$
	$$F_{dir^*}^iE_{r+1}=\ker[E_{r+1} K^*\to E_{r+1} K^*/F^i]$$
	Since $E_{r+1}$ is a homology of $E_r$ and 
	$E_r K^*\to E_{r}K^*/F^i$ is a morphism of complexes,
	from
	$$G^i E_r\subset F_{dir^*}^iE_r = \ker[E_r K^*\to E_r K^*/F^i]$$
	one immediatly 
	have:
	$$G^i E_{r+1}\subset F_{dir*}^i E_{r+1}$$
\end{itemize}
\end{proof}
\fi
\begin{thrm}[on three filtrations; {\cite[Theorem  1.3.16]{D}},
	{\cite[Theorem 3.12]{PS}}]
	\label{lemma:three_filtrations}
	Assume the differential $d_{r}$ 
	is $F_{ind}$-strict
 for all $r=0,\ldots, r_0$,
	then for all $r'\in 0,\ldots,r_0+1$ we have:
	\begin{enumerate}
		\item $F_{dir}=F_{ind}=F_{dir^*}$ on $E_{r'}$.
		\item The sequence 
			$0\to {}_{W}E_{r'}(F^i K^*)\to {}_{W}E_{r'}(K^*)\to 
				{}_{W}E_{r'}(K^*/F^i K^*) \to 0$
			is exact for all $i$.
		\item For all $i$ we have ${}_{F}\Gr^i {}_{W}E_{r'}(K^*)\simeq 
			{}_{W}E_{r'}\ {}_{F}\Gr^i (K^*)$.
	\end{enumerate}
\end{thrm}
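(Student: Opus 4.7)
The plan is to prove (1)--(3) simultaneously by induction on $r'$, using the preceding lemma's inclusions $F_{dir}\subset F_{ind}\subset F_{dir^*}$ together with the classical strict-morphism lemma: if $d$ is a strict endomorphism of an $F$-filtered complex $(A,F)$, then $H^*$ sends $F^iA\hookrightarrow A$ to an injection, commutes with $F$-quotients, and commutes with $\Gr_F$ (see \cite{D}).

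For the base case $r'=0$, the three filtrations coincide on $E_0=\Gr_W K^*$ by the preceding lemma. For (2), the short exact sequence $0\to F^i\to K^*\to K^*/F^i\to 0$, passed to $\Gr_W^p$, becomes the elementary exact sequence
\[
0\to \frac{W^p\cap F^i}{W^{p+1}\cap F^i}\to \frac{W^p}{W^{p+1}}\to \frac{W^p+F^i}{W^{p+1}+F^i}\to 0,
\]
and for (3) both $\Gr^i_F\Gr^p_W K^*$ and $\Gr^p_W\Gr^i_F K^*$ are canonically identified with the bigraded piece $(W^p\cap F^i)/((W^{p+1}\cap F^i)+(W^p\cap F^{i+1}))$.

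For the inductive step, assume (1)--(3) hold on $E_r$ with $r\leq r_0$, so that $F:=F_{dir}=F_{ind}=F_{dir^*}$ is a well-defined filtration on $E_rK^*$ and by the hypothesis $d_r$ is $F$-strict. Applying the strict-morphism lemma to the endomorphism $d_r$ of $(E_rK^*,F)$, and using (2)$_r$+(3)$_r$ to identify $H^*(F^iE_rK^*,d_r)\simeq E_{r+1}(F^iK^*)$ and $H^*(E_rK^*/F^i,d_r)\simeq E_{r+1}(K^*/F^i)$, we obtain (2)$_{r+1}$ and (3)$_{r+1}$. For (1)$_{r+1}$, the exactness in (2)$_{r+1}$ identifies the image description $F_{dir}^iE_{r+1}$ with the kernel description $F_{dir^*}^iE_{r+1}$, and the universal sandwich $F_{dir}\subset F_{ind}\subset F_{dir^*}$ then forces all three filtrations to coincide.

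The main obstacle is the bookkeeping of filtrations: the hypothesis gives $F_{ind}$-strictness of $d_r$, while to invoke the classical lemma one must know that $F^\bullet$ on $E_rK^*$ is an honest filtration whose $F^i$-pieces coincide with the $E_r$-terms of the subcomplexes $F^iK^*\subset K^*$. This is precisely what (1)--(3) at stage $r$ deliver, so the induction is self-consistent; the whole argument hinges on folding (1)$_{r+1}$ back into the loop via (2)$_{r+1}$, which is where the three-filtrations agreement propagates from one stage to the next.
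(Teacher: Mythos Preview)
Your proposal is correct and follows essentially the same approach as the paper's (commented-out) proof: induction on $r'$, using the strictness of $d_r$ at each stage to propagate properties (1)--(3) to the next page, with the sandwich $F_{dir}\subset F_{ind}\subset F_{dir^*}$ from the preceding lemma closing the loop. The only cosmetic difference is packaging: you invoke a ``strict-morphism lemma'' as a black box to obtain (2)$_{r+1}$ and (3)$_{r+1}$ first and then deduce (1)$_{r+1}$, whereas the paper's argument unfolds the long exact sequence explicitly, establishing $F_{dir}=F_{ind}$ (via injectivity of $E_{r+1}F^i\to E_{r+1}K^*$) and $F_{ind}=F_{dir^*}$ (via surjectivity of $E_{r+1}K^*\to E_{r+1}(K^*/F^i)$) in tandem with the exactness.
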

\iffalse
\begin{proof}
	We will proceed by induction.
	Assume for a given $r$ the above sequence is exact and 
	all three filtrations coincide.
	Let us prove that $E_{r+1}F^i\to E_{r+1}K^*$ is an inclusion
	with the image equal to $F_{ind}^i E_{r+1}$.
	Consider:
	$$\frac{\ker[F^i E_r\os{d_r}{\to}F^i E_r]}{\im[d_r]}=
	\frac{\ker[E_r F^i\os{d_r}{\to} E_r F^i]}{\im[d_r]}\longrightarrow 
	\frac{\ker[E_r\os{d_r}{\to} E_r]}{\im[d_r]}=E_{r+1}$$
	The first equality holds under induction hypothesis, hence
	the second map is an inclusion since $d_r$ is $F$-strict. Its image
	in $E_{r+1}$ is $\im[\ker[F^i_{ind}E_r\to E_r]\to E_{r+1}]$, which is
	by definition $F^i_{ind}E_{r+1}$. Thus
	$E_{r+1}F^i\to E_{r+1}K^*$ is an inclusion with image 
	$F^i_{ind}E_{r+1}$, hence $F^{i}_{dir}=F^i_{ind}$ on $E_{r+1}$ as required.
	To show surjectivity of $E_{r+1}\to E_{r+1}K^*/F^i$ note that
	$$0\to F^i E_r \to E_r K^*\to E_r K^*/F^i=E_r K^*/F^iE_r K^*\to 0$$
	is exact, then taking cohomology w.r.t $d_r$ produces a long exact 
	sequence, hence
	$$F^i_{ind} E_{r+1}\to \ker[E_{r+1}\to E_{r+1}K^*/F^i]$$
	is onto, thus $F^i_{ind}=F^i_{dir^*}$ on $E_{r+1}$. 
	Since $F^i E_{r+1}\to E_{r+1}$ is into, the same long exact 
	sequence implies also that
	$$E_{r+1}\to E_{r+1}K^*/F^i$$ is onto.

	Finally, since $E_r F^{i+1}\to E_r F^{i}$ is equivalent to
	$F^{i+1}E_r\hookrightarrow F^i E_r$, one gets
	a natural isomorphism:
	$${}_{F}\Gr^i E_r K^*= E_r\ {}_{F}\Gr^i K^*$$
\end{proof}
\fi
\begin{cor}
	If $d_r$ is $F_{ind}$-strict for all $r\geq 0$, then
	${}_{F}E_1={}_{F}E_\infty$
\end{cor}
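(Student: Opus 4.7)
The plan is to feed the preceding Theorem on three filtrations back into itself with $r_0$ taken arbitrarily large, so that its conclusions hold for every finite $r'$; since all complexes in this paper are bounded below, the $W$-spectral sequence stabilizes and the same conclusions extend to $r'=\infty$. Applying part (3) of the theorem at $r'=\infty$ supplies the natural identification
$${}_{F}\Gr^i\, {}_{W}E^{pq}_\infty K^* \;\simeq\; {}_{W}E^{pq}_\infty\, {}_{F}\Gr^i K^*,$$
while part (2) gives strictness of the filtration induced by $F$ on each ${}_{W}E^{pq}_\infty K^*$.

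The remainder of the argument is a dimension count in each total degree $n=p+q$. Using strictness of $F$ on ${}_{W}E_\infty K^*$ together with the standard identification ${}_{W}E^{pq}_\infty K^* = \Gr^p_W H^{p+q}(K^*)$, summation over $i$ and over $p+q=n$ of the left-hand side of the identification above yields $\dim H^n(K^*)$. Summation over $i$ and $p+q=n$ of the right-hand side gives $\sum_i \dim H^n({}_{F}\Gr^i K^*) = \sum_i \dim {}_{F}E^{i,n-i}_1 K^*$, since by definition ${}_{F}E^{i,n-i}_1 K^* = H^n({}_{F}\Gr^i K^*)$ and the $W$-spectral sequence of ${}_{F}\Gr^i K^*$ converges to its cohomology. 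Combining these two computations of $\dim H^n(K^*)$ we obtain
$$\sum_i \dim {}_{F}E^{i,n-i}_1 K^* \;=\; \dim H^n(K^*) \;=\; \sum_i \dim {}_{F}E^{i,n-i}_\infty K^*.$$

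Each ${}_{F}E^{pq}_{r+1}$ is a subquotient of ${}_{F}E^{pq}_r$, so the dimensions are non-increasing in $r$ bidegree-by-bidegree; equality of totals in each $n$ then forces termwise equality ${}_{F}E^{pq}_1 = {}_{F}E^{pq}_\infty$ in every bidegree.

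The only real obstacle is bookkeeping with indices and the order of summation over $i$ versus $p$. If one wants an argument valid beyond the vector-space setting (where literal dimensions make sense), the counting step can be replaced by a direct inductive comparison via the short exact sequences $0\to {}_{W}E_{r'}F^i \to {}_{W}E_{r'}K^* \to {}_{W}E_{r'}(K^*/F^i) \to 0$ supplied by part (2) of the theorem, but the skeleton of the proof is identical.
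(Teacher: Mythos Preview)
Your argument is correct and follows essentially the same route as the paper's proof: both invoke part~(3) of the three-filtrations theorem at $r=\infty$ to obtain ${}_F\Gr\,{}_W E_\infty K^*\simeq {}_W E_\infty\,{}_F\Gr\,K^*$, then observe that the left side has total dimension $\dim H^*(K^*)$ while the right side has total dimension $\dim {}_F E_1 K^*$, and conclude by the subquotient inequality. Your write-up is simply more explicit about the bookkeeping than the paper's one-line chain of equalities.
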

\begin{proof}
	By the previous ${}_{F}\Gr^i\ {}_{W}E_r(K^*)=
		{}_{W}E_r\ {}_{F}\Gr^i(K^*)$. 
	Putting $r=\infty$ one has:

	$${}_{F}\Gr\  {}_{W}\Gr\ H^*(K^*)={}_{F}\Gr\ {}_{W}E_{\infty}(K^*)=
	{}_{W}E_\infty\ {}_{F}\Gr(K^*)={}_{W}\Gr\ {}_{F}E_1(K^*),$$
	hence by dimension reasons 
	${}_{F}E_\infty(K^*)={}_{F}E_1(K^*)$.
\end{proof}

\subsection{Mixed Hodge Complexes and Hodge Diagrams}
\label{sect:prelim:mhd}
In this section we recall the notions of mixed Hodge ($\mb{Q}$-)complexes, 
mixed Hodge complexes of sheaves and Hodge diagrams.

\begin{dfn}
        A \textit{zig-zag} of length $s$ is a diagram $I_s$ of the form:
        $$1 \longrightarrow 2 \longleftarrow 3 \longrightarrow\ldots 
                \longleftarrow s$$
\end{dfn}
Fix a sufficiently large $s$ and the corresponding zig-zag 
$I=I_s$ once and for all.
Consider for a moment a category $U$
with a chosen class of morphisms called weak equivalences.
The main example of $U$ will be the category of filtered complexes in
an abelian category with the class of quasi-isomorphisms compatible 
with filtration.

\begin{dfn}\label{dfn:zigzag}
        A \textit{zig-zag quasi-isomorphism} $\phi\colon A\zgqis B$ between
        $A,B\in U$
        is
        a functor $\Phi\in \Fun(I,U)$ where $\Phi(1)=A,\Phi(s)=B$
        and for all $i,j\in I$ and $[i\to j]\in \Hom_I(i,j)$,
        $\Phi([i\to j])$ is a weak equivalence in $U$.
\end{dfn}

In the sequel we will often omit the word ``zig-zag'' and refer to what 
we have just defined simply as quasi-isomorphisms.

\begin{dfn}[
{\cite[Definition 2.32]{PS}}]
	A data 
	$$\left(C_{\mb{Q}}^*,(C_{\mb{C}}^*,F),
	\phi\colon C_{\mb{Q}}\bigotimes\limits_{\mb{Q}}\mb{C}
	\zgqis 
		C_{\mb{C}}\right)$$ 
	where 
	\begin{itemize}
		\item $C_{\bbk}^*$ is a complex of modules over
			$\bbk=\mb{Q},\mb{C}$,
		\item
			$\phi$ is a quasi-isomorphism. 
	\end{itemize}
	is called a \textit{pure Hodge 
	complex}  of weight $m$
	if
	\begin{enumerate}
		\item ${}_{F}E_1 C_{\mb{C}}^*=
			{}_{F}E_{\infty} C_{\mb{C}}^*$, and
		\item $(H^n(C^*),F)$ is a 
			pure Hodge \textit{structure} of weight $n+m$. (This
			condition is phrased by saying that
			$F$ is \emph{$(n+m)$-conjugate}.)
	\end{enumerate}
	The data will be denoted simply by $(C^*,F)$.
\end{dfn}

If $(C^*,F)$ is a Hodge complex of weight $m$, 
then by the last condition there is a canonical identification:
$${}_{F}E^{pq}_1=F^p H^{p+q}\cap \bar{F}^{q+m}H^{p+q}$$
where $\bar{F}$ is the complex conjugate filtration to $F$.

\begin{dfn}[{\cite[Definition 3.13]{PS}}]
	A data 
	$$\left((K_{\mb{Q}}^*,W_{\mb{Q}}),
		(K_{\mb{C}}^*,W_{\mb{C}},F),
	\phi\colon (K_{\mb{Q}}\bigotimes\limits_{\mb{Q}}\mb{C},
	W_{\mb{Q}}\bigotimes\limits_{\mb{Q}} \mb{C})
	\zgqis
	(K_{\mb{C}},W_{\mb{C}})\right)$$
	where
	\begin{itemize}
		\item $K_{\bbk}$ is a complex of modules over 
			$\bbk=\mb{Q},\mb{C}$,
		\item $\phi$ is a 
			filtred quasi-isomorphism,
	\end{itemize}
	is called \textit{a mixed Hodge complex} (\textit{MHC} for short) if
	$\Gr^W_p K^*=\Gr^{-p}_W K^*$ is a pure Hodge complex of weight $p$.
	The data will be denoted by $(K^*,W,F)$.
\end{dfn}
Note that in the original definition given by Deligne the data defining 
a mixed Hodge complex lands in various derived categories
from the beginning. Following \cite{PS}, we consider mixed Hodge complexes
concretely, for example the quasi-isomorphism $\phi$ is a concrete 
zig-zag of actual maps. 
This makes the notion of a \textit{morphism} between mixed Hodge complexes
clear, thus we define the category of mixed Hodge complexes $\MHC$.

\iffalse
\begin{cor}
	Assume that $(K^*,F,W)$ is a mixed Hodge complex, 
	then
	\begin{enumerate}
		\item ${}_{F}E_1 K^*={}_{F}E_\infty K^*$,
		\item ${}_{W}E_2 K^* = {}_{W}E_\infty K^*$.
	\end{enumerate}
\end{cor}
\begin{proof}
	First we note that the filtration $F$ induces on 
	${}_{W}E^{pq}_1 K^*=H^{p+q}{}_{W}\Gr^p K^*$
	a pure Hodge structure of weight $(p+q)-p=q$.
	Since $d_r$ is compatible with filtration $F$ all terms 
	${}_{W}E^{pq}_r$ are mixed Hodge structures and $d_r$ is a morphism
	of Hodge structures. The purity implies that $d_{>1}=0$, hence
	${}_{W}E_2 K^*={}_{W}E_\infty K^*$.

	According to the above corollary, in order to show that 
	${}_{F}E_1={}_{F}E_\infty$ it is enough to prove that $d_r$ are
	$F_{ind}$-strict for all $r\geq 0$. By the previous the only 
	non-trivial differentials are $d_0,d_1$. 
	Since ${}_{W}\Gr\ K^*$ is a pure Hodge complex, $d_0$ is $F$-strict.
	As we seen $d_1$ is a morphism of Hodge structures, thus it is
	automatically $F$-strict on $E_1$.
\end{proof}
\fi

\begin{dfn}\label{dfn:decalage}
The \textit{d\'ecalage} $\Dec\ W$ of a filtration $W$ on a complex $K^*$ is the filtration given by:
$$(\Dec\ W)^{k} K^n:=\{x\in W^{k+n}K^n\mid dx\in W^{k+n+1}K^{n+1}\}$$
\end{dfn}

We denote a mixed Hodge \textit{structure} on a $\mb{Q}$-vector space $H$ 
by $(H,F,W)$, where $W$ is the \textit{weight} filtration on $V$ and
$F$ is the \textit{Hodge} filtration on $H\otimes \mb{C}$.

Giving an MHC $(K,F,W)$, passing to the cohomology we obtain
a filtered isomorphism 
$$(H^*(K_{\mb{C}}),H^*(W_{\mb{C}}))\simeq 
(H^*(K_{\mb{Q}})\otimes \mb{C},H^*(W_\mb{Q})\otimes \mb{C}),$$
and a bifiltered 
vector space $(H^*(K_{\mb{C}}),H^*(F),H^*(W_{\mb{C}}))$.
This provides a main source of mixed Hodge structures:
\begin{thrm}[Deligne; {\cite[Theorem 3.18]{PS}}]
	The cohomology of an MHC 
	$(K,F,W)$ 
	gives one an MHS 
	$(H^*(K_{\mb{Q}}),H^*(F),\Dec H^*(W))$.
\end{thrm}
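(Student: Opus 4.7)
My plan is to build the mixed Hodge structure on cohomology step by step, extracting it from the two spectral sequences associated with $W$ and $F$, and then identify the $W$-filtration after décalage as the correct weight filtration on $H^*(K)$.

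First I would analyze the $W$-spectral sequence on $K_{\mb{C}}$. Since $(K,F,W)$ is an MHC, each graded piece $\Gr^p_W K$ is a pure Hodge complex of weight $-p$ (in the convention of the paper, $\Gr^W_p K = \Gr^{-p}_W K$ is pure of weight $p$). Therefore the induced $F$-filtration on ${}_{W}E_1^{p,q}(K_{\mb{C}}) = H^{p+q}(\Gr^p_W K_{\mb{C}})$ gives it a pure Hodge structure of weight $q$. Using the compatibility of the filtered quasi-isomorphism $\phi$, we may transport the rational structure to obtain an MHS on $E_1^{p,q}(K,W)$ of weight $q$.

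Next I would show the spectral sequence degenerates at $E_2$ as a sequence of Hodge structures. The differential $d_1\colon E_1^{p,q}\to E_1^{p+1,q}$ is a morphism between Hodge structures of the same weight $q$, hence $F$-strict; by strictness and standard diagram chasing its cohomology $E_2$ again inherits a pure Hodge structure of weight $q$ on each $E_2^{p,q}$. For $r\geq 2$ the differential $d_r\colon E_r^{p,q}\to E_r^{p+r,q-r+1}$ would be a morphism between pure Hodge structures of different weights $q$ and $q-r+1$, hence is zero. Thus $E_2=E_\infty$, and each $E_\infty^{p,q}$ carries a pure Hodge structure of weight $q$. Along the way I would apply Theorem \ref{lemma:three_filtrations} to guarantee that the three natural $F$-filtrations on the $E_r$ coincide and that the exactness of $0\to E_r F^i\to E_r\to E_r/F^i\to 0$ propagates, so that the induced $F$-filtration on cohomology is well-behaved.

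The remaining step is to identify the weight filtration on $H^*(K_{\mb{Q}})$. The standard identity for the décalage is $\Gr^p_{\Dec W} H^n(K) \simeq E_1^{n-p,p}(K,W) = E_\infty^{n-p,p}(K,W)$, which follows by unwinding Definition \ref{dfn:decalage} and using that the $W$-spectral sequence degenerates at $E_2$. Consequently $\Gr^p_{\Dec W} H^n(K_{\mb{Q}})$ inherits, via this isomorphism and the Hodge structure on $E_\infty^{n-p,p}$, a pure Hodge structure of weight $p$. This is precisely the condition for $(H^n(K_{\mb{Q}}), H^n(F), \Dec H^n(W))$ to be a mixed Hodge structure.

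The main obstacle is the careful bookkeeping in the second step: checking that the $F$-filtration passes cleanly through the $W$-spectral sequence, so that the Hodge structures on the $E_r^{p,q}$ are genuine (not merely formal) and the differentials truly are morphisms of Hodge structures. This is exactly the content of Theorem \ref{lemma:three_filtrations}, whose hypothesis ($F_{ind}$-strictness of $d_r$) must be verified inductively using purity at each stage. Once that technical point is handled, the weight argument killing $d_{\geq 2}$ and the décalage identification proceed formally.
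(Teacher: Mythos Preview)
The paper does not give its own proof of this theorem; it is simply cited from Peters--Steenbrink (following Deligne). Your outline is exactly the standard argument found there: pure Hodge structures of weight $q$ on ${}_{W}E_1^{pq}$, strictness of $d_1$ via the three-filtrations lemma, vanishing of $d_{r\ge 2}$ by weight reasons, and the d\'ecalage identification on cohomology. So your approach is correct and agrees with the cited source.

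One small slip worth fixing: in your identification you write $\Gr^p_{\Dec W} H^n(K) \simeq E_1^{n-p,p}(K,W) = E_\infty^{n-p,p}(K,W)$, but the correct page is $E_2$, not $E_1$. You yourself establish degeneration at $E_2$, not at $E_1$; the associated graded of $\Dec W$ on $H^n$ matches ${}_{W}E_2^{n-p,p} = {}_{W}E_\infty^{n-p,p}$, which is the $d_1$-cohomology of $E_1$, not $E_1$ itself. This is just an indexing typo and does not affect the argument.
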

\begin{rmrk}\label{rmrk:decalage}
Thus for the induced \textit{mixed Hodge structure}, the
weight filtration on $H^n:=H^n(K)$ is given by the d\'ecalage of the weight filtration on the
mixed Hodge complex, i.e.\ 
$(\Dec\ W)_k H^{n}=W_{k-n} H^n$, and thus the 
weight $k$ piece of the mixed Hodge structure on $H^n$
is equal to:
$$\Gr^{\Dec\ W}_{k}H^n=\Gr^{-k}_{\Dec\ W}H^{n}=
\Gr^{n-k}_W H^{n}.$$
For example, if the Hodge complex $(K,F,W)$ is pure of weight $0$, 
then $H^n$ is a pure Hodge structure of weight $n$.
\end{rmrk}

Recall the following basic fact about MHS's:
\begin{prop}[Deligne splitting;{\cite[Lemma-Definition 3.4]{PS}}]
	\label{prop:deligne_splitting}
	An MHS $(V,F,W)$ admits a canonical 
	decomposition $V=\bigoplus V^{p,q}$ defined in terms of $F,W$, 
	such that 
	$\bigoplus_{p+q=k}V^{p,q}\to \Gr^{k}_W V$ is an isomorphism onto the
	Hodge components of the associated pure Hodge structure.
\end{prop}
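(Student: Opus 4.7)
The plan is to give an explicit formula for the subspaces $V^{p,q}\subset V_{\mb{C}}$ in terms of $F,W,\overline{F}$ and then verify the stated properties by induction on the length of the weight filtration. Using the paper's decreasing-filtration convention (so $W^k=W_{-k}$, and the weight $k$ piece of $V$ corresponds to $W_k/W_{k-1}$), I would define
\[
V^{p,q}\;:=\;\bigl(F^p\cap W_{-(p+q)}\bigr)\cap \Bigl(\overline{F}^{\,q}\cap W_{-(p+q)}\;+\;\sum_{i\ge 2}\overline{F}^{\,q-i+1}\cap W_{-(p+q-i)}\Bigr).
\]
The correction sum on the right is precisely what compensates for the failure of $\overline{F}$ to be $(p+q)$-conjugate to $F$ on $V$ itself, while remaining conjugate on each graded piece. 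Once the formula is on the table, the properties to check are: (a) $V^{p,q}\subset W_{-(p+q)}$ and the composition $V^{p,q}\hookrightarrow W_{-(p+q)}\twoheadrightarrow \Gr^{p+q}_W V$ lands in the Hodge component $H^{p,q}(\Gr^{p+q}_W V)$; (b) this composition is an isomorphism onto that component; (c) $V_{\mb{C}}=\bigoplus_{p,q} V^{p,q}$; (d) the formula is natural in morphisms of MHS, so the splitting is canonical.

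For (a) and (b), I would work modulo $W_{-(p+q)-1}$: in the graded piece $\Gr^{p+q}_W V$ the correction terms $\overline{F}^{\,q-i+1}\cap W_{-(p+q-i)}$ vanish for $i\ge 2$, so the image of $V^{p,q}$ is exactly $F^p\Gr^{p+q}_W V \cap \overline{F}^{\,q}\Gr^{p+q}_W V=H^{p,q}(\Gr^{p+q}_W V)$, using that the induced Hodge structure on $\Gr^{p+q}_W V$ is pure of weight $p+q$. To upgrade surjectivity from the graded level to a genuine isomorphism, one lifts a class of bidegree $(p,q)$ in $\Gr^{p+q}_W V$ step by step through the filtration, at each step adjusting by an element of the correction sum so as to stay in $F^p\cap W_{-(p+q)}$; the correction sum is engineered exactly so these adjustments exist.

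For the direct sum decomposition (c), I would induct on the weight length of $V$. When $V$ is pure of weight $k$ the decomposition reduces to the classical Hodge decomposition $V=\bigoplus_{p+q=k}H^{p,q}$, and the formula collapses to $V^{p,q}=F^p\cap\overline{F}^{\,q}$. For the inductive step, let $k_0$ be the smallest weight with $W_{-k_0}V\ne 0$; by (a)+(b) applied to the pure subquotient $\Gr^{k_0}_W V=W_{-k_0}V$ one obtains $W_{-k_0}V=\bigoplus_{p+q=k_0}V^{p,q}$, and $(V/W_{-k_0}V,F,W)$ is an MHS of shorter length. Checking that $V^{p,q}$ surjects onto $(V/W_{-k_0})^{p,q}$ and has kernel contained in $W_{-k_0}\cap V^{p,q}$ then pieces the decomposition back together; this is the main technical obstacle, and the place where the correction terms in the definition of $V^{p,q}$ really earn their keep.

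Finally, canonicity is essentially free: the defining formula for $V^{p,q}$ uses only $F$, $W$ and complex conjugation (coming from the $\mb{Q}$-structure), so any morphism $f\colon(V,F,W)\to(V',F',W')$ of MHS takes $V^{p,q}$ into $V'^{p,q}$, and the identification with $H^{p,q}(\Gr^{p+q}_W V)$ proved in (a)+(b) is natural in $f$. This gives the proposition as stated.
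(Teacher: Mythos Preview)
The paper does not give its own proof of this proposition: it is stated as a citation of \cite[Lemma-Definition 3.4]{PS} and used as a black box thereafter. Your sketch is essentially the standard Deligne argument recorded in that reference, so in spirit you are reproducing exactly what the paper defers to.

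One small index slip: with the increasing convention you set up (weight $k$ piece is $W_k/W_{k-1}$), the defining formula should read $V^{p,q}=(F^p\cap W_{p+q})\cap\bigl(\overline{F}^{\,q}\cap W_{p+q}+\sum_{i\ge 2}\overline{F}^{\,q-i+1}\cap W_{p+q-i}\bigr)$, not $W_{-(p+q)}$. This is cosmetic and does not affect the structure of your argument.
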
	
This decomposition provides a canonical isomorphism of vector spaces
	$\Gr^*_W V_{\mb{C}}\simeq V_{\mb{C}}$.
It follows that a morphism between MHS's is necessary \emph{strict} 
with respect to filtrations $F,W$.

In particular a morphism between two pure Hodge structures 
of different weights is necessary zero.
The basic corollary of this fact is the following
\begin{prop}[{\cite[Theorem 3.18]{PS}}]
The Hodge filtration $F$ on
$E^{pq}_1(K,W)=\Gr^p_W H^n(K)$ is $q$-conjugate, i.e.\ 
$E^{pq}_1(K,W)$ is a pure Hodge structure of weigth $q$.
Differentials $d_r$ are morphisms of Hodge structures, 
in particular $E^{pq}_2(K,W)=E^{pq}_\infty(K,W)$,
and we have an isomorphism of pure Hodge structures:
$$\Gr^{\Dec\ W}_q H^n(K)=E^{pq}_2(K,W)$$
\end{prop}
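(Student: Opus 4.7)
The plan is to propagate Hodge-theoretic structure through the pages of the spectral sequence using the Three Filtrations Theorem (Lemma~\ref{lemma:three_filtrations}), and then to exploit the fact that morphisms between pure Hodge structures of distinct weights vanish to force degeneration at $E_2$.

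First, I would observe that since $(K,F,W)$ is an MHC, the graded piece $\Gr^p_W K$ is a pure Hodge complex whose $n$-th cohomology carries a pure Hodge structure of weight $n-p$. Taking $n=p+q$ gives a pure Hodge structure of weight $q$ on $E^{pq}_1(K,W)=H^{p+q}(\Gr^p_W K)$, whose Hodge filtration $F$ is the one induced from $F$ on $K$. To see that this induced $F$ is well-defined as a filtration on $E_1$, I would apply the Three Filtrations Theorem: on $E_0=\Gr_W K$ the three filtrations $F_{dir},F_{ind},F_{dir^*}$ coincide tautologically, and $d_0$ is the differential of a direct sum of pure Hodge complexes, hence $F$-strict. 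The theorem then yields $F_{dir}=F_{ind}=F_{dir^*}$ on $E_1$, and this common filtration agrees with the Hodge filtration of the weight-$q$ structure obtained above.

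Next, I would argue inductively that each differential $d_r$ is a morphism of Hodge structures. At the first step, $d_1\colon E^{pq}_1\to E^{p+1,q}_1$ maps between pure Hodge structures of the same weight $q$ and is $F$-compatible by the previous paragraph, so it is a morphism of pure Hodge structures and in particular $F$-strict. A further application of the Three Filtrations Theorem then gives $F_{dir}=F_{ind}=F_{dir^*}$ on $E_2$, and each $E^{pq}_2$ inherits a pure Hodge structure of weight $q$ as a subquotient. Continuing in this fashion, every $d_r$ with $r\geq 2$ would be a morphism between pure Hodge structures of distinct weights $q$ and $q-r+1$, hence zero. This proves $E_2=E_\infty$.

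Finally, to identify the Hodge structure $\Gr^{\Dec\ W}_q H^n(K)$ with $E^{n-q,q}_2$, I would invoke Remark~\ref{rmrk:decalage}: in the chosen sign conventions $\Gr^{\Dec\ W}_q H^n(K)=\Gr^{n-q}_W H^n(K)$, which coincides with $E^{n-q,q}_\infty=E^{n-q,q}_2$ as filtered $\mb{Q}$-vector spaces, and the $F$-filtration on both sides agrees through these identifications, yielding an isomorphism of pure Hodge structures of weight $q$. The main obstacle throughout is the bookkeeping of filtrations across pages, that is, ensuring that $F_{dir},F_{ind},F_{dir^*}$ remain equal after each differential; this is precisely where the Three Filtrations Theorem carries the weight of the argument and where $F$-strictness of the successive $d_r$ is essential.
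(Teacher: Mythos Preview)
Your proof is correct and follows the standard route (essentially the argument in \cite[Theorem~3.18]{PS}). The paper itself does not give a proof of this proposition: it merely cites \cite{PS} and frames the statement as a ``basic corollary'' of the fact, derived from the Deligne splitting, that morphisms between pure Hodge structures of different weights vanish. Your argument fills in exactly the bookkeeping the paper omits, using the Three Filtrations Theorem to ensure the induced $F$ on each page is well-defined and that each $d_r$ is $F$-strict; this is the intended mechanism behind the paper's one-line remark.
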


\begin{rmrk}\label{rmrk:compatible_filtered}
In particular a quasi-isomorphism of MHC's $(K_1,F,W)\zgqis (K_2,F,W)$ 
which is required to be only \emph{compatible} with filtrations,
induces an \emph{isomorphism}
on cohomology considered as MHS's. In particular the induced zig-zag 
${}_{W}E_1(K_1)\zgqis {}_{W}E_1(K_2)$ is automatically a 
quasi-isomorphism, though not necessary an isomorphism.

More generally, in the setting of MHC's, the notion 
of filtered quasi-isomorphisms is too restrictive.
	For example Beilinson's theorem \cite{Beilinson} suggest that
it is convinient to define a weak equivalence in $\MHC$ 
to be a compatible quasi-isomorphism
$(K_1,F,W)\ar (K_2,F,W)$, see also \cite{CiriciGuillen}.

Since $\ChF$ was introduced to model behavior of MHC's, we 
will explicitely use adjective ``filtered'' to refer
to filtered quasi-isomorphisms. 
We note however that most constructions
of quasi-isomorphisms in this work are naturally filtered.
\end{rmrk}

The following property holds for any filtered complex:
\begin{prop}[Deligne; {\cite[Proposition 1.3.4]{D}}]
	\label{prop:decalage_ss}
There is a natural quasi-isomorphism
$$E_0(K,\Dec\ W)\zgqis E_1(K,W)$$
and natural isomorphisms for $r>1$
$$E^{pq}_r(K,\Dec\ W)\simeq E^{p+n,-p}_{r+1}(K,W),n=p+q.$$
\end{prop}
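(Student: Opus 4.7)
The plan is to construct an explicit comparison map at the $E_0$-level and then verify the identifications by direct computation with the subgroups $Z_r^{pq}, B_r^{pq}$ from Subsection~1.1.

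Setting $p' = p+n$ with $n = p+q$, I would first establish the set-theoretic identity
$$Z_r^{pq}(K,\Dec W) \;=\; Z_{r+1}^{p',-p}(K,W) \;=\; \bigl\{x \in W^{p'}K^n : dx \in W^{p'+r+1}K^{n+1}\bigr\},$$
which follows by unwinding the definition of $\Dec W$ (the extra constraint $dx \in W^{p+n+1}$ from the décalage is automatically implied by the stronger $W$-side condition once $r \geq 0$). The required comparison map is then defined as the identity on representatives; it is well-defined because $(\Dec W)^{p+1}K^n \subset W^{p'+1}K^n \subset B_1^{p',-p}(K,W)$, and it is a chain map since both $d_0$ on $E_0(\Dec W)$ and $d_1$ on $E_1(W)$ are induced by $d$: fixing $p$ in the source (so cohomological degree $n$ varies) corresponds to fixing the second index $-p$ in the target (so the first index $p' = p+n$ varies).

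For the $E_0/E_1$ quasi-isomorphism I would argue surjectivity and injectivity on $d_0$-cohomology separately. For surjectivity, a $d_1$-cocycle representative $x$ has $dx = dy + z$ with $y \in W^{p'+1}$ and $z \in W^{p'+2}$; replacing $x$ by $x - y$ gives an element of $(\Dec W)^p K^n$ whose differential lands in $(\Dec W)^{p+1}K^{n+1}$ and which still maps to $[x]$. For injectivity, if $x = du + v$ with $u \in W^{p'}K^{n-1}$ and $v \in W^{p'+1}K^n$, the $d_0$-cocycle hypothesis $dx \in W^{p'+2}$ forces $u \in (\Dec W)^p K^{n-1}$ (using $du = x - v \in W^{p'}$) and $v \in (\Dec W)^{p+1}K^n$ (using $dv = dx \in W^{p'+2}$), exhibiting $[x]$ as the coboundary $d_0[u]$.

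For the higher-page statement ($r \geq 1$), a parallel computation shows that the inclusion $B_r^{pq}(\Dec W) \subset B_{r+1}^{p',-p}(W)$ becomes an equality after intersecting with $Z_r^{pq}(\Dec W) = Z_{r+1}^{p',-p}(W)$: given $x = du + v$ with $u \in W^{p'-r}K^{n-1}$ and $v \in W^{p'+1}K^n$, the stronger cycle condition $dx \in W^{p'+r+1}$ forces $du \in W^{p'}$ and $dv \in W^{p'+2}$, thereby placing $u \in (\Dec W)^{p-r+1}K^{n-1}$ and $v \in (\Dec W)^{p+1}K^n$, so $x \in B_r^{pq}(\Dec W)$. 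This yields the natural isomorphism $E_r^{pq}(K,\Dec W) \simeq E_{r+1}^{p+n,-p}(K,W)$ for all $r \geq 1$. The main obstacle is purely bookkeeping: one must be careful with the double index shift (the one encoded in the definition of $\Dec W$ and the one coming from the cohomological degree $n$), and at each step verify that the stronger $d$-conditions on the $W$-side automatically imply the weaker ones inherited from the $\Dec W$-side. Once the numerator and denominator identifications are set up correctly, the whole argument is a direct chase through definitions.
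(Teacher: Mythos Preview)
Your argument is correct and is essentially the standard one (this is Deligne's original computation in \cite[Proposition~1.3.4]{D}). Note that the paper does not supply its own proof of this proposition; it is simply quoted from Deligne, so there is nothing in the paper to compare against beyond the statement itself.

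One minor point of phrasing: in your injectivity step at the $E_0/E_1$ level you treat the case ``$\phi[x]=0$ in $E_1(W)$'' rather than ``$\phi[x]$ is $d_1$-exact''. This is harmless because $\phi$ is surjective on cochains: if $\phi[x]=d_1 c$ then lift $c=\phi[y]$, apply your argument to $[x]-d_0[y]$, and conclude. Alternatively, your direct computation for $r\geq 1$ already gives $E_1^{pq}(K,\Dec W)\cong E_2^{p+n,-p}(K,W)$, which is precisely the induced isomorphism on cohomology of the $E_0\to E_1$ map, so the quasi-isomorphism statement also follows from that. Either way the gap is cosmetic.
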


\iffalse
\begin{dfn}[Mixed Hodge diagram; \cite{M},\cite{JC}]
	A \textit{mixed Hodge diagram} (\textit{MHD} for short) 
	is a MHC 
	with a multiplication
	compatible with filtrations and the differential
	making the rational and the complex parts of the 
	Hodge complex
	into a connected cdga.
\end{dfn}
\fi
\begin{dfn}
	A \textit{mixed Hodge diagram} (\textit{MHD} for short)
	is an object in $\CMon(\MHC)$.
\end{dfn}
We will often denote the corresponding category by $\MHD$.
Set
$U((K,F,W)):=K_{\mb{Q}}$ and $E_1((K,F,W)):={}_{W}E_1(K_{\mb{Q}})$
gives a pair of functors  
$$U,E_1\colon (\MHC,\otimes)\ar (\Ch(\Vect_{\mb{Q}}),\otimes).$$
The following result is the key for our work:
\begin{thrm}[$E_1$-formality principle,{\cite[Cirici-Horel]{CH20}}]
	\label{thrm:mhd:E_1_qis}
	There is a quasi-isomorphism of lax symmetric monoidal functors
	$U\zgqis E_1$.
\end{thrm}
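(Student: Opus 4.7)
The plan is to produce an intermediate lax symmetric monoidal functor $T\colon \MHC\to \Ch(\Vect_{\mb Q})$ together with natural quasi-isomorphisms $U\zgqis T\zgqis E_1$, doing the construction first on the complex side where the Deligne splitting is available, and then descending to $\mb Q$ by faithful flatness.

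First, I would work over $\mb C$ and produce a lax symmetric monoidal zig-zag quasi-isomorphism $E_1(-)\otimes \mb C\zgqis (-)_{\mb C}$ of functors $\MHC\to \Ch(\Vect_{\mb C})$. The input is Proposition~\ref{prop:decalage_ss}, which gives the natural quasi-isomorphism $E_0(K_{\mb C},\Dec W)\zgqis E_1(K_{\mb C},W)$, combined with the Deligne splitting of Proposition~\ref{prop:deligne_splitting} applied to the mixed Hodge structure induced on cohomology. The crucial point is that the Deligne splitting is canonical (so natural in the MHC) and multiplicative with respect to tensor products of MHS's, because the bigrading $\{V^{p,q}\}$ is characterised by filtration conditions that are preserved under tensor product. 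This makes it possible to lift it to a natural transformation of lax symmetric monoidal functors at the chain level, rather than just on cohomology. Composing with the structural zig-zag $\phi\colon K_{\mb Q}\otimes \mb C\zgqis K_{\mb C}$ that is part of the data of an MHC then produces a lax symmetric monoidal zig-zag
\begin{equation*}
E_1(K)\otimes \mb C\ \zgqis\ K_{\mb Q}\otimes \mb C
\end{equation*}
of functors valued in $\Ch(\Vect_{\mb C})$.

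Second, I would descend to $\mb Q$ by forming a homotopy pullback at the level of lax symmetric monoidal functors. Concretely, $T$ is defined, diagram by diagram along the zig-zag produced above, as the corresponding iterated homotopy fibre product taken over $\mb Q$; it comes with natural transformations $T\to U$ and $T\to E_1$ as the two projections. Since $\mb C$ is faithfully flat over $\mb Q$ and tensoring with $\mb C$ preserves and detects quasi-isomorphisms in $\Ch(\Vect_{\mb Q})$, the fact that $(T\to U)\otimes \mb C$ and $(T\to E_1)\otimes \mb C$ are quasi-isomorphisms by construction implies that $T\to U$ and $T\to E_1$ are already quasi-isomorphisms of $\mb Q$-complexes. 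The lax symmetric monoidal structure on $T$ is inherited componentwise from those of $U$, $E_1$ and the middle terms, once all comparison maps have been organised into a diagram of lax symmetric monoidal functors.

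The main obstacle is this second step: making the descent construction strictly functorial in the MHC and coherently lax symmetric monoidal. Homotopy pullbacks of lax symmetric monoidal functors are well-behaved but do require care; one has to rectify the zig-zag from the first step into a diagram of honest lax symmetric monoidal natural transformations between cofibrant/fibrant models so that the pullback computes the correct homotopy type and inherits a monoidal structure. A routine but essential supporting task will be to verify that the Deligne splitting defines, compatibly with the monoidal structure and with all the intermediate comparison maps (including $\phi$ and the décalage comparison), a map of diagrams rather than just a collection of quasi-isomorphisms on objects; this is where the characterisation of $V^{p,q}$ by filtration conditions, together with strictness of morphisms of MHS's, does the work.
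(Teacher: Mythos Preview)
The paper does not prove this theorem: it simply cites \cite[Theorem 8.11]{CH20} together with the rectification result \cite[Theorem 2.3]{CH20} (Hinich). So there is no ``same approach'' to compare with; the question is whether your sketch would actually yield a proof.

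There is a genuine gap in Step~1. The Deligne splitting of Proposition~\ref{prop:deligne_splitting} is a statement about mixed Hodge \emph{structures}, i.e.\ it splits the weight filtration on $H^*(K_{\mb C})$, not on $K_{\mb C}$ itself. You write that this ``makes it possible to lift it to a natural transformation of lax symmetric monoidal functors at the chain level'', but no mechanism for such a lift is given, and none is available in general: an MHC is not degreewise an MHS, so there is no canonical bigrading on $K_{\mb C}$ to split $\Dec W$. This is exactly why Morgan's argument over $\mb C$ (see the remark following the theorem) first passes to a minimal model---which \emph{is} degreewise an MHS---before invoking the Deligne splitting, at the price of losing functoriality. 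Note also the remark after Corollary~\ref{cor:cirici_splitting}: the functorial splitting produced by Cirici--Horel over $\mb C$ does \emph{not} coincide with the Deligne splitting, which already signals that their argument is not the one you outline.

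Step~2 is also problematic as stated. A zig-zag $E_1(K)\otimes\mb C \zgqis K_{\mb Q}\otimes\mb C$ in $\Ch(\Vect_{\mb C})$ whose intermediate objects are only defined over $\mb C$ cannot be ``pulled back'' to a zig-zag over $\mb Q$ by faithful flatness alone: you need the intermediate functors, not just the endpoints, to be defined over $\mb Q$. Faithful flatness detects quasi-isomorphisms between $\mb Q$-complexes, but it does not manufacture $\mb Q$-forms of $\mb C$-complexes. The Cirici--Horel approach circumvents both issues at once by working with model structures on filtered complexes and invoking Hinich's rectification of lax symmetric monoidal functors, rather than via Deligne's splitting plus descent.
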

\begin{proof}
	The statement follows by the proof of \cite[Theorem 7.8]{CH20} 
	and by \cite[Theorem 2.3]{CH20},
which in turn was proved by Hinich in \cite[Theorem 4.1.1]{hinich}.
\end{proof}
\begin{rmrk}
	In particular, given $(K,F,W)\in \MHD$ provides a quasi-isomorphism 
	$K_{\mb{Q}}\zgqis {}_{W}E_1(K_{\mb{Q}})$ of cdga's.
	Over $\mb{C}$ this was proved by Morgan in 
	\cite[Theorem 9.6]{M} by showing that $K$ admits 
	a minimal Sullivan model $M$ 
	with a compatible mixed Hodge \emph{structure} and 
	then applying Deligne's splitting to $M$.
	Over $\mb{Q}$ an existence of a non-canonical splitting was 
	proved in \cite[Theorem 10.1]{M}.
\end{rmrk}
\begin{cor}\label{cor:cirici_splitting}
	The object $(H^*(K_{\mb{Q}}),\Dec W)$ admits a 
	functorial splitting in $(K,F,W)\in \MHC$. 
\end{cor}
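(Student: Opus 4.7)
My plan is to extract the splitting directly from Theorem \ref{thrm:mhd:E_1_qis} and then verify compatibility with $\Dec W$ via naturality applied to the sub-MHCs $W^k K\hookrightarrow K$.

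Applying Theorem \ref{thrm:mhd:E_1_qis} to $(K,F,W)\in\MHC$ yields a functorial zig-zag quasi-isomorphism $K_{\mb{Q}}\zgqis {}_W E_1(K_{\mb{Q}})$ in $\Ch(\Vect_{\mb{Q}})$. Taking cohomology produces a natural isomorphism
\[
\Phi_K \colon H^n(K_{\mb{Q}}) \xrightarrow{\ \sim\ } H^n\bigl({}_W E_1(K_{\mb{Q}})\bigr) = \bigoplus_{p+q=n} {}_W E_2^{pq}(K_{\mb{Q}}).
\]
The right-hand side is intrinsically graded by the weight index $p$, so $\Phi_K$ transports this grading to a functorial direct sum decomposition $H^n(K_{\mb{Q}})=\bigoplus_p V^p_K$ with $V^p_K\simeq{}_W E_2^{p,n-p}(K_{\mb{Q}})$. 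Combined with the degeneration $E_2=E_\infty$ and Remark \ref{rmrk:decalage}, this identifies $V^p_K$ canonically with $\Gr^{p-n}_{\Dec W}H^n(K_{\mb{Q}})$, so what remains is to check that the decomposition does split the filtration $\Dec W$.

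For this I use naturality. The subcomplex $W^k K\subset K$ carries the restricted filtrations and is itself an MHC, since $\Gr^p_W(W^k K)=\Gr^p_W K$ for $p\ge k$ and vanishes otherwise; the inclusion $i_k\colon W^k K\hookrightarrow K$ is thus a morphism in $\MHC$. Naturality of Theorem \ref{thrm:mhd:E_1_qis} applied to $i_k$ yields a commutative square
\[
\begin{array}{ccc}
H^n(W^k K_{\mb{Q}}) & \xrightarrow{\Phi_{W^k K}} & \bigoplus_{p\ge k,\,p+q=n}{}_W E_2^{pq}(K_{\mb{Q}}) \\
{\scriptstyle (i_k)_*}\downarrow & & \downarrow \\
H^n(K_{\mb{Q}}) & \xrightarrow{\Phi_K} & \bigoplus_{p+q=n}{}_W E_2^{pq}(K_{\mb{Q}}).
\end{array}
\]
The image of the left vertical is by definition $W^k H^n(K_{\mb{Q}})$, while the image of the right vertical is the evident subspace $\bigoplus_{p\ge k}V^p_K$. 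Hence $\Phi_K\bigl(W^k H^n(K_{\mb{Q}})\bigr)=\bigoplus_{p\ge k}V^p_K$, which by the décalage rule $(\Dec W)^{k-n}H^n=W^k H^n$ of Remark \ref{rmrk:decalage} is precisely the statement that the decomposition $H^n(K_{\mb{Q}})=\bigoplus_p V^p_K$ (reindexed by $p\mapsto p-n$) splits the filtration $\Dec W$. Functoriality of the splitting is immediate from the naturality of $\Phi_K$ in $(K,F,W)$.

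The main obstacle is exactly this filtration-compatibility check in the last step; once Theorem \ref{thrm:mhd:E_1_qis} is available, all the analytic content is already encoded there, and the compatibility falls out almost for free by applying naturality to the MHC inclusions $W^k K\hookrightarrow K$.
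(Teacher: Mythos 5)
The paper states this corollary without supplying a proof, so there is nothing to compare against directly; your argument fills in the omitted justification and it is correct. The three ingredients you use are exactly what is needed: taking cohomology of the natural zig-zag from Theorem~\ref{thrm:mhd:E_1_qis} to get $\Phi_K\colon H^n(K_{\mb{Q}})\ar[\sim]\bigoplus_{p+q=n}{}_W E_2^{pq}(K_{\mb{Q}})$, invoking the degeneration $E_2=E_\infty$ together with Remark~\ref{rmrk:decalage} to recognize the summands as $\Gr^{\Dec W}H^n$, and then verifying filtration-compatibility by applying naturality to the MHC morphism $W^k K\hookrightarrow K$. That last step is the one that actually has content, since the functors $U$ and $E_1$ in Theorem~\ref{thrm:mhd:E_1_qis} take values in \emph{unfiltered} complexes, so the resulting isomorphism $H^*(K_{\mb{Q}})\simeq\Gr_W H^*(K_{\mb{Q}})$ is a priori only an isomorphism of graded vector spaces. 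Your observation that $W^k K$, with the restricted filtrations, is again an MHC is correct and is what makes the naturality argument available.

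One small imprecision in the commutative square: the top-right corner should read $\bigoplus_{p\ge k,\,p+q=n}{}_W E_2^{pq}\bigl((W^k K)_{\mb{Q}}\bigr)$, not $\bigoplus_{p\ge k}{}_W E_2^{pq}(K_{\mb{Q}})$. These agree for $p>k$, but at $p=k$ the natural map $E_2^{kq}(W^k K_{\mb{Q}})\to E_2^{kq}(K_{\mb{Q}})$ is a surjection rather than an isomorphism, because $E_1^{k-1,q}(W^k K)=0$ suppresses the $d_1$-boundaries entering the $p=k$ column. This does not affect your conclusion, since the argument only needs the \emph{image} of the right vertical arrow, and that image is still $\bigoplus_{p\ge k}V^p_K$ precisely because the component maps are surjective at $p=k$ and isomorphisms for $p>k$.
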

\begin{rmrk}
	It is not true that this splitting over $\mb{C}$ coincides 
	with the Deligne's splitting.
\end{rmrk}

%For a general treatment of minimal models in 
%the mixed Hodge context see \cite{JC}, \cite{JG}, \cite{CH20}. 
Up to this moment we discussed mixed Hodge complexes and diagrams of 
$\mb{Q}$-modules. In fact it is easier to work with Hodge complexes of sheaves of 
$\mb{Q}$-modules over a space $X$ and then pass to global sections.
\begin{dfn}[{\cite[Definition 2.32]{PS}}]
	A data 
	$$\left(\mc{K}_{\mb{Q}}^*,(\mc{K}_{\mb{C}}^*,F),
	\phi:\mc{K}_{\mb{Q}}\bigotimes\limits_{\mb{Q}}\mb{C}
	\zgqis 
		\mc{K}_{\mb{C}}\right)$$ 
	where 
	\begin{itemize}
		\item $\mc{K}_{\bbk}^*$ is a complex of sheaves of
			$\bbk=\mb{Q}-,\mb{C}-$ modules over $X$,
		\item
			$\phi$ is a quasi-isomorphism of complexes of sheaves 
	\end{itemize}
	is called a \textit{pure Hodge 
	complex of sheaves} of weight $m$
	if
	\begin{enumerate}
		\item The associated spectral sequence 
	$${}_{F}E^{pq}_1=
	\mb{H}^{p+q}(X;\Gr^{p}_F\mc{K}^*_{\mb{C}})
		\Rightarrow 
		\Gr^{p}_F\mb{H}^{p+q}(X;\mc{K}_{\mb{C}})$$
			degenerates at $E_1$.
		\item The induced filtration $F$ on 
			$(H^n(\mc{K}^*_{\mb{C}}),F)$ is 
			$(n+m)$-conjugate, i.e.~$(H^n(\mc{K}^*_{\mb{C}}),F)$ 
			defines a pure Hodge structure
			of weight $n+m$.
	\end{enumerate}
	The data will be denoted simply by $(\mc{K}^*,F,W)$.
\end{dfn}

\begin{dfn}[{\cite[Definition 3.13]{PS}}]\label{dfn:mhc}
	A \textit{mixed Hodge complex of sheaves} over $X$ is a data
	$$\left((\mc{K}_{\mb{Q}},W_{\mb{Q}}),
	(\mc{K}_{\mb{C}},F,W_{\mb{C}}),
	\phi:(\mc{K}_{\mb{Q}}\otimes \mb{C},
		W_{\mb{Q}}\otimes \mb{C})\zgqis
	(\mc{K}_{\mb{C}},W_{\mb{C}})\right)$$
	where
	\begin{itemize}
		\item ${\mc{K}}_{\bbk}$ is a complex of sheaves of 
			$\bbk$-modules over $X$ where $\bbk=\mb{Q}$ or $\mb{C}$,
		\item $\phi$ is a 
			filtered quasi-isomorphism of complexes of 
			sheaves over $X$
	\end{itemize}
	such that for each $p$,
	$(\Gr^{-p}_{W_{\mb{Q}}}\mc{K}_{\mb{Q}},
		(\Gr^{-p}_{W_{\mb{C}}}\mc{K}_{\mb{C}},F))$
	is a pure Hodge complex of sheaves of weight $p$.
\end{dfn}
We denote the category of MHC's of sheaves over $X$ by 
$\MHC(X)$.
The following fact is also useful.
\begin{thrm}[{\cite[3.18]{PS}}]
	\label{thrm:ss_degeneration_mhc}
	Assume that $(\mc{K},F,W)\in \MHC(\Sh_X)$.
	Then the following is true.
	\begin{enumerate}
		\item The spectral sequence 
			${}_{F}E^{pq}_1 \mc{K}=
		\mb{H}^{p+q}(X,\Gr^p_F \mc{K})\implies 
		\Gr^p_F \mb{H}^{p+q}(X,\mc{K})$
		degenerates at ${}_{F}E_1$.
		\item The spectral sequence 
			${}_{W}E^{pq}_1 \mc{K} = 
			\mb{H}^{p+q}(X,\Gr^p_W \mc{K})
			\implies Gr^p_W \mb{H}^{p+q}(X,\mc{K})$
			degenerates at ${}_{W}E_2$.
	\end{enumerate}
\end{thrm}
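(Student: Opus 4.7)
The plan is to reduce the statement for mixed Hodge complexes of sheaves to the corresponding statement for mixed Hodge complexes of $\mb{Q}$-modules (which is already summarized in the preceding Proposition for the weight filtration, and which is classical for the Hodge filtration via the theorem on three filtrations). The reduction is effected by a functorial resolution such as the Godement resolution $\mc{K}_\bbk \to G^\bullet \mc{K}_\bbk$ for $\bbk = \mb{Q}, \mb{C}$. Because the construction $G^\bullet$ is functorial and exact, it preserves both filtrations $F$ and $W$, as well as the zig-zag $\phi$, so passing to the total complex of global sections
\[
R\Gamma(X,\mc{K}_\bbk) := \Gamma\bigl(X, \Tot G^\bullet \mc{K}_\bbk\bigr),
\]
equipped with the induced filtrations, produces a bifiltered $\mb{Q}$-complex with a comparison zig-zag over $\mb{C}$.

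The first technical point is to verify that $(R\Gamma(X,\mc{K}), F, W)$ lies in $\MHC$, i.e.\ that each $\Gr^p_W R\Gamma(X,\mc{K})$ is a pure Hodge complex of the correct weight. Since $\Gamma(X,-)$ is exact on flabby sheaves and the Godement resolution is strict with respect to the filtrations, one obtains a natural identification $\Gr^p_W R\Gamma(X,\mc{K}) \simeq R\Gamma(X, \Gr^p_W \mc{K})$ as filtered complexes (with $F$ on the right). The right-hand side is the hypercohomology of a pure Hodge complex of sheaves, whose $F$-degeneration at $E_1$ is part of the definition; together with the conjugation condition on $H^n$, this makes $R\Gamma(X, \Gr^p_W \mc{K})$ a pure Hodge complex, so $R\Gamma(X,\mc{K})$ is indeed a mixed Hodge complex of $\mb{Q}$-modules.

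The second step applies the known degeneration facts for mixed Hodge complexes of $\mb{Q}$-modules. For the weight filtration one has, exactly as in the preceding Proposition, that ${}_W E^{pq}_1 R\Gamma(X,\mc{K})$ is a pure Hodge structure of weight $q$; for $r \geq 2$ the differential $d_r$ maps between pure Hodge structures of weights $q$ and $q-r+1$ and must therefore vanish, giving $E_2 = E_\infty$. For the Hodge filtration one argues inductively via Theorem \ref{lemma:three_filtrations}: the three filtrations coincide on $E_0, E_1$ of the $W$-spectral sequence, which together with the morphism-of-Hodge-structures property shows that every $d_r$ is $F_{ind}$-strict, hence $E_1(F) = E_\infty(F)$. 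Finally, one identifies the spectral sequences of the bifiltered complex $R\Gamma(X,\mc{K})$ with the hypercohomology spectral sequences stated in the theorem, using $H^{p+q}(\Gr^p_F R\Gamma(X,\mc{K})) \simeq \mb{H}^{p+q}(X, \Gr^p_F \mc{K})$ and its $W$-analog.

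The main obstacle is the bookkeeping in step one: namely, producing a functorial resolution that respects the filtrations $F$ and $W$ on \emph{both} the rational and complex components while preserving the zig-zag $\phi$ and satisfying the strictness needed to commute $\Gamma$ with $\Gr_W$. Once this is set up carefully, everything else is a direct application of the standard theory of MHCs over $\mb{Q}$ recalled earlier in this section.
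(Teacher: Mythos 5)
The paper states this result with a bare citation to \cite[Theorem 3.18]{PS} and provides no proof of its own, so the comparison is with the standard argument from the literature. Your reduction is exactly that argument, and it is consistent with the machinery the paper itself sets up in Section~\ref{sect:prelim}: the functor $R_{Gdm}\Gamma = \Gamma \circ G$ from $\MHC(X)$ to $\MHC$ (the paper mentions this as \cite[Lemma-Definition 3.25]{PS}), the theorem on three filtrations (Theorem~\ref{lemma:three_filtrations}), and its corollary that ${}_{F}E_1={}_{F}E_\infty$ follows from $F_{ind}$-strictness of all $d_r$. Two minor points worth making precise: (a) to identify $\Gr^p_W R\Gamma$ with $R\Gamma\,\Gr^p_W$ you need not only that $G$ is exact but that the resulting short exact sequences of flabby complexes remain exact after $\Gamma$, which is true because Godement sheaves are flabby; and (b) the claim that $d_0$ is $F$-strict follows from the degeneration condition in the definition of a pure Hodge complex via the equivalence between $E_1$-degeneration of a (biregular) filtered complex and strictness of its differential, which is a standard fact you use implicitly. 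With those two details spelled out, the proposal is correct and takes the same route as the cited source.
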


Observe that for $U\subset X$, $(\mc{K}(U),F,W)$ is rarely 
an MHC.
However passing to derived global sections 
by means e.g.\ of the canonical Godement resolution $G\colon \Ch(\Sh)\ar \Ch(\Sh)$
is.
Namely we have a functor
$R_{Gdm}\Gamma\colon \MHC(X)\to \MHC$
(\cite[Lemma-Definition 3.25]{PS}) given by $R_{Gdm}\Gamma:=\Gamma\circ G$.
Note that $R_{Gdm}\Gamma$ is not lax symmetric monoidal and 
does not induce a functor $\MHD(X)\ar \MHD$.
Though $R_{Gdm}\Gamma$ is not a lax symmetric monoidal,
by
working over $\mb{Q}$ it is possible to replace this functor by an equivalent
lax symmetric monoidal by using the Thom-Whitney-Sullivan functor $s_{TW}$ 
described in \ref{sect:aznar_mhd}. For now we only 
briefly mention
\begin{thrm}[Navarro Aznar; Theorem \ref{thrm:mhd_sheaves_sections}]
	\label{thrm:prelim:aznar}
There is a natural lax symmetric monoidal functor
$R_{TW}\Gamma\colon (\MHC(X),\otimes)\to (\MHC,\otimes)$.
\end{thrm}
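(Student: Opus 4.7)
The plan is to realize $R_{TW}\Gamma$ as the three-step composition of the Godement resolution $G$, the global sections functor $\Gamma(X,-)$, and the Thom-Whitney-Sullivan totalization $s_{TW}$. Concretely, $G(\mc{K})$ is a cosimplicial complex of flasque sheaves which, after applying $\Gamma(X,-)$ levelwise, yields a cosimplicial complex $\Gamma G(\mc{K})^\bullet$ of $\bbk$-modules. The totalization $s_{TW}$ sends such a cosimplicial complex $C^\bullet$ to the equalizer over face/degeneracy maps of $\prod_n \Omega^\bullet_{\PL}(\Delta^n)\otimes_{\mb{Q}} C^n$, where $\Omega^\bullet_{\PL}(\Delta^n)$ denotes the PL de Rham forms on the standard $n$-simplex. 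The decisive feature is that $\Omega^\bullet_{\PL}(\Delta^\bullet)$ is a simplicial commutative $\mb{Q}$-cdga, so $s_{TW}$ is lax symmetric monoidal, whereas the Alexander-Whitney normalization $N^*$ underlying $R_{Gdm}\Gamma$ is only lax monoidal; this is precisely why the construction is restricted to rational coefficients.

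First I would construct a natural quasi-isomorphism $s_{TW} C^\bullet \zgqis N^* C^\bullet$ compatible with filtrations, which is standard given the unitality and acyclicity of $\Omega^\bullet_{\PL}(\Delta^n)$ (for instance via integration of PL forms against shuffle maps). Since $G$ and $\Gamma$ preserve the two filtrations $F$ and $W$ (being defined stalkwise, respectively by linearity), this comparison produces a natural filtered quasi-isomorphism $R_{TW}\Gamma \mc{K}\zgqis R_{Gdm}\Gamma \mc{K}$ for each $\mc{K}\in\MHC(X)$. The right-hand side is an MHC by Deligne's classical theorem (underlying Theorem \ref{thrm:ss_degeneration_mhc}), and the MHC property in the sense of Definition \ref{dfn:mhc} is invariant under filtered quasi-isomorphism of the pieces, so $R_{TW}\Gamma \mc{K}\in\MHC$ as required.

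The main obstacle is the lax symmetric monoidal structure: one must construct natural transformations $R_{TW}\Gamma \mc{K}_1\otimes R_{TW}\Gamma \mc{K}_2\to R_{TW}\Gamma(\mc{K}_1\otimes \mc{K}_2)$ compatible with filtrations and with the comparison zig-zag $\phi\colon \mc{K}_{\mb{Q}}\otimes\mb{C}\zgqis \mc{K}_{\mb{C}}$. The essential ingredient is the natural map
\[
\bigl(\Omega^\bullet_{\PL}(\Delta^n)\otimes C_1^n\bigr)\otimes \bigl(\Omega^\bullet_{\PL}(\Delta^n)\otimes C_2^n\bigr)\longrightarrow \Omega^\bullet_{\PL}(\Delta^n)\otimes(C_1^n\otimes C_2^n)
\]
induced by the multiplication on $\Omega^\bullet_{\PL}$; commutativity and simpliciality of the PL forms make this a morphism of cosimplicial complexes which descends to the equalizer defining $s_{TW}$. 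The symmetry and coherence axioms of a lax symmetric monoidal functor then reduce to the corresponding properties of $\Omega^\bullet_{\PL}(\Delta^\bullet)$ as a simplicial commutative $\mb{Q}$-algebra, while compatibility with $\phi$ follows from naturality of $s_{TW}$ applied componentwise along the zig-zag; naturality in $X$ is automatic from the functoriality of $G$, $\Gamma$, and $s_{TW}$.
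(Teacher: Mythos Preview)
Your proposal is correct and follows essentially the same route as the paper: both realize $R_{TW}\Gamma$ as $s_{TW}\circ\Gamma\circ G_{\cs}$, use the filtered quasi-isomorphism $s_{TW}\zgqis s$ to reduce the MHC axioms to the classical Godement case, and extract the lax symmetric monoidal structure from the commutative multiplication on the simplicial cdga of PL forms. The paper itself does not prove this statement in detail but outlines the construction in \S\ref{sect:tws_functor} and defers to Navarro Aznar \cite[8.15]{Aznar}; your sketch fills in precisely those details.
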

In particular we get a functor $R_{TW}\Gamma\colon \MHD(X)\ar \MHD$.
\iffalse
In \cite{Aznar} the Thom-Whitney-Sullivan functor was used to produce a 
functor $R\Gamma_{TW}\colon \MHD(X)\to \MHD.$ \ref{thrm:mhd_sheaves_sections}.
\fi
\subsubsection{Mixed Cone}
\label{sect:prelim:mixed_cone}
%Since an MHC consist of some data plus cohomological conditions we
%can consider $\MHC(X)$ as a full subcategory of a category which we denote by $\ChF_{F,W}(\Sh_X)$.
%Objects of $\ChF_{F,W}(\Sh_X)$ are described by Definition \ref{dfn:mhc} without the last
%cohomological condition. 
\begin{dfn}
	If $f:A\to B$ is a morphism between complexes, 
	set
	$[A\os{f}{\to} B]:=Cone(A\os{f}{\to} B)[-1]$.
\end{dfn}

\begin{prop}[Mixed cone construction; {\cite[Theorem 3.22]{PS}}]
	\label{prop:mixed_cone}
	Assume $f\colon A\to B$ is a morphism of mixed Hodge complexes. Then
	$K:=[A\os{f}{\to} B],K^n=A^{n}\oplus B^{n-1}$ 
	becomes a mixed Hodge complex if we put:
	\begin{itemize}
		\item $W^p K:=W^{p}A\oplus W^{p-1}B$
		\item $F^p K:=F^p A\oplus F^p B$
	\end{itemize}
	There is an exact sequence of MHC:
	$$0\to B[-1]\to [A\to B]\to A\to 0.$$
\end{prop}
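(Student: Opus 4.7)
The plan is to check the three ingredients of an MHC---a $\mathbb{Q}$-complex with weight filtration, a $\mathbb{C}$-complex with weight and Hodge filtrations, and a zig-zag of filtered quasi-isomorphisms between them---together with the purity of the $W$-graded pieces. The underlying complex $K_\bbk = [A_\bbk \xrightarrow{f_\bbk} B_\bbk]$ is the usual shifted cone, with differential $d_K(a,b) = (d_A a, f_\bbk(a) - d_B b)$; in particular $K^n = A^n \oplus B^{n-1}$. The zig-zag $\phi_K \colon K_\mathbb{Q} \otimes \mathbb{C} \zgqis K_\mathbb{C}$ is obtained by applying the cone construction simultaneously to each arrow of $\phi_A$ and $\phi_B$, using that the cone of a filtered quasi-isomorphism is again a filtered quasi-isomorphism.

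Next I would verify that $W^p K = W^p A \oplus W^{p-1} B$ and $F^p K = F^p A \oplus F^p B$ are biregular decreasing filtrations preserved by $d_K$. The $F$-case is immediate since $f$ respects $F$. For $W$, take $(a,b) \in W^p K$: then $d_A a \in W^p A$, $d_B b \in W^{p-1} B$, and $f(a) \in W^p B \subset W^{p-1} B$, so $d_K(a,b) \in W^p K$. The same computation explains the shift by one on the $B$-component: since $f(W^p A) \subset W^p B$ while the $B$-summand of $W^p K$ is $W^{p-1} B$, the map $f$ induces the \emph{zero} map $\Gr^p_W A \to \Gr^{p-1}_W B$ at the level of associated graded. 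Consequently $\Gr^p_W K$ decomposes as the complex $\Gr^p_W A \oplus (\Gr^{p-1}_W B)[-1]$.

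With this splitting in hand, purity is straightforward. Since $\Gr^{-p}_W A$ is a pure Hodge complex of weight $p$ by hypothesis and $\Gr^{-p-1}_W B$ is pure of weight $p+1$, and shifting by $[-1]$ decreases the weight of a pure Hodge complex by one (because $H^n(C[-1]) = H^{n-1}(C)$ has weight $(n-1)+m = n+(m-1)$), the summand $(\Gr^{-p-1}_W B)[-1]$ is also pure of weight $p$. A direct sum of pure Hodge complexes of the same weight is again pure of that weight, so $\Gr^{-p}_W K$ is pure of weight $p$ and $K$ is an MHC.

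For the short exact sequence $0 \to B[-1] \to K \to A \to 0$, one endows $B[-1]$ with the shifted filtrations $W^p(B[-1]) = W^{p-1} B$ and $F^p(B[-1]) = F^p B$ (no shift); the inclusion and projection are then strictly compatible with both filtrations by construction, and the zig-zags match by functoriality of the cone. The main obstacle I anticipate is the careful bookkeeping of shifts in the weight filtration: one must confirm that the $(p-1)$-shift on the $B$-component is exactly what is needed for $\Gr^{-p}_W K$ to acquire weight $p$ rather than $p-2$, and that the sign conventions in the cone differential are consistent with those used elsewhere in the theory. Everything else is a routine verification.
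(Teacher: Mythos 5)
Your proof is correct and takes essentially the same approach as the paper's: the key observation in both is that $\Gr^p_W K$ ignores $f$ and splits as the direct sum $\Gr^p_W A \oplus (\Gr^{p-1}_W B)[-1]$ of two pure Hodge complexes of the same weight, whence purity is immediate. You supply somewhat more detail on the zig-zag $\phi_K$ and the filtered short exact sequence, which the paper leaves implicit.
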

\begin{proof}
	We will use the idea of the following proof in the sequel, so we reproduce it here. It is enough to show that 
 $\Gr^p_W K^*$ is a pure Hodge complex
	of weight $-p$.
	Note that $\Gr^p_W K^*$ as a complex ignores $f$ and 
	is equal to the direct sum 
	$$\Gr^{p}_W A\oplus \Gr^{p-1}_W B [-1].$$
	So $F$ induces a pure Hodge structure of weight $n-p$ on the
	$n$-th cohomology equal to 
	$$H^{n}(\Gr^{p}_W A)\oplus H^{n-1}(\Gr^{p-1}_W B).$$
	Hence $[A\os{f}{\to} B]$ is a mixed Hodge complex. 
\end{proof}

\
\begin{dfn}[Shift functor]
	If $(C^*,F)$ is a Hodge complex of weight $m$, 
	then $(C^*[a],F)$ is a Hodge complex of weight $m+a$.
	If $(K^*,F,W)$ is an MHC, then
	$(K^*[a],F,W[a])$ is an MHC where
	$W[a]^p=W^{a+p}$.
\end{dfn}
\iffalse
In the course of the work we will apply various natural 
constructions $\mc{U}(-)$ on an MHC $(K,F,W)$ in the form
$(K,F,W)\lto 
(\mc{U}(K_{\mb{Q}},W),\mc{U}(K_{\mb{C}},F,W))\in \MHC$, 
which took into 
account only the pair $(K_\mb{Q},W),(K_{\mb{C}},F,W)$, considered
independently and viewed
as a (multi)filtered complex with prescribed behaviour
of the shift functor $[-]$. 
Since checking that 
$\mc{U}(K_{\mb{Q}},W),\mc{U}(K_{\mb{C}},F,W)$
satisfies axioms of MHC can be done easily afterwards,
it will be convinient to consider 
$\MHC$ as a subcategory.
\fi

Let $\Bbbk-\mc{A}$ denote a symmetric monoidal additive category
enriched over $\Bbbk$, e.g.~the
category of sheaves of $\Bbbk$-modules over $X$, $\Bbbk-\Sh_X$.
To capture the mixed cone behavior on the Hodge and the weight filtrations
let us introduce categories $\ChF_{F}=\ChF_{F}(\Bbbk-\mc{A})$ and 
$\ChF_{W}=\ChF_{W}(\Bbbk-\mc{A})$.
Each of these is the category of filtered complexes in $\mc{A}$ with the filtration denoted $F$, 
respectively $W$, with an extra data of a shift functor $[-]$ defined as follows. 
For $(K^*,F)\in \ChF_{F}$ we define $(K^*,F)[a]=(K^*[a],F)$,
while for $(K^*,W)\in \ChF_{W}$ we set $(K^*,W)[a]=(K^*[a],W[a])$.

Similarly we denote by $\ChF_{F,W}(\Bbbk-\mc{A})$ the category of 
complexes
in $\mc{A}$ with filtrations
$F$ and $W$ and a shift functor defined by $(K^*,F,W)[a]=(K^*[a],F,W[a])$.

\begin{notation}\label{dfn:chf}
	We will refer to either $\ChF_{F}$, $\ChF_{W}$ or $\ChF_{F,W}$
as $\ChF$. 
\end{notation}
By analogy with~Proposition~\ref{prop:mixed_cone} one can define cones 
$[A\to B]\in \ChF$ for $A,B\in \ChF$. 
\begin{rmrk}\label{rmrk:ChF}
	The resulting cone functors model the rational part, 
respectively complex part of the mixed cone of 
a morphism between MHC's.
We will abuse notation and consider
$\MHC(X)$ as a subcategory of $\ChF_{F,W}(\Sh_X)$.
Objects of $\ChF_{F,W}(\Sh_X)$ are the same as in Definition \ref{dfn:mhc} except
the last cohomological condition. There natural projections 
$\ChF_{F,W}(\Sh_X)$ to $\ChF_W(\mb{Q}-\Sh_X)$ and to $\ChF_{F,W}(\mb{C}-\Sh_X)$
respectively.
\end{rmrk}
If $\mc{K}\in \ChF$ is a 
filtered complex, then we denote the corresponding filtration by  
$V_\mc{K}$ or $V\mc{K}$. 
Therefore, if $\mc{K}\in \ChF_{F}$, then 
$(V^p\mc{K}[i])^n=V^p\mc{K}^{n+i}$.
If $\mc{K}\in \ChF_{W}$, then 
$(V^p{\mc{K}[i]})^n=V^{p+i}\mc{K}^{n+i}$.

If $\mc{K}',\mc{K}''\in \ChF$, then 
the complex $\mc{K}'\otimes \mc{K}''$ admits 
a tensor product filtration 
$$(V_{\mc{K}'}\otimes V_{\mc{K}''})^n=
	\sum_i V_{\mc{K}'}^{i}\otimes V_{\mc{K}''}^{n-i}.$$

One has the following straightforward generalization of the mixed cone 
construction.
\begin{dfn}[Iterated cone]\label{dfn:iterated_cone}
	%Let $K=[A_0\to A_1\to A_2\to \ldots]$ be the totalization of 
	%a bicomplex
	%formed by complexes $A_i\in \ChF$.
	For a bicomplex $\ldots\to A_{-1}\to A_0\to A_1\to \ldots$ 
	with $A_i\in \ChF$,
	we set 
	$K=[\ldots\to A_{-1}\to A_0\to A_1\to \ldots]\in \ChF$
	equal to the totalization
	$$K=\oplus_i A_i[i].$$
	%to be the following totalization:
	%call the totalization
	%$$K=\oplus A_i[i]$$
%	the \emph{iterated cone}
%	One can consider $K$ as an \emph{iterated cone}:
%	$$[A_0\to[A_1\to [A_2\to \ldots]]].$$
\end{dfn}
For example $[A_0\to A_1\to A_2\to ]$ is equal to 
the \emph{iterated cone} $[A_0\to [A_1\to [A_2\to \ldots]]$.
Proposition \ref{prop:mixed_cone} shows that
if the maps $A_i\ar A_{i+1}$ are morphisms of mixed Hodge complexes
for all $i$, 
then $[\ldots\to A_{-1}\to A_0\to A_1\to\ldots]$ is a mixed Hodge complex.
 
\begin{dfn}
	The complex $\mc{K}'\otimes \mc{K}''\in \ChF$
	is called the \textit{tensor product} of 
		$\mc{K}'$ and $\mc{K}''$.
\end{dfn}
For all $a,b$, there is a natural isomorphism 
$$(\mc{K}'\otimes \mc{K}'')[a+b]\simeq \mc{K}'[a]\otimes \mc{K}''[b].$$

\begin{prop}\label{prop:vect_to_ch}
	There is a natural embedding $\Ch(\Vect_{\Bbbk})\to \ChF$
	commuting with $[-]$ and sending $\Bbbk$ to $1_{\mc{A}}$.
\end{prop}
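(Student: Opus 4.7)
The plan is to define $\Phi\colon \Ch(\Vect_{\Bbbk})\to \ChF$ on objects by
$\Phi(V^\bullet)^n := V^n\otimes_{\Bbbk} 1_{\mc{A}}$,
using the $\Bbbk$-enrichment of $\mc{A}$, with differential $d_V\otimes \id_{1_{\mc{A}}}$ and with $\Phi(f):=f\otimes \id_{1_{\mc{A}}}$ on morphisms. The only nontrivial point is equipping $\Phi(V^\bullet)$ with filtration(s) that intertwine the respective shift conventions of the two categories. For $\ChF_{F}$ the shift $[a]$ does not touch $F$, so the constant filtration $F^0\Phi(V)=\Phi(V)$, $F^1\Phi(V)=0$ works. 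For $\ChF_{W}$ the convention $W[a]^p=W^{p+a}$ forces a degree-sensitive choice; we set
\[
W^p\Phi(V)^n=\begin{cases} V^n\otimes 1_{\mc{A}}, & p\le n,\\ 0, & p>n.\end{cases}
\]
For $\ChF_{F,W}$ combine the two.

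With these definitions, functoriality and $\Bbbk$-additivity are immediate. By construction $\Phi(\Bbbk)=1_{\mc{A}}$ concentrated in degree zero with its trivial filtrations. Compatibility of $d\colon\Phi(V)^n\to\Phi(V)^{n+1}$ with $W$ is the easy case-check in the three regimes $p\le n$ (both sides are the whole space), $p=n+1$ (source is $0$), and $p\ge n+2$ (both sides are $0$). Commutation with $[-]$ is evident on the complex part and for $F$; for $W$ it is the identification
\[
W[a]^p(\Phi(V)[a])^n = W^{p+a}\Phi(V)^{n+a} = V^{n+a}\otimes 1_{\mc{A}} \ \text{iff}\ p\le n,
\]
which matches $W^p\Phi(V[a])^n = V^{n+a}\otimes 1_{\mc{A}}\ \text{iff}\ p\le n$, by the very definition of $\Phi$. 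Faithfulness of the resulting embedding follows because $\Bbbk\hookrightarrow\Hom_{\mc{A}}(1_{\mc{A}},1_{\mc{A}})$ via the $\Bbbk$-linear structure, so $V\mapsto V\otimes 1_{\mc{A}}$ is injective on morphisms.

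The only real subtlety in the argument is the degree-dependence of the $W$-filtration: a naive constant filtration does not commute with the shift, because $[a]$ shifts $W$ by $a$. Morally, $V^n\otimes 1_{\mc{A}}$ is placed in Hodge-complex weight $-n$, so that after passing to $\Dec W$ one recovers pure weight $0$ on each cohomological degree --- consistent with viewing a complex of $\Bbbk$-vector spaces as Hodge-theoretically trivial data.
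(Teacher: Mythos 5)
Your proposal is correct and matches the paper's own argument: both use the constant filtration for $F$ and the degree-dependent (``dumb'') filtration $W^pD^n=D^n$ for $p\le n$, $0$ otherwise, for $W$, observing that this is precisely what makes the embedding commute with the shift convention $W[a]^p=W^{p+a}$. The added case-checks and the remark about $\Dec W$ are harmless elaborations of the same idea.
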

\begin{proof}
	In fact the functor is unique up to a natural isomorphism.
	We start with a functor $\Vect_{\Bbbk}\to \Bbbk-\mc{A}$ 
	sending $\Bbbk$ to the unital object $1_{\mc{A}}\in \Bbbk-\mc{A}$.
	This functor is determined up to a natural isomorphism.
	It defines an embedding $\Ch(\Vect_{\Bbbk})\to \Ch(\Bbbk-\mc{A})$.
	If $\ChF=\ChF_{F}$, then $D\in \Ch(\Vect_{\Bbbk})$ 
	admits the trivial filtration $V_D^n=0,n>0$ and $V_D^n=D,n\leq 0$.
	If $\ChF=\ChF_{W}$, then we define $V_D$ by  
	the dumb filtration $V_D^n (D^i)=D^i,n-i\leq 0$ and zero otherwise, i.e.\ $V^n_D D^*=D^{\geq n}$. 
	Then $d_D V_D^n\subset V_D^{n+1}$.
	In both cases $\mc{D}:=(D,V_D)\in \ChF$.
	It is straighforward to check that the functor commutes with $[-]$
	and is unique up to a natural isomorphism.
\end{proof}
Assume $D\in \Ch(\Vect_{\Bbbk})$ and $\mc{K}\in \ChF$.
\begin{dfn}\label{dfn:chf_tensor_product}
	The tensor product $\mc{K}\otimes D$ of $\mc{K}$ by $D$ is 
	by definition the tensor product $\mc{K}\otimes \mc{D}$,
 where $\mc{D}$ is the image of $D$ under the natural embedding
 from Proposition \ref{prop:vect_to_ch}.
\end{dfn}
Equivalently we have a natural isomorphism
$$\mc{K}\otimes D\simeq \oplus_i \mc{K}[-i]\otimes D^i.$$
where the RHS is a filtered complex with the differential equal to $d_{\mc{K}}[i]+d_D$ 
obeying the usual sign rule and the filtration
induced by $V_{\mc{K}[i]}$.

\iffalse
\begin{dfn_prop}
The above complex $\mc{K}\otimes D\in \ChF$ 
is naturally the tensor product of $\mc{K}$ by $D$.
\end{dfn_prop}
\begin{proof}
	We should check that the differential preserves the filtration.
For $d_{\mc{K}}[i]$ it is clear.
\begin{enumerate}
	\item If $\mc{K}\in \Ch_{I}$, then 
		$$V^t(\mc{K}[-i]\otimes D^i)^p=V^t \mc{K}^{p-i}\otimes D^i
			\os{d_D}{\longrightarrow}
			V^t \mc{K}^{p+1-(i+1)\otimes D^{i+1}}=
			V^t (\mc{K}[-(i+1)]\otimes D^{i+1})^{p+1}$$
	\item If $\mc{K}\in \Ch_{II}$, then 
		$$V^t(\mc{K}[-i]\otimes D^i)^p=V^{t-i}\mc{K}^{p-i}\otimes D^i
			\os{d_D}{\longrightarrow}
			V^{t+1}(\mc{K}[-(i+1)]\otimes D^{i+1})^{p+1}$$
\end{enumerate}
\end{proof}

\begin{dfn_prop}\label{prop:mhc_tensor}
	The above construction defines the \textit{tensor product} 
	of a mixed Hodge complex of sheaves $(\mc{K},F,W)$
	by a complex $D\in \Ch(\Vect_{\mb{Q}})$ given by
	$(\mc{K}\otimes D,F,W)$.
\end{dfn_prop}

\fi

\newpage
\section{Lattices}\label{sect:lattices}
Let $(L,\leq)$ be a poset. We start this section by recalling necessary
definitions from theory of lattices and then in Section \ref{sect:lattices:arrangement_poset}
give the definition of an arrangement $[X,L]$
over a topological space $X$ given by a presheaf 
$(L,\le)^{\op}\to 2^X$,
to which our 
algebraic model of Theorem \ref{thrm:main_lattices} can be applied. 
In Section \ref{sect:lattices:l_algebras} 
we introduce, under some conditions on $L$, the notion of an $(L,\leq)$-chain
algebra. Roughly speaking it is a dga graded by the vertices of $L$ with an additional 
differential that reflects the combinatorics of $(L,\le)$. 
This notion is different from what we call an $(L,\le)$-object, 
which is simply a presheaf on the category $(L,\le)$ (so we can speak about 
$(L,\le)$-complexes, etc.). 
We will construct basic operations such as
the tensor product/convolution of $(L,\le)$-algebra with an 
$(L,\le)$-chain algebra, totalization,
pullback, pushforward, and prove
the projection formula. 

In Section \ref{sect:lattices:os} we recall the definition of the 
Orlik-Solomon algebra $\OS(L)$ viewed as an $(L,\le)$-chain algebra.

In Section \ref{sect:lattices:mhc} we will see, in particular, that
the convolution $K*\OS(L)=\Tot(K\otimes \OS(L))$ of an 
$(L,\le)$-pure Hodge complex $K$ of weight $0$
with the $(L,\le)$-chain complex $\OS(L)$
defines two mixed Hodge complexes $(K*\Mu,F,W)$ and $(K*\OS(L), F, W')$, 
called the {\it natural} and the {\it rank} Hodge complex respectively.
In Lemma \ref{lemma:two_weight_filtrations} we will show that 
the corresponding mixed Hodge structures 
coincide.

The formalism below can be viewed as an attempt 
to create ``lattice homological algebra'' from scratch. 
These tools allow us to handle 
the homological combinatorics behind complexes of sheaves 
related to the arrangement, which in turn describes the homotopy type of the complement $U=X-\bigcup\limits^{x\in L}_{x>0}L_x$ in the complex algebraic case. 
Another perspective on the 
additive part of the story was developed in \cite{To}.
Our machinery is designed mostly to handle multiplicative questions, and it will be used in the 
main construction of the mixed Hodge diagram $\MVHD(L)$ 
in Section \ref{sect:compl:mvhc}.

\iffalse
From now on we fix a smooth arrangement $(X,Z_*)$. 
If $V_i$ is a collection of subvarieties in $X$, then 
for a multi-index $I$
we write $V_I=\bigcap\limits_{i\in I}V_i,V_\emptyset = X$ and 
$V^I=\bigcup\limits_{i\in I}V_i$.
\\
\
Put $U_i=X-Z_i$, then $U_I=X-Z^I$ and $U^I=X-Z_I$.
\fi
\
\subsection{Posets}
%Here $(L,\leq)$ denotes a poset.
\begin{dfn}
For a subset $S\subset L$ we write $S\leq t,t\in L$, and 
call $t$ an \textit{upper bound} for $S$,
if $s\leq t$ for all $s\in S$.
\end{dfn}
Dually we define an \textit{lower bound} of $S\subset L$.
\begin{dfn}
	An element $t\in S$ is \textit{maximal} in $S\subset L$ if
	for all $t'\in S$,
	$t'\geq t$ implies $t'=t$.
\end{dfn}
The set of maximal elements in $S\subset L$ 
is denoted by $\max(S)\subset L$. Dually we define $\min(S)\subset L$.

\begin{dfn}
	If $(L,\leq)$ is a poset and $S\subset L$,  then
	$t=\sup {S}\in L$ is called the \textit{supremum} of $S$, if 
	$t\geq S$ is an upper bound and 
	if for any upper bound $t'\geq S$
	we have $t'\leq t$.
	In other words, $\sup {S}$ is the lowest upper bound of $S$.
\end{dfn}
\begin{dfn}
	Denote $\os{\circ}{\sup{}} {S}\subset L$ to be the subset of 
	all upper bounds $t\geq S$ such that
	for all $t'\geq S$, $t\geq t'$ implies
	$t'=t$.
	In other words $\os{\circ}{\sup{}} S=\min\{t\in L|t\geq S\}$
	is the set of minimal upper bounds.
\end{dfn}
Dually we define $\inf$ and $\os{\circ}{\inf}$.
\begin{dfn}
	We say that $x$ \textit{covers} $y$ if 
	$x\geq y,x\neq y$ and there are no
	other elements between $x$ and $y$.
	Denote this relation by $x :>y$.
\end{dfn}

In the following we assume that all posets $(L,\leq)$ admit an infinum, 
since it is unique and we denote 
it by $0=\inf(L)\in L$.
In case $\sup(L)$ may not exists, we call such $(L,\leq)$ a \textit{local} poset. 
%All posets assumed to be local, if not stated otherwise.

\begin{dfn}
	The elements $\Atoms(L):=\{x\in L\mid x:>0\}$ are called the
	\textit{atoms} of $(L,\leq)$. 
\end{dfn}

\begin{dfn}
	A poset $(L,\leq)$ is \textit{graded} 
	if it is equipped with a strict order-preserving
	function $r\colon (L,\leq)\to (\mb{Z},\leq)$
	such that $x<:y$ implies $r(y)=r(x)+1$.
\end{dfn}
In this case we refer to $r$ as the \emph{grading} or the \emph{rank function} of $(L,\leq)$.
It is easy to check that a poset admitting a strict order-preserving function 
admits a grading.
Clearly, if a such $r$ exists, it is determined by the value 
$r(0)$. We will assume that $r(0)=0$.
Note that $a\in \Atoms(L)$ iff $r(a)=1$.
\
\begin{dfn}\label{dfn:lattice:interval}
For given $x,y\in (L,\leq)$,
the subset $L[x,y]:=\{t\in L\mid x \leq t\leq y\}\subset L$ is called
the \textit{interval} between $x$ and $y$ in $(L,\leq)$.
\end{dfn}

\begin{dfn}
	A local poset $(L,\leq)$ is a \textit{sup-lattice} 
	if for any $S\subset L$ 
	there is $\sup(S)$ and for each $x\in L$ there are $a_1,\ldots,a_n\in \Atoms(L)$ such that
	$x=\sup(a_1,\ldots,a_n)$. 
	%Similarly we define \textit{inf-lattice}.
	A \emph{lattice} is a poset which is both a sup- and inf- lattice.
	In each case for any $a,b\in L$ the operations
	$a\vee b:=\sup(a,b)$ and 
	$a\wedge b:=\inf(a,b)$ are well defined;
	they are called \emph{join} and \emph{meet} respectively.
\end{dfn}
\begin{rmrk}
	In other terminology this definition describes an \emph{atomistic sup-lattice}.
\end{rmrk}
%Note that the order of a lattice can be recovered from either of the operations 
%$\vee,\wedge$: e.g.\ we have $a\leq b$ iff $a\vee b=b$.
\begin{dfn}
	A local poset $(L,\leq)$
	is a \textit{local} (graded) sup-lattice, 
	if for any $x\in L$,
	$L[0,x]$ is a (graded) sup-lattice.
	If $L$ admits a grading, 
	then we define the grading $r$ on $L$ by 
	setting $r(x):=r(L[0,x])$, assuming $r(0)=0$.
\end{dfn}
Clearly the grading $r$ of a local sup-lattice $L$ 
satisfies $r(x)=r(y)+1$ if $x:> y$ and 
is uniquely determined by the condition $r(0)=0$.
We will denote by $x\vee_{[0,t]}y$ for $x,y\le t\in L$
the join taken in the sup-lattice $L[0,t]$.

%\begin{dfn}
%	The elements $\Atoms(L):=\{x\in L\mid r(x)=1\}$ are called the
%	\textit{atoms} of 
%	the graded local poset $(L,\leq)$. 
%\end{dfn}
In subsequel all posets, if not stated otherwise, will be
graded (local) atomistic sup-lattices, 
we refer to them as (local) sup-lattices.

\iffalse
\begin{dfn}\label{dfn:lattice:weakly_geometric}
	A graded sup-lattice $(L,\leq)$ is 
	called \textit{weakly geometric lattice}, if 
\begin{enumerate}
	\item For each $x\in L$ there is a decomposition 
		$x=a_1\vee\ldots \vee a_k$ for some $a_i\in \Atoms(L)$.
	\item For all $x,y\in L$ we have $r(x\vee y)\leq r(x)+r(y)$.
\end{enumerate}
\end{dfn}
\fi

\begin{dfn}\label{dfn:lattice:geometric}
	A graded lattice $(L,\leq)$ is called \textit{geometric lattice}, 
	if 
$r(x\vee y)+r(x\wedge y)\leq r(x)+r(y)$ for all $x,y\in L$.
\end{dfn}

Following \cite{JCW}, one defines \emph{locally geometric
lattices} by imposing the corresponding 
conditions on all intervals.
%We will work primarely with graded locally geometric and 
%weakly geometric lattices, 
%graded local posets and 
%graded geometric lattices.
\begin{dfn}
	Elements $x_i\le t\in L,i\leq n$ of 
	a graded local geometric geometric lattice
	$(L,\le)$ are 
	called \textit{independent} in $L[0,t]$,
	if $$r(\sup_{[0,t]}(x_1\vee\ldots \vee x_n))=
	\sum\limits^{i}_{1\leq i\leq n} r(x_i).$$
\end{dfn}
In this case every subset of independent elements is independent.
Elements 
$x_1,\ldots,x_n\le t$ are independent 
iff for all $k\leq n$ we have 
$\sup_{[0,t]}(x_1\vee \ldots \vee\widehat{x_k}\vee\ldots \vee x_n)<
	\sup_{[0,t]}(x_1\vee\ldots\vee x_n)$.
In particular for each $y\le t\in L$ there are $x_1,\ldots,x_r\le t\in \Atoms(L)$
such that $y=\sup_{[0,t]}(x_1\vee\ldots\vee x_r)$,
where $r=r(y)$.
%Another useful criterium in this case is that $x,y\in L$ 
%are independent iff $x\wedge y=0$.
In subsequel all posets will be assumed to be graded.

Assume $(L,\le),(L',\le)$ are local sup-lattices.
\begin{dfn}\label{dfn:lattices:homomorphism}
	An order preserving map  $f\colon (L,\leq)\to (L',\leq)$ 
	is a \textit{homomorphism of local sup-lattices} if
	\begin{enumerate}
		\item $f(0)=0$.
		\item $f(x\vee y)=f(x)\vee f(y)\in L'[0,f(t)]$ for 
			any $x,y\in L[0,t]$.
		\item $f^{-1}(L'[0,t])$ is an interval of $L$ for 
			all $t\in L'$.
		\iffalse
		\item $y:>x$ implies $f(y):>f(x)$ or $f(y)=f(x)$
		\fi
	\end{enumerate}
	In the case $(L,\le),(L',\le)$ are locally geometric lattices,
	then $f$ is a \emph{homomorphism of locally geometric lattices}
	if in addition the equality
	$f(x\wedge y)=f(x)\wedge f(y)$ holds for each sublattice $x,y\in L[0,t]$.
\end{dfn}

\begin{dfn}\label{dfn:lattices:locally_injective}
	Say a map $f\colon \Atoms(L)\ar \Atoms(L')$ is \emph{locally injective}, if
	for all $t\in L$ and $a,b\in \Atoms(L[0,t])$ we have $g(a)=g(b)$ iff $a=b$.
\end{dfn}
\begin{rmrk}\label{rmrk:lattices:geom_homomorphism_preserves_r}
	If $f\colon (L,\le)\ar (L',\le)$ 
	is a homomorphism of graded geometric lattices,
	then for all $t\in L$ and each independent set $x_i\in L[0,t]$, 
	elements $f(x_i)\in L'[0,f(t)]$ are independent.
	In particular $f$ preserves the rank function:
	$r(f(x))=r(x)$ for all $x\in L$ and the restriction
	$f\colon \Atoms(L)\ar \Atoms(L')$ is a well-defined map which 
	is locally injective.
\end{rmrk}

\begin{dfn}\label{dfn:lattices:contraction}
	An order preserving map $f\colon (L,\leq)\to (L',\leq)$ is 
	a \textit{contraction}, if
	 $y<:x$ implies $f(y)<:f(x)$ or $f(y)=f(x)$.
\end{dfn}
Equivalently, $f$ is contraction iff $r(f(x))\le r(x)$ for all $x\in L$.

\begin{dfn}
	A subposet $L'\subset L$ is called \textit{complete} if
	for all $x,y\in L'$ and $t\in L'$ such that $x\leq t\leq y$ we have
	$t\in L'$.
\end{dfn}
So the preimage of complete subposet under an order preserving
map is complete.
\begin{dfn}
	We call an order-preserving map $f\colon (L,\le)\ar (L',\le)$ \emph{complete},
	if $f(L)\subset L'$ is complete.
\end{dfn}
\begin{rmrk}\label{rmrk:I_commutes}
If $f$ is a homomorphism preserving the rank function, then $f$ is complete
iff for all $a'\in \Atoms(L')$ and $x\in L$ such that $a'<f(x)$ we have $a'\in f(\Atoms(L))$.
In other words for all $x\in L$ the natural inclusion 
$f(\{a\le x\mid a\in \Atoms(L)\})\subset \{a'\le f(x)\mid a'\in \Atoms(L')\}$
is surjective.
\end{rmrk}

\subsection{\texorpdfstring{$(L,\leq)$}{L}--objects}
\label{sect:lattices:l_algebras}
Let $(L,\leq)$ be a graded poset.
Fix a symmetric monoidal abelian category $\Bbbk-\mc{A}$
enriched over $\Bbbk$, e.g.\ $\Bbbk-\Sh_X$.
%Recall that $\ChF$ denotes either of the categories $\ChF_{F}=\ChF_{F}(\Bbbk-\mc{A})$, 
%$\ChF_{W}=\ChF_{W}(\Bbbk-\mc{A})$, $Ch$$$ (Definition \ref{dfn:chf}). 
%In this section by a complex we mean an object of $ChF$ (Definition \ref{dfn:chf}). 
The objects of $\ChF=\Ch(\Bbbk-\mc{A})$ will be referred to as \emph{complexes}.
In this section we will consider two main types of complexes graded by a poset $(L,\le)$
and study basic operations on them.

\begin{dfn}[$(L,\leq)$-object]
	An $(L,\leq)$-\textit{object} in a category $C$
	is a functor $F\colon (L,\leq)^{\op}\to C$.
	Its values are denoted by $F_s\in C,s\in L$.
	The corresponding morphisms $g^F_{yx}\colon F_x\to F_y,x\geq y\in L$
	are called the \textit{structure maps}.
	\iffalse
	If $C$ is additive then we put
	$F([x,y])=\bigoplus\limits_{t\mid x\leq t\leq y\in L}F_t$.
	\fi
\end{dfn}

\subsubsection{$(L,\le)$-algebras}
If $(L,\le)$ is a sup-lattice, then we can consider $-\vee-$
as a monoidal structure on the category $(L,\le)$.
Then an $(L,\le)$-\emph{algebra} in $\ChF$ is a lax 
monoidal functor
$((L,\le)^{\op},\vee^{\op})\ar (\ChF,\otimes)$.
Below we will explicitly describe $(L,\leq)$-algebras in the case when $(L,\le)$ is a local
sup-lattice.

Let $\mc{S}\subset \{(x,y,t)\mid x,y,t\in L\}$ be a subset.
We denote the corresponding indicator function by $\mc{S}^{t}_{xy}$, i.e.
$\mc{S}^{t}_{xy}\neq 0$ iff $(x,y,t)\in \mc{S}$.
We require  that for all $x,y,z,t\in L$ the following should should hold:
\begin{enumerate}
	\item $\mc{S}^{t}_{xy}=0$ if $t\not\in x\os{\circ}{\vee}y$.
	\item $\mc{S}^{t}_{xy}\neq 0$, then for each $x'\le x,y'\le y$ 
		and $t'=x\vee_{[0,t]} y$ 
		we have $\mc{S}^{t'}_{x'y'}\neq 0$.
	\item If $t\in \os{\circ}{\sup}(x,y,z)$, then 
		for $t_1= x\vee_{[0,t]}y,t_2=y\vee_{[0,t]}z$,
		 we have
		 $\mc{S}^{t_1}_{xy}\neq 0,\mc{S}^{t}_{t_1 z}\neq 0$
		iff $\mc{S}^t_{x t_2}\neq 0,\mc{S}^{t_2}_{yz}\neq 0$.
\end{enumerate}
Our main examples are the following:
\begin{enumerate}
	\item $\mc{S}=\{(x,y,t)\mid 
		t\in x\os{\circ}{\vee}y\}$.
	\item $\mc{S}=\{(x,y,t)\mid 
		t\in x\os{\circ}{\vee}y,r(t)=r(x)+r(y)\}$ 
		in the case of a local geometric lattice $(L,\le)$.
\end{enumerate}

For any $x'\leq x,y'\leq y$ there is a unique map
$$\gamma^{x'y'}_{xy}\colon \os{\circ}{\sup}(x,y)\to 
	\os{\circ}{\sup}(x',y')$$ 
such that $\gamma^{x'y'}_{xy}(t)\leq t$ for all $t\in \os{\circ}{\sup}(x,y)$. It is defined by
taking $\sup(x',y')$ in $L[0,t]$.
Note that $\mc{S}^t_{xy}\neq 0$ implies $\mc{S}^{t'}_{x'y'}\neq 0$.

\begin{dfn}
\label{dfn:L_algebra}
	An $\mc{S}$-\emph{partial}
	$(L,\leq)$-\emph{algebra}
	in $\ChF$ is a functor 
	$$A_{(-)}\colon (L,\leq)^{\op}\to \ChF$$ with morphisms
	$$m_{x y}\colon A_x\otimes A_y\to 
\bigoplus\limits^{t\in L}_{\mc{S}^{t}_{xy}\neq 0}A_t$$
	such that the following holds:
 %with components $m^t_{x,y}$ such that
	\begin{enumerate}
		\item $m$ is associative, i.e.

	\begin{equation}\label{eq:dfn:associativity}
		\sum\limits^{t_1\in L}_{\mc{S}^t_{t_1 z},\mc{S}^{t_1}_{x y}\neq 0}
		m^{t}_{t_1,z}\circ (m^{t_1}_{x y}\otimes \id)=
		\sum\limits^{t_2\in L}_{\mc{S}^{t}_{x t_2},
		\mc{S}^{t_2}_{yz}\neq 0}
	m^{t}_{x, t_2}\circ(\id\otimes m^{t_2}_{yz})
	\end{equation}
	for all $t\in \os{\circ}{\sup}(x,y,z)$. 
	Here $m^t_{x y}$ are the components of $m_{x y}$.

		\item $m$ is functorial, i.e.\ for 
 for all $x'\le x,y'\le y$ the following square
			commutes:
			\begin{equation}\label{eq:dfn:L_algebra_functoriality}
\begin{tikzcd}[column sep = large, row sep = large]
	A_x\otimes A_y\arrow[r,"m"]
	\arrow[d,swap,"g^A_{x'x}\otimes g^A_{y'y}"]&	
	%A_{x\os{\circ}{\vee}y}=
	\bigoplus\limits^{t\in L}_{\mc{S}^t_{xy}\neq 0}A_t 
		\arrow[d,"g^A_{\gamma(t),t}"]\\
	A_{x'}\otimes A_{y'}\arrow[r,swap,"m"]
		&
	%A_{x'\os{\circ}{\vee}y'}=
		\bigoplus\limits^{t'\in L}_{\mc{S}^{t'}_{x'y'}\neq0 }A_{t'}.
\end{tikzcd}
\end{equation}
\item $m$ is unital, i.e. we have a morphism $1_{\ChF}\ar A_0$ such that
	the induced morphism $$m(1_{\ChF},-)\colon A_x\ar A_x$$ is identity.
\end{enumerate}

	\iffalse
	In particular for all $x,y\in L$, objects 
	$F[x,y]:=\bigoplus\limits_{t\mid x\leq t\leq y\in L} F_t$ 
	are associative algebras in $C$.
	If $C$ is symmetric we say that 
	$F$ is commutative, if
	$F[0,x]$ are commutative.
	\fi
\end{dfn}
\begin{rmrk}
Note that in \eqref{eq:dfn:associativity} both sides contain at most
one term.
\end{rmrk}
\begin{dfn}
1. In the case $\mc{S}=\{(x,y,t)\mid t\in x\os{\circ}{\vee}y\}$ an $\mc{S}$-partial $(L,\le)$-algebra
will be called simply an $(L,\le)$-\emph{algebra}.

2. In the case $\mc{S}=\{(x,y,t)\mid t\in x\os{\circ}{\vee}y,r(t)=r(x)+r(y)\}$ and
$(L,\le)$ is locally geometric, an $\mc{S}$-partial $(L,\le)$-algebra
will be called a \emph{partial} $(L,\le)$-\emph{algebra}.
\end{dfn}

\begin{rmrk}
\iffalse
If $(L,\le)$ is a sup-lattice, then we can consider $-\vee-$
as a monoidal structure on the category $(L,\le)$.
In this case an $(L,\le)$-algebra in $\ChF$ is a lax 
monoidal functor
$((L,\le)^{\op},\vee^{\op})\ar (\ChF,\otimes)$.
\fi
	Thus an $(L,\leq)$-algebra $A_\bullet$ in $\ChF$ 
	is provided with a richer structure then that of
	an unital $(L,\leq)$-object in $\Mon^{\circ}(\ChF)$.
	Such object will be called 
	$(L,\le)$-monoid in $\ChF$.
\end{rmrk}
\begin{dfn}
	The \textit{totalization} of an $(L,\leq)$-object 
	$A_\bullet$ in $\ChF$ is
	$\Tot(A_\bullet)=\oplus_{x\in L} A_x\in \ChF$.
\end{dfn}

The associativity in the definition of $(L,\leq)$-algebra in $\ChF$
is equivalent to requiring that $\Tot(A)$ with a product 
given by $m^t_{xy}$ should be an associative algebra.

\subsubsection{$(L,\le)$-chain algebras}
Assume $(L,\leq)$ is a local poset.
We define an $(L,\le)$-\textit{filtered complex} $K$ in $\ChF$
as a 
\begin{dfn}
	An $(L,\leq)$-\textit{filtered complex} $K$ in $\ChF$  
	is a data formed by
	\begin{enumerate}
		\item a collection of objects $K_x\in \ChF$ 
			with the inner differential $d_x$
			for all $x\in L$,
		\item a collection of morphisms
	$\partial_{yx}\colon K_x\to K_y[1]$ for all pairs $y\le x\in L$,
	such that $\partial_{xx}=d_x$.
	\end{enumerate}
	such that
	$$\sum\limits^{y,x}_{y\le x}\partial_{yx}$$
	defines a differential on 
	$K[0,z]:=\bigoplus\limits^{t}_{t\leq z\in L}K_t$ for each $z\in L$.
	%i.e.\ $(\partial|_{[0,z]})^2=0$ where $\partial|_{[0,z]}$ is the restriction of 
	%$\partial$ to $K[0,z]$.
\end{dfn}
%Note that $\partial_{xx}^2=0$.
The morphisms $\partial_{yx}$ for $y<x$ and 
the differential $d=\sum \partial_{xx}$ 
are called the \textit{structure maps} of
$K$ and the \textit{inner differential} respectively.

Note that it is straightforward to define a \emph{morphism} between 
$(L,\le)$-objects (respectively $(L,\le)$-filtered complexes).

\begin{dfn}[Totalization]
	The \textit{totalization} of an $(L,\leq)$-filtered complex $K$
	is the complex $$\Tot(K)=\bigoplus\limits_{x\in L}K_x\in \ChF$$ with 
	the differential equal to the sum $\sum \partial_{yx}$.
\end{dfn}
%Notice that the definition is correct and $(d+\partial)^2=0$, since by definiton $\partial_{yx}d_x=-d_y\partial_{yx}$, 
%i.e.\ $[\partial,d]=\partial d+d \partial=0$.

\begin{rmrk}
Notice that the filtered complexes
$(K_x,V_{K_x})\in \ChF$ define 
an associated graded $(L,\leq)$-filtered complex 
$\Gr^\bullet_V K\in \Ch(\Bbbk-\mc{A})$.
In the case $\ChF=\ChF_{W}$ the structure maps 
$\Gr_W \partial_{yx}$ are \textit{trivial}
since $\partial_{yx}W^k(K_x)\subset W^{k}(K_y[1])=(W^{k+1}K_y)[1]$.
\end{rmrk}

\iffalse
If $L$ is graded, we make the following definition.
\begin{dfn}
	The \textit{range} of an $(L,\leq)$-filtered object $K$ is
$$\max_{x,y\mid \partial_{yx}\neq 0}{(r(x)-r(y))}\in \mb{Z}$$
\end{dfn}
\fi

Thus a 
$(L,\leq)$-filtered object is a $L$-graded complex with 
a perturbation of the differential which decreases the
$L$-grading.

%We are mostly interested in the case when $\partial_{yx}\neq 0$ 
%implies 
%$y<:x\in L$:
\begin{dfn}[$(L,\leq)$-chains]
	An $(L,\leq)$-\textit{chain object}
	is an $(L,\leq)$-filtered object $K$ such that:
	\begin{enumerate}
		\item $\partial_{yx}\neq 0$ only in case $y<:x$ or $y=x$.
		\item $\partial:=\sum\limits_{y<:x} \partial_{yx}$ 
			is a differential.
	\end{enumerate}
\end{dfn}
Thus an $(L,\le)$-chain object is an $L$-graded complex 
together with an additional
\emph{combinatorial differential} $\partial:=\sum\limits_{y<:x}\partial_{yx}$ 
such that $[d,\partial]=0$.

Assume $(L,\leq)$ is a local sup-lattice.
\begin{dfn}[$(L,\leq)$-filtered algebra]
\label{dfn:L_filtered_algebra}
	An $(L,\leq)$-\textit{filtered algebra} in $\ChF$
	is an $(L,\leq)$-filtered complex $K$ with
	a collection of morphisms:
	$$m_{x,y}\colon K_x\otimes K_y\to 
	\bigoplus\limits^t_{t\in x\os{\circ}{\vee}y}K_t,$$
	with components $m^t_{x,y}$ such that 
	$m$ is associative and obeys the Leibniz rule, 
	i.e. $\Tot(K)$ defines a monoid in $\ChF$ 
	with the differential 
	$\partial+d$.
\end{dfn}
%We are mostly interested in case of 
%$(L,\le)$-filtered algebras of range $1$, which we will cal
Similarly we define
$(L,\le)$-\emph{chain algebras}.  

%\begin{rmrk}
%We note that $(L,\le)$-filtered algebras are not the same as 
%$(L,\le)$-algebras defined above (Definition~\ref{dfn:L_algebra}).
%\end{rmrk}

\begin{dfn}
	An $(L,\leq)$-filtered algebra 
	is \textit{well-graded}, if 
	$L$ is graded and the multiplication satisfies
	$m^{t}_{xy}=0,t\in x\os{\circ}{\vee}y$ if 
	$r(t)\neq r(x)+r(y)$.
\end{dfn}

\subsubsection{Operations}
Clearly one has the notion of the \textit{tensor product} of 
two $(L,\leq)$-objects given by point-wise tensor product.
A direct check justifies the following definition.
\begin{dfn_prop}[Tensor product]\label{dfn:tensor_product}
	The \textit{tensor product} $A\otimes K$ of an $(L,\leq)$-complex 
	(algebra) $A$ and 
	an $(L,\leq)$-filtered complex (algebra) $K$ is an
	$(L,\leq)$-filtered complex (algebra) given by:
	\begin{enumerate}
		\item Components $(A\otimes K)_x=A_x\otimes K_x,x\in L$
		\item Structure maps $\partial^{A\otimes K}_{yx}:=
			g^A_{yx}\otimes \partial^K_{yx}$ for each $y<: x\in L$,
			as a morphism 
			$(A\otimes K)_x\to (A\otimes K)_{y}$.
	\end{enumerate}
\end{dfn_prop}
\begin{rmrk}\label{rmrk:lattices:product_with_partial}	
%One can see that if $K$ is an $(L,\leq)$-chain algebra and 
%$A$ is an $(L,\leq)$-algebra, then 
%$A\otimes K$ is an $(L,\leq)$-chain algebra.
Note that in case $K$ is well-graded 
$(L,\le)$-chain algebra while $A$ is a partial algebra,
then $(A\otimes K,d)$ equipped with the inner differential 
$d$ only, is $(L,\le)$-chain algebra.
It is not true in general that $(A\otimes K,d+\partial)$
is an $(L,\le)$-chain algebra. 
It turns out, however, that in certain cases 
$(A\otimes K,d+\partial)$ admits 
a perturbation of the multiplication satisfying the Leibniz rule.
We will return to this point in \ref{sect:cech_model}.
%Note that the above definition makes sense in case $K$ is well-graded, 
%while $A$ is only a \emph{partial} $(L,\le)$-algebra, 
%in this case $A\otimes K$ is a well-graded $(L,\le)$-filtered algebra.
\end{rmrk}

According to Proposition \ref{prop:vect_to_ch} we can tensor $A$ by an 
$(L,\leq)$-filtered complex in $\Ch(\Vect_{\Bbbk})$. 
The result will again be an $(L,\leq)$-filtered complex.

\
The following notion is clear.
\begin{dfn_prop}[Restriction]
	If $L\subset L'$ is a complete sublattice and $K'$ is 
	a $(L',\leq)$-filtered complex,
	then the \textit{restriction} $K'|_{L}$ is a
	$(L,\leq)$-filtered complex.
\end{dfn_prop}
	\
	Let $K$ be an $(L,\leq)$-chain complex
	and assume $f\colon (L,\leq)\to (L',\leq)$ is a contraction.
	\
	Let
	$$(f_*K)_{x'}=\Tot(K|_{f^{-1}x'}).$$
	Define the structure maps:
	$$\partial^{f_*K}_{y'x'}\colon (f_*K)_{x'}\to (f_*K)_{y'}$$ for 
	$y'<x'$
	equal to 
	$\sum\partial_{yx}$
	where the sum is over all pairs $y<x$ such that $f(x)=x',f(y)=y'$.
	
\begin{prop}
	The objects $(f_*K)_{x'},x'\in L'$ naturally form an 
	$(L',\leq)$-chain complex.
	There is a natural isomorphism $\Tot(f_*K)\simeq \Tot(K)$ of complexes. 
	In addition, if $f$ is a homomorphism and
	$K$ is $(L,\leq)$-chain algebra, then $f_*K$ is
	$(L',\leq)$-chain algebra.
\end{prop}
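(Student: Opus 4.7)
The plan is to decompose the differential on $\Tot(K)$ along the fibers of $f$ and observe that, because $f$ is a contraction, the resulting pieces organize themselves into the data required for an $(L',\leq)$-chain complex on $L'$. The isomorphism of totalizations will then be tautological, and the multiplicative statement will follow from a short computation ensuring that the multiplication of $\Tot(K)$ respects the coarser $L'$-grading provided $f$ is a homomorphism.

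First I would verify the chain-complex axioms for $f_*K$. Given $y <: x$ in $L$, since $f$ is a contraction we have either $f(y) = f(x)$ or $f(y) <: f(x)$. Therefore each summand $\partial_{yx}$ of the total differential $\partial + d = \sum_{y \le x} \partial_{yx}$ on $\Tot(K)$ falls into exactly one of three groups: (i) $y = x$, contributing to $d_x$ inside $K_x \subset (f_*K)_{f(x)}$; (ii) $y <: x$ with $f(y) = f(x) = x'$, contributing again to the inner differential of $(f_*K)_{x'}$; (iii) $y <: x$ with $f(y) <: f(x)$, which is exactly the summand defining the proposed $\partial^{f_*K}_{y'x'}$ for $y' = f(y)$, $x' = f(x)$. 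Writing $D = \partial + d$, the identity $D^2 = 0$ on $\Tot(K)$ decomposes according to pairs of target-source fibers in $L'$ and yields precisely the relations needed for $\partial^{f_*K} = \sum_{y' \le x'} \partial^{f_*K}_{y'x'}$ to satisfy $[\partial^{f_*K}, \partial^{f_*K}] = 0$. Since by construction $\partial^{f_*K}_{y'x'} = 0$ unless $y' <: x'$ or $y' = x'$, this gives $f_*K$ the structure of an $(L',\leq)$-chain complex.

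The statement $\Tot(f_*K) \simeq \Tot(K)$ is then immediate by direct computation:
\[
\Tot(f_*K) \;=\; \bigoplus_{x' \in L'} (f_*K)_{x'} \;=\; \bigoplus_{x' \in L'}\bigoplus_{x \in f^{-1}(x')} K_x \;=\; \bigoplus_{x \in L} K_x \;=\; \Tot(K),
\]
and by the decomposition above this identification carries the total differential $\partial^{f_*K} + d^{f_*K}$ to $\partial + d$.

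Finally I would address the algebra case. Assuming $f$ is a homomorphism of local sup-lattices and that $K$ is an $(L,\leq)$-chain algebra, the multiplication transported to $\Tot(f_*K)$ is the same as on $\Tot(K)$, so the monoid axioms and the compatibility with the total differential are automatic. What has to be checked is that this multiplication respects the grading by $L'$ in the sense required by Definition \ref{dfn:L_filtered_algebra}, i.e.\ that for $x, y \in L$ and $t \in x \os{\circ}{\vee} y$ one has $f(t) \in f(x)\os{\circ}{\vee} f(y)$ in $L'$. This is the only genuinely nontrivial step. Minimality of $t$ above $x,y$ implies $x \vee_{[0,t]} y = t$, and by clause (2) of the homomorphism axiom $f(t) = f(x) \vee_{[0,f(t)]} f(y)$; any $s' \in L'$ with $f(x), f(y) \le s' < f(t)$ would violate this join identity in $L'[0,f(t)]$, showing $f(t) \in f(x) \os{\circ}{\vee} f(y)$. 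I expect this minimality argument to be the main (small) obstacle; once it is in place, the components $m^{t'}_{x'y'}$ of the pushed-forward multiplication are well-defined and the $(L',\leq)$-chain algebra axioms for $f_*K$ follow by regrouping the corresponding identities on $\Tot(K)$.
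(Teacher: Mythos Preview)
Your argument is correct and follows the paper's strategy: identify $\Tot(f_*K)\simeq\Tot(K)$ and read the $(L',\le)$-chain axioms off the fiberwise decomposition of $D^2=0$, using the contraction property to force $\partial^{f_*K}_{y'x'}=0$ unless $y'<:x'$. The paper verifies $[d',\partial']_{y'x'}=0$ by an explicit component calculation and treats the algebra case via the interval condition on $f^{-1}(L'[0,t'])$, whereas you separate $(d')^2$, $[d',\partial']$, $(\partial')^2$ by the rank grading and check $f(t)\in f(x)\os{\circ}{\vee}f(y)$ directly from the join-preserving axiom---minor variations on the same route.
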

\begin{proof}
Each $(f_*K)_{x'}$ is a totalization. It is a complex
with its own inner differential $d'_{x'}$ given by:
$$d'_{x'}=\sum\limits^x_{x\in f^{-1}x'}d_x+
	\sum\limits^{y,x}_{y<x\in f^{-1}x'}\partial_{yx}=:d_{x'}+
	\partial_{x'}.$$ 

Let $\partial'=\partial^{f_*K}$ and $d'=\sum d'_{x'}$.
By the construction,
$$\left(\bigoplus\limits_{x'\in L'}(f_*K)_{x'},d'+\partial'\right)=
\left(\bigoplus\limits_{x\in L}K_x,d+\partial\right).$$

Thus $(d'+\partial')^2=0$. To show that $\partial'$ are the
structure maps of an $(L,\leq)$-chain complex,
it is enough to prove 
$[d',\partial']_{y'x'}=0$ for all $y'<x'$. Here and below $f_{yx}=f|_{yx}$ denotes
the $yx$-component of a morphism $f$ with the source and the target 
components given by $x,y$ respectively.
The restriction of $[d',\partial']_{y'x'}=
d'_{y'}\partial'_{y'x'}+\partial'_{y'x'}d'_{x'}$ onto its $yx$-component
for $f(y)=y',f(x)=x'$ is equal to
\begin{equation}\label{eq:yx_comp}
\left(d_y\partial_{yx}+\sum_{z}\partial_{yz}\partial_{zx}\right)+
\left(\partial_{yx}d_x+\sum_w \partial_{yw}\partial_{wx}\right)=
\sum_{z}\partial_{yz}\partial_{zx}+
\sum_w \partial_{yw}\partial_{wx}
\end{equation}
where $z$ runs through $\{z\mid y<z<x,f(z)=y'\}$ and
$w$ through $\{w\mid y<w<x,f(w)=x'\}$.
Then \eqref{eq:yx_comp} is equal to
$$
\partial^2|_{yx}-\sum_t \partial_{yt}\partial_{tx}=
-\sum_t \partial_{yt}\partial_{tx}
$$
where $t$ runs through the set 
\begin{equation}\label{eq:yx_comp_run_set}
	\{t\mid y<t<x, y'<f(t)<x'\}.
\end{equation}

Since $K$ has range $1$ and $f$ is a contraction,
$r(x')>r(y')+1$ implies $r(x)>r(y)+1$ and hence $\partial'_{y'x'}= 0$. 
Thus it is enough to cover the case $x':>y'$.
In this case \eqref{eq:yx_comp_run_set} is empty, therefore 
\eqref{eq:yx_comp}  vanishes and hence $f_*K$ is an $(L,\leq)$-chain complex.

If $K$ is $(L,\leq)$-chain algebra and $f$ is a homomorphism of
local lattices, then 
$$\Tot\left(f_*(K)|_{L[0,t]}\right)=\Tot\left(K|_{f^{-1}L[0,t]}\right)$$
is a dg-algebra with the multiplication 
preserving the $L$-grading. It defines the desired $(L',\leq)$-chain algebra 
structure on $f_*K$.
\end{proof}
The above proposition justifies the following
\begin{dfn}[Pushforward]\label{dfn:lattice:direct_image}
The \textit{pushforward} of an $(L,\leq)$-chain complex 
(respectively chain algebra) $K$ under a
contraction (respectively contraction homomorphism) 
$f\colon (L,\leq)\to (L',\leq)$ 
is the $(L',\leq)$-chain complex (respectively chain algebra)
$f_*K$.
\end{dfn}
\
Since the composition of two contraction maps is a contraction,
the pushforward map satisfies $$f_*\circ g_*=(f\circ g)_*$$ i.e.\ 
the construction is functorial.
Note that if $\pi\colon (L,\leq)\to(pt,\leq)$ is the projection to the point, then
$$\Tot(K)=\pi_*K.$$

\begin{dfn}[Pullback]\label{dfn:lattice:pullback}
	If $f\colon (L,\leq)\to (L',\leq)$ is an order preserving map
	and $A'$ is an $(L',\leq)$-object, then
	its \textit{pullback} under $f$ is the $(L,\leq)$-object $f^*A'$
	with values $(f^*A')_{x}:=A'_{f(x)}$ and the obvious
	structure maps.
\end{dfn}
The following is clear.
\begin{prop}
	If $f$ above is a homomorphism 
	and $A'$ is an $(L',\leq)$-algebra 
	then $f^*A'$ has
	a natural structure of an $(L,\leq)$-algebra.
	Similarly, if $f$ is a homomorphism of locally geometric lattices and
	$A'$ is partial, then $f^*A'$ is partial.
\end{prop}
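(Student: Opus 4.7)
The plan is to pull back the multiplication on $A'$ along $f$ componentwise. For $x,y \in L$ and $t \in x \os{\circ}{\vee} y$, set
$$m^{f^*A'}_{xy}\big|_t := m^{A'}_{f(x)f(y)}\big|_{f(t)} \colon A'_{f(x)} \otimes A'_{f(y)} \to A'_{f(t)}.$$
The main point to verify is that this component of $m^{A'}$ actually exists, i.e.\ that $f(t)$ lies in $f(x) \os{\circ}{\vee} f(y)$. Since $t \in x \os{\circ}{\vee} y$ implies $x \vee_{[0,t]} y = t$, condition~(2) of Definition~\ref{dfn:lattices:homomorphism} yields $f(t) = f(x \vee_{[0,t]} y) = f(x) \vee_{[0,f(t)]} f(y)$. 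If there were $s'$ with $f(x), f(y) \le s' < f(t)$, then $f(x) \vee_{[0,f(t)]} f(y) \le s' < f(t)$, a contradiction; hence $f(t)$ is a global minimal upper bound of $f(x)$ and $f(y)$, as required.

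With this in hand, the algebra axioms of Definition~\ref{dfn:L_algebra} transfer from $A'$ to $f^*A'$. Associativity for $t \in \os{\circ}{\sup}(x,y,z)$ follows from associativity of $m^{A'}$ applied to $(f(x), f(y), f(z))$ with target $f(t)$, via the intermediate elements $t_1 = x \vee_{[0,t]} y$ and $t_2 = y \vee_{[0,t]} z$: condition~(2) again gives $f(t_1) = f(x) \vee_{[0,f(t)]} f(y)$ and $f(t_2) = f(y) \vee_{[0,f(t)]} f(z)$, which are precisely the intermediate elements in the associativity diagram for $A'$. Functoriality under the structure maps \eqref{eq:dfn:L_algebra_functoriality} follows because the diagram for $f^*A'$ at $(x',x,y',y)$ is, by construction, the pullback along $f$ of the corresponding diagram for $A'$ at $(f(x'),f(x),f(y'),f(y))$. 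Unitality uses $f(0) = 0$ to transport the unit $1_{\ChF} \to A'_0$ to a unit $1_{\ChF} \to (f^*A')_0$.

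For the partial case we must additionally verify that whenever $\mathcal{S}^t_{xy} \neq 0$ carries the rank condition $r(t) = r(x) + r(y)$, the target $f(t)$ satisfies $r(f(t)) = r(f(x)) + r(f(y))$. Remark~\ref{rmrk:lattices:geom_homomorphism_preserves_r} ensures that any homomorphism of locally geometric lattices preserves the rank function, so $r(f(t)) = r(t) = r(x) + r(y) = r(f(x)) + r(f(y))$ and the partial multiplication is well-defined. The main obstacle in the proof is the well-definedness argument of the first paragraph; the key combinatorial input is condition~(2) of Definition~\ref{dfn:lattices:homomorphism}, which forces joins inside every finite interval to commute with $f$ and hence the image of a minimal upper bound to remain minimal. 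Once this is secured, the remainder of the verification is a routine transport of axioms along the functor $f^*$.
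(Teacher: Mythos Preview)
Your proof is correct and spells out exactly the verification the paper omits: the paper gives no proof at all, merely asserting ``The following is clear'' before stating the proposition. Your argument---using condition~(2) of Definition~\ref{dfn:lattices:homomorphism} to show that $f$ carries minimal upper bounds to minimal upper bounds, and then transporting the axioms componentwise---is the natural one and fills the gap cleanly.
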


\begin{rmrk}
	If $f$ preserves the rank function, then 
	the pushforward of a well-graded 
	$(L,\leq)$-chain algebra is well-graded. 
	Similarly a pullback of a partial $(L,\le)$-algebra 
	is partial if $r(f(x))=r(x)$ for all $x\in L$.
	We defined partial algebras only for locally geometric lattices, 
	so if $f$ is a homomorphism of locally geometric lattices,
	 $r(f(x))=r(x)$ holds automatically by 
	Remark \ref{rmrk:lattices:geom_homomorphism_preserves_r}.
	Note that there is no general recipe for pulling back
	$(L,\leq)$-chain objects even for homomorphism of 
	lattices, e.g.~the restriction is well defined only
	for complete sublattice.
\end{rmrk}

\begin{lemma}\label{lemma:lattice:on_endomorphism}
	If $\phi\colon L\to L$ is an order preserving map such that
	$\phi(x)\geq x$ for all $x\in L$, and $A$ is an $(L,\leq)$-object,
	then there is a natural homomorphism 
	$u_\phi\colon \phi^*A\to A$.
\end{lemma}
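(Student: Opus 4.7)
The plan is to construct $u_\phi$ componentwise and then verify naturality using only the fact that $A$ is a functor on $(L,\leq)^{\op}$. Since $(\phi^*A)_x = A_{\phi(x)}$ by Definition \ref{dfn:lattice:pullback} and the hypothesis $\phi(x)\geq x$ provides a canonical structure map $g^A_{x,\phi(x)}\colon A_{\phi(x)}\to A_x$ in $A$, the only sensible candidate is
\[
(u_\phi)_x := g^A_{x,\phi(x)}\colon (\phi^*A)_x \longrightarrow A_x.
\]
I would take this as the definition.

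To verify that the collection $\{(u_\phi)_x\}_{x\in L}$ assembles into a morphism of $(L,\leq)$-objects, it suffices to check that for every $y\leq x$ the square
\[
\begin{tikzcd}[column sep = large]
A_{\phi(x)} \arrow[r, "g^A_{x,\phi(x)}"] \arrow[d, swap, "g^A_{\phi(y),\phi(x)}"] & A_x \arrow[d, "g^A_{yx}"] \\
A_{\phi(y)} \arrow[r, swap, "g^A_{y,\phi(y)}"] & A_y
\end{tikzcd}
\]
commutes, where the left vertical map is the structure map of $\phi^*A$ (well-defined since $\phi$ is order preserving, so $\phi(y)\leq \phi(x)$). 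Because $y\leq x\leq \phi(x)$ and $y\leq \phi(y)\leq \phi(x)$ are both chains in $(L,\leq)$, functoriality of $A$ gives
\[
g^A_{yx}\circ g^A_{x,\phi(x)} \;=\; g^A_{y,\phi(x)} \;=\; g^A_{y,\phi(y)}\circ g^A_{\phi(y),\phi(x)},
\]
which is exactly the commutativity required.

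There is no substantial obstacle here: the content of the lemma is entirely in unwinding the definition of the pullback, and the verification reduces to a single application of the contravariant functoriality of $A$ to the chain $y\leq\phi(y)\leq\phi(x)$ (equivalently $y\leq x\leq\phi(x)$). Naturality of the construction in $\phi$ and in $A$ (which is not explicitly demanded in the statement but will matter later) is an immediate consequence of the same chain of equalities, so no further work is needed.
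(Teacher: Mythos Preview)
Your proof is correct and follows exactly the same approach as the paper: define $(u_\phi)_x$ to be the structure map $g^A_{x,\phi(x)}$ and verify the required square commutes by functoriality of $A$. The paper is simply terser, asserting the commutativity without writing out the chain of equalities, whereas you spell it out explicitly.
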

\begin{proof}
	Define $u_\phi\colon (\phi^*A)_x=A_{\phi(x)}\to A_x$ as 
	the corresponding structure map. Then for any pair 
	$x'\leq x$ the following diagram commutes:
	\begin{tikzcd}
		\phi^*(A)_x\arrow[r,"u_\phi"]\arrow[d]& 
			A_x\arrow[d]\\
		\phi^*(A)_{x'}\arrow[r,"u_\phi"] &
			A_{x'}
	\end{tikzcd}.
\end{proof}

Let $p\colon (L,\leq)\to (L',\leq)$ be a contraction homomorphism. 
Consider an $(L',\leq)$-complex $A'$ and
an $(L,\leq)$-chain complex $K$.
We have
\begin{lemma}[Projection formula]\label{lemma:lattice:projection_formula}
	There is a natural
	isomorphism of $(L,\leq)$-chain complexes:
	$$p_*(p^*A'\otimes K)\simeq A'\otimes p_*K.$$
	This isomorphism is an isomorphism of $(L,\le)$-chain algebras  
	in case when $A'$ is an 
	$(L',\le)$-algebra
	and $K$ is an $(L,\le)$-chain algebra.
\end{lemma}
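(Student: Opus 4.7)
The plan is to exhibit the obvious componentwise identification as an isomorphism of $(L',\le)$-chain complexes and, in the algebra case, of $(L',\le)$-chain algebras. No perturbation is required; the entire proof unwinds the definitions of $p_*$, $p^*$, and $\otimes$, and the main task is to confirm that on each fiber of $p$ the inner differential, the covering structure maps $\partial_{y'x'}$, and (when present) the multiplication all agree.

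First I would construct the graded isomorphism. For $x'\in L'$, since $p(x)=x'$ throughout the fiber we can factor
\[
(p_*(p^*A'\otimes K))_{x'}=\bigoplus_{x\in p^{-1}(x')}A'_{p(x)}\otimes K_x=A'_{x'}\otimes\bigoplus_{x\in p^{-1}(x')}K_x=A'_{x'}\otimes(p_*K)_{x'},
\]
and call this identification $\Phi_{x'}$. Functoriality in $x'$ is immediate: the structure map $g^{A'}_{y'x'}\otimes g^{p_*K}_{y'x'}$ on the right is, fiberwise, the sum of the structure maps on the left.

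Next I would verify compatibility with the differentials and the covering structure maps. On a summand $A'_{x'}\otimes K_x$, the pulled-back structure map is $g^{p^*A'}_{yx}=g^{A'}_{p(y)p(x)}$, so whenever $y$ and $x$ lie in the same fiber it equals $\mathrm{id}_{A'_{x'}}$. The inner differential of $(p_*(p^*A'\otimes K))_{x'}$ therefore reduces to $d_{A'_{x'}}\otimes 1+1\otimes\bigl(\sum_{x}d_{K_x}+\sum_{y<:x\text{ in }p^{-1}(x')}\partial^K_{yx}\bigr)$, which is precisely $d_{A'_{x'}}\otimes 1+1\otimes d_{(p_*K)_{x'}}$. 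For $y'<:x'$, grouping the pairs $y<:x$ with $p(y)=y'$, $p(x)=x'$ gives $\partial^{p_*(p^*A'\otimes K)}_{y'x'}=g^{A'}_{y'x'}\otimes\sum_{(y,x)}\partial^K_{yx}=g^{A'}_{y'x'}\otimes\partial^{p_*K}_{y'x'}$, which matches Definition~\ref{dfn:tensor_product} applied to $A'\otimes p_*K$.

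For the multiplicative statement I would invoke the homomorphism hypothesis. Axiom~(2) of Definition~\ref{dfn:lattices:homomorphism} implies that if $t\in x\os{\circ}{\vee}y$ then $p(t)\in p(x)\os{\circ}{\vee}p(y)$ with $p(x)\vee_{[0,p(t)]}p(y)=p(t)$, so the multiplication on $p^*A'\otimes K$ respects the fibration by $p$. On components it acts by $m^{A'}_{p(x)p(y)}|_{p(t)}\otimes m^K_{xy}|_{t}$, and grouping the $(x,y,t)$-components by their images $(x',y',t')$ under $p$ recovers $m^{A'}_{x'y'}|_{t'}\otimes m^{p_*K}_{x'y'}|_{t'}$, which is exactly the multiplication prescribed for $A'\otimes p_*K$. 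Associativity, unitality, and the Leibniz rule with respect to $d+\partial$ are then inherited on both sides from the known $(L,\le)$-chain algebra structure on $p^*A'\otimes K$. The proof presents no real obstacle, only careful bookkeeping with fiber indices; the one essential use of the hypothesis on $p$ is axiom~(2), which is precisely what guarantees fiber-compatibility of the multiplication.
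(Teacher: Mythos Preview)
Your proof is correct and follows exactly the same approach as the paper: write down the obvious componentwise identification and verify compatibility with structure maps and multiplication. The paper's proof is in fact much terser than yours---it records the identification $(p_*(p^*A'\otimes K))_{x'}=\bigoplus_{x\in p^{-1}x'}A'_{x'}\otimes K_x=(A'\otimes p_*K)_{x'}$, then declares the compatibility with $\partial_{y'x'}$ ``straightforward to check'' and the multiplicativity ``clear''---so you have simply carried out the bookkeeping the paper omits.
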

\begin{proof}
	We have
	$$(p_*(p^*A'\otimes K))_{x'}=
	\bigoplus\limits^x_{x\in p^{-1}x'}(p^*A)_x\otimes K_{x}=
	\bigoplus\limits^x_{x\in p^{-1}x'}A_{x'}\otimes K_x=
	(A'\otimes p_*K)_{x'}.$$
	Then it is straighforward to check that
	this identification commutes with the structure maps
	$$\partial'_{y'x'}\colon (p_*(p^*A'\otimes K))_{x'}\to 
	(p_*(p^*A'\otimes K))_{y'}$$
	and
	$$\partial''_{y'x'}\colon 
		(A'\otimes p_*K)_{x'}\to (A'\otimes p^*K)_{y'}$$
	The multiplicativity is clear.
	\iffalse
	In both cases are equal to $g_{y'x'}\otimes \partial_{y'x'}$,
	where $g$ and $\partial$ a structure maps of $A$ and $D$
	respectively.
	\fi
\end{proof}
%\begin{dfn}
%	A \emph{morphism} $[L,A]\ar [L',A']$ is a data given by 
%	$f\colon (L,\le)\ar (L',\le)$ and a morphism $f^*A'\ar A$
%\end{dfn}

Let $\pi:(L,\leq)\to (pt,\leq)$ be the trivial morphism.
Consider an $(L,\leq)$-object (algebra) $A$ 
	in $\ChF$ and an $(L,\leq)$-chain object 
	(algebra) $K$ in $\ChF$ or in $\Ch(\Vect_{\Bbbk})$ (thanks to Proposition \ref{prop:vect_to_ch}).
	
\begin{dfn}[Convolution]\label{dfn:convolution}
	The \textit{convolution} $A*K$ of $A$ and $K$ is 
	the complex (algebra) equal to
	$$\pi_*(A\otimes K)=\Tot(A\otimes K)\in \ChF.$$
\end{dfn}
Thus $\Tot(K)$ is the convolution of $K$ with the 
trivial algebra 
(i.e.~ the algebra with the value at all vertices equal to 
the unital object $\mathrm{1}_C\in \ChF$), and
$$A*K=\Tot(A\otimes K)=
	\left(\bigoplus\limits_{x\in L}A_x\otimes K_x,d+\partial\right).$$
Assume $K$ is an $(L,\leq)$-chain complex in $\ChF$.
\begin{dfn}[Rank filtration]\label{dfn:rank_filtration}
	The complex $\Tot(K)\in \Ch(\Bbbk-\mc{A})$ admits 
	an increasing filtration 
$W'_{p}\Tot(K):=\bigoplus\limits^{x\in L}_{r(x)\leq p}K_x$.
	The filtration $W'$ is called the \textit{rank filtration}.
\end{dfn}
%If $K$ is an $(L,\leq)$-chain algebra and $(L,\leq)$ is weakly geometric, 
%then the rank filtration $W'$
%is multiplicative. 
If $K$ is well-graded, then $\Tot(K)$ becomes
a bicomplex with $W'$ being a split
multiplicative filtration.
Note that for a contraction order-preserving map $g\colon (L,\le)\ar (L',\le)$
an $(L,\le)$-chain complex $K$ we have a natural $W'$-filtered morphism
\begin{equation}
	(\Tot(K),W')\ar (\Tot(g_* K),W').
\end{equation}

\subsection{\texorpdfstring{$(L,\leq)$}{L}--mixed Hodge complexes}
\label{sect:lattices:mhc}
In the sequel we use the lower bullet to indicate 
the fact that complexes we consider are graded by the elements of a lattice $L$.

By Remark \ref{rmrk:ChF} there is a functor 
$$\MHC(X)\ar \ChF_{F,W}(\Sh_X)\ar \ChF_{F}(\mb{C}-\Sh_X)\times \ChF_{W}(\mb{Q}-\Sh_X)$$
which associates to an MHC $(K,W,F)$ the corresponding pair 
$((K_{\mb{C}},F), (K_{\mb{Q}},W))$.
We define
an $(L,\leq)$-object $(K,F,W)_\bullet$ in $\MHC(X)$
as a pair of $(L,\leq)$-objects $(K_\mb{C},F)_\bullet$ in 
$\ChF_{F}(\mb{C}-\Sh_X)$ 
and $(K_{\mb{Q}},W)_\bullet$ in $\ChF_{W}(\mb{Q}-\Sh_X)$ respectively which are 
compatible in the sense of Definition \ref{dfn:mhc}.
Similarly we define $(L,\leq)$-chain complex in $\MHC(X)$.

%***DOBAVITI LEMMU PRO g_* MHC***

Assume $(L,\leq)$ is a graded local sup-lattice, e.g.\ that it is
given by an arrangement.
Here we prove two lemmas which allow us to produce mixed Hodge complexes 
of sheaves starting from an $(L,\leq)$-chain object in $\MHC(X)$.

Assume $(A,F,W)_\bullet$ is an $(L,\leq)$-object in $\MHC(X)$ and 
$K$ is an $(L,\leq)$-chain object in $\Ch(\Vect_{\mb{Q}})$.
%First we note the existence of the natural Hodge structure on
%convolution.
\begin{lemma}\label{lemma:convolution_is_hodge}
	If $A$ and $K$ are as above, then
	$A\otimes K$ is an $(L,\leq)$-chain object in $\MHC(X)$.
	The convolution $(A*K,F,W)$ is
	a mixed Hodge complex of sheaves.
\end{lemma}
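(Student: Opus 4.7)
My plan is to reduce to Proposition \ref{prop:mixed_cone} (the mixed cone construction), applied iteratively along a rank stratification of $(L,\leq)$. For each $x\in L$, I first equip $A_x\otimes K_x$ with the bifiltration obtained from Definition \ref{dfn:chf_tensor_product} (using the embedding of Proposition \ref{prop:vect_to_ch} to regard $K_x$ as an object of $\ChF$). Writing $K_x\simeq\bigoplus_i K_x^i[-i]$, this gives
\[
W^p(A_x\otimes K_x)^n=\bigoplus_i W^{p-i}A_x^{n-i}\otimes K_x^i,\qquad F^p(A_x\otimes K_x)^n=\bigoplus_i F^pA_x^{n-i}\otimes K_x^i,
\]
and hence $\Gr^p_W(A_x\otimes K_x)\simeq\bigoplus_i(\Gr^{p-i}_WA_x)[-i]\otimes K_x^i$. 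Each summand is pure of weight $(i-p)+(-i)=-p$, using the shift convention of Definition \ref{dfn:chf} and the fact that tensoring with a $\mb{Q}$-vector space preserves purity, so $(A_x\otimes K_x,F,W)\in\MHC(X)$. The structure maps $\partial^{A\otimes K}_{yx}=g^A_{yx}\otimes\partial^K_{yx}$, viewed as degree-$+1$ morphisms $A_x\otimes K_x\to(A_y\otimes K_y)[1]$, preserve both filtrations because $g^A_{yx}$ is an MHC morphism by hypothesis and $\partial^K_{yx}$ is a map of vector space complexes (hence trivially Hodge-compatible). This shows $A\otimes K$ is an $(L,\leq)$-chain object in $\MHC(X)$.

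For the convolution, I stratify $\Tot(A\otimes K)$ by rank: set $B_q=\bigoplus_{r(x)=q}(A_x\otimes K_x)$ and $F_p=\bigoplus_{q\le p}B_q$. Since $\partial^{A\otimes K}_{yx}\ne 0$ only for $y=x$ or $y<:x$, each successive piece fits into a mixed cone $F_p=[B_p\xrightarrow{\partial}F_{p-1}[1]]$, where the map is assembled from the rank-one structure maps. Unrolling gives the iterated cone
\[
A*K\simeq[B_N\to[B_{N-1}[1]\to[\,\cdots\,\to B_0[N]]]],\qquad N=\max_{x\in L}r(x),
\]
which by Definition \ref{dfn:iterated_cone} and iterated application of Proposition \ref{prop:mixed_cone} is a mixed Hodge complex of sheaves.

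The main obstacle is the bookkeeping of shifts: one must verify that the cone formulas $W^p[X\to Y]=W^pX\oplus W^{p-1}Y$ and $F^p[X\to Y]=F^pX\oplus F^pY$, combined with the conventions $W[1]^p=W^{p+1}$ in $\ChF_W$ and the trivial shift on $F$ in $\ChF_F$, reproduce exactly the naive tensor-product/direct-sum bifiltration on $\Tot(A\otimes K)$. Once this single-step compatibility is checked, the induction proceeds without further difficulty, and the conclusion that $(A*K,F,W)\in\MHC(X)$ follows formally.
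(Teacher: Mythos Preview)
Your argument is correct, but it takes a different route from the paper's proof. You stratify by rank and realize $A*K$ as an iterated mixed cone $[B_N\to[B_{N-1}[1]\to\cdots]]$, then invoke Proposition~\ref{prop:mixed_cone} inductively; the filtration bookkeeping you outline does indeed show that the cone filtrations agree with the naive direct-sum $(F,W)$ on $\Tot(A\otimes K)$, so the induction goes through.

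The paper instead argues directly from the definition of an MHC: since $\partial_{yx}$ maps $W^k(K'_x)$ into $W^k(K'_y[1])=W^{k+1}(K'_y)$, the structure maps vanish on $\Gr_W$. Hence $\Gr^p_W\Tot(A\otimes K)=\bigoplus_x\Gr^p_W(A_x\otimes K_x)$ is a direct sum of pure Hodge complexes of weight $-p$, and the MHC axiom is immediate. This is essentially a one-line observation, whereas your approach requires tracking shifts through iterated cones. The paper's argument also isolates the key structural fact (triviality of $\Gr_W\partial_{yx}$) that is reused in the next lemma comparing $W$ with the rank filtration $W'$; your cone presentation obscures this. On the other hand, your approach has the virtue of reducing everything to a single standard tool (Proposition~\ref{prop:mixed_cone}), and it would generalize more readily to situations where one wants to keep track of the intermediate filtered pieces $F_p$.
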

\begin{proof}
	Let $K'=A\otimes K$. By definition, we have 
	$\partial_{yx}\colon K'_x\to K'_y[1]$.
Notice that the filtered complexes
$(K_x,W_{K_x})$ in $\ChF_{W}(\Sh_X)$ define 
an associated graded $(L,\leq)$-filtered complex 
$\Gr^\bullet_W K'\in \Ch(\Sh_X)$.
Then the structure maps 
$\Gr_W \partial_{yx}$ are \textit{trivial}
since $\partial_{yx}W^k(K'_x)\subset W^{k}(K'_y[1])=(W^{k+1}K'_y)[1]$.
In particular $\Gr^n_W K$ is an $(L,\leq)$-pure Hodge complex of weight $-n$ 
(recall our indexing conventions in Section~\ref{sec:notation_etc}).
So $(K',F,W)_\bullet$ is an $(L,\leq)$-object in $\MHC(X)$.

In particular $(\Tot(K'),F,W)$ defines an object in $\MHC(X)$.
\end{proof}
\begin{dfn}
The mixed Hodge complex $(A*K,F,W)$ is called
the \textit{natural} mixed Hodge complex.
\end{dfn}

Now assume $(A,F,W)_\bullet$ is an $(L,\leq)$-pure Hodge 
complex of weight $0$ and
$K$ is an $(L,\leq)$-chain object in $\Ch(\Vect_{\mb{Q}})$ 
such that $H^*(K_x)$ is concentrated in degree $-r(x)$.
Recall that $A*K$ admits the rank filtration $W'$
(definition \ref{dfn:rank_filtration}).
\begin{lemma}\label{lemma:two_weight_filtrations}
	For $A$ and $K$ as above,
	$(A*K,F,W')$ is a mixed Hodge complex of sheaves.
	The filtrations induced by $W$ and $W'$ coincide on
	$\mb{H}^*(A*K)$.
\end{lemma}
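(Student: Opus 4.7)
My plan is to handle the two assertions separately.

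For the first part, that $(A*K,F,W')$ is a mixed Hodge complex of sheaves, I would compute the associated graded directly. Since the combinatorial differential $\partial_{yx}$ strictly decreases $W'$-rank, it vanishes on the associated graded, leaving
$$\Gr^p_{W'}(A*K)=\bigoplus\limits^{x\in L}_{r(x)=-p}A_x\otimes K_x$$
with only the inner differential $d_A+d_K$. The K\"unneth formula together with the hypothesis that $H^*(K_x)$ is concentrated in degree $-r(x)=p$ gives
$$H^n(A_x\otimes K_x)\simeq H^{n-p}(A_x)\otimes H^p(K_x),$$
which is a pure Hodge structure of weight $n-p$, since $A_x$ is pure of weight $0$. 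Hence $\Gr^p_{W'}(A*K)$ is a pure Hodge complex of weight $-p$, so $(A*K,F,W')\in\MHC(X)$ by Definition \ref{dfn:mhc}. The required $F$-degeneration is inherited from each $A_x$.

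For the second assertion, my plan is to compare the two spectral sequences via a common intermediate. Both $d_K$ and $\partial$ strictly increase the dumb filtration $W$-degree on $K$, so only $d_A$ survives on $\Gr^p_W$, giving $E_1^{p,q}(A*K,W)=\bigoplus_{x\in L} H^q(A_x)\otimes K_x^p$. Assembled over $p$, the complex $(E_1^{*,q}(W),d_1)$ is precisely the convolution $H^q(A)*K$, where $H^q(A)$ is the $(L,\leq)$-object with values the weight-$q$ pure Hodge structures $H^q(A_x)$; thus $E_2^{p,q}(W)=H^p(H^q(A)*K)$. On the other hand, a K\"unneth computation using the hypothesis on $K_x$ yields
$$E_1^{p,q}(A*K,W')=\bigoplus\limits^{x\in L}_{r(x)=-p}H^q(A_x)\otimes H^{-r(x)}(K_x),$$
which is exactly the $E_1$-term of the rank filtration applied to the complex $H^q(A)*K$. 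Since both $(A*K,F,W)$ and $(A*K,F,W')$ are mixed Hodge complexes, both spectral sequences degenerate at $E_2$ by Theorem~\ref{thrm:ss_degeneration_mhc}, and their $E_2$-pages agree as subquotients of $\mb{H}^*(A*K)$.

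The main obstacle will be promoting the equality of the associated gradeds to an equality of the filtrations themselves on $\mb{H}^*(A*K)$. Both $W$ and $W'$ are paired with the \emph{same} Hodge filtration $F$ on the complexified complex, and the two resulting mixed Hodge structures on each $\mb{H}^n(A*K)$ share the same pure weight-$q$ subquotients identified above. I expect to close the gap by applying the Deligne splitting of Proposition~\ref{prop:deligne_splitting} to both candidate MHSs: since $F$ and the graded pure Hodge pieces coincide, the two Deligne bigradings $V^{i,j}$ of $\mb{H}^n(A*K)\otimes\mb{C}$ must agree, forcing the two weight filtrations to be equal. An equivalent, more hands-on route would be to show that the hypothesis on $K_x$ allows any cocycle representative lying in $W^p(A*K)^n$ to be modified by a coboundary into one lying in $W'^p(A*K)^n$, using a vertex-wise splitting of the quasi-isomorphism $K_x\simeq H^{-r(x)}(K_x)[r(x)]$ to ``push down'' the $K_x$-components of higher internal degree.
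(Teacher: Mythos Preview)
Your treatment of the first assertion is correct and essentially identical to the paper's.

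For the second assertion, your identification of the two $E_1$-pages with the convolution $H^q(A)*K$ is fine, but the closing step via the Deligne splitting has a genuine gap. The claim ``since $F$ and the graded pure Hodge pieces coincide, the two Deligne bigradings must agree'' is false in general: the Deligne bigrading $I^{p,q}$ is built from both $F$ \emph{and} $W$, and two MHSs on the same $(V,F)$ with abstractly isomorphic graded pieces can have different weight filtrations. For instance take $V=\mb{Q}^2$, $F^1=0$, $F^0=\mb{C}e_1$, $F^{-1}=V_{\mb{C}}$; then for \emph{any} rational line $L\neq\mb{Q}e_1$ the filtration $W_{-2}=W_{-1}=L\subset W_0=V$ defines an MHS with $\Gr^W_0\cong\mb{Q}(0)$ and $\Gr^W_{-2}\cong\mb{Q}(1)$. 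Different choices of $L$ give distinct weight filtrations with the same $F$ and the same graded pure pieces. So knowing the abstract $E_2$-pages match does not force $W=W'$ on $\mb{H}^*(A*K)$.

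The paper sidesteps this entirely. Instead of comparing the two spectral sequences, it studies the $W$-filtration \emph{on} ${}_{W'}E_1^{p',n-p'}=\mb{H}^n(\Gr^{p'}_{W'}(A*K))$. This group carries an MHS via the MHC $(\Gr^{p'}_{W'}(A*K),F,W)$; but by part~1 the Hodge filtration on it is already $(n-p')$-conjugate, i.e.\ the structure is \emph{pure} of weight $n-p'$. The weight filtration of a pure Hodge structure has a single nonzero graded piece, so $\Gr^p_W({}_{W'}E_1^{p',n-p'})=0$ unless $p=p'$. The three-filtrations lemma (Theorem~\ref{lemma:three_filtrations}) then transports this concentration to ${}_{W'}E_2=\Gr_{W'}H^*$, giving $\Gr^p_W\Gr^{p'}_{W'}H^n=0$ for $p\neq p'$. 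Since both $(H^n,F,\Dec W)$ and $(H^n,F,\Dec W')$ are MHSs, the resulting filtered inclusion is strict, forcing $W=W'$. The crucial input you are missing is this purity leverage: it is the \emph{relation} between $W$ and $W'$ on the $E_1$-page, not an abstract comparison of $E_2$-pages, that pins down the filtration.

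Your option~(b) could in principle be pushed through --- it would yield one inclusion $W^p H^n\subset W'^p H^n$, and strictness of MHS morphisms then gives equality --- but as written it is only a sketch, and the paper's purity argument is both shorter and more conceptual.
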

\begin{proof}
1.
	First we shall prove that $\left(\Gr^p_{W'}(A*K),F\right )$ is
	a pure Hodge complex
	of weight $-p$.
	Note that 
	$$\Gr^p_{W'}(A*K)=\bigoplus\limits_{r(x)=-p}^{x\in L}A_x\otimes K_x$$
	is a sum of complexes, and each term is the tensor product 
	$A_x\otimes K_x$ of complexes. Thanks to the natural quasi-isomorphism 
	$$K_x \sim H^{-r(x)}(K_x)[r(x)]$$ we have
	$\mb{H}^n(A_x\otimes K_x)\simeq \mb{H}^{n-p}(A_x)\otimes H^{p}(K_x)$.
	Since $A_x$ has weight $0$, 
	the Hodge filtration $F$ is $(n-p)$-conjugate
	on $\mb{H}^n(\Gr^p_{W'}(A*K))$, 
	thus $(A*K,F,W')$ is a mixed Hodge complex.

2.
	Unfortunately we do not have a direct comparsion map between 
	$W$ and $W'$ on the level of complexes.
	To prove that $W$ and $W'$ coincide on 
	$H^*:=\mb{H}^*(A*K)$ 
	is to prove $$\Gr^p_W \Gr^{p'}_{W'}H^*=0$$ unless $p= p'$.
	Note that the induced filtration $W$ on 
	$\Gr^p_{W'} H^n={}_{W'}E^{pq}_2$
	coincides with $W_{dir}$ on ${}_{W'}E^{pq}_2$.
	by lemma \ref{lemma:three_filtrations}.
	Namely, $W_{dir}$ on 
	$E^{p',n-p'}_2(A*K,W')=\Gr^{p'}_{W'}(H^n)$
	is equal to $W_{ind}$ if $d_1$ is 
	$W_{dir}$-strict on $E_1(A*K,W')$.
	Thus it is enough to show that 
	$${}_{W}\Gr^p E^{p',n-p'}_1(A*K,W')=0$$ 
	unless $p=p'$. 
	The Hodge filtration $F$ on 
	$$V^n:=E^{p',n-p'}_1(A*K,W')=\mb{H}^n(\Gr^{p'}(A*K,W'))$$ is 
	$(n-p')$-conjugate by the previous remarks.
	So, since $(\Gr^{p'}_{W'}(A*K),F,W)$ is a mixed Hodge complex
	and the corresponding mixed 
	Hodge structure
	$(V^n,F,\Dec W)$ admits non-trivial weights
	only in degree $n-p'$, the group 
	$$\Gr^{\Dec\ W'}_{n-p} V^n=
	\Gr^{p-n}_{\Dec W'} V^n=\Gr^p_{W'}V^n$$
	vanishes unless $p'=p$ (see remark \ref{rmrk:decalage}).
	\iffalse
	But $E^{p',n-q'}_1(A*K,W')=H^n(\Gr^{p'}(A*K,W'))$ is the sum
	$\bigoplus\limits_{x:r(x)=-p'}H^n(A_x\otimes K_x)\simeq 
	\bigoplus\limits_{x:r(x)=-p'}H^{n-p'}(A_x)\otimes H^{p'}(K_x)$.
	Since $(A_x,F,W)$ is pure of weight $0$, the complex 
	$A_x\otimes K_x$ is pure of weight $p'$ and
	the claim follows.
	\fi
\end{proof}

\begin{dfn}\label{dfn:rank_hs}
	The mixed Hodge complex $(A*K,F,W')$
	is called the \textit{rank} Hodge complex.
\end{dfn}
Hence the natural and the rank mixed Hodge structures 
$(\mb{H}^*(A*K),F,\Dec W)$,
$(\mb{H}^*(A*K),F,\Dec W')$ coincide.

Assume $K$ is an $(L,\le)$-chain of MHC such that $K_x$ is pure of weight $-r(x)$.
\begin{lemma}
	Given an order-preserving map $g\colon (L,\le)\ar (L',\le)$ such that
	$r(g(x))=r(x)$ for all $x\in L$,
	the direct image $g_*K$ is an $(L',\le)$-chain of MHC such that $(g_*K)_{x'}$
	is pure of weight $-r(x')$.
\end{lemma}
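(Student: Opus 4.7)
The plan is essentially a bookkeeping verification, so I would organize it as follows.

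First, I would observe that the hypothesis $r(g(x))=r(x)$ forces $g$ to be a contraction: for $y<:x$ we have $r(y)+1=r(x)$, so $r(g(y))+1=r(g(x))$, and combined with $g(y)\le g(x)$ this yields $g(y)<:g(x)$ (and cannot collapse). Consequently the earlier proposition guarantees that $g_*K$ is a well-defined $(L',\le)$-chain complex in $\ChF_{F,W}$, with $(g_*K)_{x'}=\bigoplus_{x\in g^{-1}(x')}K_x$ and structure maps $\partial'_{y'x'}=\sum_{g(x)=x',\,g(y)=y'}\partial_{yx}$. So the only thing left is to upgrade this from a chain of (bi)filtered complexes to a chain of mixed Hodge complexes with the stated purity.

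Next I would verify the purity of each $(g_*K)_{x'}$. Because every $x\in g^{-1}(x')$ satisfies $r(x)=r(x')$, each summand $K_x$ is, by hypothesis, a pure Hodge complex of weight $-r(x')$. Purity is preserved under finite direct sums: the weight filtration on $\bigoplus_x K_x$ is the direct sum of the weight filtrations, so $\Gr^{-n}_W(g_*K)_{x'}=\bigoplus_x \Gr^{-n}_W K_x$; taking hypercohomology commutes with finite direct sums, and a direct sum of pure Hodge structures of weight $n-r(x')$ is pure of weight $n-r(x')$. The same argument on the complex side gives $E_1$-degeneration of the Hodge-to-de-Rham spectral sequence. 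Hence $(g_*K)_{x'}$ is a pure Hodge complex of weight $-r(x')$.

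Finally I would check that each structure map $\partial'_{y'x'}\colon (g_*K)_{x'}\to (g_*K)_{y'}[1]$ is a morphism of mixed Hodge complexes. By hypothesis each $\partial_{yx}$ is a morphism in $\MHC(X)$ (this is built into the definition of an $(L,\le)$-chain in $\MHC(X)$), and the required sum respects the filtrations $F$ and $W$ coordinate-wise; on the rational side one uses the fixed zig-zag quasi-isomorphisms from the $K_x$'s to assemble the required zig-zag for the direct sums, which is automatic since these are finite biproducts in a $\mb{Q}$-linear category. The associativity / chain relation $[d,\partial']=0$ was already established in the cited proposition. Putting these pieces together, $g_*K$ is an $(L',\le)$-chain in $\MHC(X)$ with $(g_*K)_{x'}$ pure of weight $-r(x')$, which is exactly the claim. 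The only mildly delicate point, and the one I would write out most carefully, is that the pullback of the zig-zag comparison $K_{\mb{Q}}\otimes\mb{C}\zgqis K_{\mb{C}}$ to the direct sum remains a filtered quasi-isomorphism; this is formal from additivity.
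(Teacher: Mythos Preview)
Your proposal is correct and follows the same approach as the paper. The paper's proof is a single sentence: ``It is enough to note that $(g_*K)_{x'}$ is a direct sum of pure MHC's of weight $-r(x')$,'' which is precisely your second step; you simply unpack the surrounding verifications (that $g$ is a contraction so the pushforward proposition applies, that the fiber $g^{-1}(x')$ has no internal structure maps since all its elements have equal rank, and that the comparison zig-zags pass to finite direct sums) which the paper leaves implicit.
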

\begin{proof}
	It enough to note that $(g_*K)_{x'}$ is a direct sum of pure MHC's of weight $-r(x')$. 
\end{proof}
\subsection{Orlik-Solomon algebras}
\label{sect:lattices:os}
In this section we assume $(L,\leq)$ to be a local graded lattice.
Following Orlik-Solomon's work \cite{OS} we define:
\begin{dfn}[OS-complex]\label{dfn:os_complex}
	The OS-complex $\mc{M}$ (over $\Bbbk$) of a local graded lattice
	$(L,\leq)$ is an $(L,\leq)$-chain complex in $\Ch(\Bbbk-\Mod)$
        such that the following is true:
	\begin{enumerate}
		\item $\mc{M}_0=\Bbbk[0]$.
		\item $\mc{M}_s$ is a free $\Bbbk$-module sitting 
                in degree $\ -r(s)$.
		\item $\Tot(\mc{M}|_{[0,s]})$ is acyclic for 
			all $s>0\in L$.
			In other words, 
			the negatively graded complex 
			$$\bigoplus\limits_{t\leq s}\mc{M}_t$$
			with $\partial$ defined by:
			$$\partial v_y=\sum\limits^x_{y:>x} \partial_{xy}v_y$$
			is an acyclic complex for each $s>0\in L$.
	\end{enumerate}
\end{dfn}

\begin{rmrk}
The OS-complex $\mc{M}$ is the categorification of 
the M\"obius function $\mu$ of $L$: 
$$\mu[0,s]=(-1)^{r(s)}\rk_{\Bbbk} \mc{M}_s.$$
\end{rmrk}
\iffalse
\begin{prop}\label{prop:os_complex_unique}
	If local graded poset $(L,\leq)$ admits an OS-complex 
	$\mc{M}$, then it is unique up to a 
	canonical isomorphism.
\end{prop}
\begin{proof}
The claim appears in \cite[Lemma 2.2]{LOO} and 
moreover provides the existence of an OS-complex
under stronger assumptions on $(L,\le)$. 

In fact the proof requires only the rank function $r$.
Namely we let $\Mu_0=\mb{Z}[0]$.
Assume $\Mu_x$ is defined for all $x$ with $r(x)<n$, i.e.
complexes $\Mu[0,x]$ are acyclic for all such $x>0$.
Take $z$ with $r(z)=n$ and
consider complex
$\Mu_{<z}:=\bigoplus\limits_{x<z}\Mu_x$ with differential $\sum\limits_{y<:x}\partial_{xy}$.
By the construction $\Mu_{<z}$ is negatively graded and is acylic in degrees 
$>-r(z)+1$. Note that 
$H^{-r(z)+1}(\Mu_{<z})[r(z)-1]\subset \bigoplus\limits_{x<:z}\Mu_x$
is free $\mb{Z}$-module. We set $\Mu_z:=H^{-r(z)+1}(\Mu_{<z})[r(z)]$. 
The components 
the natural
inclusion
$\Mu_z \subset \bigoplus\limits_{x<:z}\Mu_x$ 
define the structure maps $\partial_{xz}$ for all $x<:z$.
\end{proof}
\fi
Assume $(L,\le)$ is a
locally graded poset.
We will denote by $[L,\Mu]$ the data given by a poset $(L,\le)$ and
an OS-complex $\Mu$.

We have the following categorical characterization of the OS-complex $\Mu$:
\begin{prop}\label{prop:os_complex_categorically}
	Given $[L,\Mu]$, for an $(L,\le)$-chain complex $K$ such that $K_x\in \Ch^{\ge -r(x)}$,
there is a natural isomorphism  $$\Hom(K,\Mu)\ar[\sim]\Hom_{\Ch(\Bbbk-\Mod)}(K_0,\Mu_0).$$
\end{prop}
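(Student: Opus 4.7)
The plan is to construct the inverse of the restriction map $f\mapsto f_0$ by induction on the rank function of $L$. Given a chain map $g\colon K_0\to \Mu_0$, I will produce components $f_x\colon K_x\to \Mu_x$ for each $x\in L$, beginning with $f_0:=g$. Since $\Mu_x$ is concentrated in degree $-r(x)$ and $K_x\in\Ch^{\ge -r(x)}$ by hypothesis, a chain map $f_x$ is equivalent to a single $\Bbbk$-linear map $K_x^{-r(x)}\to\Mu_x$, the chain-map conditions being vacuous; and the compatibility $f_y\circ\partial_{yx}=\partial^\Mu_{yx}\circ f_x$ for $y<:x$ becomes an equation living in degree $-r(x)$. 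Since projecting a map into $\bigoplus_{y<:x}\Mu_y$ onto each summand $\Mu_y$ is lossless, all of these compatibilities at level $x$ collapse into the single identity
$$\iota_x\circ f_x \;=\; \sigma_x\qquad\text{as maps }K_x^{-r(x)}\longrightarrow \bigoplus_{y<:x}\Mu_y,$$
where $\iota_x:=\sum_{y<:x}\partial^\Mu_{yx}$ and $\sigma_x:=\sum_{y<:x}f_y\circ\partial_{yx}|_{K_x^{-r(x)}}$.

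The existence and uniqueness of $f_x$ will follow from the acyclicity of $\Tot(\Mu|_{[0,x]})$. Its lowest-degree piece is $\Mu_x$ in degree $-r(x)$, with next differential $\iota_x$ and subsequent differential $\delta_x:=\sum_{z<:y<:x}\pm\partial^\Mu_{zy}$. Acyclicity at degree $-r(x)$ forces $\iota_x$ to be injective, yielding uniqueness of the lift; acyclicity at degree $-r(x)+1$ gives $\im\iota_x=\ker\delta_x$, so existence reduces to verifying $\delta_x\circ\sigma_x=0$. Using the inductive hypothesis $\partial^\Mu_{zy}\circ f_y=f_z\circ\partial_{zy}$ already established for $r(y)<r(x)$, I compute
$$\delta_x\circ\sigma_x \;=\; \sum_{z<:y<:x}\pm\,\partial^\Mu_{zy}\circ f_y\circ\partial_{yx} \;=\; \sum_{z} f_z\Big(\sum_{y:\,z<:y<:x}\pm\,\partial_{zy}\circ\partial_{yx}\Big) \;=\; 0,$$
the inner sum vanishing because it is the $(z,x)$-component of $\partial_K^2$, which is zero by the definition of an $(L,\le)$-chain complex.

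Once the $f_x$ are assembled, checking that $f\mapsto f_0$ is a two-sided inverse of the construction is routine: the compatibility with each individual $y<:x$ is recovered by projecting the sum equation to the $\Mu_y$-component, and the initial case $x=0$ tautologically returns $f_0=g$. Naturality in $K$ is automatic from the uniqueness clause: given $\phi\colon K\to K'$ and $g'\colon K'_0\to\Mu_0$, both $f'\circ\phi$ and the inductive extension of $g'\circ\phi_0$ agree at the level-$0$ restriction and hence coincide everywhere. I do not anticipate any real obstacle; the only care required is in the sign conventions when unpacking $\Tot(\Mu|_{[0,x]})$. Conceptually, the statement merely articulates the universal property of the OS-complex as a Koszul-type acyclic resolution of $\Bbbk[0]$ along $(L,\le)$.
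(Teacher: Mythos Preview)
Your proposal is correct and follows essentially the same approach as the paper: both proceed by induction on the rank, use the acyclicity of $\Tot(\Mu|_{[0,x]})$ to obtain the unique lift $f_x$ through the injective map $\iota_x=\sum_{y<:x}\partial^\Mu_{yx}$, and invoke the hypothesis $K_x\in\Ch^{\ge -r(x)}$ to make the chain-map condition on $f_x$ vacuous. The paper is terser---it merely notes that the image of $\partial\colon K_y\to\bigoplus_{x<:y}K_x[1]$ is closed under the combinatorial differential of $\Tot(\Mu_{<y})$---whereas you spell out the verification $\delta_x\circ\sigma_x=0$ explicitly; but the argument is the same.
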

\begin{proof}
It is enough to extend any $\phi_0\colon K_0\to \Mu_0$ to 
a morphism $\phi_\bullet\colon K\ar \Mu$ of
$(L,\le)$-chain complexes.
Assume $K_x\ar \Mu_x$
for any $x$ with $r(x)<r$ is given.
Let $y\in L$ be such that $r(y)=r$.
We have a morphism of complexes $\phi_{<y}\colon \Tot(K_{<y })\ar \Tot(\Mu_{<y})$
where $\Tot(K_{<y})=\oplus_{t<y}K_t$.
In the following diagram
\begin{equation}
\begin{tikzcd}
K_y\arrow[r,dashed,"\phi_y"]\arrow[d,"\partial"]& \Mu_y\arrow[d,"\partial",hook]\\
\oplus_{x<:y}K_x[1]\arrow[r]& \oplus_{x<:y}\Mu_x[1],
\end{tikzcd}
\end{equation}
by the definition of the $(L,\le)$-chain complex $K$,
the image of the left vertical arrow in $\Tot(K_{<y})$ is 
closed with respect to
the combinatorial differential.
Since $\Mu_{\le y}$ is acyclic, 
there is a unique $\phi_y\colon K_y\ar \Mu_y$ making the above diagram commutative.
By the assumption $K_y\in \Ch^{\ge -r(y)}$, hence $\phi_y$
commutes with the inner differential.
\end{proof}

\begin{dfn}\label{dfn:os_algebra}
	An OS-algebra of a local sup-lattice $(L,\leq)$ is
	an OS-complex $\mc{M}$ with a structure of an $(L,\leq)$-chain
	algebra. 
\end{dfn}
By dimension reasons the multiplication $m^t_{xy}\colon \Mu_x\otimes \Mu_y\ar \Mu_t$ in an 
OS-algebra $\Mu$ is non-zero only on independent components, i.e.\ $m^t_{xy}=0$
if $t\not\in x\os{\circ}{\vee}y$ or $r(t)\neq r(x)+r(y)$.
In other words $\Mu$ is a well-graded $(L,\le)$-chain algebra.

\begin{thrm}[Orlik-Solomon; {\cite[\S 2]{OS}}]
\label{thrm:os_existence}
	If $(L,\leq)$ is a locally geometric lattice, then 
	there is a natural OS-algebra 
	$\OS(L)$ over $\mb{Z}$ with underlying lattice $(L,\le)$.
\end{thrm}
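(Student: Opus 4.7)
The plan is to construct $\OS(L)$ by the classical Orlik--Solomon generators-and-relations recipe, promoted to an $(L,\leq)$-chain algebra via the obvious $L$-grading by joins. Over $\mb{Z}$, I start with the free graded exterior algebra $\Lambda=\Lambda(e_a\mid a\in \Atoms(L))$ with each $e_a$ placed in cohomological degree $-1$, and endow it with the unique degree $+1$ derivation $\partial$ determined by $\partial e_a=1$ for all atoms. A monomial $e_S:=e_{a_1}\wedge\cdots\wedge e_{a_k}$ is assigned the $L$-grade $\sup(S)$ whenever $S$ admits a common upper bound (so that local geometricity provides the join inside some $L[0,t]$), and $L$-grade ``undefined'' otherwise. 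I then set $\OS(L):=\Lambda/\mc{I}$, where $\mc{I}$ is the ideal generated by all $e_S$ for $S$ with no common upper bound, together with $\partial(e_S)$ for each dependent subset (one with an upper bound $t$ but $r(\sup_{[0,t]}S)<|S|$). Both the cohomological and the $L$-grading descend to $\OS(L)$, as does the differential $\partial$, and the resulting component $\OS(L)_x$ is canonically the top-degree piece of the classical OS-algebra of the geometric lattice $L[0,x]$.

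The next step is to verify the axioms of Definition~\ref{dfn:os_complex}. The identifications $\OS(L)_0=\mb{Z}[0]$ and ``$\OS(L)_x$ is free over $\mb{Z}$, concentrated in degree $-r(x)$'' both reduce to the classical nbc-basis theorem of \cite{OS} applied to the geometric lattice $L[0,x]$. For each covering relation $y<:x$, the exchange axiom for $L[0,x]$ guarantees that removing one atom from an independent generating set with join $x$ produces an independent set whose join is covered by $x$; this shows that $\partial$ on $\OS(L)_x$ decomposes as $\sum_{y<:x}\partial_{yx}$, providing the structure maps. Acyclicity of $\Tot(\OS(L)|_{[0,s]})$ for $s>0$ is the classical fact that the OS-algebra of a nonzero geometric lattice is contractible. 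I would prove it by fixing any atom $a\leq s$ and letting $h$ denote left multiplication by $e_a$; the Leibniz rule immediately gives $\partial h + h\partial = \id$, so $\id$ is nullhomotopic.

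The multiplicative structure is inherited from $\Lambda$: the wedge product induces $m^t_{xy}\colon \OS(L)_x\otimes \OS(L)_y\to \OS(L)_t$ which is nonzero precisely when $t\in x\os{\circ}{\vee}y$ and $r(t)=r(x)+r(y)$, exhibiting $\OS(L)$ as a well-graded partial $(L,\leq)$-chain algebra in the sense of Definition~\ref{dfn:L_algebra}; associativity, the functoriality square~\eqref{eq:dfn:L_algebra_functoriality}, and the Leibniz rule for $\partial$ are all inherited directly from $\Lambda$. Naturality in morphisms of locally geometric lattices follows from sending atoms to atoms, since by Remark~\ref{rmrk:lattices:geom_homomorphism_preserves_r} such a morphism preserves independence and hence respects both the ideal $\mc{I}$ and the $L$-grading. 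The main obstacle is the freeness/rank statement for $\OS(L)_x$: although classical, it requires the nbc-basis argument from \cite{OS} and cannot be shortcut by purely formal means; once that is in hand, the remaining properties follow by direct computation in $\Lambda$.
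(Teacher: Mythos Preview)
Your proposal follows the same route as the paper: the Orlik--Solomon quotient $\Lambda\langle e_a\rangle/\mc{J}$ with $\mc{J}$ generated by $\partial$ of dependent monomials, the Koszul differential, and the contracting homotopy $[\partial,\,e_a\wedge-]=\id$ for acyclicity of $\Tot(\OS(L)|_{[0,s]})$.

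The one place where the paper is more careful is the passage from geometric to \emph{locally} geometric lattices. In a genuinely local $(L,\leq)$ a set $S\subset\Atoms(L)$ may have several incomparable minimal upper bounds, so your assignment of a single $L$-grade ``$\sup(S)$'' to the monomial $e_S$ is not well-defined, and the global quotient $\Lambda/\mc{I}$ does not decompose as $\bigoplus_{x\in L}\OS(L)_x$. The paper avoids this by \emph{defining} $\OS(L)_x:=\OS(L[0,x])_x$ (the top piece of the classical OS-algebra of the geometric interval) and then specifying each component $m^t_{xy}$ of the multiplication interval-by-interval via the product in $\OS(L[0,t])$, one for each $t\in x\os{\circ}{\vee}y$. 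Your sentence identifying $\OS(L)_x$ with the top-degree piece of $\OS(L[0,x])$ is exactly the right object, so the fix is purely organizational: run your construction on each geometric interval $L[0,x]$ separately, rather than on a single global exterior algebra.
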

\begin{proof}
        For all $x\in L$, $L([0,x])$ is geometric. 
        First we assume the construction $OS(L[0,x])$ is known.

        Following \cite{JCW} we put
	$\OS(L)_x:=\OS(L[0,x])_x$. 
        Then $\Tot(\OS(L)|_{[0,x]})=\Tot(\OS([0,x]))$ is acyclic
        for all $x>0\in L$. Take $x,y\in L$ and
        and $t\in x \os{\circ}{\wedge}y$.
        Define $m^t_{xy}\colon \OS(L)_x\otimes \OS(L)_y\to \OS(L)_t$
        to be equal to the multiplication 
        $\OS([0,t])_x\otimes \OS([0,t])_y\to \OS([0,t])_t$. 

        It is clear that $m^t_{xy}$ defines an associative structure.
        The functoriality in Definition \ref{dfn:L_algebra}
        follows by naturality of $\OS(L[0,x])$. Namely the inclusion
        $L[0,x]\subset L[0,x']$ induces a natural isomorphism 
	$\OS(L[0,x])\simeq \OS(L[0,x'])|_{[0,x]}$.

        It remains to provide a natural construction of $\OS(L)$ in case 
	of a geometric lattice
        $(L,\le)$. We refer the reader to the original paper \cite[\S 2]{OS}
        and survey \cite{Yu}. 
        
        Below we sketch the construction following \cite{OS}
         (see also Remark \ref{rmrk:os_over_z}). 
    
        Set $\OS(L)=\Lambda^*\langle d\nu_i,i\in \Atoms(L)\rangle/\mc{J}$
	where $\mc{J}$ is an ideal in the Grassmann algebra that
	we will now descibe.
	The \emph{Kozsul differential} $\partial$ on the Grassmann algebra 
	is equal to $\sum\limits_i \iota_i$.
	Let $\langle dep\rangle$ denote the ideal 
        equal to the linear $\mb{Z}$-span by 
	$\nu_I$ where
	$I\subset \Atoms(L)$ is dependent, i.e.~$r(\sup(I))<|I|$.
	Then as an ideal, $\mc{J}$ is generated by 
	$\partial\langle dep\rangle$.
	One can check that $\mc{J}=
		\langle dep\rangle+\partial\langle dep\rangle$
	as vector spaces and that $\mc{J}$ is $L$-graded.
	Considering the complex $\Tot(\OS(L[0,x]))$,
 we have $[\partial,d\nu_i\wedge-]=\id$ for any $i<x\in L$, hence 
    $(\OS(L[0,x]),\partial)$ is acyclic unless $x=0\in L$.
\end{proof}

\begin{prop}[Functoriality of OS-complexes]\label{prop:morphism_of_OS_complexes}
	Let $[L,\Mu],[L',\Mu']$ be locally graded posets with OS-complexes
	$\Mu,\Mu'$ respectively.
	For a rank preserving map $g\colon (L,\le)\ar (L',\le)$,
	there is a unique homomorphism $g_*\Mu\ar \Mu'$ of $(L',\le)$-chain
	complexes inducing $\id\colon g_*\Mu_0=\Bbbk\to \Mu_0=\Bbbk$.
\end{prop}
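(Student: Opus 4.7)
The plan is to invoke the universal property of OS-complexes established in Proposition~\ref{prop:os_complex_categorically}, applied to $K = g_*\Mu$ with target $\Mu'$. Once I verify that $g_*\Mu$ is a well-defined $(L',\le)$-chain complex whose component at each $x' \in L'$ is concentrated in degree $-r(x')$, the identity map on degree-zero parts will extend uniquely to the desired homomorphism.

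First I need to check that the pushforward $g_*\Mu$ makes sense, since Definition~\ref{dfn:lattice:direct_image} requires $g$ to be a contraction while the hypothesis only says $g$ is rank-preserving and order-preserving. But these two conditions already force the contraction property: if $y <: x$ in $L$, then $g(y) \le g(x)$ in $L'$ with $r(g(x)) = r(x) = r(y)+1 = r(g(y))+1$, and in a graded poset any pair $a < b$ with $r(b) = r(a)+1$ must satisfy $a <: b$ (otherwise an intermediate element would force $r(b) \ge r(a)+2$). Hence $g_*\Mu$ is defined and carries a natural $(L',\le)$-chain structure.

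Next I verify the degree concentration. By construction $(g_*\Mu)_{x'} = \bigoplus_{x \in g^{-1}(x')} \Mu_x$, and since $g$ is rank-preserving every summand $\Mu_x$ lies in degree $-r(x) = -r(x')$. The inner differential of this totalization, a priori $\sum \partial_{yx}$ over pairs $y < x$ in $g^{-1}(x')$, is automatically zero: any such pair would consist of elements of equal rank $r(x')$, contradicting $\partial_{yx} \neq 0 \Rightarrow y <: x$. Thus $(g_*\Mu)_{x'}$ is concentrated in degree $-r(x')$, so in particular $(g_*\Mu)_{x'} \in \Ch^{\ge -r(x')}$, which is exactly the hypothesis required by Proposition~\ref{prop:os_complex_categorically}.

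Finally, since $0 \in L$ is the unique element of rank $0$ and $g$ preserves ranks, $g(0)=0$ and $g^{-1}(0) = \{0\}$, so $(g_*\Mu)_0 = \Mu_0 = \Bbbk[0] = \Mu'_0$. The identity map $\Bbbk \to \Bbbk$ in degree zero is a morphism of complexes, and Proposition~\ref{prop:os_complex_categorically} applied to $K = g_*\Mu$ with target $\Mu'$ yields a unique extension to a homomorphism $g_*\Mu \to \Mu'$ of $(L',\le)$-chain complexes inducing the identity on degree-zero parts. Uniqueness is immediate from the same universal property. There is essentially no hard step here: the entire argument is bookkeeping around the fact that rank-preservation makes every component of $g_*\Mu$ sit in a single degree, which is exactly the condition that activates the universal property of $\Mu'$.
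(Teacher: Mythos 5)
Your proof is correct and follows essentially the same route as the paper: the key step in both is the application of the universal property of OS-complexes from Proposition~\ref{prop:os_complex_categorically} to $K=g_*\Mu$ with target $\Mu'$. The paper's proof is two lines because it leaves implicit the checks that you spell out (that $g$ is a contraction so $g_*\Mu$ is defined, that each $(g_*\Mu)_{x'}$ sits in degree $-r(x')$ and hence in $\Ch^{\ge -r(x')}$, and that $(g_*\Mu)_0=\Bbbk$); one small shortcut you could have taken is that rank-preservation gives $r(g(x))\le r(x)$ directly, which is the paper's stated equivalent characterization of a contraction, rather than rederiving the covering-relation form.
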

\begin{proof}
	By Proposition \ref{prop:os_complex_categorically}, 
	$\Hom(g_*\Mu,\Mu')=\Hom((g_*\Mu)_0,\Mu'_0)$.
	Clearly $(g_*\Mu)_0=\Mu'_0=\Bbbk$.
\end{proof}
In particular, if $(L,\le)$ admits an OS-complex $\Mu$, then it is
unique up to a canonical isomorphism.
One can say that  $g\colon L\ar L'$ induces a natural \emph{morphism} 
$[L,\Mu]\ar[] [L',\Mu']$.

\begin{cor}\label{cor:morphism_of_OS_algebras}
	Assume $g\colon L\ar L'$ is a 
	homomorphism of locally geometric lattices.
	Then there is a natural homomorphism of $(L',\le)$-chain algebras
	$g_*\OS(L)\ar \OS(L')$.
\end{cor}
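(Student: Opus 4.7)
The plan is first to apply Proposition~\ref{prop:morphism_of_OS_complexes} to produce the map as a morphism of $(L',\le)$-chain complexes, automatically natural, and then to verify separately that it is multiplicative. By Remark~\ref{rmrk:lattices:geom_homomorphism_preserves_r}, a homomorphism of locally geometric lattices preserves the rank function and restricts to a locally injective map on atoms. Proposition~\ref{prop:morphism_of_OS_complexes} therefore yields a unique morphism $\phi\colon g_*\OS(L)\to \OS(L')$ of $(L',\le)$-chain complexes whose $0$-th component is $\id_\Bbbk$, and uniqueness makes $\phi$ automatically functorial in $g$. Since the proposition supplies uniqueness only at the level of chain complexes, I cannot use it directly to deduce multiplicativity of $\phi$; instead I would construct a second candidate $\psi$ from the explicit generator-and-relation description of the OS-algebra and identify it with $\phi$.

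To build $\psi$, I would work one interval at a time. For each $x\in L$, the restriction of $g$ is a rank-preserving homomorphism of geometric lattices $L[0,x]\to L'[0,g(x)]$ which is injective on atoms. Using the presentation $\OS=\Lambda\langle d\nu_a\mid a\in\Atoms\rangle/\mc{J}$ from the proof of Theorem~\ref{thrm:os_existence}, the assignment $d\nu_a\mapsto d\nu_{g(a)}$ extends to a Grassmann algebra homomorphism and descends to an algebra homomorphism $\tilde{g}_x\colon \OS(L[0,x])\to \OS(L'[0,g(x)])$, because $g$ sends independent subsets of $\Atoms(L[0,x])$ to independent subsets of $\Atoms(L'[0,g(x)])$ and hence dependent subsets to dependent subsets. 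Taking the top $L$-degree component gives $\OS(L)_x\to \OS(L')_{g(x)}$; summing over $x\in g^{-1}(x')$ assembles an $(L',\le)$-graded map $\psi\colon g_*\OS(L)\to \OS(L')$, multiplicative by construction.

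It then remains to show that $\psi$ is a morphism of $(L',\le)$-chain complexes, after which the uniqueness in Proposition~\ref{prop:morphism_of_OS_complexes} forces $\psi=\phi$ and transfers multiplicativity to $\phi$. The combinatorial differential on $\OS$ is the Koszul derivation $\partial=\sum_a\iota_a$, which visibly commutes with any algebra map of the form $d\nu_a\mapsto d\nu_{g(a)}$; the finer decomposition $\partial=\sum_{y'<:x'}\partial_{y'x'}$ is respected because a homomorphism of graded lattices is a contraction sending covers to covers. The main obstacle I anticipate is bookkeeping: carefully checking that the interval-local constructions $\tilde{g}_x$ patch consistently across the inclusions $L[0,x]\hookrightarrow L[0,x']$ that define $\OS(L)$ in the locally geometric case, and that the pushforward of Definition~\ref{dfn:lattice:direct_image} aggregates the $\partial_{yx}$ in $\OS(L)$ into $\partial_{y'x'}$ in $\OS(L')$ correctly after summing over the fibers of $g$.
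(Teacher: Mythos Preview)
Your proposal is correct and follows essentially the same route as the paper: reduce to the geometric case interval by interval, define the map on Grassmann generators by $d\nu_a\mapsto d\nu_{g(a)}$, check it descends modulo $\mc{J}$, and note it commutes with the Koszul differential and is $L'$-graded. The paper simply constructs what you call $\psi$ directly and observes at the end that its $0$-component is $\id_\Bbbk$, rather than first invoking Proposition~\ref{prop:morphism_of_OS_complexes} to produce $\phi$ and then identifying $\psi=\phi$; your extra step is harmless but unnecessary, since the explicit $\psi$ already \emph{is} the desired chain-algebra morphism. One small slip: the inference ``independent to independent, hence dependent to dependent'' is not valid as stated; what you actually need (and what holds) is that $g$ preserves rank of sups and is injective on atoms, so $r(\sup g(S))=r(\sup S)$ and $|g(S)|=|S|$, whence $S$ is dependent iff $g(S)$ is.
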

\begin{proof}
	It is sufficient to consider the case of globally geometric lattices.
	We have the induced homomorphism 
	$G\colon \Lambda\langle \Atoms(L)\rangle\ar[g] 
		\Lambda\langle \Atoms(L')\rangle.$
	Since $g$ maps $\Atoms(L)$ to $\Atoms(L')$ 
	(see Remark \ref{rmrk:lattices:geom_homomorphism_preserves_r}), 
	$G$ commutes with the Koszul differential $\partial$. 
	Following the proof of Theorem \ref{thrm:os_existence}, let 
	$\mc{J}=\langle dep\rangle+\partial\langle dep\rangle$, and 
	similarly for $\mc{J}'$.
	On the level of algebras we have 
	$$\Tot(g_*\OS(L))=\Tot(\OS(L))=\Lambda\langle \Atoms(L)\rangle/\mc{J},$$
	and $\Tot(\OS(L'))=\Lambda\langle \Atoms(L')\rangle/\mc{J}'$.
	Clearly $g$ preserves dependent subsets of atoms, hence
	$g\mc{J}\subset \mc{J}'$. 
	We obtain a homomorphism of cdga's 
	$G\colon \Tot(g_*\OS(L))\ar \Tot(\OS(L'))$.
	On the other hand, $G$ is $L'$-graded, so
	it follows that $G$ is induced by
	a morphism of $(L',\le)$-chain algebras 
	$G_\bullet\colon g_*\OS(L)\ar \OS(L')$. 
	Since $G_\bullet$ satifies $G_0=\id_{\Bbbk}$, this
	completes the proof.
\end{proof}
Hence a homorphism $g\colon (L,\le)\ar (L',\le)$ of locally geometric lattices
induces a \emph{multiplicative} morphism $[L,\OS(L)]\ar[] [L',\OS(L')]$.

\begin{exmpl}
\begin{enumerate}
	\item Consider an arrangement $(X,Z_*)$ from 
		Definition \ref{dfn:arrangement}
		and let $\mc{C}=2^N$ to be the cubical poset
		of all subsets in the index set $N$ parametrizing $Z_*$.
		The corresponding OS-algebra $\mc{M}=\OS(\mc{C})$ is the 
	Grassmann algebra
	generated by free variables 
	$d\nu_*,\deg(d\nu_*)=-1$ corresponding to $Z_*$.
	The differential $\partial=\sum\limits_i \iota_i$ is 
	the full Koszul differential. 
	The algebra is clearly $L$-graded and 
	$L$-filtred with respect to the differential.
	To simplify the notation, let us order $\Atoms(\mc{C})$. Then 
	the vertex 
	$I\in \mc{C}$ of the cube
	corresponds 
	to the free module $\mb{Z}\cdot d\nu_I$
	generated by the monomial
	$d\nu_I=
	d\nu_{I_1}\wedge\ldots\wedge d\nu_{I_k}\in \mc{M}_I$ 
	of degree $-|I|$
	for
	$I_1<\ldots<I_k$ that make up $I$. 
	The product $d\nu_I\wedge d\nu_J$
	is zero if $I\cap J\neq \emptyset$, otherwise it is given
	by the usual Koszul rule $d\nu_I\wedge d\nu_J=\pm d\nu_{IJ}$.
	
	Assume $X$ admits an action of a group $G$, such that $g(Z_i)$
	is an element of arrangement for all $i\in N$ and $g\in G$, 
	i.e.\ we can write $g(Z_i)=Z_{g(i)}\subset X$, where $G\ar \Sigma(N)$
	is the corresponding representation in the permutation group.
	Then $G$ acts on the poset $\mc{C}$, and hence on $\OS(\mc{C})$
	providing a morphism $g_*(\OS(\mc{C}))\ar \OS(\mc{C})$ 
	which is given by Corollary \ref{cor:morphism_of_OS_algebras}.
	(In Subsection \ref{sect:lattices:arrangement_poset} 
	we say that the corresponding cubical arrangement 
	poset $[X,\mc{C}]$ is $G$-equivariant.)
	Clearly the construction of $\OS(\mc{C})$ and of
	the morphism $g_*\OS(\mc{C})\ar \OS(\mc{C})$ is independent 
	of the order on $N$.

\item For a topological space $X$, the intersection poset $\Delta$ of 
	all diagonals $\Delta_*\subset X^n$
	is a geometric lattice with atoms $\Delta_{\{ab\}}$ given 
	by the diagonals $\{(x_1,\ldots,x_n)\in X^n\mid x_a=x_b\}$.
        The complement is the usual configuration space $F(X;K_n)$,
        where $K_n$ is a complete graph on $n$ vertices.
	The maximal element $\infty\in \Delta$ corresponds to the diagonal
	$x_1=\ldots=x_n$ and
	 the minimal element $0\in \Delta$ is $X^n$.
	Then OS-algebra $\mc{M}=\OS(L)$ 
	is recovered by $\Delta$-graded algebra
	$$\Tot(\OS(L))=\Lambda\langle \Delta_{\{ab\}}\mid
            a\neq b\in \{1,\ldots ,n\}\rangle/\mc{J},$$
	where $\deg(\Delta_{\{ab\}})=-1,d\Delta_{\{ab\}}=1$, and $\mc{J}$ is 
	an ideal given by Arnold's relations: 
		$$\Delta_{\{ab\}}\Delta_{\{bc\}}+\Delta_{\{bc\}}\Delta_{\{ca\}}+
			\Delta_{\{ca\}}\Delta_{\{ab\}}=0.$$
	This algebra admits a natural 
	$\Delta$-grading with pieces 
	$\mc{M}_s,s\in \Delta$ spanned by the
	Grassmann monomials $\Delta_{\{a_1 b_1\}}\wedge\ldots\wedge \Delta_{\{a_k b_k\}}$ 
	such that $\Delta_{\{a_1 b_1\}}\cap\ldots\cap \Delta_{\{a_k b_k\}}=\Delta_s\subset X^n$. 
	This defines a $(\Delta,\leq)$-chain algebra, 
	which we call $\OS(\Delta)$.
        In Section \ref{sect:mv_totaro} we generalize
        this description and obtain a spectral sequence
        converging to the cohomology of a chromatic configuration space $H^*(F(X;G))$. 
\end{enumerate}
\label{exmpl:os_algebras}
\end{exmpl}

\begin{dfn}\label{dfn:cubical_lattice_of_L}
	The {\it locally cubical lattice} of $(L,\leq)$ is 
	the poset $(\mc{Q},\leq)$ formed by pairs 
	$\tilde{I}=(x,I)$ for $I\subset \Atoms(L)$ such that 
	$x\in \os{\circ}{\sup}(I)$.
	Define $(x,I)\leq (y,J)$ iff $x\leq y$ and $I\subset J$.
\end{dfn}

In particular the poset $(\mc{Q},\le)$ 
is a locally geometric lattice.
If $(L,\leq)$ is a sup-lattice, then 
$\mc{Q}$ is the cubical lattice $(2^{\Atoms(L)},\subset)$.

For $i\in \Atoms(L)=\Atoms(\mc{Q})$ we write $i\in \tilde{I}=(x,I)$
if $i\in I$.
We have a natural map 
$p\colon (\mc{Q},\leq)\to (L,\leq)$ 
given by $p(\tilde{I})=x$.

\begin{prop}\label{prop:p_contraction_homomorphism}
	The map $p$ is a contraction homomorphism of local sup-lattices.
\end{prop}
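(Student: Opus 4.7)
My plan is to verify separately the three clauses of the homomorphism axiom (Definition \ref{dfn:lattices:homomorphism}) and the contraction axiom (Definition \ref{dfn:lattices:contraction}). The essential structural fact I would establish first is that for every $(y,K)\in\mc{Q}$, the interval $\mc{Q}[0,(y,K)]$ is isomorphic as a lattice to the Boolean lattice $2^K$ via $(x',I')\mapsto I'$. This is because $x'\in\os{\circ}{\sup}(I')$ combined with $x'\leq y$ forces $x'=\sup_{[0,y]}(I')$: any strictly smaller upper bound of $I'$ lying below $y$ would contradict the minimality of $x'$ in $\os{\circ}{\sup}(I')$. Consequently $\mc{Q}$ is a local sup-lattice with rank function $r((x,I))=|I|$, its atoms are precisely the pairs $(a,\{a\})$ with $a\in\Atoms(L)$, and joins in intervals are given by $(x,I)\vee(x',I')=(x\vee_{[0,y]}x',\,I\cup I')$.

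With this structure in hand, the homomorphism conditions are straightforward. The equality $p(0)=0$ holds because $0_{\mc{Q}}=(0,\emptyset)$. The join formula gives $p((x,I)\vee(x',I'))=x\vee_{[0,y]}x'=p(x,I)\vee_{[0,p(y,K)]}p(x',I')$, which is the compatibility with binary joins inside intervals. For the preimage-is-interval condition I would identify $p^{-1}(L[0,t])$ with $\mc{Q}[0,(t,\Atoms(L[0,t]))]$: the top lies in $\mc{Q}$ since $t=\sup\Atoms(L[0,t])$ is trivially a minimal upper bound of $\Atoms(L[0,t])$, and any $(x,I)$ with $x\leq t$ automatically has $I\subset\Atoms(L[0,t])$ because each atom of $I$ lies below $x$.

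The contraction property is the heart of the proof. Given a cover $(x,I)<:(y,J)$, the rank computation $|J|=|I|+1$ shows $J=I\cup\{a\}$ for a unique atom $a\notin I$. I would split into two cases depending on whether $a\leq x$. If $a\leq x$, then $x$ itself is a minimal upper bound of $J$ (a strictly smaller upper bound of $J$ would in particular bound $I$ and contradict minimality of $x\in\os{\circ}{\sup}(I)$), so $(x,J)\in\mc{Q}$ sits between $(x,I)$ and $(y,J)$; the cover hypothesis forces $y=x$, and $p$ sends the cover to the identity. If $a\not\leq x$, I would work inside the geometric lattice $L[0,y]$ and apply the semimodular inequality $r(x\vee_{[0,y]}a)+r(x\wedge a)\leq r(x)+r(a)$; since $a$ is an atom not below $x$, $x\wedge a=0$, and since $y\in\os{\circ}{\sup}(I\cup\{a\})$ coincides with the minimal upper bound of $\{x,a\}$ below $y$, we get $x\vee_{[0,y]}a=y$, yielding $r(y)\leq r(x)+1$; combined with $x<y$ this forces $y:>x$ in $L$.

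I expect the main obstacle to be this last case: bridging the combinatorial Boolean cover in $\mc{Q}$ to an actual cover in $L$ requires local geometricity of $L$ through the semimodular rank inequality, and one must take care that the minimal upper bound $y$ of $\{x,a\}$ in $L$ indeed coincides with the join taken inside $L[0,y]$ before concluding that the rank difference is exactly one.
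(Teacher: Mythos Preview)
Your proof is correct and follows the same outline as the paper's, but fills in what the paper leaves implicit. The paper's argument is a two-line sketch: it records that $(x,I)<:(y,J)$ iff $|J\setminus I|=1$ (whence $r_{\mc{Q}}(x,I)=|I|$), states the join formula $(x,I)\vee(y,J)=(x\vee_{[0,t]}y,\,I\cup J)$ inside any interval below $(t,K)$, and asserts that $p$ is a contraction without further comment. Your explicit Boolean-lattice description of intervals and your verification of all three homomorphism clauses are the natural way to expand that sketch.

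The one substantive point is your contraction argument in the case $a\not\leq x$, where you invoke semimodularity. You are right to do so: this step genuinely requires $L$ to be locally geometric. Without it one can build a graded atomistic lattice in which two atoms $a,b$ have join of rank $3$ (e.g.\ three atoms $a,b,c$ with $a\vee c$ and $b\vee c$ of rank $2$ and $a\vee b=a\vee b\vee c$ of rank $3$); then $(a\vee b,\{a,b\})\in\mc{Q}$ has $r_{\mc{Q}}=2$ but $r_L(a\vee b)=3$, so $p$ is not a contraction. The paper introduces the locally-geometric hypothesis only in the sentence immediately after the proposition, but every subsequent use of $p$ as a contraction homomorphism (the atomic complex, the projection-formula manipulations, Theorem~\ref{thrm:atomic_complex}) is in that setting, so your added hypothesis matches the intended application.
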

\begin{proof}
	One can check that $(x,I)<:(y,J)$ iff $|J-I|=1$.
	Thus $r(x,I)=|I|$ defines a grading and $p$ is contractible.
	Then $(x,I)\vee (y,J)=
		(x\vee y\in L[0,t],I\cup J)\in [0,(t,K)]$ for
	any $(t,K)$ with $(x,I),(y,J)\leq (t,K)$. 
\end{proof}

Assume $(L,\leq)$ is locally geometric lattice.
Let $\OS(\mc{Q})$ denote the natural OS-algebra of 
$\mc{Q}$. 
\begin{rmrk}\label{rmrk:os_of_locally_cubical}
If $L$ is geometric, $\OS(\mc{Q})$ is given by the $L$-grading components 
of 
the usual Grassmann algebra
$\Lambda\langle d\nu_i,i\in \Atoms(L)\rangle$ where $\deg(d\nu_i)=-1$. 
The Koszul differential $\sum\limits_{i\in \Atoms(L)}\iota_i$ defines 
the structure maps of $\OS(\mc{Q})$ (Example \ref{exmpl:os_algebras}).

In case $L$ is locally geometric, the restriction $\OS(\mc{Q})|_{[0,\tilde{I}]}$
is determined by $\OS(\mc{Q}[0,\tilde{I}])$. 
Namely, similarly to Example \ref{exmpl:os_algebras} 
we can chose an order on $\Atoms(L)$, then
$\OS(\mc{Q})_{\tilde{I}}=\mb{Z}\cdot d\nu_{\tilde{I}}$, where
$d\nu_{\tilde{I}}$ is a Grasmmann monomial $d\nu_{I}$ of degree $-|I|$ equipped with
a label $x\in \os{\circ}{\sup}(I)\in L$.
%After ordering elements of $\Atoms(L)$
The product structure is given by 
$d\nu_{\tilde{I}}\wedge d\nu_{\tilde{J}}=
\sum^{\tilde{K}\in\os{\circ}{\sup}(\tilde{I},\tilde{J})}_{|K|=|I|+|J|}\pm d\nu_{\tilde{K}}$,
where the sign is determined by the shuffle $(I,J)$ in $K=I\sqcup J$.
Similarly to the cubical case we have
$\partial^{\mc{Q}}_{\tilde{I}-\{i\},\tilde{I}}=\iota_i$.
\end{rmrk}

The following is an extension to the local case 
of the notion of the relative atomic
complex.
\begin{dfn}[Atomic complex; \cite{FY}]\label{dfn:atomic_complex}
	The \textit{atomic complex} $\mc{D}$ of $(L,\leq)$ is an 
	$(L,\leq)$-chain algebra equal to the direct image 
	$p_*\OS(\mc{Q})$ under the natural map
	$p\colon \mc{Q}\to L$.
\end{dfn}

Explicitely $(\mc{D}_x,\partial')$ is the complex spanned by 
Grassman monomials $d\nu_{\tilde{I}},\tilde{I}\in \mc{Q}$.
The differential $\partial'$ acts as follows:
$$\partial'd\nu_{\tilde{I}}=
\sum\limits^{i\in \tilde{I}}_{\sup(\tilde{I}-\{i\})=x}
\iota_i d\nu_{\tilde{I}}.$$

The structure morphisms 
$\partial^{\mc{D}}_{yx}\colon \mc{D}_x\to \mc{D}_y[1],x:>y$
are equal to the remaining part of the Koszul differential.
The multiplication is clear.

We will need the following homological interpretation of $\OS(L)$.
Consider $\OS(L)$ as an $(L,\leq)$-chain algebra with trivial inner 
differentials $(\OS(L)_x,d_x=0)$. 
There is a map $\mc{D}\to \OS(L)$ of $(L,\leq)$-chain
algebras induced by the identity: locally it sends
$d\nu_I$ to $d\nu_I\in \OS(L)=\Lambda^*\langle d\nu_i\rangle/\mc{J}$, and
since $\partial'd\nu_I=0\in \OS(L)$, it is a morphism of
$(L,\leq)$-chain complexes.
We have the following formality result for the atomic complex $\mc{D}(L)$:
\begin{thrm}[Feichtner-Yuzvinsky; {\cite[Theorem 3.1]{FY}}]
\label{thrm:atomic_complex}
	The morphism $\mc{D}=p_*\OS(\mc{Q})\to \OS(L)$
	is a quasi-isomorphism of $(L,\leq)$-chain algebras.
	So the atomic $(L,\le)$-chain algebra $\mc{D}$ is formal, 
	i.e.\ one has a natural 
	quasi-isomorphism $\mc{D}_x\ar \OS(L)_x\in \Ch(\Bbbk-\Mod)$
	commuting with the structure maps $\partial_{yx}$.
\end{thrm}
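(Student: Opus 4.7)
The plan is to reduce the statement to a pointwise cohomological computation on a single geometric interval, and then to identify $H^*(\mc{D}_x)$ with $\OS(L)_x$ via the No Broken Circuit (NBC) basis of the Orlik--Solomon algebra. Both constructions $\mc{D}_x=(p_*\OS(\mc{Q}))_x$ and $\OS(L)_x$ depend only on the interval $L[0,x]$, which is geometric by local geometricity, so it suffices to fix a geometric lattice $L$ with top element $\hat{1}$ and prove that $\mc{D}_{\hat{1}}\to \OS(L)_{\hat{1}}$ is a quasi-isomorphism. Unwinding the definitions of $p_*$ and $\OS(\mc{Q})$ (Definition~\ref{dfn:cubical_lattice_of_L}, Remark~\ref{rmrk:os_of_locally_cubical}), the complex $\mc{D}_{\hat{1}}$ is the subspace of $\Lambda\langle d\nu_i\mid i\in \Atoms(L)\rangle$ spanned by monomials $d\nu_I$ with $\sup I=\hat{1}$, equipped with the inner differential $\partial' d\nu_I=\sum \iota_i\, d\nu_I$ where $i$ ranges only over those $i\in I$ with $\sup(I\setminus\{i\})=\hat{1}$; the target $\OS(L)_{\hat{1}}$ sits in degree $-r(\hat{1})$ with zero differential, and the map is induced by the quotient $\Lambda\twoheadrightarrow \Lambda/\mc{J}$.

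The main step is to show that $H^*(\mc{D}_{\hat{1}},\partial')$ is concentrated in degree $-r(\hat{1})$ and is naturally isomorphic to $\OS(L)_{\hat{1}}$. I would do this by choosing a linear order on $\Atoms(L)$ and constructing an acyclic matching on $\{I\subset\Atoms(L)\mid \sup I=\hat{1}\}$ whose critical cells are precisely the NBC bases of $L$. Given a non-NBC subset $I$ with $\sup I=\hat{1}$, pick the lexicographically smallest broken circuit $B=C\setminus\{a_0\}\subset I$ with $a_0=\min C$, and match $I$ with $I\triangle\{a_0\}$. Since $a_0$ lies in the closure of $B\subset I\triangle\{a_0\}$, both endpoints satisfy $\sup=\hat{1}$ and thus belong to $\mc{D}_{\hat{1}}$; moreover the matching edge is an $\iota_{a_0}$-arrow of $\partial'$ with unit coefficient. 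The standard discrete Morse argument then produces a deformation retract of $\mc{D}_{\hat{1}}$ onto the span of NBC bases, and Bj\"orner's NBC basis theorem identifies this span with $\OS(L)_{\hat{1}}$.

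Finally I would upgrade this pointwise quasi-isomorphism to one of $(L,\le)$-chain algebras. Multiplicativity is built in: on totalizations, both $\Tot(\mc{D})|_{[0,x]}$ and $\Tot(\OS(L))|_{[0,x]}$ carry the Grassmann product, and the map is the identity modulo $\mc{J}$. Compatibility with the structure maps $\partial_{yx}^{\mc{D}}$ and $\partial_{yx}^{\OS(L)}$ is automatic because both are extracted from the full Koszul differential on $\Lambda\langle d\nu_i\mid i\in \Atoms(L[0,x])\rangle$ by the same rank-decrement convention, and the NBC basis in the interval $L[0,y]$ is compatible with that in $L[0,x]$ in the obvious way.

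The hardest step is the construction of the acyclic matching on the restricted complex $\mc{D}_{\hat{1}}$: unlike the total Koszul complex, where multiplication by an auxiliary $d\nu_i$ gives a contracting homotopy via the identity $[\partial, d\nu_i\wedge -]=\id$ used in the proof of Theorem~\ref{thrm:os_existence}, here the matched pair must stay inside $\mc{D}_{\hat{1}}$, i.e.\ must preserve $\sup I=\hat{1}$, and must be connected by an arrow of $\partial'$ rather than by an arbitrary Koszul arrow. Verifying that the broken-circuit/minimum-atom choice described above simultaneously satisfies both constraints (and that the resulting pairing is acyclic and its critical cells span $\OS(L)_{\hat{1}}$) is the combinatorial heart of the argument, and is precisely where one uses the matroid structure of $L$ in an essential way.
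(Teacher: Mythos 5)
The paper does not actually prove Theorem~\ref{thrm:atomic_complex}; it cites Feichtner--Yuzvinsky \cite{FY}. The only in-paper argument is the alternative sketch in Remark~\ref{rmrk:os_over_z}, which deduces that $H^*(\mc{D}(L)_x;\mb{Z})$ is free and concentrated in degree $-r(x)$ from Folkman's theorem \cite{Fo}, and then runs the spectral sequence of the bicomplex $(\Lambda\langle d\nu_I\rangle,\partial'+\partial'')$ to get the identification of the $(L,\le)$-chain algebras. Your proposal takes a genuinely different route: a direct discrete Morse argument on the restricted Koszul-type complex $\mc{D}_{\hat 1}$, with critical cells indexed by NBC bases, followed by Bj\"orner's NBC basis theorem. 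If carried out, this is a legitimate self-contained alternative to citing \cite{FY} or \cite{Fo}: it buys an explicit basis realization of the cohomology and makes the concentration in a single degree transparent, at the cost of a nontrivial combinatorial verification.

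That verification, however, is exactly where your write-up leaves a gap. You describe the matching (pair $I$ with $I\triangle\{a_0\}$, with $a_0=\min C$ for $C$ the circuit giving the lex-smallest broken circuit $B\subset I$) and you check that both endpoints stay in $\mc{D}_{\hat 1}$ and are joined by a $\partial'$-arrow, but you then write that verifying acyclicity and that the critical cells are precisely the NBC sets ``is the combinatorial heart of the argument'' and leave it open. That is the whole proof: without an explicit acyclicity argument (no directed cycle in the modified Hasse diagram) and a check that the critical cells span $\OS(L)_{\hat 1}$, the deformation retract is not established. There is a further, more concrete issue: the assignment $I\mapsto a_0$ is not well-defined as stated. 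A broken circuit $B$ can arise from several circuits $C$ with different minima (already in $U_{2,4}$, $\{3,4\}$ is $\{1,3,4\}\setminus\{1\}$ and $\{2,3,4\}\setminus\{2\}$), so ``$a_0=\min C$'' does not single out an element, and you must specify a canonical choice and then \emph{re-verify} that $I$ and $I\triangle\{a_0\}$ produce the same $a_0$ (the involutivity of the matching). Until these two points are settled, this is a plausible but incomplete sketch rather than a proof.

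Two smaller remarks. First, the reduction to a single geometric interval $L[0,x]$ is correct, since both $\mc{D}_x$ and $\OS(L)_x$ are computed from $\Atoms(L[0,x])$ alone, and in the geometric case $\os{\circ}{\sup}(I)$ is a singleton so the fiber $p^{-1}(x)$ is exactly $\{I:\sup I=x\}$, as you say. Second, once the pointwise quasi-isomorphism is in hand, the upgrade to a quasi-isomorphism of $(L,\le)$-chain algebras is indeed immediate, since the map $\mc{D}\to\OS(L)$ is already defined globally as the quotient by $\mc{J}$ and is automatically compatible with the structure maps and the product; the ``NBC compatibility across intervals'' you invoke is not actually needed for this part.
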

In other words there is a natural morphism 
$p\colon [\mc{Q},\OS(\mc{Q})]\ar[] [L,\OS(L)]$ inducing a quasi-isomorphism 
$p_*\OS(\mc{Q})\ar\OS(L)$.

\begin{rmrk}\label{rmrk:os_over_z}
For a locally geometric lattice $(L,\le)$, the fact that 
$H^*(\mc{D}(L)_x;\mb{Z})$ is free and 
concentrated in degree $-r(x)$
follows also from \cite[Theorems 3.1, 4.1]{Fo}. So
the spectral sequence $E^{pq}$ of the bicomplex 
$(\Lambda\langle d\nu_I\rangle,\partial'+\partial'')$ 
defined by the rank grading obeys $E^{pq}_1=0$ for all $q\neq 0$. 
Hence $E^{pq}_2=E^{pq}_\infty$
and acyclicity of the Koszul complex
implies that the totalization of an $(L,\le)$-chain algebra 
$H^*(\mc{D}(L)_\bullet;\mb{Z})\simeq E^{**}_1$ is acylic.
Applying this for intervals $L[0,x]$ for all $x\neq 0\in L$ we obtain that 
$H^*(\mc{D}(L)_\bullet;\mb{Z})$ satisfy the definition 
of an OS-complex. The evident multiplicative structure provides a different 
from Theorem~\ref{thrm:os_existence} proof of 
the existence of an OS-algebra over 
$\mb{Z}$ underlying $(L,\le)$.
\end{rmrk}

The definition of atomic complex make sense is more general setting
and is related to the notion of order complex of a lattice. 
Following \cite{Petersen} denote by $\tilde{H}_{*}(L(0,x))$ 
the reduced homology of the open interval 
$L(0,x)$ considered as a nerve of the corresponding category,
where by definition $\tilde{H}_*(L(0,0))$ is $\mb{Z}$
concentrated in homological degree $-2$.

Suppose $(L,\le)$ is a local sup-lattice, not necessary graded.
Let $\mc{D}:=p_*\OS(\mc{Q})$.
\begin{thrm}[Yuzvinsky; Lemma 2.3\cite{YuzAtomic}]\label{order_complex}
	For all $x\in L$ there is a natural isomorphism:
	$$H^*(\mc{D}_x)\simeq \tilde{H}_{-*-2}(L(0,x)).$$
\end{thrm}
If $(L,\le)$ is graded then $\mc{D}$ is a $(L,\le)$-chain algebra,
the isomorphism above is compatible with shuffle-type multiplication on the 
right hand side \cite{YuzAtomic}.

Otherwise $\mc{D}$ is not even an $(L,\le)$-filtered complex.
It will be useful to consider the following construction for arbitrary local sup-lattice.
Assume $A_\bullet$ is an $(L,\le)$-object in complexes, and $r\colon L\ar \mb{Z}$ 
such that $r(x)<r(y)$ if $x<y$. We call $r$ a \emph{weak} grading.
Since we do not assume that $r(y)=r(x)+1$ if $x<:y$,
such $r$ always exists.
The complex $T:=\Tot(p^*A_\bullet\otimes \OS(\mc{Q}))$ admits a filtration $W$
corresponding to $r$:
$$W^p T=\bigoplus\limits^{x\in L}_{r(x)\ge p}A_x\otimes (p_*\OS(\mc{Q}))_x.$$
Immediately we have
\begin{prop}\label{prop:atomic_complex_ss}
	Given $r$ defines a spectral sequence
	$$E^{pq}_1=\bigoplus\limits^{x\in L,i,j}_{r(x)=-p,i+j=p+q}H^i(A_x)\otimes H^j(\mc{D}_x),$$
	converging to $\Gr^p_W H^*(\Tot(p^*A_\bullet\otimes \OS(\mc{Q})))$.
\end{prop}
Of course the spectral sequence is multiplicative if $r$ satisfies $r(x\vee y)\le r(x)+r(y)$ for all $x,y\in L$.

\subsection{Arrangement posets}
\label{sect:lattices:arrangement_poset}
Here $\mc{T}=\Top,\Var$ etc. is a subcategory  of 
$\Top$ closed under finite coproducts and products. 
Assume $\mc{S}$ is a fibered category
over $\mc{T}$, i.e.\ 
a functor $\mc{S}\colon \mc{T}^{\op}\ar \Cat$ with values (the fibers) denoted by 
$\mc{S}_X\in \Cat$ for each $X\in \mc{T}$.
Essentially we need a notion of a pullback. 
Namely a morphism
$f\colon X'\ar X\in \mc{T}$ provides 
a functor $f^*:=\mc{S}(f)\colon \mc{S}_{X'}\ar \mc{S}_X$ between the fibers.

Examples of interest are $\mc{T}=\Top,\Var$,
$\mc{S}=\ChF(\Sh)$, in which case we have 
$\mc{S}_X=\ChF(\Sh_X)$.
In this subsection we call the objects of $\mc{T}$ simply 
{\it spaces} and the objects of $\mc{S}_X$ {\it sheaves} on $X$.

Let $X\in \mc{T}$ be a space.
Denote by $2^X$ the poset of subspaces 
of $X$ ordered by 
inclusion.
Let $[L,\Mu]$ be a graded local sup-lattice $(L,\le)$ equipped with an OS-complex $\Mu$. 
%with a rank function
%satifying $r(t)\leq r(x)+r(y)$ for each $t\in x\os{\circ}{\vee} y$.
\begin{dfn}\label{dfn:arrangement_poset}
	An \textit{arrangement} 
	$[X,L,\mc{F}^L,\Mu]$ (in $\mc{T},\mc{S}$)
	is an $(L,\leq)$-subspace of 
	$X\in \mc{T}$, i.e.\ a 
	functor 
	$$L_{(-)}\colon (L,\leq)^{\op}\to {2^X}$$ with values
	$L_p\subset X$ for $p\in L$ such that
	\begin{enumerate}
		\item $L_0=X$;
		\item for each $I\subset \Atoms(L)$ we have
			$L_{I}:=
			\bigcap\limits_{i\in I}L_i=
			\bigsqcup\limits_{t\in \os{\circ}{\sup}(I)} 
			L_t\subset X$;
		\item for each $x\in X$ the full subposet 
			$\{p\in L\mid L_p\ni x\}\subset L$ is 
			 an interval of the form 
	$L[0,t],t\in L$;
		\item 
			$\bigcap\limits^{a\leq x}_{r(a)=1} L_a=L_x\subset X$,
	\end{enumerate}
	together with the additional data:
	\begin{enumerate}
		\item an $(L,\le)$-object $\mc{F}^L$ in $\mc{S}_X$;
		\item an OS-complex or OS-algebra $\Mu$ underlying $(L,\le)$.
	\end{enumerate}
\end{dfn}
In the case when $\mc{F}^L$ is an $(L,\le)$-algebra and 
$\Mu$ is an OS-algebra, 
we call $[X,L,\mc{F}^L,\Mu]$ \emph{multiplicative}.

\begin{dfn}\label{dfn:arrangement_posets:geometric}
	An arrangement is called \emph{locally geometric}	if $(L,\le)$ is a
	locally geometric lattice and for all $I\subset \Atoms(L)$
	we have $|\os{\circ}{\sup}(I)|\le 1$.
\end{dfn}

If $\mc{F}^L=\pi^*F$ is the constant $(L,\le)$-object, where
$\pi\colon (L,\le)\ar (0,\le)$ and $F\in \mc{S}_X$, we write 
$[X,L,F,\Mu]$ for $[X,L,\mc{F}^{L},\Mu]$.

Let us note that an \emph{arrangement} as defined in
Definition \ref{dfn:arrangement} is a cubical multiplicative arrangement 
$[X,\mc{C},\ul{\mb{Q}}_X,\OS(\mc{C})]$ 
where $\ul{\mb{Q}}_X$
is the constant $\mc{C}$-sheaf.

\iffalse 
Given an $(L,\le)$-object $\mc{F}^L$ in $\mc{S}_X$ and 
an OS-algebra $\Mu$ underlying $(L,\le)$, 
we will often consider an (\emph{equipped}) arrangement 
$[X,L,\mc{F}^L,\Mu]$ as the arrangement $[X,L]$ together with an additional data.
Working additively, since OS-coplex $\Mu$ is unique (Proposition \ref{prop:os_complex_exists}),
we will ommit it from the notation and write $[X,L,\mc{F}^L]$.
Similarly, in case $\mc{S}_X=\Sh_X$ and $\mc{F}^L$ is 
a constant $(L,\le)$-object equal to the constant sheaf $\ul{\Bbbk}_X$ we 
will write $[X,L,\Mu]$.
\fi

%Note that $[X,L]$ is naturally equipped with the constant sheaf and the natural OS-complex 
%(Proposition \ref{}).
%
We call $U:=X-\bigcup\limits_{x>0\in L} L_x$ the 
\emph{complement} of the arrangement.
\begin{dfn}\label{dfn:arrangement_posets:morphism}
	A \emph{morphism} of (multiplicative) arrangements 
	$$\phi\colon [X',L',\mc{F}^{L'},\Mu']\ar[] [X,L,\mc{F}^L,\Mu]$$
	is the following data:
\begin{enumerate}
	\item a morphism $f\colon X'\ar X\in \mc{T}$;

	\item a rank-preserving homomorphism $g\colon (L,\le)\ar (L',\le)$
		such that 
		$f^{-1}L_{p}\subset L'_{g(p)}$ for each $p\in L$;
	\item a morphism 
		$h\colon f^*\mc{F}^L\ar g^*\mc{F}^{L'}$ of $(L,\le)$-objects 
		in $\mc{S}_{X'}$.
\end{enumerate}
In the multiplicative case we require in addition that
$h$ should be a morphism of partial $(L,\le)$-algebras and
the natural morphism $g_*\Mu\ar \Mu'$ provided by 
Proposition \ref{prop:morphism_of_OS_complexes} should be 
a morphism of $(L',\le)$-chain algebras.
\end{dfn}
Note that the morphism $g\colon L\ar L'$ satisfies $g^{-1}(0)=0$ and 
the restriction $g\colon \Atoms(L)\ar \Atoms(L')$ is a well-defined 
locally injective map (Definition \ref{dfn:lattices:locally_injective}).

The morphism $\phi$ restricts to a map of complements: 
$f|_{U'_{g(p)}}\colon U'_{g(p)}\ar U_p$, 
where $U_p=X-L_p$ for each $p\in L$.
%Recall that in case $g$ is a homomorphism between locally geometric lattices,
%$g$ preserves the rank function 
%(Remark \ref{rmrk:lattices:geom_homomorphism_preserves_r}).
\iffalse
Let us introduce the following notation
\begin{dfn}\label{dfn:arrangement_posets:category}
	Denote by $\AP^{\circ}_+(\mc{T})$,$\AP_+(\mc{T},\mc{S})$, 
	$\AP^{\bullet}_+(\mc{T},\mc{S})$ the categories of 
	 arrangements with objects
	$[X,L]$, $[X,L,F,\Mu]$, $[X,L,\mc{F},\Mu]$ respectively.
	Similarly we denote by $\AP^{\circ}_\times(\mc{T})$,$\AP_\times(\mc{T})$, 
	$\AP^{\bullet}_\times(\mc{T},\mc{S})$  their multiplicative counterparts.
	Let $\GAP(\mc{T})$ denotes the category of geometric arrangements of the form 
	$[X,L,\ul{\Bbbk}_X,\OS(L)]$.
\end{dfn}\fi
\begin{rmrk}\label{dfn:arrangement_posets:category}
	There are various subcategories of the category of all arrangements.
	Working additively, by the functoriality of OS-complexes, $\Mu$ is unique
	if it exists
	(Proposition \ref{prop:morphism_of_OS_complexes}).
	We can omit $\Mu$ from the notation and 
	consider the category of arrangements of the form 
	$[X,L,\mc{F}^L]:=[X,L,\mc{F}^L,\Mu]$ or $[X,L,F]:=[X,L,F,\Mu]$.
	Similarly, in the case $\mc{S}_X=\Sh_X$ and $\mc{F}^L$ is 
	a constant $(L,\le)$-object equal to the constant sheaf 
	$\ul{\Bbbk}_X$, 
	we can consider the category of objects denoted by $[X,L,\Mu]:=[X,L,\ul{\Bbbk}_X,\Mu]$.

	Other main examples of interest are multiplicative and 
	formed by arrangements $[X,L,F,\Mu]$ or $[X,L,\ul{\Bbbk}_X,\Mu]$,
	locally geometric posets $[X,L,F]=[X,L,F,\OS(L)]$ or 
	$[X,L,\mc{F}^L]=[X,L,\mc{F}^L,\OS(L)]$, etc.
	These are well defined thanks to the functoriality of $\OS(-)$, see 
	Corollary \ref{cor:morphism_of_OS_algebras}.

%	To state that some construction $B(X,L,\mc{F},\Mu)$ defines a functor on 
%	a subcategory of arrangements, will say that $B(-)$ is functorial 
%	in arrangement $[X,L,\mc{F},\Mu]$ of specified type with respect to 
%	morpisms of specified type.
 
\end{rmrk}
With this in mind, it should be clear from the context which category of arrangements
we consider.
We will denote it simply 
$\AP(\mc{T},\mc{S})$.
%%%%%%%%%%%

Let $\mc{E}\ar \mc{T}$ be a functor. 
\begin{dfn}\label{dfn:arrangement_posets:functorial_equipment}
	A \emph{functorial equipment} with values in $\mc{E}$ 
	is a functor
	$\Phi\colon \AP(\mc{T},\mc{S})\ar \mc{E}$
	over $\mc{T}$. 
\end{dfn}
\begin{rmrk}\label{rmrk:target_of_equipment}
	Let $\mc{T}=\Var$.
	Usually $\mc{E}$ will be a category of pairs 
	$(X,F_X)$, where $F_X$ are in $\ChF(\Sh_X)$.
	By definition a morphism $(X',F_{X'})\ar (X,F_X)$ is a pair 
	$f\colon X'\ar X$ and $f^*F_X\ar F_{X'}\in \ChF(Sh_{X'})$.
	In the case of mixed Hodge complexes $F_X\in \MHC(X)$ and $F_{X'}\in \MHC(X')$, $f^*F_X$
	is not an MHC, but
	$f^*F_X\ar F_{X'}\in \ChF_{F,W}(\Sh_X)$ induces
	$F_X\ar R_{TW}f_* F_{X'}\in \MHC(X)$ (see Remark \ref{rmrk:pullback_for_mhc} in \ref{sect:tws_functor}).
\end{rmrk}

%Equivalently, $\Phi$ defines a functor
%$\AP^{\circ}(\mc{T})\ar \AP(\mc{T},\mc{S})$ over $\mc{T}$
%by the rule
%$[X,L]\lto [X,L,\Phi([X,L])]$.
A \emph{morphism} of functorial equipments $\Phi\ar \Phi'$
is defined in the obvious way.
%Latter on we will restrict ourselves to arrangements 
%iin a subcategory of $\Top$ 
%(e.g.\ smooth algebraic varieties over $\mb{C}$).
%By default an arrangement will be equipped with the constant sheaf, if not stated otherwise.

\begin{exmpl}\begin{enumerate}
	\item An arrangement $(X,Z_*)$ parametrized by an index set $N$
		defines the \emph{cubical} arrangement $[X,\mc{C}]$, where
		the cubical lattice $\mc{C}=2^N$ is ordered by inclusion and
		$\mc{C}_I=Z_I\subset X$ provides a map $(\mc{C},\le)^{\op} \ar 2^X$.
		By definition $[X,\mc{C},\ul{\mb{Z}}_X,\OS(\mc{C})]$ 
		is a multiplicative geometric arrangement. 
	
	\item
	Recall that the \emph{intersection poset} $P$ of $(X,Z_*)$
	is the sublattice of $2^X$ of all \emph{different} intersections 
	$Z_I=\cap_{i\in I} Z_i\subset X$. 
	If $P$ is graded and admits an OS-complex $\Mu$, 
	it tautologically gives an arrangement $[X,P,\Mu]$.
	There is a natural morphism $p\colon [X,\mc{C},\OS(\mc{C})]\ar[] [X,P,\Mu]$.

	\item The intersection poset of vector hyperplanes is always a geometric
		lattice with the rank function given by codimension. 
		More generally an intersection poset $(L,\le)$ of hypersurfaces in a manifold $M$
		with clean 
		intersections,
		i.e.\ locally homeomorphic to a hyperspace arrangement,
		is locally geometric lattice.
		This provides a multiplicative arrangement $[M,L,\ul{\Bbbk}_M,\OS(L)]$.
	\item  
		The configuration space of $n$-ordered points in $X\in \Top$ is
	by definition the complement
	$$F(X,n):=X^{\times n}-\cup_{ab}\Delta_{\{ab\}},$$ where
	$\Delta_{\{ab\}}:=\{(x_1,\ldots,x_n)\in X^{\times n}\mid x_a=x_b\}.$
	The intersection poset of $\Delta_{ab}\subset X^{\times n}$ for all 
	$a,b\le n$
	provides a arrangement $[X^{\times n},\Delta_*]$ 
	with
	$\Atoms(\Delta_*)=\{\Delta_{ab}\mid a,b\le n\}$.
	Since the poset $\Delta_*$ is independent of $X$, one
	can take $X=\mb{C}$ and see by the previous part that
	$[X^{\times n},\Delta_*]$ is geometric.
	The corresponding subspaces
	$\Delta_{(-)}\subset X^{\times n}$, called diagonals, 
	are given by
	$$\Delta_{\{S_1,\ldots,S_k\}}=
	\{(x_1,\ldots, x_n)\in X^{\times n}\mid i,j\in S_t\Rightarrow x_i=x_j\},$$ 
	parametrized by an
	unordered 
	collection of disjoint non-empty subsets
	$\sqcup_{i\le k} S_i=\{1,\ldots,n\}$.
        This is the \emph{partition lattice}
        of $\{1,\ldots,n\}$.
	The rank function $r$ satisfies 
	$r(\Delta_{\{S_1,\ldots,S_k\}})=\sum\limits_{i\leq k} |S_i|-1$.
	The permutations $\Sigma_n$ acting on $X^{\times n}$
	induce a natural action on the arrangement $[X^{\times n},\Delta_*,\OS(\Delta_*)]$.

\item	Consider a vector subspace arrangement given by 
	two planes and one line 
	$\Pi_1,\Pi_2,l\subset \mb{C}^3$ in general position 
    meeting at the origin, which we denote $p$. 
	%at one point $p=0$ in general position.
	The corresponding intersection poset consisting of 
	$$\mb{C}^3,\Pi_1,\Pi_2,l,l_{12}=\Pi_1\cap \Pi_2,p
	\subset \mb{C}^3$$ 
	has no rank function:
	\begin{equation*}
	\begin{tikzcd}
	0&&	&\mb{C}^3	&	&\\
	1&l\arrow[urr]&&\Pi_1\arrow[u]&&\Pi_2\arrow[ull]\\
	2&&&&l_{12}\arrow[ul]\arrow[ur]&\\
	?&&p\arrow[uul]\arrow[uur]\arrow[uurrr]\arrow[urr]&&&
	\end{tikzcd}
	\end{equation*}
	Some paths 
	from $\mb{C}^3$ to $p$ have length 2 and others have length 3, 
	so there is no way to define a grading at $p$.

	One can always consider the corresponding cubical lattice 
	to resolve the issue:
	\begin{equation*}
	\begin{tikzcd}
		0&&\mb{C}^3	&\\
		1&l\arrow[ur]&\Pi_1\arrow[u]&\Pi_2\arrow[ul]\\
		2&l\cap \Pi_1\arrow[u]\arrow[ur]&
			l\cap \Pi_2\arrow[ul]\arrow[ur]&
			\Pi_1\cap \Pi_2\arrow[u]\arrow[ul]\\
		3&&l\cap \Pi_1\cap \Pi_2\arrow[ul]\arrow[u]\arrow[ur]&
	\end{tikzcd}
	\end{equation*}
	This result is a geometric arrangement.

	Another option is to duplicate $p$ by
	adding $p'$ to the arrangement 
	such that $L_{p'}=L_p$:
	\begin{equation*}
	\begin{tikzcd}
	0&&	&\mb{C}^3	&	&\\
	1&l\arrow[urr]&&\Pi_1\arrow[u]&&\Pi_2\arrow[ull]\\
	2&&p'\arrow[ul]\arrow[ur]&&l_{12}\arrow[ul]\arrow[ur]&\\
	3&&&p\arrow[ul]\arrow[ur]&&
	\end{tikzcd}
	\end{equation*}
	Note that this arrangement is not geometric
	and does not satisfy the second condition of 
	Definition \ref{dfn:lattice:geometric}.
\end{enumerate}
\label{exmpl:arrangementss}
\end{exmpl}

\newpage
\section{Natural mixed Hodge diagram 
\texorpdfstring{$(\mc{K}_{U/X},F,W)$}{}}\label{sect:aznar_mhd}
Let $j\colon U\subset X$ be an open in smooth compact algebraic variety. 
The main result we will use in \ref{sect:compl:mvhc}
is Corollary \ref{cor:functorial_mhd},
which provides a functorial mixed Hodge diagram $\mc{K}_{U/X}\in \MHD(X)$ 
corresponding to $Rj_*\mb{Q}_U$. 
%This will be used in \ref{sect:compl:mvhd}.
This is done in a straightforward way 
by reducing to the case of an NCD-embedding $U/\tilde{X}$
and then taking colimit over all NCD-embeddings 
$U/\tilde{X}$ over $U/X$. The former case is
Theorem \ref{thrm:aznar_functorial_mhd} due to Navarro Aznar~\cite{Aznar}.
It uses the Thom-Whitney-Sullivan functor $s_{TW}$, which 
refines totalization functor $s\colon \cs\Vect_{\mb{Q}}\ar \Ch(\Vect_{\mb{Q}})$
to an equivalent lax \emph{symmetric} monoidal functor 
$s_{TW}\colon (\cs\Vect_{\mb{Q}},\otimes)\ar (\Ch(\Vect_{\mb{Q}}),\otimes)$.
%Corollary \ref{cor:functorial_mhd}
%will be used in the main construction \ref{sect:compl:mvhc}.
A construction similar in Corollary \ref{cor:functorial_mhd} appears also
in \cite{CH20}[6.1].

\subsection{Adapted sheaves}
Below we will work in the category $\Sm$ of smooth 
manifolds with the usual topology
given by open embeddings. 
In particular for any $X\in \Sm$,
the restriction $\PSh_{\Sm}\to \PSh_{X}$
commutes with the sheafififaction $\PSh_*\to \Sh_*$ 
sending  $F\in \PSh_*$ to $\ul{F}\in \Sh_*$.

\begin{dfn}[Sullivan polynomial forms, {\cite[\S 10]{FHT}}]
	Let $A^*_{\PL}\in \CMon(\Ch(\mb{Q}-\PSh_{\Sm}))$ be 
	the presheaf of Sullivan polynomial forms, i.e.
	for each $X\in \Sm$, $A^*_{\PL}(X)\in \CDGA_{\mb{Q}}$
	is the cdga of polynomial forms on 
	the singular simplicial set
	of $X$.
\end{dfn}
\begin{prop}[{\cite[\S 5, Theorem 5.5]{Aznar}}]
	The sheaffication $A^*_{\PL}\ar \ul{A}^*_{\PL}$  
	is a quasi-isormorphism of preasheaves in 
	$\CMon(\Ch(\mb{Q}-\PSh_{\Sm}))$.
\end{prop}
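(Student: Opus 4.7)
The plan is to show that for each $X \in \Sm$, the natural cdga morphism $A_{\PL}^*(X) \to \ul{A}_{\PL}^*(X)$ is a quasi-isomorphism; the multiplicative structure is preserved automatically since sheafification is a left adjoint and carries commutative monoids in $\Ch(\mb{Q}\text{-}\PSh_{\Sm})$ to commutative monoids in $\Ch(\mb{Q}\text{-}\Sh_{\Sm})$. The strategy is to identify both cdgas as computing $H^*(X;\mb{Q})$ and to verify that the natural map implements the canonical comparison.

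First, I would compute the stalks of $A_{\PL}^*$. For $x \in X$, the stalk $A_{\PL,x}^*$ is the filtered colimit of $A_{\PL}^*(V)$ over open neighborhoods $V \ni x$. Since filtered colimits commute with cohomology of $\mb{Q}$-cochain complexes, and since $x$ admits a cofinal family of contractible Euclidean neighborhoods in the smooth manifold $X$, the classical Sullivan--de Rham theorem $H^*(A_{\PL}^*(V)) \cong H^*(V;\mb{Q})$ shows the stalk cohomology is $\mb{Q}$ concentrated in degree zero. Sheafification preserves stalks, so the complex of sheaves $\ul{A}_{\PL}^*$ is a resolution of the constant sheaf $\ul{\mb{Q}}_X$.

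Next I would show that the sheaves $\ul{A}_{\PL}^n$ are soft, which together with the resolution property implies $\ul{A}_{\PL}^*(X) \simeq R\Gamma(X,\ul{\mb{Q}}_X) = H^*(X;\mb{Q})$. Each $\ul{A}_{\PL}^n$ is a module over the sheaf of rings $\ul{A}_{\PL}^0$, and a partition-of-unity argument (smooth partitions of unity on $X$ pulled back via singular simplices, exploiting the polynomial nature of the forms) produces the required softness on the paracompact Hausdorff base $X$. Combined with the Sullivan isomorphism $H^*(A_{\PL}^*(X)) \cong H^*(X;\mb{Q})$ and a diagram chase over a good open cover to verify compatibility of the two identifications, the natural map is a quasi-isomorphism.

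The main technical obstacle is verifying softness, which amounts essentially to the Mayer--Vietoris property for $A_{\PL}^*$: for any open cover $X = U \cup V$ of a smooth manifold, the sequence
\[
0 \to A_{\PL}^*(X) \to A_{\PL}^*(U) \oplus A_{\PL}^*(V) \to A_{\PL}^*(U \cap V) \to 0
\]
should be exact. This is not immediate because a singular simplex $\Delta^n \to X$ need not factor through $U$ or through $V$; the subdivision and extension arguments specific to polynomial forms that handle this (and give softness) are the core content of Aznar's theorem, and this is where I would expect the bulk of the work to lie.
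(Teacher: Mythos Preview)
The paper does not supply its own proof of this proposition; it is stated with a bare citation to Navarro Aznar \cite[\S 5, Theorem 5.5]{Aznar} and then used as a black box. So there is no argument in the paper to compare against.

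Your outline is the standard one and is correct in structure: compute stalks to see $\ul{A}^*_{\PL}$ resolves $\ul{\mb{Q}}$, establish acyclicity of the sheaves $\ul{A}^n_{\PL}$ for $\Gamma$, and compare with Sullivan's theorem on the presheaf side. You also correctly locate the real content in the softness/Mayer--Vietoris step.

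One point to flag: the parenthetical suggestion of producing partitions of unity by pulling back a smooth partition of unity on $X$ along singular simplices does not work as written, since the pullback of a smooth function along a continuous (or even smooth) map $\Delta^n\to X$ is not a polynomial on $\Delta^n$. The actual mechanism in Aznar (and already in Sullivan and Bousfield--Gugenheim) is intrinsic to the simplicial side: one uses the Kan extension property of $A^*_{\PL}$ on simplicial sets together with barycentric subdivision of the singular complex, which yields the required surjectivity in the Mayer--Vietoris sequence and hence softness of the sheafified complex. You essentially acknowledge this in your final paragraph, so this is more a correction of the heuristic than of the plan.
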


Let $\Bbbk\supset \mb{Q}$ be a field.
\begin{dfn}
	We say a cdga $R^*\in \CDGA_{\Bbbk}$ \textit{describes} the 
	$\Bbbk$-homotopy type of $X$
	if there is a quasi-isomorphism 
	$A^*_{\PL}(X)\otimes_{\mb{Q}} \Bbbk\zgqis R^*$ in $\CDGA_{\Bbbk}$.
\end{dfn}

%We call a complex $\mc{K}$ in $\Ch_V(\Sh_X)$ \textit{soft}
%if $F^p \mc{K}^i$ are soft for all $p,i$.
%For a soft $\mc{K}$, $\Gr^p_F \mc{K}^i$ are soft.

Recall that (Section~\ref{sec:notation_etc}) for an inclusion of an open subset $j\colon U\subset X$ and 
a sheaf $\ul{F}$ on $X$, we denote the sheaf 
$j_* j^*\ul{F}$ by $(j_* j^*)_U\ul{F}$. 
Similarly, we denote $(Rj_* j^*)\ul{F}$ by 
$(Rj_* j^*)_U\ul{F}$. 
%note: Let $f:X\to Y$ be continuous. If $Y$ is locally compact, paracompact and Hausdorff, then $f_*$ and $f_!$ take flabby sheaves to soft sheaves.

\begin{dfn}
	Let $U\subset X$ be open. 
	A sheaf $\ul{A}^*_X\in \CMon(\Ch(\Bbbk-\Sh_X))$ over $X$ 
	is $U$-\textit{adapted}, if
	there is a quasi-isomorphism
		$$\ul{A}_X^*\zgqis 
			(j_*j^*)_U \ul{A}^*_{\PL}\otimes \Bbbk\in 
				\CMon(\Ch(\Bbbk-\Sh_X)),$$
     		which we consider as part of the structure.
\end{dfn}

In the sequel an $X$-adapted sheaf $\ul{A}_X$ over $X$ will be called an adapted sheaf over $X$.
Similarly, we define adapted sheaves over $\Sm$ and call them adapted.
The de Rham complex $(\Lambda^*,d_{\dR})$ over $\Bbbk=\mb{R}$
and $\ul{A}^*_{\PL}\otimes \Bbbk$ over $\Bbbk$ are examples 
of soft adapted sheaves \cite[Lemma 5.4]{Aznar}.

\subsection{Thom-Whitney-Sullivan totalization functor}
\label{sect:tws_functor}
Recall that we use $\ChF(\mc{A})$ to denote any of several similar categories of (multi)filtered complexes in $\mc{A}$, see \ref{dfn:chf}. In particular, an object of $\ChF$ comes equipped with one or several filtrations, and we will denote either of these by $V$.  
If $\mc{A}$ is abelian symmetric monoidal, then $\ChF(\mc{A})$ is additive symmetric monoidal.
We will now discuss the Thom-Whitney-Sullivan functor $s_{TW}$. 
%For an abelian category $\mc{A}$, we consider $\ChF(\mc{A})$ as a symmetric monoidal 
%category of (bi-)filtered complexes in $\mc{A}$.
%In what follows all filtrations on an object $\mc{F}\in \ChF(\mc{A})$ are treated separately and by abusing notation we denote any filtration by the same letter $V$.

As a motivation consider the Moore complex functor
$s\colon \cs{\ChF(\Sh_X)}\to \ChF(\Sh_X)$. 
The Alexander-Whitney formula 
makes $s$ a lax (non-symmetric) monoidal functor, in particular we get a functor
$$s\colon \cs\CMon(\ChF(\Sh_X))\to \Mon(\ChF(\Sh_X)),$$
which takes values in non-commutative monoids.
The functor $s_{TW}$ can be though as a symmetric monoidal refinement of $s$.
Below we summarize its main properties.
\begin{prop}[{\cite[2.2, 3.3, 6.12]{Aznar}}]
\label{prop:s_tw}
There is a lax symmetric monoidal functor
$$s_{TW}\colon (\cs\ChF(\Vect_{\mb{Q}}),\otimes)\to (\ChF(\Vect_{\mb{Q}}),\otimes),$$ 
and a natural filtered quasi-isomorphism 
$s_{TW}\zgqis s$.
The functor $s_{TW}$ is exact on chain complexes. 
This induces a symmetric  lax
monoidal functor
$$s_{TW}\colon \cs\ChF(\mb{Q}-\Sh_X)\to \ChF(\mb{Q}-\Sh_X)$$
defined by $(s_{TW}\mc{F})(U)=s_{TW}(\mc{F}(U))$ for each 
$\mc{F}\in \ChF(\mb{Q}-\Sh_X)$ and any open $U\subset X$.
\end{prop}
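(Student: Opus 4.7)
The plan is to follow the construction of Navarro Aznar. Let $L^\bullet_n$ denote Sullivan's polynomial forms on the standard $n$-simplex, assembled into a simplicial commutative dg-algebra $L^\bullet_\bullet$ over $\mb{Q}$. For a cosimplicial filtered complex $X^\bullet \in \cs\ChF(\Vect_{\mb{Q}})$, I would define
$$s_{TW}(X^\bullet) := \mathrm{eq}\!\left(\prod_{n\ge 0} L^\bullet_n \otimes X^n \rightrightarrows \prod_{[n]\to [m]} L^\bullet_n \otimes X^m\right),$$
equipped with the filtration induced from $V$ on $X^\bullet$ (placing $L^\bullet_\bullet$ in filtration degree $0$). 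The assignment is manifestly functorial, and exactness on chain complexes follows because each $L^\bullet_n$ is $\mb{Q}$-flat and ends commute with finite limits of exact sequences of $\mb{Q}$-vector spaces.

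For the lax symmetric monoidal structure, the multiplication in $L^\bullet_n$, combined with the symmetry of $\otimes$, induces a natural morphism
$$s_{TW}(X^\bullet)\otimes s_{TW}(Y^\bullet) \to s_{TW}(X^\bullet\otimes Y^\bullet),$$
which respects the symmetry constraint because $L^\bullet_n$ is graded commutative. This is precisely the improvement over the Moore complex $s$, whose natural lax monoidal structure (the Alexander--Whitney product) is only associative and fails symmetry. The natural quasi-isomorphism $s_{TW}\zgqis s$ would be produced by comparing both functors through a third intermediate construction built from integration maps $\int_{\Delta^n}\colon L^\bullet_n\to \mb{Q}[-n]$; the fact that $L^\bullet_\bullet$ is a simplicial resolution of the constant cosimplicial object $\mb{Q}$ makes the comparison into a quasi-isomorphism by a standard spectral-sequence argument with respect to the cosimplicial filtration.

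The principal obstacle is verifying that the entire package (the functor, its lax symmetric monoidal structure, and the comparison with $s$) is \emph{filtered}: one must check that each step of the zig-zag is induced by a $V$-filtered map and that the induced maps on $\Gr_V$ remain quasi-isomorphisms. This reduces to the observation that placing $L^\bullet_\bullet$ in filtration degree $0$ makes every structural operation (multiplication, integration, the equalizer) preserve $V$ tautologically, so that $\Gr_V s_{TW}(X^\bullet) \simeq s_{TW}(\Gr_V X^\bullet)$; the content is then the unfiltered statement. This is the part recorded in \cite[\S 2.2, 3.3, 6.12]{Aznar}. Finally, the passage to sheaves is formal: because $s_{TW}$ is defined as an end and therefore commutes with limits, the sectionwise formula $(s_{TW}\mc{F})(U):=s_{TW}(\mc{F}(U))$ defines a sheaf whenever each $\mc{F}^n$ is a sheaf, and all of the above properties are inherited from the vector space case.
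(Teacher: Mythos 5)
The paper does not prove this proposition; it simply recalls it with a citation to Navarro Aznar. Your sketch of the construction — the end $\int_{[n]}L_n\otimes X^n$ with Sullivan forms, the filtration induced from $X^\bullet$ with $L_\bullet$ placed in filtration degree $0$, the lax symmetric monoidal structure coming from graded commutativity of $L_\bullet$, and the comparison with the Moore complex via the integration maps $\int_{\Delta^n}$ — is indeed the approach of \cite[\S\S 2--3]{Aznar}, so there is no divergence to report on that front.

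Two of your justifications are, however, too quick. First, exactness: ends are limits, hence left exact, and \enquote{ends commute with finite limits of exact sequences} does not give surjectivity of $s_{TW}(X)\to s_{TW}(X'')$. The exactness of $s_{TW}$ actually uses the \emph{extendability} of the simplicial cdga $L_\bullet$ (in the Bousfield--Gugenheim sense), which gives a contraction exhibiting the end as a retract of $\prod_n L_n\otimes X^n$. Second, and more seriously, the sheaf step: you argue that since $s_{TW}$ is an end and ends commute with limits, the sectionwise formula lands in $\Sh_X$. The end \emph{does} commute with the limit diagrams in the sheaf axiom, so the statement reduces to whether $U\mapsto L_n\otimes \mc{F}^n(U)$ is a sheaf for each fixed $n$. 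But $L_n$ is an infinite-dimensional $\mb{Q}$-vector space, and tensoring with an infinite-dimensional space does \emph{not} commute with the infinite products in the sheaf condition. Concretely, on the discrete space $X=\mb{N}$ with $\mc{F}=\ul{\mb{Q}}$ and $V=\bigoplus_{n\in\mb{N}}\mb{Q}$, the compatible family $(v_n\otimes 1)_n$ does not glue to an element of $V\otimes\mc{F}(X)=V\otimes\prod_n\mb{Q}$, so $V\otimes\mc{F}$ taken sectionwise fails to be a sheaf. The assertion that $s_{TW}$ preserves sheaves is therefore not formal; it is precisely the content of \cite[6.12]{Aznar} and requires an argument specific to the situation (or a reading of the statement that incorporates sheafification), not the abstract \enquote{ends commute with limits} reasoning.
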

%In particular $s_{TW}$ induces a functor:
%$$s_{TW}\colon \cs \CMon(\ChF(\mb{Q}-\Sh_X))\to \CMon(\ChF(\mb{Q}-\Sh_X))$$

For continous $f\colon X\to Y$ the exactness of $s_{TW}$ implies $s_{TW}\circ f^*=f^*\circ s_{TW}$
and $s_{TW}\circ f_*=f_*\circ s_{TW}$.

Let $G_{\cs}(-)\colon \ChF(\Sh_X)\to \cs\ChF(\Sh_X)$ denote the Godement 
cosimplicial construction (\cite[Appendice]{Godement}). 
Then $G=s\circ G_{\cs}$ is the Godement resolution.
%Since $G_{\cs}$ is a symmetric lax monoidal functor, it lifts to 
%$G_{\cs}\colon \CMon(\ChF(\Sh_X))\to \cs\CMon(\ChF(\Sh_X))$.
If $f\colon X\to Y$ is continuous, then there is natural morphism 
$G_{\cs}f_*\to f_* G_{\cs}$ and by the exactness of $f^*$, 
a natural quasi-isomorphism $f^*G_{\cs}\to G_{\cs}f^*$.

\begin{dfn}
	Set
	\begin{align*}
		G_{TW}&=s_{TW}\circ G_{\cs},\\
		R_{TW}f_*&=s_{TW}\circ f_*\circ G_{\cs}=f_*\circ G_{TW},\\
		\Gamma_{TW}&=s_{TW}\circ \Gamma\circ G_{\cs}=\Gamma\circ G_{TW}.\\
	\end{align*}
\end{dfn}
Note that in case when $j$ is an open inclusion 
we have a natural isomorphism $j^*  G_{\cs}\simeq G _{\cs}j^*$,
in particular $R_{TW}j_*j^*\simeq j_*j^*G_{TW}$.

Let us call a complex $K^*\in \ChF(\Sh_X)$ \emph{filtered soft}, if 
for any filtration $V$ defined by $\ChF$,
the filtered components $V^i K^n\in \Sh_X$ are soft
for all $n,i$.
\begin{prop}[{\cite[4.5, 4.7, 6.13]{Aznar}}]
	The functor
	$$G_{TW}\colon (\ChF(\mb{Q}-\Sh_X),\otimes)\to (\ChF(\mb{Q}-\Sh_X),\otimes)$$
	is lax symmetric monoidal and provides 
	a \textit{filtered} soft resolution, 
	i.e.\ there is a natural \emph{filtered} quasi-isomorphism
	$$K\ar[\sim]G_{TW}(K)\in \ChF(\Sh_X)$$
	for all $K\in \ChF(\mb{Q}-\Sh_X)$.
\end{prop}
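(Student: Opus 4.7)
The plan is to build $G_{TW}$ as a composition of two lax symmetric monoidal functors, and then deduce the quasi-isomorphism property by comparing with the classical Godement resolution $G=s\circ G_{\cs}$ via the filtered quasi-isomorphism $s_{TW}\zgqis s$ from Proposition~\ref{prop:s_tw}.

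First I would verify that the cosimplicial Godement construction
$$G_{\cs}\colon (\ChF(\mb{Q}-\Sh_X),\otimes)\to (\cs\ChF(\mb{Q}-\Sh_X),\otimes)$$
is lax symmetric monoidal. Degreewise, $G_{\cs}^n(\mc{F})$ is a product of stalks indexed by points, and the natural transformation $G_{\cs}(\mc{F})\otimes G_{\cs}(\mc{G})\to G_{\cs}(\mc{F}\otimes\mc{G})$ is built stalkwise from the canonical maps $\mc{F}_x\otimes \mc{G}_x\to (\mc{F}\otimes\mc{G})_x$; the filtration is transported through the tensor-product filtration on both sides, and the symmetry constraint is inherited from that on $\Sh_X$. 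Composing with $s_{TW}$ (lax symmetric monoidal by Proposition~\ref{prop:s_tw}) yields the required lax symmetric monoidal structure on $G_{TW}$.

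For the natural filtered quasi-isomorphism $K\zgqis G_{TW}(K)$, I would use a two-step comparison. The unit $K\to c(K)\to G_{\cs}(K)$ of the cosimplicial Godement construction, together with $s_{TW}$ applied to a constant cosimplicial object being canonically $K$, produces a natural map $K\to G_{TW}(K)=s_{TW}(G_{\cs}(K))$. Now consider the commutative square
\begin{equation*}
\begin{tikzcd}
K\arrow[r]\arrow[rd]& s_{TW}(G_{\cs}(K))\arrow[d,"\sim"]\\
& s(G_{\cs}(K))=G(K)
\end{tikzcd}
\end{equation*}
The diagonal is the classical Godement filtered quasi-isomorphism $K\zgqis G(K)$, which is well known to be a filtered quasi-isomorphism when the filtrations are the componentwise or d\'ecalage ones considered in $\ChF$ (it arises from applying $G_{\cs}$ termwise and using that $\Gr^i_V G(K)\simeq G(\Gr^i_V K)$). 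The right vertical map is a filtered quasi-isomorphism by Proposition~\ref{prop:s_tw}, so by two-out-of-three the top horizontal map is a filtered quasi-isomorphism.

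The remaining point, and the main technical obstacle, is the filtered softness of $G_{TW}(K)$. For the classical Godement resolution this is standard: each $V^i G^n(K)\simeq G^n(V^i K)$ is a product of skyscrapers, hence soft. For $s_{TW}$ one has, locally in degree $n$, a finite direct sum
$$(s_{TW}M)^n=\bigoplus_{p+q=n}M^{p,q}\otimes_{\mb{Q}} L^p_q,$$
where $L^p_q$ denotes a finite-dimensional $\mb{Q}$-vector space of PL-forms on the standard simplex (see \cite[\S 2]{Aznar}), and each filtered piece is a direct summand of the same form with $M^{p,q}$ replaced by $V^i M^{p,q}$. Since softness is preserved under finite direct sums, tensor products with finite-dimensional vector spaces, and is inherited by the direct summands that appear in $s_{TW}$ (it is local, so one checks stalkwise/on paracompact opens), the filtered components $V^i G_{TW}(K)^n$ are soft. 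This completes the proof.
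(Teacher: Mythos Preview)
The paper does not prove this proposition; it is quoted from Navarro Aznar with a citation and no argument. So there is nothing to compare your proof against in the paper itself.

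Your sketch for the lax symmetric monoidal structure and for the filtered quasi-isomorphism $K\to G_{TW}(K)$ is essentially correct: composing the lax symmetric monoidal Godement cosimplicial functor with $s_{TW}$, and then using two-out-of-three with the classical Godement resolution and the comparison $s_{TW}\zgqis s$, is exactly the right strategy. (One small point: you invoke a direct map $s_{TW}(G_{\cs}K)\to s(G_{\cs}K)$; in Aznar's setup there is indeed an integration map realising this, though the paper only records the comparison as a zig-zag.)

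The genuine gap is in your softness argument. You write
\[
(s_{TW}M)^n=\bigoplus_{p+q=n}M^{p,q}\otimes_{\mb{Q}} L^p_q
\]
with $L^p_q$ a \emph{finite-dimensional} space of PL-forms. Both assertions are false: $s_{TW}$ is defined as an \emph{end} over the simplex category (a compatible family in a product, not a direct sum over $p$), and the spaces $\Omega^q_{\mathrm{poly}}(\Delta^p)$ of polynomial forms on simplices are infinite-dimensional. So the reduction to ``softness is stable under finite direct sums and tensoring with finite-dimensional vector spaces'' does not apply. The actual argument in \cite{Aznar} uses the specific structure of the Godement sheaves $G^p_{\cs}(K)$ as products of skyscrapers: one checks that the end defining $s_{TW}$ commutes with this product decomposition, so that $s_{TW}(G_{\cs}K)$ is again a sheaf whose sections over $U$ are a product over $x\in U$, hence flabby (and the same for each filtered piece). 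You should replace your softness paragraph with this kind of argument.
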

\begin{rmrk}\label{rmrk:pullback_for_mhc}
	In particular given morphism $f^*F\ar F'\in \ChF(\Sh_X)$ induces a natural morphism 
	$F\ar R_{TW}f_* F'\in \ChF(\Sh_{X'})$
given by the composition $$F\ar G_{TW}(F)=s_{TW}G_{\cs}(F)\ar s_{TW}f_* G_{\cs}(F')=R_{TW}f_* F',$$
where the middle arrow is induced by $f^*(G_{\cs}(F))\ar G_{\cs}(f^*(F))\ar G_{\cs}(F')$.
\end{rmrk}
\begin{rmrk}
	The quasi-isomorphism $s_{TW}\zgqis s$ in Proposition \ref{prop:s_tw} 
	is monoidal only up to homotopy
	(see \cite[Theorem 3.3]{Aznar}). 
	However for $F\in \CMon(\Ch(\mb{Q}-\Sh_X))$ there is a natural
	zig-zag in $\DGA_{\mb{Q}}$:
	$$\Gamma \left[G(F)\ar G(G_{TW}(F))\al G_{TW}(F)\right]$$
	induced by the resolutions $F\ar G_{TW}(F)$ and $G_{TW}(F)\ar G(G_{TW}(F))$ respectively. Since 
	$G_{TW}(F)$ is soft, these morphisms provide a zig-zag of quasi-isomorphisms.
\end{rmrk}

This canonical resolution allows one to control the rational homotopy type on the level of sheaves
of cdga's, namely we have:
\begin{cor}\label{cor:global_sections_of_adapted_sheaf}
	%\label{prop:adapted_exists}
	%There is a natural quasi-isomorphisim:
	%$$\Gamma G_{TW}(\ul{\mb{Q}}_X)\zgqis 
	%A_{\PL}(X)\in \CMon(\Ch(\Vect_{\mb{Q}})).$$
	%For a filtered adapted sheaf 
	
	If $(\ul{A}^*_X,V)\in \CMon(\ChF(\Bbbk-\Sh_X))$ is a $U$-adapted sheaf 
	$\ul{A}^*_X$ with a multiplicative filtration $V$, then the natural map
	$(\ul{A}^*_X,V)\ar G_{TW}(\ul{A}^*_X,V)$ is a filtered resolution
	of $\ul{A}^*_X$
	by a soft $U$-adapted sheaf $G_{TW}(\ul{A}^*_{X})$ 
	with the filtration $G_{TW}(V)$. 
	The same is true in the multifiltered case.
	We have a quasi-isomorphism 
		$$\Gamma_{TW}(\ul{A}^*_X)\zgqis A_{\PL}(X)\otimes \Bbbk\in 
			\CDGA_{\Bbbk}$$
\end{cor}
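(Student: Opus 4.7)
The plan is to chain together three results that are already in hand: the filtered soft resolution property of $G_{TW}$, its lax symmetric monoidality, and the definition of $U$-adaptedness. Since all three apply to $\ChF$ objects, the multifiltered case will be essentially the same once the singly-filtered case is handled.

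First, I would verify that $(\ul{A}^*_X,V) \to G_{TW}(\ul{A}^*_X,V)$ is a filtered soft resolution. This is a direct instance of the preceding proposition (the statement that $G_{TW}$ is a filtered soft resolution on $\ChF(\mb{Q}\text{-}\Sh_X)$) applied to the underlying filtered complex of $\ul{A}^*_X$. Because $G_{TW}=s_{TW}\circ G_{\cs}$ is lax symmetric monoidal, the multiplication on $\ul{A}^*_X$ is transported to $G_{TW}(\ul{A}^*_X)$, so the latter lies in $\CMon(\ChF(\Bbbk\text{-}\Sh_X))$ and the resolution map is a morphism there. The multifiltered case is identical: $G_{TW}$ is defined term-wise on filtered pieces and preserves each filtration independently.

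Next, I would check that $G_{TW}(\ul{A}^*_X)$ is itself $U$-adapted. Applying $G_{TW}$ to the zig-zag of quasi-isomorphisms witnessing $U$-adaptedness of $\ul{A}^*_X$ and using that $G_{TW}$ takes (filtered) quasi-isomorphisms to (filtered) quasi-isomorphisms, I get a zig-zag
\[
G_{TW}(\ul{A}^*_X)\zgqis G_{TW}\bigl((j_*j^*)_U\, \ul{A}^*_{\PL}\otimes \Bbbk\bigr)
\]
in $\CMon(\ChF(\Bbbk\text{-}\Sh_X))$. Composing with the natural resolution map $(j_*j^*)_U\, \ul{A}^*_{\PL}\otimes \Bbbk \to G_{TW}\bigl((j_*j^*)_U\, \ul{A}^*_{\PL}\otimes \Bbbk\bigr)$, which is also a quasi-isomorphism, produces the desired zig-zag exhibiting $U$-adaptedness of $G_{TW}(\ul{A}^*_X)$.

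For the global sections statement, I would exploit softness. By construction $\Gamma_{TW}(\ul{A}^*_X)=\Gamma(G_{TW}(\ul{A}^*_X))$, and since $G_{TW}(\ul{A}^*_X)$ is a soft resolution of $\ul{A}^*_X$, taking $\Gamma$ preserves all the quasi-isomorphisms produced above (this is where softness, not just acyclicity, matters, because we need the quasi-isomorphisms on the level of sheaves to remain quasi-isomorphisms after applying $\Gamma$). Combining these with the definition of $U$-adaptedness and with $A^*_{\PL}(X)\otimes\Bbbk\zgqis \Gamma(\ul{A}^*_{\PL}\otimes\Bbbk)$ (obtained by sheafification together with softness of $\ul{A}^*_{\PL}\otimes\Bbbk$) yields the desired zig-zag in $\CDGA_{\Bbbk}$.

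The only real delicate point is to ensure that every arrow in the zig-zag lives in $\CDGA_{\Bbbk}$ and not merely in $\DGA_{\Bbbk}$; this is precisely what the lax \emph{symmetric} monoidality of $s_{TW}$ (versus the non-symmetric Alexander–Whitney model $s$) buys us, so the whole argument can be run in commutative algebras without needing a further homotopy-coherent replacement. Everything else is straightforward application of the preceding proposition and of the fact that $\Gamma$ is exact on soft sheaves.
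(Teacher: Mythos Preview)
Your proposal is correct and is exactly the kind of unpacking the paper intends: the corollary is stated without proof in the paper, as it follows immediately from the preceding proposition on $G_{TW}$ being a lax symmetric monoidal filtered soft resolution, together with the definition of $U$-adaptedness. Your three-step chain (filtered soft resolution, $U$-adaptedness preserved under $G_{TW}$, pass to $\Gamma$ using softness) is the intended argument.

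One small point: in your final step you invoke $A^*_{\PL}(X)\otimes\Bbbk\zgqis \Gamma(\ul{A}^*_{\PL}\otimes\Bbbk)$, but what the zig-zag from $U$-adaptedness actually produces is $\Gamma\bigl(G_{TW}((j_*j^*)_U\,\ul{A}^*_{\PL}\otimes\Bbbk)\bigr)$, which computes $A_{\PL}(U)\otimes\Bbbk$, not $A_{\PL}(X)\otimes\Bbbk$. The displayed conclusion in the paper's statement appears to be a typo (it should read $A_{\PL}(U)$, consistent with how the corollary is later applied in Corollary~\ref{cor:mvhd_is_adapted}). Your argument is fine once you replace $X$ by $U$ in that last identification and note that $(j_*j^*)_U\,\ul{A}^*_{\PL}$ is soft (since $\ul{A}^*_{\PL}$ is), so that $\Gamma$ already computes derived sections.
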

\iffalse
\begin{prop}\label{prop:adapted_res}
	If $\ul{A}^*_X$ is $U$-adapted and soft and $V\subset X$ is open, 
then $(j_*j^*)_V \ul{A}^*_X$  is $U\cap V$-adapted. 
In particular there is a quasi-isomorphism 
$\ul{A}^*_X(V)\zgqis A_{\PL}(U\cap V)\in \CDGA_{\mb{Q}}$.
\end{prop}
\begin{proof}
Note that $\ul{A}^*_X\zgqis (j_*j^*)_U\ul{A}_{\PL}$
implies an existence of a zig-zag going entirely through soft objects 
$G_{TW}(\ul{A}^*_X)\zgqis G_{TW}((j_*j^*)_U \ul{A}_{\PL})$.
Since $\ul{A}^*_{X}$ and $\ul{A}^*_{\PL}$ soft 
we have a diagram of quasi-isomorphisms:
$$(j_*j^*)_V \ul{A}^*_X\longrightarrow (j_*j^*)_V (G_{TW}(\ul{A}^*_X))\zgqis 
(j_*j^*)_V (G_{TW}((j_*j^*)_U \ul{A}^*_{\PL})) \longleftarrow 
	(j_*j^*)_V ((j_*j^*)_U \ul{A}^*_{\PL})$$

Then the natural identification
$(j_*j^*)_V\circ (j_*j^*)_U=(j_*j^*)_{U\cap V}$ implies the claim.
\end{proof}
\fi

Finally we have a natural functor $R_{TW}f_*\colon \MHD(X)\to \MHD(Y)$.
In particular,
\begin{thrm}[{\cite[8.15]{Aznar}}]\label{thrm:mhd_sheaves_sections}
	If $(\mc{K},F,W)\in \MHD(X)$, then 
		$\Gamma_{TW}(\mc{K},F,W)\in \MHD$
	provides a mixed Hodge diagram in the sense of Morgan.
 If $\mc{K}$ is $U$-adapted for an open $U\subset X$, 
 then there is a quasi-isomorphism $\Gamma_{TW}(\mc{K}_{\mb{Q}})\zgqis A_{\PL}(U)\in\CDGA_{\mb{Q}}$. 
\end{thrm}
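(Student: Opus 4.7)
The plan is to prove the two assertions separately, both by exploiting the identity $\Gamma_{TW} = \Gamma \circ G_{TW}$, where $G_{TW}$ produces a lax symmetric monoidal filtered soft resolution. The key properties I will use are (i) $s_{TW}$ is exact and lax symmetric monoidal (Proposition \ref{prop:s_tw}), hence commutes with $\mathrm{Gr}_W$; (ii) $G_{TW}$ sends filtered quasi-isomorphisms to filtered quasi-isomorphisms and lands in filtered soft sheaves; and (iii) $\Gamma$ is exact on soft sheaves and so computes hypercohomology on $G_{TW}(\mc{K})$.

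For the first claim, I would verify the axioms of an MHC on $\Gamma_{TW}(\mc{K},F,W)$ in turn. The rational part $\Gamma_{TW}(\mc{K}_{\mb{Q}})$ inherits the filtration $\Gamma_{TW}(W)$; the complex part inherits $\Gamma_{TW}(F)$ and $\Gamma_{TW}(W)$; and the comparison $\phi$ yields a filtered quasi-isomorphism $\Gamma_{TW}(\mc{K}_{\mb{Q}})\otimes\mb{C} \zgqis \Gamma_{TW}(\mc{K}_{\mb{C}})$ after tensoring, because $s_{TW}$ commutes with $-\otimes_{\mb{Q}}\mb{C}$ and with quasi-isomorphisms. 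The crucial point is the pure-Hodge condition: by exactness, $\mathrm{Gr}^p_W \Gamma_{TW}(\mc{K}) \simeq \Gamma_{TW}(\mathrm{Gr}^p_W \mc{K})$, and since $G_{TW}$ is a filtered soft resolution, $H^n$ of the latter equals $\mb{H}^n(X,\mathrm{Gr}^p_W \mc{K})$. The $E_1$-degeneration in Theorem \ref{thrm:ss_degeneration_mhc} applied to the pure Hodge complex of sheaves $\mathrm{Gr}^p_W \mc{K}$ gives that the induced Hodge filtration $F$ endows this hypercohomology with a pure Hodge structure of the correct weight, provided one identifies the $F$-filtration computed from $\Gamma_{TW}(F)$ with the hypercohomological Hodge filtration; this identification again reduces to the fact that $G_{TW}$ is a filtered soft resolution. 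The multiplicative refinement to $\MHD = \CMon(\MHC)$ follows because $\mc{K} \in \CMon(\MHC(X))$ and $\Gamma_{TW}$ is lax symmetric monoidal, so it carries commutative monoids to commutative monoids.

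For the adapted case, the assumption produces a zig-zag of quasi-isomorphisms $\mc{K}_{\mb{Q}} \zgqis (j_* j^*)_U \ul{A}^*_{\PL}$ in $\CMon(\Ch(\mb{Q}\text{-}\Sh_X))$. Applying $\Gamma_{TW}$, and using that it preserves zig-zag quasi-isomorphisms and is lax symmetric monoidal, yields a zig-zag $\Gamma_{TW}(\mc{K}_{\mb{Q}}) \zgqis \Gamma_{TW}((j_*j^*)_U \ul{A}^*_{\PL})$ of cdga's. The right-hand side unwinds as $\Gamma_{TW}(U, j^* \ul{A}^*_{\PL})$ using the natural quasi-isomorphism $G_{\cs} \circ j_* \to j_* \circ G_{\cs}$ on soft sheaves composed with the adjunction $\Gamma(X,j_*(-)) = \Gamma(U,-)$. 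Finally, the sheafification map $A^*_{\PL} \zgqis \ul{A}^*_{\PL}$ on $U$, together with the natural zig-zag $s_{TW} \zgqis s$ from Proposition \ref{prop:s_tw} and the standard fact that $\Gamma \circ G$ computes $A_{\PL}(U)$ on the presheaf $A^*_{\PL}$, produces the required quasi-isomorphism $\Gamma_{TW}(\mc{K}_{\mb{Q}}) \zgqis A_{\PL}(U)$ in $\CDGA_{\mb{Q}}$.

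The main obstacle is the purity verification: one must show that the filtration $\Gamma_{TW}(F)$ computed term-wise on the Thom--Whitney resolution really does give, on passage to cohomology of $\Gamma_{TW}(\mathrm{Gr}^p_W \mc{K})$, the Hodge filtration of the pure Hodge structure on $\mb{H}^{n}(X,\mathrm{Gr}^p_W \mc{K})$ of weight $n+p$. This is a strictness statement that depends on the $E_1$-degeneration for $F$ in Theorem \ref{thrm:ss_degeneration_mhc} and on the fact that, since $G_{TW}(\mathrm{Gr}^p_W \mc{K})$ is filtered soft, the filtration spectral sequence on $\Gamma_{TW}(\mathrm{Gr}^p_W \mc{K})$ coincides with the hypercohomology spectral sequence of $(\mathrm{Gr}^p_W \mc{K},F)$ from $E_1$ onwards.
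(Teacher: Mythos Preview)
The paper does not supply its own proof of this theorem: it is stated with attribution to Navarro Aznar \cite[8.15]{Aznar} and no proof block follows. The second assertion is essentially a restatement of Corollary~\ref{cor:global_sections_of_adapted_sheaf} (itself stated without proof), specialised to the rational part of an MHD.

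Your sketch is a correct outline of how the argument goes, and it matches the machinery the paper assembles in \S\ref{sect:tws_functor}: exactness and lax symmetric monoidality of $s_{TW}$ (Proposition~\ref{prop:s_tw}), the fact that $G_{TW}$ is a filtered soft resolution, and the identification of $\Gamma\circ G_{TW}$ with hypercohomology on the associated graded pieces. The one point you flag as an obstacle---that the filtration $\Gamma_{TW}(F)$ on $\Gamma_{TW}(\mathrm{Gr}^p_W\mc{K})$ agrees with the hypercohomological Hodge filtration---is indeed the substantive step, and your reduction of it to the filtered-softness of $G_{TW}(\mathrm{Gr}^p_W\mc{K})$ together with Theorem~\ref{thrm:ss_degeneration_mhc} is the right mechanism. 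For the $U$-adapted part, your unwinding via $(j_*j^*)_U\ul{A}^*_{\PL}$ and the sheafification quasi-isomorphism $A^*_{\PL}\to\ul{A}^*_{\PL}$ is exactly what the paper's definitions are set up to deliver.
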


\subsection{Logarithmic complex}
Assume $j\colon U\hookrightarrow X$ is an NCD-embedding in 
a smooth compact algebraic variety $X$.
Such embeddings form a category $\Var^{NCD}_{-/-}$ with objects denoted by $U/X$.
Let $\MHC_{-}$ be the category of pairs $(X,\mc{F})$ for
$X\in \Var$ and $\mc{F}\in \MHC(X)$, where
by definition a morphism $(X,\mc{F})\ar (X',\mc{F}')$ is
a pair of morphisms $f\colon X\ar X'\in \Var$ and 
$g\colon f^*\mc{F}'\ar \mc{F}\in \ChF_{F,W}(\Sh_X)$,
so by Remark \ref{rmrk:pullback_for_mhc}, $g$ induces $\mc{F}'\ar[g] R_{TW}f_* \mc{F}\in \MHC(X')$.
Exterior tensor product $-\boxtimes-$ in $\MHC_{-}$ 
defines a symmetric monoidal category $(\MHC_{-},\boxtimes)$ such that
$(\MHC_{-},\boxtimes)\ar[(X,\mc{F})\ar X] (\Var,\times)$ is a
fibered category.

Let 
\begin{equation}\label{eq:dfn:log_subsheaf}
\Omega^*(\ln X-U)\subset j_*\Omega^*_{U}
\end{equation}
denote the \textit{logarithmic}
(holomorphic) de Rham complex over $X$.
It is a sheaf of cdga's.
\begin{dfn}[{\cite[\S 3.1]{D}}]
	The logarithmic complex $\Omega^*(\ln X-U)$
	admits two 
	filtrations:
	\begin{enumerate}
		\item The \textit{Hodge} decreasing filtration 
			$F^p \Omega^*(\ln X-U)=
				\Omega^{\geq p}(\ln X-U).$
		\item The \emph{weight} increasing filtration $W_n \Omega^*(\ln X-U)=
			\Omega^*_{X}\wedge \Omega^{n}(\ln X-U)[-n]+
			\Omega^{\leq n}(\ln X-U).$
	\end{enumerate}
\end{dfn}
In other words, the Hodge filtration $F$ is the dumb decreasing 
truncation, while
the weight filtration $W_n\Omega^*(\ln X-U)$ is formed by the log-forms 
containing at most $n$ singular terms, e.g.\ we have $W_0\Omega^*(\ln X-U)=\Omega^*_{X}$.
Both filtrations are multiplicative.

Note that the inclusion in \eqref{eq:dfn:log_subsheaf} is functorial 
in the following sense.
If $f\colon U/X\ar U'/X'$ is a morphism, then 
\iffalse
Assume there is a commutative square
\begin{equation*}
\begin{tikzcd}
	X\arrow[r,"\ol{f}"]& X'\\
	U\arrow[u,hook,"j"]\arrow[r,"f"]   & U'\arrow[u,hook,swap,"j'"]
\end{tikzcd}
\end{equation*}
such that $j$ and $j'$ are NCD-embeddings as above.
\fi
Then $f^{-1}(X'-U')\subset X-U$.
If $\omega\in \Omega^*(\ln X'-U')$ has a logarithmic 
singularity along $X'-U'$, then $f^*\omega$
has an at most logarithmic singularity along $X-U$.
Thus 
\begin{prop}
\label{prop:log_complex_functoriality}
	The complex 
	$(\Omega^*(\ln X-U),F,W)$ is functorial in 
	$U/X\in \Var^{NCD}_{-/-}$, 
	i.e.\ if $f$ is as above, then
	there is an $f^*\colon \Omega^*(\ln X'-U')\longrightarrow
	\Omega^*(\ln X-U)$ such that $f^* W_n\subset W_n$ and 
	$f^*F^p\subset F^p$.
\end{prop}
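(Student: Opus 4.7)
The plan is to construct $f^*$ locally, using the description of log forms in suitable coordinates, and then verify compatibility with each filtration directly.

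First I would note that the pullback $\bar{f}^*\colon \bar{f}^{-1}\Omega^*_{X'} \to \Omega^*_X$ restricts to $f^*\colon f^{-1}\Omega^*_{U'}\to \Omega^*_U$ on $U$, and hence gives a morphism $\bar{f}^{-1}j'_*\Omega^*_{U'}\to j_*\Omega^*_U$ by adjunction. The task reduces to showing that the image of $\bar{f}^{-1}\Omega^*(\ln X'-U')$ lies in the subsheaf $\Omega^*(\ln X-U)\subset j_*\Omega^*_U$, and that this containment respects $F$ and $W$. This is a purely local statement around points $x\in X$ with $\bar{f}(x)\in X'-U'$ (on $X-\bar{f}^{-1}(X'-U')$ the claim is trivial since log forms pull back to regular forms).

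Second, I would perform the local computation. Pick coordinates $z_1,\ldots,z_n$ on $X$ near $x$ so that the components of $X-U$ through $x$ are cut out by $z_1,\ldots,z_k$, and coordinates $w_1,\ldots,w_m$ on $X'$ near $\bar{f}(x)$ so that the components of $X'-U'$ through $\bar{f}(x)$ are cut out by $w_1,\ldots,w_l$. The hypothesis $\bar{f}^{-1}(X'-U')\subset X-U$ forces each pullback $\bar{f}^*w_i$, $i\le l$, to vanish set-theoretically only inside $\{z_1\cdots z_k=0\}$, hence to factor as $\bar{f}^*w_i=u_i\cdot z_1^{a_{i1}}\cdots z_k^{a_{ik}}$ for a unit $u_i\in\mathcal{O}_{X,x}^\times$ and non-negative integers $a_{ij}$. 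Consequently
\[
\bar{f}^*\!\left(\tfrac{dw_i}{w_i}\right)=\frac{du_i}{u_i}+\sum_{j=1}^{k}a_{ij}\,\frac{dz_j}{z_j}\in\Omega^1(\ln X-U)_x,
\]
and for $i>l$ we have $\bar{f}^*w_i\in\mathcal{O}_{X,x}$, so $\bar{f}^*dw_i\in\Omega^1_{X,x}$. Since the sheaf $\Omega^*(\ln X-U)$ is generated as an $\mathcal{O}_X$-algebra by the $dz_j/z_j$ and the $dz_j$, the map $\bar{f}^*$ sends $\Omega^*(\ln X'-U')$ into $\Omega^*(\ln X-U)$.

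Finally, I would check the two filtrations. The Hodge filtration $F^p=\Omega^{\ge p}(\ln -)$ is the dumb truncation, so $\bar{f}^*F^p\subset F^p$ is automatic because $\bar{f}^*$ preserves form-degree. For the weight filtration, a local section of $W_n\Omega^*(\ln X'-U')$ is an $\mathcal{O}_{X'}$-linear combination of wedge products $dw_{i_1}/w_{i_1}\wedge\cdots\wedge dw_{i_s}/w_{i_s}\wedge\eta$ with $s\le n$, $\eta$ regular. The formula above shows that each $\bar{f}^*(dw_{i_r}/w_{i_r})$ is a sum of at most $k$ logarithmic one-forms on $X$ (from the $dz_j/z_j$-terms) plus a regular one-form $du_{i_r}/u_{i_r}$, so the wedge of $s$ such factors still involves at most $s\le n$ genuinely logarithmic one-forms in each monomial, i.e.\ lands in $W_n\Omega^*(\ln X-U)$. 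This gives $\bar{f}^*W_n\subset W_n$ and completes the proof.

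The only mildly delicate step is the factorisation $\bar{f}^*w_i=u_i\cdot z_1^{a_{i1}}\cdots z_k^{a_{ik}}$; this is where the NCD hypothesis on both divisors is essential, and where the assumption $\bar{f}^{-1}(X'-U')\subset X-U$ is used. Everything else is formal.
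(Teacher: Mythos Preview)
Your proof is correct and follows the same line as the paper: the paper merely observes (in the sentence preceding the proposition) that a form with logarithmic singularities along $X'-U'$ pulls back to one with at most logarithmic singularities along $X-U$, and then states the proposition without further argument. Your local computation with the factorisation $\bar{f}^*w_i=u_i\, z_1^{a_{i1}}\cdots z_k^{a_{ik}}$ is exactly the standard justification of this assertion, and your checks for $F$ and $W$ are the expected ones; you have simply supplied the details the paper omits.
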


\begin{prop}[{\cite[Proposition 3.1.8]{D}}]\label{prop:sheaves:log_complex}
	There is a natural zig-zag of filtered quasi-isomorphisms
	$$(\Omega^*(\ln X-U),W)\longleftarrow(\Omega^*(\ln X-U),\tau_{\le})
		\longrightarrow (j_*\Omega^*_U,\tau_{\le}).$$
\end{prop}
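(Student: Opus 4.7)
The statement is a classical result of Deligne, and I would follow the argument from \cite{D}. The plan has two essentially independent parts, corresponding to the two arrows of the zig-zag. The underlying map of both arrows is (a restriction of) the natural inclusion \eqref{eq:dfn:log_subsheaf}, and the strategy in each case is to reduce from a filtered quasi-isomorphism to an unfiltered quasi-isomorphism using properties of the canonical filtration $\tau_{\le}$.

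For the right arrow, I would observe that the inclusion $\Omega^*(\ln X-U)\hookrightarrow j_*\Omega^*_U$ is a quasi-isomorphism of sheaves on $X$. This is Grothendieck's algebraic/meromorphic de Rham comparison applied locally: in a neighborhood of a point where $X-U$ is locally cut out by $z_1\cdots z_k=0$, both sides compute the cohomology of the complement, which is generated by $dz_i/z_i$. Once the underlying map is a quasi-isomorphism, the fact that $\tau_{\le}$ is the canonical filtration (so $\mathrm{Gr}^{\tau}_n K = H^n(K)[-n]$ for any $K$) immediately upgrades it to a $\tau_{\le}$-filtered quasi-isomorphism.

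For the left arrow, I would first check that $\tau_{\le n}\Omega^*(\ln X-U)\subset W_n\Omega^*(\ln X-U)$, which is obvious in degrees $k<n$ since a holomorphic log $k$-form has at most $k\le n-1$ singular factors, and in degree $n$ the subgroup $\tau_{\le n}$ contains only $n$-forms, which lie in $W_n$ tautologically. Then I would argue that the induced map on associated graded pieces is a quasi-isomorphism for each $n$. On one hand $\mathrm{Gr}^{\tau}_n \Omega^*(\ln X-U) = H^n(\Omega^*(\ln X-U))[-n]$, which by the previous paragraph equals $R^n j_*\mathbb{C}_U[-n]$. On the other hand, the Poincaré residue provides a canonical isomorphism
$$\mathrm{Res}\colon \mathrm{Gr}^W_n \Omega^*(\ln X-U)\xrightarrow{\sim}\bigoplus_{|I|=n}\Omega^{*-n}_{D_I},$$
where $D_I=\bigcap_{i\in I}D_i$ and the $D_i$'s are the smooth components of $X-U$. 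Composing with the holomorphic Poincaré lemma on each smooth $D_I$ gives a quasi-isomorphism to $\bigoplus_{|I|=n}\mathbb{C}_{D_I}[-n]$. The last step is to identify this sum with $R^n j_*\mathbb{C}_U[-n]$: locally at a point lying in exactly $k$ of the $D_i$'s, one has $R^n j_*\mathbb{C}_U\cong \Lambda^n\mathbb{C}^k$, and tracing through the residue along a monomial basis $d\log z_{i_1}\wedge\ldots\wedge d\log z_{i_n}$ shows that the composite $\mathrm{Gr}^{\tau}_n \to \mathrm{Gr}^W_n$ is the natural identification.

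The only real obstacle is the Poincaré residue computation together with the local identification of $R^n j_*\mathbb{C}_U$ with the sum of constant sheaves on $n$-fold intersections; these are the geometric heart of Deligne's construction and depend on the NCD hypothesis. Once they are in place, commutativity of the comparison diagram on associated gradeds reduces to a matter of tracking signs through the residue, and the two claimed filtered quasi-isomorphisms follow.
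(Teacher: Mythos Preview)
The paper does not give its own proof of this proposition; it is stated with a citation to \cite[Proposition 3.1.8]{D} and immediately used. Your proposal reproduces Deligne's original argument and is correct.

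One small streamlining for the left arrow: once you have established, via the Poincar\'e residue, that $\Gr^W_n\Omega^*(\ln X-U)$ is quasi-isomorphic to $\bigoplus_{|I|=n}\mb{C}_{D_I}[-n]$ and hence has cohomology concentrated in degree $n$, the filtered quasi-isomorphism $(\Omega^*(\ln X-U),\tau_{\le})\to(\Omega^*(\ln X-U),W)$ follows formally. Indeed, for any filtered complex $(K,W)$ such that $H^k(\Gr^W_n K)=0$ for $k\neq n$, the identity $(K,\tau_{\le})\to(K,W)$ is a filtered quasi-isomorphism; this avoids the need to separately identify $R^nj_*\mb{C}_U$ and then match the two descriptions through the residue, though of course that verification is what underlies the local computation anyway.
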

Recall that  $j_*j^*G(\ul{\mb{Q}})$ is a natural flabby resolution of 
$Rj_* j^* \ul{\mb{Q}}$. We have
\begin{thrm}[\cite{D}]\label{thrm:deligne_mhc}
	Mixed Hodge complex of sheaves
	$$(j_* j^* G(\ul{\mb{Q}})_X,\tau_{\leq}),
	(\Omega^*(\ln X-U),F,W))$$ 
	provides a functor $\Var^{NCD}_{-/-}\ar \MHC_{-}$
    over $\Var$.
\end{thrm}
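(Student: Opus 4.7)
My plan is to verify the axioms of a mixed Hodge complex of sheaves from Definition~\ref{dfn:mhc}, namely:
\begin{enumerate*}[label=(\roman*)]
\item a filtered quasi-isomorphism between the rational and complex parts,
\item purity of $\Gr^W_p$ of weight $p$, and
\item functoriality in $U/X$.
\end{enumerate*}
Throughout, write $D=X-U$ for the normal crossings divisor, let $\widetilde{D}^{(p)}$ be the normalization of the $p$-fold self-intersections, and let $a_p\colon\widetilde{D}^{(p)}\to X$ be the natural map; these are smooth proper varieties over $\mathbb{C}$ of dimension $\dim X-p$.

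For the comparison isomorphism, I would combine Proposition~\ref{prop:sheaves:log_complex} with the classical fact that the natural inclusion $\Omega^*(\ln X-U)\hookrightarrow j_*\Omega^*_U$ is a quasi-isomorphism (this is the holomorphic Poincaré lemma together with the fact that $j_*$ of the soft de~Rham resolution computes $Rj_*\underline{\mathbb{C}}_U$). Together with the soft/flabby resolution $\underline{\mathbb{Q}}_U\to j^*G(\underline{\mathbb{Q}})_X$ and the usual comparison $\underline{\mathbb{C}}_U\zgqis\Omega^*_U$, this yields a zig-zag
\[
 \bigl(j_*j^*G(\underline{\mathbb{Q}})_X\otimes\mathbb{C},\tau_{\le}\bigr)\zgqis\bigl(j_*\Omega^*_U,\tau_{\le}\bigr)\zgqis \bigl(\Omega^*(\ln X-U),\tau_{\le}\bigr)\zgqis\bigl(\Omega^*(\ln X-U),W\bigr)
\]
of filtered quasi-isomorphisms, which is exactly the compatibility $\phi$ required by Definition~\ref{dfn:mhc}.

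The heart of the matter, and the main obstacle, is establishing purity of the graded pieces. For this I would use Deligne's Poincaré residue isomorphism
\[
\mathrm{Res}\colon \Gr^W_p\Omega^*(\ln X-U)\xrightarrow{\ \sim\ } (a_p)_*\Omega^*_{\widetilde{D}^{(p)}}[-p],
\]
which is strict with respect to the Hodge filtration $F$ (the induced $F$ on the right is the usual dumb filtration). On the rational side, the identification $\Gr^{\tau}_p\bigl(j_*j^*G(\underline{\mathbb{Q}})_X\bigr)\simeq R^pj_*\underline{\mathbb{Q}}_U[-p]$ combined with the local computation of the cohomology of a punctured polydisk gives $R^pj_*\underline{\mathbb{Q}}_U\simeq (a_p)_*\underline{\mathbb{Q}}_{\widetilde{D}^{(p)}}(-p)$, and one checks that this matches $\mathrm{Res}$ after $\otimes\mathbb{C}$ up to the Tate twist. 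Thus $\Gr^W_p$ is realized, both rationally and analytically, by the constant-sheaf/de~Rham Hodge complex of sheaves on the smooth proper variety $\widetilde{D}^{(p)}$ shifted by $[-p]$ and twisted by $(-p)$; since $\widetilde{D}^{(p)}$ is smooth and projective this is a pure Hodge complex of sheaves of weight $p$ by the standard theorem of Deligne (the Hodge-to-de~Rham spectral sequence degenerates at $E_1$, and $H^n$ carries a pure Hodge structure of weight $n+p$).

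Finally, for functoriality in $U/X\in \Var^{NCD}_{-/-}$: the rational part $(j_*j^*G(\underline{\mathbb{Q}})_X,\tau_{\le})$ is functorial because $G$, $j_*j^*$ and the canonical filtration $\tau_{\le}$ are all natural in sheaf maps, and $f^{-1}(X'-U')\subset X-U$ gives the required comparison morphism; the complex part is functorial by Proposition~\ref{prop:log_complex_functoriality}. The compatibility $\phi$ is a zig-zag of natural transformations, so it is automatically functorial. Assembling these pieces yields the desired functor $\Var^{NCD}_{-/-}\to\MHC_{-}$ over $\Var$.
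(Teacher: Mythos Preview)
The paper does not supply its own proof of this theorem; it is stated with attribution to Deligne \cite{D} and used as a black box. Your outline is the standard argument (as in \cite{D} or \cite[Ch.~4]{PS}): the comparison zig-zag via Proposition~\ref{prop:sheaves:log_complex}, purity of $\Gr^W_p$ via the Poincar\'e residue identification with $(a_p)_*\Omega^*_{\widetilde D^{(p)}}[-p]$ together with classical Hodge theory on the smooth proper strata, and functoriality via Proposition~\ref{prop:log_complex_functoriality}. Two small remarks: in the comparison zig-zag, the passage through the non-derived $j_*\Omega^*_U$ is justified precisely because the inclusion $\Omega^*(\ln X-U)\hookrightarrow j_*\Omega^*_U$ is already a quasi-isomorphism (this is built into Proposition~\ref{prop:sheaves:log_complex}); and you write ``projective'' for $\widetilde D^{(p)}$, whereas the paper's standing hypothesis is smooth \emph{proper}, which still suffices for the required degeneration and purity by Deligne's results. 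With those adjustments your sketch is correct.
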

In fact we have
$\Omega^*(\ln X-U)\zgqis j_*j^* G(\ul{\mb{C}}_X)$ in $\Mon(\Ch(\mb{C}-\Sh_X))$.
Using the Thom-Whitney functor $s_{TW}$ it is possible to upgrade 
this construction
to the level of cdga's over $\Bbbk\supset \mb{Q}$.
We have a soft resolution 
$j_*j^*G_{TW}(\ul{\mb{Q}})\sim R_{TW}j_*j^*\ul{\mb{Q}}\in \CMon(\Ch(\mb{Q}-\Sh_X))$
of  $Rj_*j^*\ul{\mb{Q}}$,
together with a natural quasi-isomorphism 
$j_*j^* G_{TW}(\ul{\mb{Q}})\otimes \mb{C}\zgqis \Omega^*(\ln X-U)\in \CMon(\Ch(\mb{C}-\Sh_X))$.

%We have symmetric monoidal categories 
%$(\Var^{NCD}_{-/-},\times)$ and 
%$(\MHC_{-},\boxtimes)$. 
%which are fibered over $\Var$.
\begin{thrm}[{\cite[8.15]{Aznar}}]\label{thrm:aznar_functorial_mhd}
	There is a natural mixed Hodge diagram of $U$-adapted sheaves
	$$(\mc{K}^{NCD}_{U/X},F,W):=
	((j_* j^*G_{TW}(\ul{\mb{Q}})_X,\tau_{\leq}),
	(\Omega^*(\ln X-U),F,W))\in \MHD(X),$$
	providing a lax symmetric monoidal functor of categories fibered over $(\Var,\times)$:
	$$\mc{K}^{NCD}_{-}\colon (\Var^{NCD}_{-/-},\times)\ar (\MHC_{-},\boxtimes).$$
\end{thrm}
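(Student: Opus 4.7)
The plan is to build on Theorem \ref{thrm:deligne_mhc}, which already provides the underlying mixed Hodge complex of sheaves $((j_*j^*G(\ul{\mb{Q}})_X, \tau_{\le}), (\Omega^*(\ln X-U), F, W))$, and to upgrade it to a mixed Hodge \emph{diagram} (i.e.\ an object of $\CMon(\MHC(X))$) in a way that is functorial and lax symmetric monoidal in $U/X$. There are three things to verify: (i) both halves carry compatible cdga structures with multiplicative filtrations; (ii) the comparison quasi-isomorphism linking them is a zig-zag in $\CMon$ respecting filtrations; (iii) the whole package is functorial in $U/X$ and lax symmetric monoidal with respect to $\boxtimes$ / $\times$.

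For (i), the complex part is already a cdga: $\Omega^*(\ln X-U) \subset j_*\Omega^*_U$ is closed under the wedge product, and the Hodge and weight filtrations are multiplicative by inspection of their definitions. For the rational part, the naive Godement resolution $G$ only produces a monoid via Alexander--Whitney; this is precisely why we replace $G$ by $G_{TW} = s_{TW} \circ G_{\cs}$. By Proposition \ref{prop:s_tw} and the softness statement quoted afterwards, $G_{TW}$ is a lax symmetric monoidal endofunctor of $\Ch(\mb{Q}\text{-}\Sh_X)$ delivering filtered soft resolutions, hence applied to the sheaf of cdga's $\ul{\mb{Q}}_X \to j_*j^*\ul{\mb{Q}}$ it yields an object of $\CMon(\Ch(\mb{Q}\text{-}\Sh_X))$, with the canonical filtration $\tau_{\le}$ which is again multiplicative on nose.

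For (ii), we need to exhibit the comparison in $\CMon$ after extending scalars to $\mb{C}$. Start with the zig-zag of filtered quasi-isomorphisms from Proposition \ref{prop:sheaves:log_complex}: $(\Omega^*(\ln X-U), W) \xleftarrow{\sim} (\Omega^*(\ln X-U), \tau_{\le}) \xrightarrow{\sim} (j_*\Omega^*_U, \tau_{\le})$. All three are sheaves of cdga's and both arrows are multiplicative. To compare with the rational side, combine with the natural map $j_*j^*\ul{\mb{C}} \to j_*\Omega^*_U$ (the holomorphic Poincar\'e lemma) and the resolution $j_*j^*\ul{\mb{C}} \to j_*j^* G_{TW}(\ul{\mb{C}}) = j_*j^*G_{TW}(\ul{\mb{Q}}) \otimes \mb{C}$, then bridge the two by applying $G_{TW}$ (or the bar-like construction $G_{TW}$ applied to $j_*\Omega^*_U$) and using that $G_{TW}$ is lax symmetric monoidal and produces filtered quasi-isomorphisms on filtered complexes. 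This yields the required zig-zag in $\CMon(\ChF_{F,W}(\mb{C}\text{-}\Sh_X))$, and verifying that the induced filtration on each intermediate term is $W$ (respectively $\tau_{\le}$) is a check on $W_0 \Omega^* = \Omega^*_X$ and the compatibility of $\tau$ with $W$ recorded in Proposition \ref{prop:sheaves:log_complex}.

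For (iii), functoriality of the whole diagram reduces to functoriality of each ingredient already at hand: Proposition \ref{prop:log_complex_functoriality} for the complex side; the evident naturality of $j_*j^*$ under the morphism $f\colon U/X \to U'/X'$ (which gives $f^*(j'_*j'^*G_{TW}\ul{\mb{Q}}) \to j_*j^*f^*G_{TW}\ul{\mb{Q}} \to j_*j^*G_{TW}\ul{\mb{Q}}$ via Remark \ref{rmrk:pullback_for_mhc}); and the fact that $G_{TW}$, $s_{TW}$ are natural. Lax symmetric monoidality against $\boxtimes$ follows from the K\"unneth type isomorphism $\Omega^*(\ln X \times X' - U \times U') \simeq \Omega^*(\ln X-U) \boxtimes \Omega^*(\ln X'-U')$ (compatible with $F$ and $W$ by construction of the tensor filtrations), together with the lax symmetric monoidality of $G_{TW}$ applied to $j_* j^* \ul{\mb{Q}}$, and the compatibility of the comparison zig-zag with these tensor structures.

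The main obstacle is step (ii): producing the comparison zig-zag as an honest chain of morphisms of commutative monoids in $\ChF_{F,W}(\mb{C}\text{-}\Sh_X)$ rather than merely as a homotopy-commutative diagram. This is exactly what the Thom--Whitney--Sullivan totalization is designed for; the crucial inputs are that $s_{TW}$ is lax symmetric monoidal (so it preserves $\CMon$) and that $G_{TW}$ delivers filtered soft resolutions, so that all maps in the zig-zag can be realized concretely through objects of $\CMon(\ChF_{F,W}(\mb{C}\text{-}\Sh_X))$ and compared against $\Omega^*(\ln X-U)$ via the Poincar\'e lemma and Proposition \ref{prop:sheaves:log_complex}.
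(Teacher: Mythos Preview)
The paper does not supply its own proof of this statement: it is quoted directly from Navarro Aznar \cite[8.15]{Aznar}, and the surrounding text only records the consequence that, after forgetting monoidal structures, $\mc{K}^{NCD}_{-}$ recovers Deligne's mixed Hodge complex from Theorem~\ref{thrm:deligne_mhc}. So there is nothing to compare against in the paper itself.

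That said, your proof plan is a faithful outline of the argument one finds in \cite{Aznar}. The three-step decomposition (cdga structures on each half, comparison zig-zag in $\CMon$, functoriality and lax monoidality) is the right scaffolding, and you correctly locate the only genuinely delicate point: producing the comparison zig-zag as a chain of morphisms of commutative monoids rather than up to homotopy, which is precisely what the Thom--Whitney--Sullivan functor $s_{TW}$ is engineered to achieve. One small addition worth making explicit in step (iii): the lax structure map $\mc{K}^{NCD}_{U/X}\boxtimes\mc{K}^{NCD}_{U'/X'}\to\mc{K}^{NCD}_{U\times U'/X\times X'}$ must itself be a \emph{quasi-isomorphism} (this is what the paper uses downstream), and on the complex side this rests on the local freeness of $\Omega^*(\ln X-U)$ together with the K\"unneth isomorphism, not just a formal map.
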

Forgetting the monoidal structures the functor $\mc{K}^{NCD}_{-}$ is naturally 
quasi-isomorphic to the Deligne MHC of Theorem \ref{thrm:deligne_mhc}.
Here the lax structure is the quasi-isomorphism 
$\mc{K}^{NCD}_{U/X}\boxtimes \mc{K}^{NCD}_{U'/X'}\ar 
	\mc{K}^{NCD}_{U\times U'/X\times X'}$.
Restriction to the diagonal induces a product 
$\mc{K}^{NCD}_{U/X}\otimes \mc{K}^{NCD}_{U/X}\ar 
\mc{K}^{NCD}_{U/X}\in \MHD(X)$.

We want to extend this construction to the case when $X-U$ 
is not necessary an NCD.
Consider the category $\Var_{-/-}$ of pairs $U/X$, where $X\in \Var$ is smooth 
proper and $U$ is open. It is fibered over $\Var$ with fibers
$\Var_{-/X}$.
\iffalse
Theorem \ref{thrm:aznar_functorial_mhd} provides a functor
$\Var^{NCD}_{-/-}\ar $.
We will extend $\mc{K}$ by mimicking 
a formula for the left Kan extension of the form:
\begin{equation*}
\begin{tikzcd}
	\Var^{NCD}_{-/-}\arrow[d,"I"]\arrow[r,"\mc{K}"]	& \MHD_{-}\\
	\Var_{-/-}\arrow[ru,dashed]
\end{tikzcd}
\end{equation*}
\fi
An NCD compactification $p\colon U/\tilde{X}\ar U/X$ provides
$R_{TW}p_*\mc{K}^{NCD}_{U/\tilde{X}}\in \MHD(X)$.
Any two NCD compactifications of a given $U/X$
can be dominated be a third one (\cite{D}), i.e. $\Var_{-/X}$ is cofiltered.
Similarly, every morphism $U/X\ar U'/X'$ can be lifted to 
some NCD compactifications $U/\tilde{X}\ar U'/\tilde{X}'$.
Following {\cite{CH20}[6.1]} we extend the definition of 
$\mc{K}^{NCD}$ to $\Var_{-/-}$ by the formula
$$\mc{K}_{U/X}:=
	\colim_{p\colon (U/\tilde{X},U)\ar (U/X)}
	R_{TW}p_* \mc{K}^{NCD}_{U/\tilde{X}}\in \MHD(X),$$
where the colimit is taken over all NCD-compactifications of $U$ over 
$X$. This colimit is well defined since it is a colimit of quasi-isomorphic objects.
%Here we abuse the notation and denote by $\mc{K}$ on the RHS 
%This is functorial in the following sense.
As a corollary we have
\begin{cor}\label{cor:functorial_mhd}
There is a commutative diagram of lax symmetric monoidal functors fibered over $(\Var,\times)$:
\begin{equation*}
\begin{tikzcd}
	(\Var_{-/-},\times) \arrow[rd,swap,"{U/X\rightarrow X}"]\arrow[rr,"{U/X\rightarrow (X,\mc{K}_{U/X})}"] &&
(\MHC_{-},\boxtimes)\arrow[ld,"{X\leftarrow(X,\mc{K}_{U/X})}"]\\
		&(\Var,\times)&
\end{tikzcd}
\end{equation*}
\end{cor}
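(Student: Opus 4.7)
The plan is to verify four assertions: that $\mc{K}_{U/X}$ is well-defined as an object of $\MHC(X)$ (in fact $\MHD(X)$), that it is functorial in $U/X\in \Var_{-/-}$, that the construction carries a lax symmetric monoidal structure, and that the resulting functor is naturally fibered over $(\Var,\times)$. Commutativity of the diagram is then immediate from the construction since every arrow built below lies over the corresponding arrow in $\Var$.

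First I would show that the colimit defining $\mc{K}_{U/X}$ is essentially constant, hence well-defined. The category of NCD-compactifications $U/\tilde{X}$ lying over $U/X$ is cofiltered by Hironaka and Deligne: any two such $\tilde{X}_1,\tilde{X}_2\to X$ can be dominated by a third. For a morphism $q\colon U/\tilde{X}'\to U/\tilde{X}$ in this indexing category, the functoriality of $\mc{K}^{NCD}_{-}$ from Theorem \ref{thrm:aznar_functorial_mhd} gives $\mc{K}^{NCD}_{U/\tilde{X}}\to R_{TW}q_*\mc{K}^{NCD}_{U/\tilde{X}'}$, and applying $R_{TW}p_*$ yields the transition map. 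These are filtered quasi-isomorphisms, because on both sides the underlying complex computes $Rj_*j^*\ul{\mb{Q}}_U$ and the Hodge/weight filtrations agree by the independence of the Deligne MHC from the choice of NCD resolution. So the colimit stabilizes inside $\MHC(X)$.

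Second, functoriality in $U/X$: given $f\colon U/X\to U'/X'$, any NCD-compactification $U'/\tilde{X}'$ can be replaced by one through which $f$ extends (extend $f$ to some birational modification of $\tilde{X}$, then resolve to restore the NCD condition). This produces lifts $\tilde{f}\colon U/\tilde{X}\to U'/\tilde{X}'$ whose sources are cofinal in the indexing system for $U/X$, and Theorem \ref{thrm:aznar_functorial_mhd} gives a morphism $\mc{K}^{NCD}_{U'/\tilde{X}'}\to R_{TW}\tilde{f}_*\mc{K}^{NCD}_{U/\tilde{X}}$. Pushing down to $X'$ and passing to the colimit yields the required map $\mc{K}_{U'/X'}\to R_{TW}f_*\mc{K}_{U/X}$. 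For the lax symmetric monoidal structure, if $\tilde{X},\tilde{X}'$ are NCD-compactifications of $U,U'$, then $\tilde{X}\times\tilde{X}'$ is an NCD-compactification of $U\times U'$ (the boundary divisor is the union of the pullbacks of two SNC divisors, which remains SNC), and such products are cofinal in the indexing system for $U\times U'/X\times X'$. Combining the lax symmetric monoidal structure of $\mc{K}^{NCD}$ from Theorem \ref{thrm:aznar_functorial_mhd}, the lax symmetric monoidality of $R_{TW}$ from Proposition \ref{prop:s_tw}, and the natural transformation $p_*(-)\boxtimes p'_*(-)\to (p\times p')_*(-\boxtimes -)$, one obtains the required $\mc{K}_{U/X}\boxtimes \mc{K}_{U'/X'}\to \mc{K}_{U\times U'/X\times X'}$ after passing to the colimit.

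The main obstacle is turning the pseudo-functoriality produced by \emph{choices} of NCD-resolutions into an actual functor with a strict lax monoidal structure: any single choice of resolution gives at best a pseudo-functor. The device of taking a colimit over \emph{all} NCD-compactifications bypasses this, since the diagrams of lifts form cofiltered categories of objects linked by filtered quasi-isomorphisms, so the colimit is canonically defined and the compatibility with composition and with the symmetry of $\boxtimes$ is automatic. The fibration over $(\Var,\times)$ follows because by construction each morphism built above lies over the corresponding morphism in $\Var$.
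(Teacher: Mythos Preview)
Your proposal is correct and follows exactly the approach the paper takes. In fact, the paper gives no separate proof of this corollary at all: it simply states the colimit formula, notes that the indexing category is cofiltered (any two NCD-compactifications are dominated by a third, and any morphism in $\Var_{-/-}$ lifts to NCD-compactifications), observes that the transition maps are quasi-isomorphisms so the colimit is well-defined, references \cite{CH20}[6.1] for the construction, and then writes ``As a corollary we have''. Your write-up is a faithful expansion of precisely these points, including the monoidal part via products of NCD-compactifications.
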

In this sense $\mc{K}_{U/X}\in \MHD(X)$ is functorial in $U/X$ and
we will say that $\mc{K}_{U/X}$ provides a \emph{functorial equipment}
of $U/X$ in $\MHD(X)$.
\newpage
\section{Towards the complement}\label{sect:construction}
Throughout the section $[X,L,\Mu]$ 
is an arragement poset of closed subspaces 
of $X\in \Top$. 
We will define the Mayer-Vietoris complex 
$\MV$ which can be used for
calculating the cohomology of the
complement $U=X-\bigcup\limits^{x\in L}_{x>0}L_x$. 
It serves as a prototype for the Mayer-Vietoris Hodge diagram 
$(\MVHD,F,W')$
which we describe in \ref{sect:compl:mvhc} in order
to model the rational homotopy type of $U$ in case of an algebraic arrangement.

In \ref{sect:compl:mv_resolution}, following \cite[\S 2]{Looijenga},
for $F\in\Ch(\Sh_X)$
we define 
$\MV=\MV(X,L,(i_*i^!)_{L_\bullet} G(F),\Mu)$ 
as the convolution
$(i_*i^!)_{L_\bullet} G(F)*\mc{M}\in \Ch(\Sh_X)$. 
%It is 
%equipped with the rank filtration $W'$. 
Here 
$G(F)$ is the cosimplicial Godement resolution of
$F$.
It is provided with a natural quasi-isomorphism
$\MV\sim Rj_*j^*F$ for the open inclusion 
$j\colon U\to X$.
The construction is multiplicative, albeit non-commutative, 
e.g.\ if $\mc{M}$ is an OS-algebra and 
$F\in \CMon(\Ch(\Sh_X))$, then
$\MV$ is naturally an object of $\Mon(\Ch(\Sh_X))$.

As a convolution $\MV$ is equipped with the rank filtration $W'$.
The corresponding spectral sequence
$E^{pq}_1(\MV,W')\Rightarrow H^*(U;j^*F)$ is described in Theorem \ref{thrm:ss_for_lattices}.
As an application to chromatic configuration spaces an explicit expression of
$(E^{**}_1,d_1)$ in terms of generators and relations is given in \ref{sect:mv_totaro}. 
This generalizes Totaro's spectral sequence \cite[Theorem 1]{Totaro}.

In \ref{sect:leray_ss} we relate this spectral spectral sequence
to the \emph{lattice spectral sequence} introduced independently by Peterson and Tosteson in \cite{Petersen}, 
\cite{To}. %See also \ref{sect:lattice_colattice}.
In \ref{GorMac} we sketch a proof of the Goresky-MacPherson formula
in order to relate the lattice spectral sequence to the Leray spectral sequence of the open embedding of the complement.

In \ref{sect:cech_model} 
we introduce the \v{C}ech construction in order 
to upgrade the non-commutative construction of $\MV$ 
in the case $\Bbbk\supset \mb{Q}$ to 
a genuine cdga. 
This construction will be used to obtain 
the Hodge diagrams $(\MVHD,F,W)$ and 
$(\MVHD,F,W')$ in Section \ref{sect:compl:mvhc}.

\subsection{Mayer-Vietoris model \texorpdfstring{
$\MV$ of
$Rj_*j^*\mc{F}_X$}{}
%$Rj_*j^*\mc{F}_X$
}
\label{sect:compl:mv_resolution}
Recall that $X\in \Top$ is assumed to be paracompact, 
Hausdorff and locally contractible topological space. 
Here
$[X,L,\Mu]$ is an arrangement of closed subspaces in $X$.
Recall that $\Mu$ is essentially unique due to 
Proposition \ref{prop:morphism_of_OS_complexes}. 
For each $s\geq t\in L$ we have the natural embeddings 
$i_{ts}\colon L_s\to L_t\subset X$ and
$j\colon X-\bigcup\limits_{s\in L}L_s\to X$.
Put $i_s:=i_{0s}$.

For a closed embedding $i\colon Z\to X$ we define the functor $i^!\colon \Sh_X\to \Sh_Z$ 
by taking the sections with support in $Z$. Any sheaf $F\in \Ch(\Sh_X)$
defines an $(L,\leq)$-sheaf $(i_*i^!)_{L_\bullet} F$ given by 
$$(i_*i^!)_{L_s} F:={i_s}_* {i^!_s}F.$$
The structure maps are given by the natural inclusions $\partial_{ts}$ induced by $i_{ts}$.
\begin{prop}\label{prop:ii_is_Lalgebra}
	If $[X,L,F,\Mu]$ is multiplicative, 
	then $(i_*i^!)_{L_\bullet} F$ is
	an $(L,\leq)$-algebra in $\Ch(\Sh_X)$.
\end{prop}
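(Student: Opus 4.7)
The plan is to construct the multiplication maps $m^{t}_{xy}\colon (i_*i^!)_{L_x}F\otimes(i_*i^!)_{L_y}F\to(i_*i^!)_{L_t}F$ for each $t\in x\os{\circ}{\vee}y$ out of the multiplication on $F$, and then verify the axioms of Definition~\ref{dfn:L_algebra}. The hypothesis that $[X,L,F,\Mu]$ is multiplicative, in the constant case $\mc{F}^L=\pi^*F$ of Remark~\ref{dfn:arrangement_posets:category}, unpacks to $F$ being a (commutative) monoid $\mu\colon F\otimes F\to F$ in $\Ch(\Sh_X)$. Since the sheaf $(i_*i^!)_{L_s}F$ is a subsheaf of $F$ consisting of sections supported in $L_s$, the composition
\[
(i_*i^!)_{L_x}F\otimes (i_*i^!)_{L_y}F\hookrightarrow F\otimes F\xrightarrow{\ \mu\ }F
\]
sends a pair of sections $(\sigma,\tau)$ to a section with support contained in $\mathrm{supp}(\sigma)\cap\mathrm{supp}(\tau)\subset L_x\cap L_y$.

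Next I would invoke the arrangement axioms of Definition~\ref{dfn:arrangement_poset}. Writing $x,y$ as suprema of atoms (by axioms (1),(4)) and applying axiom (2) component-wise, one gets $L_x\cap L_y=\bigsqcup_{t\in x\os{\circ}{\vee}y}L_t$; pairwise disjointness follows from axiom (3), since any common point of $L_{t_1}$ and $L_{t_2}$ would force $t_1,t_2$ to lie in a common interval $L[0,\tilde t]$ with $\tilde t\ge x,y$, contradicting the minimality of $t_1,t_2$ as upper bounds. Each $L_t$ is closed in $X$, hence also open in $L_x\cap L_y$, so the subsheaf of $F$ consisting of sections supported in $L_x\cap L_y$ decomposes canonically as $\bigoplus_{t\in x\os{\circ}{\vee}y}(i_*i^!)_{L_t}F$. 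The components of the factored map give the desired $m^{t}_{xy}$.

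It then remains to verify the three conditions of Definition~\ref{dfn:L_algebra}. Associativity (equation~\eqref{eq:dfn:associativity}) follows from associativity of $\mu$, once one observes that the support decomposition on the triple intersection $L_x\cap L_y\cap L_z$ is refined by either bracketing in exactly the way that makes both sides of \eqref{eq:dfn:associativity} restrict to the associativity diagram of $\mu$. Functoriality (\eqref{eq:dfn:L_algebra_functoriality}) is immediate because every structure map $g^A_{x'x}\colon (i_*i^!)_{L_x}F\to(i_*i^!)_{L_{x'}}F$ is just the inclusion of subsheaves coming from $L_x\subset L_{x'}$, and $m$ is the restriction of the fixed multiplication $\mu$ of $F$. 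Unitality uses $L_0=X$, so $(i_*i^!)_{L_0}F=F$ and the unit $1\to F$ of the monoid $F$ provides the unit $1\to A_0$. The only real point requiring care is the direct-sum decomposition of sections with support in $L_x\cap L_y$, which rests on the disjoint closed-open decomposition $L_x\cap L_y=\bigsqcup_t L_t$; once that is in hand the rest of the proof is a formal check.
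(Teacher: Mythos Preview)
Your proposal is correct and follows essentially the same approach as the paper: both construct $m^t_{xy}$ by restricting the multiplication of $F$ to the subsheaves of sections with support in $L_x$ and $L_y$, then use the decomposition $L_x\cap L_y=\bigsqcup_{t\in x\os{\circ}{\vee}y}L_t$ to split off components. Your version is more detailed---you derive the decomposition from the arrangement axioms and check functoriality and unitality explicitly---while the paper simply asserts the decomposition and remarks that associativity follows from that of $F$.
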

\begin{proof}
%Since $L_x\cap L_y=\bigsqcup\limits_{t\in x\os{\circ}{\vee}y}L_t$,
For all open $V\subset X$ we have a natural product
$$m_{xy}\colon (i_*i^!)_{L_x} F(V)\otimes (i_*i^!)_{L_y} F(V)\to (i_* i^!)_{L_x\cap L_y} F(V).$$
This together with the decomposition $L_x\cap L_y=\bigsqcup\limits_{t\in x\os{\circ}{\vee}y}L_t$
gives for each $t\in x\os{\circ}{\vee} y$, a well-defined component
$m^t_{xy}\colon (i_*i^!)_x F\otimes (i_* i^!)_y F\to (i_*i^!)_t F$.
The associativity follows from the associativity of the multiplication in $F$.
\end{proof}
Thus $[X,L,(i_*i^!)_{L_\bullet}F,\Mu]$ is an arrangement.
More generally, let $\mc{F}^L$ be an $(L,\le)$-object in $\Sh_X$.
\begin{dfn}[Mayer-Vietoris complex]\label{dfn:mv_complex}
	The Mayer-Vietoris complex $\MV(X,L,\mc{F}^L,\Mu)$
	is the convolution 
	$\Tot(\mc{F}^L\otimes \mc{M})\in \Ch(\Sh_X)$.
\end{dfn}
Recall that $\MV(X,L\mc{F}^L,\Mu)$ is an $L$-filtered object and 
admits the rank filtration $W'$ (Definition \ref{dfn:rank_filtration}):
$$W'_k \MV(X,L,\mc{F}^L,\Mu)=
	\bigoplus\limits^{x\in L}_{r(x)\leq k}\mc{F}^L_x\otimes \Mu_x.$$

	In case $\mc{F}^L=(i_*i^!)_{L_\bullet}F$
we will write $\MV(F)$ for short.
If $F\in \Mon(\Ch(\Sh_X))$ and
$\Mu$ is multiplicative, then
$\MV(F)$ naturally lands to $\Mon(\Ch(\Sh_X))$.
Since $\Mu$ is well-graded, the filtration $W'$ is multiplicative.
Assume $F\in \Sh_X$, then $\MV(F)\in \Ch^{\leq 0}(\Sh_X)$.
There is a natural augmentation map
	$$\eta\colon \MV(F)\to 
	j_* j^*F.$$
By definition $\eta$ restricted to $\MV(F)_s\subset \MV(F)\in \Ch^{\leq 0}(\Sh_X)$ 
is zero if $s>0\in L$, and for $s=0$ it is the natural morphism $\MV(F)_0=F\to j_*j^*F$.
Clearly $\eta$ is a morphism of complexes of sheaves, since 
	$\partial (\MV(F)_t)$ for $t>0$ is formed by sections supported on 
	the complement $X-\bigcup\limits^{s\in L}_{s>0}L_s$.

\begin{dfn}
A {\it Godement} sheaf $F$ is a sheaf of  the form
$F(U)=\prod_{x\in U}\tilde{F}_x$, for some sheaf $\tilde{F}\in \Sh_X$.
\end{dfn}

\begin{prop}\label{prop:mv_resolution}
	Assume $F$ is a Godement sheaf. Then
	\begin{enumerate}
		\item There is a natural inclusion 
			$\eps\colon j_* j^*F\to F=\MV^0(F)\subset \MV^*(F)$ 
			of complexes. The composition 
			$$j_*j^*F\ar[\eps] \MV(F)\ar[\eta] j_*j^*F$$
			is the identity.
		\item The augmentation makes $\MV(F)\in \Ch^{\leq 0}(\Sh_X)$ 
			a flabby 
			resolution of $j_*j^*F$.
	\end{enumerate}
	In addition, if $\Mu$ is an OS-algebra and 
	$F$ is a sheaf of dg-algebras, 
	then $\eta$ is multiplicative.
\end{prop}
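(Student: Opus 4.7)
My plan is to exploit the product decomposition of a Godement sheaf $F$. Writing $F(V)=\prod_{y\in V}\tilde F_y$, one has globally $F=\prod_{y\in X}(\delta_y)_*\tilde F_y$, where $\delta_y\colon\{y\}\hookrightarrow X$ is the inclusion of a point. Since the functors $(i_*i^!)_{L_s}$ and $j_*j^*$ commute with arbitrary products (being compositions of right adjoints or restrictions), since tensoring by the finite-rank $\Mu_s$ commutes with products, and since finite direct sums commute with products (using finiteness of $L$), I obtain the identifications
\[
\MV(F)=\prod_{y\in X}\MV((\delta_y)_*\tilde F_y),\qquad j_*j^*F=\prod_{y\in U}(\delta_y)_*\tilde F_y,
\]
the second identity using that $j^*$ kills skyscrapers outside $U$. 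A direct calculation gives $(i_*i^!)_{L_s}(\delta_y)_*\tilde F_y=(\delta_y)_*\tilde F_y$ when $y\in L_s$ and zero otherwise, so
\[
\MV((\delta_y)_*\tilde F_y)=(\delta_y)_*\tilde F_y\otimes\Tot(\Mu|_{L[0,t_y]}),
\]
where $t_y\in L$ is characterized by $\{s\in L\mid L_s\ni y\}=L[0,t_y]$. By Definition \ref{dfn:os_complex}, this is $(\delta_y)_*\tilde F_y$ concentrated in degree zero when $y\in U$ ($t_y=0$) and acyclic when $y\notin U$ ($t_y>0$).

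Part 1 then becomes immediate: $\eps$ is the tautological inclusion $\prod_{y\in U}(\delta_y)_*\tilde F_y\hookrightarrow\prod_{y\in X}(\delta_y)_*\tilde F_y=F=\MV^0(F)$ given by extension by zero, $\eta\circ\eps=\id$ is automatic, and $\eps$ is a chain map since its source is concentrated in degree zero. For Part 2, the augmentation $\eta$ factors as the product of individual augmentations (identity for $y\in U$, zero map for $y\notin U$), so $\ker(\eta)=\prod_{y\notin U}\MV((\delta_y)_*\tilde F_y)$. I will verify acyclicity of this kernel stalkwise: at $x\in X$ the stalk is $\colim_{V\ni x}\prod_{y\in V\setminus U}(\tilde F_y\otimes\Tot(\Mu|_{L[0,t_y]}))$, a filtered colimit of finite products of acyclic complexes of abelian groups; since both operations are exact in the category of abelian groups, the stalk is acyclic, hence $\ker(\eta)$ is acyclic and $\eta$ is a quasi-isomorphism. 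Flabbiness of each $\MV^n(F)$ follows from the observation that $(i_*i^!)_{L_s}F=\prod_{y\in L_s}(\delta_y)_*\tilde F_y$ is a product of skyscraper sheaves, hence flabby; tensoring by the free module $\Mu_s$ and taking finite direct sums preserve flabbiness.

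For the multiplicative statement, on $\MV^0(F)=F$ the augmentation $\eta$ is the natural algebra restriction $F\to j_*j^*F$, while in negative degrees both $\eta$ and any product of $\MV^{<0}$-elements vanish (for degree reasons), so $\eta(a\cdot b)=\eta(a)\cdot\eta(b)$ holds automatically. The main subtlety will be justifying the swap $\bigoplus_{s\in L}\prod_y=\prod_y\bigoplus_{s\in L}$ underlying the decomposition $\MV(F)=\prod_y\MV((\delta_y)_*\tilde F_y)$; this is legitimate provided each rank layer of $L$ is finite, which is the standard situation in concrete arrangements.
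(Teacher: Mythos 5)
Your proof is correct and takes essentially the same route as the paper's: both exploit the product-of-skyscrapers description of a Godement sheaf, reduce $\MV$ at each point to $\tilde F_y\otimes\Tot(\Mu|_{L[0,t_y]})$, and invoke the acyclicity axiom of the OS-complex, with you packaging the identification at the sheaf level where the paper works with sections $\Gamma(-)|_V$, and being somewhat more explicit about flabbiness, multiplicativity, and the $\bigoplus$--$\prod$ swap. One small slip: the products $\prod_{y\in V\setminus U}$ in your stalk computation need not be finite, but since arbitrary products of acyclic complexes of abelian groups are acyclic the argument goes through unchanged.
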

\begin{proof}
	Take an open $V\subset X$.
 %%%%%%%%%%%%
	Since $\Gamma F|_V=F(V)=\prod_{x\in V}\tilde{F}_x$ for some sheaf $\tilde{F}$, we have a
	natural identification of complexes:
	$$\Gamma \MV(F)|_V =
	\prod_{x\in V}\bigoplus\limits^{t\in L}_{L_t\ni x}
		\mc{M}_t\otimes \tilde{F}_x$$
	with the differential $\partial$ on the RHS induced by the structure maps 
	of $\mc{M}$. On the other hand,
	$$j_*j^*F\ (V)=
	\prod\limits^{x\in V}_{x\not\in \bigcup\limits_{s>0\in L}L_s}\tilde{F}_x.$$

 	Let $S_x:=\{t\in L\mid L_t\ni x\}$.  
	By the definition of $L$,
	$S_x\subset L$ a sublattice with the maximal element $s_x\in L$, 
	thus $S_x=L[0,s_x]$. Moreover, $s_x>0$ iff 
	$x\not\in X-\bigcup\limits_{s>0} L_s$.
	So
	$$\left(\bigoplus\limits^{t\in L}_{L_t\ni x}\mc{M}_t\otimes \tilde{F}_x,
		\partial\right)=
	(\mc{M}[0,s_x],\partial)\otimes \tilde{F}_x$$
	is naturally quasi-isomorphic to $\tilde{F}_x$ if 
	$x\in X-\bigcup\limits^{s\in L}_{s>0} L_s$, and
	is acyclic otherwise.
	
	Recall that $\mc{M}_0=\mb{Z}$.
	Thus $\Gamma \MV_\bullet(F)|_V$ is the product over $x\in V$ 
	of complexes 
	\textit{equal} to $\tilde{F}_x$ for 
	$x\in V-\bigcup\limits^{s\in L}_{s>0} L_s$, and 
	of acyclic complexes for 
	$x\in \bigcup\limits^{s\in L}_{s>0} L_s$.
	This defines the inclusion in the obvious manner.
	Clearly both maps are quasi-isomorphisms.
	The multiplicativity follows by degree reasons
\end{proof}
%If $F$ is a sheaf of dg-algebras, then by the construction 
%the augmentation is a morphism of dg-algebras
Then for any sheaf $F\in \Ch(\Sh_X)$ we 
can apply the canonical Godement resolution $G(F)$ to obtain 
a quasi-isomorphism
\begin{equation}\label{eq:mv_resolution}
	\eps\colon j_* j^* G(F)\ar \MV(X,L,G(F),\Mu).
\end{equation}
In the case $F\in \Mon(\Sh_X)$, the quasi-isomorphism 
is multiplicative.

\subsection{Functoriality of \texorpdfstring{$\MV(X,L,(i_*i^!)^L G(F))$}{}}
We want to show that the quasi-isomorphism \eqref{eq:mv_resolution} is functorial 
with respect to arrangements. 
Let $\phi\colon [X',L',F',\Mu']\ar[] [X,L,F,\Mu]$ be a morphism of arrangements.
By Definition \ref{dfn:arrangement_posets:morphism}, 
such $\phi$ is given by a 
pair of maps $f\colon X'\ar X$, $g\colon (L,\le)\ar (L',\le)$ and a morphism $f^*F\ar F'$,
together with a natural morphism $g_*\Mu\ar \Mu'$.
In particular, we have the restriction $f\colon U'\ar U$,
which defines a morphism
$f^* (j_*j^*)_U G(F)\ar (j_*j^*)_{U'}G(F')$.
So $(j_*j^*)_U G(F)$ is functorial in the arrangement $[X,L,F,\Mu]$. In other words, 
we have a functorial equipment $[X,L,F,\Mu]\lto (j_*j^*)_U G(F)\in \Ch(\Sh_X)$
in the sense of Definition \ref{dfn:arrangement_posets:functorial_equipment}.

\begin{prop}\label{prop:MV_functorial}
	There is a quasi-isomorphism
	of functorial equipments 
	$$(j_*j^*)_U G(F)\ar[\eps]\MV(X,L,(i_*i^!)_{L_\bullet} G(F),\Mu)\in \Ch(\Sh_X)$$
	which is natural in the arrangement $[X,L,F,\Mu]$.
	For multiplicative arrangements
	this quasi-isomorphism is multiplicative.
\end{prop}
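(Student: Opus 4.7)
The plan is to build $\eps$ by applying the sheaf-level construction of Proposition \ref{prop:mv_resolution} termwise to the Godement resolution and then checking that the resulting map is natural with respect to morphisms of arrangements. Concretely, each term $G^n(F)$ of the Godement resolution is a Godement sheaf, so Proposition \ref{prop:mv_resolution} provides a natural quasi-isomorphism
\[
\eps_n\colon (j_*j^*)_U G^n(F) \longrightarrow \MV(X,L,(i_*i^!)_{L_\bullet} G^n(F),\Mu).
\]
Since the MV construction is functorial in the sheaf input, the $\eps_n$ assemble into a morphism of double complexes, and after totalization we obtain the desired $\eps$. That $\eps$ is a quasi-isomorphism follows from a standard spectral sequence argument, since it is a term-wise quasi-isomorphism between bounded-below bicomplexes with respect to the Godement grading.

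For the naturality, let $\phi\colon [X',L',F',\Mu']\ar[X,L,F,\Mu]$ consist of the data $(f\colon X'\to X,\ g\colon L\to L',\ f^*F\to F',\ g_*\Mu\to \Mu')$ from Definition \ref{dfn:arrangement_posets:morphism}. First I would assemble a natural morphism $f^*G(F)\to G(F')$ by composing the canonical $f^*G(-)\to G(f^*-)$ with $G$ applied to $f^*F\to F'$. Next, for each $p\in L$ the hypothesis $f^{-1}L_p\subset L'_{g(p)}$ produces a map $f^*(i_*i^!)_{L_p} G(F)\to (i_*i^!)_{L'_{g(p)}}G(F')$ by restriction of support along the closed inclusion. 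Taking direct sums over $p\in L$, tensoring with $\Mu_p$, and re-indexing through $g$ converts the totalization with values in $(i_*i^!)_{L'_\bullet}G(F')$ indexed by $L$ into one indexed by $L'$; this is exactly the projection-formula identification of Lemma \ref{lemma:lattice:projection_formula} combined with Definition \ref{dfn:lattice:direct_image}. Postcomposing with $g_*\Mu\to \Mu'$ supplied by the morphism of arrangements delivers the comparison map $f^*\MV(X,L,(i_*i^!)_{L_\bullet}G(F),\Mu)\to \MV(X',L',(i_*i^!)_{L'_\bullet}G(F'),\Mu')$. Compatibility with $\eps$ follows because $\eps$ is the inclusion of the $p=0\in L$ summand, and morphisms of arrangements send $0\in L$ to $0\in L'$.

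The multiplicativity claim is inherited at each step. Proposition \ref{prop:mv_resolution} already asserts that the analogue of $\eps$ at the single-Godement-sheaf level is multiplicative; Proposition \ref{prop:ii_is_Lalgebra} gives the $(L,\le)$-algebra structure on $(i_*i^!)_{L_\bullet}G(F)$; Definition-Proposition \ref{dfn:tensor_product} shows that the tensor with the OS-algebra $\Mu$ retains the multiplicative structure on totalization; and the naturality data in Definition \ref{dfn:arrangement_posets:morphism} is by hypothesis multiplicative (so $f^*F\to F'$ is a map of partial $(L,\le)$-algebras and $g_*\Mu\to\Mu'$ is a map of $(L',\le)$-chain algebras, as guaranteed by Corollary \ref{cor:morphism_of_OS_algebras} and Proposition \ref{prop:morphism_of_OS_complexes}).

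The main obstacle I anticipate is the bookkeeping in the second paragraph: verifying that re-indexing the totalization from $L$ to $L'$ via $g$ is a chain map for both the inner differential (coming from $G(-)$ and the supports) and the combinatorial differential $\partial$ encoded in $\Mu$. This requires the rank-preservation of $g$ (so that covering relations are sent to covering relations or collapsed), together with the compatibility built into the construction of $g_*\Mu$ in Definition \ref{dfn:lattice:direct_image}, plus the fact that enlarging support along $f^{-1}L_p\subset L'_{g(p)}$ commutes with the structure maps $\partial_{ts}$ defined by the closed inclusions. Once these compatibilities are checked by a diagram chase, the rest of the argument is formal.
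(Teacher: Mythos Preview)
Your proposal is correct and follows essentially the same route as the paper. The paper also builds $\eps$ from Proposition~\ref{prop:mv_resolution} applied to the Godement resolution, then establishes functoriality by the same chain: the support inclusion $f^*(i_*i^!)_{L_p}G(F)\to (i_*i^!)_{L'_{g(p)}}f^*G(F)$, the passage $\Tot\to\Tot\circ g_*$ (since $g$ is a contraction), the projection formula of Lemma~\ref{lemma:lattice:projection_formula}, and finally $g_*\Mu\to\Mu'$; compatibility with $\eps$ is again reduced to the observation that the $0$-component is the identity on $f^*G(F)$, followed by $f^*G(F)\to G(f^*F)\to G(F')$.
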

\begin{proof}
	First we show that $[X,L,F,\Mu]\lto \MV(X,L,(i_*i^!)_{L_{\bullet}}G(F),\Mu)$
	naturally produces a functorial equipment.
	Given $[X',L',F',\Mu']\ar[] [X,L,F,\Mu]$, we shall construct a natural $W'$-filtered 
	morphism
	$$f^*\MV((i_*i^!)^L G(F))\ar \MV((i_*i^!)^{L'}G(F'))$$
	which induces a commutative diagram in $\Ch(\Sh_{X'})$:
	\begin{equation}\label{eq:mv_funct}
\begin{tikzcd}
	f^*(j_*j^*)_U G(F)\arrow[r]\arrow[d,"f^*\eps"]	& 
		(j_*j^*)_{U'}G(F')\arrow[d,"\eps"]\\
	f^*\MV((i_*i^!)^L G(F))\arrow[r]& 
		\MV((i_*i^!)^{L'}G(F')).
\end{tikzcd}
\end{equation}
Note that since $g$ is a contraction, i.e.\ 
	$r(g(p))\le r(p)$ for all $p\in L$, 
	we have a natural $W'$-filtered morphism
	$\Tot(f^*(i_*i^!)^L G(F)\otimes \Mu)\ar 
		\Tot(g_*\big(f^*(i_*i^!)^L G(F)\otimes \Mu\big))$.
	On the other hand, for each $p\in L$, since
	$f^{-1}(L_p)\subset L'_{g(p)}$, we have 
	a natural inclusion $$f^*(i_*i^!)^L_{p}G(F)\ar 
		(i_*i^!)^{L'}_{g(p)}f^* G(F).$$
	It gives us a morphism of $(L,\le)$-objects:
	$f^*(j_*j^*)^L G(F)\ar g^* (j_*j^*)^{L'}f^* G(F).$
	By the projection formula \ref{lemma:lattice:projection_formula} 
	we obtain a composition:
	\begin{align*}
	\MV(f^*(i_*i^!)^L G(F))=\Tot(f^*(i_*i^!)^L G(F)\otimes \Mu)\ar
		\Tot(g_*(f^*(i_*i^!)^L G(F)\otimes \Mu))\ar \\
		\Tot(g_*(g^* (i_*i^!)^{L'}f^*G(F)\otimes \Mu))\ar
	\Tot((i_*i^!)^{L'}f^* G(F)\otimes g_*\Mu)\ar \\
		\Tot((i_*i^!)^{L'}f^*G(F)\otimes \Mu')=\MV((i_*i^!)^{L'}f^*G(F)),
	\end{align*}
	where the last arrow is induced by 
	the natural morphism $g_*\Mu\ar \Mu'$ 
	(Proposition \ref{prop:morphism_of_OS_complexes} or Corollary \ref{cor:morphism_of_OS_algebras}).
	Note that all morphisms in the composition are $W'$-filtered,
	and in the multiplicative case they are multiplicative too, so
	this gives us the required morphism and also a functorial equipment 
	$[X,L,F,\Mu]\lto \MV((i_*i^!)_{L_\bullet}G(F))$.
	It remains to note that the $0$-component of this morphism is 
		$\id\colon f^*(G(F))\ar f^*(G(F))$.
	Applying the natural quasi-isomorphism 
	$f^*G(F)\ar G(f^*F)$ followed by $G$ applied to $f^*F\ar F'$ we obtain the commutativity
	of diagram \eqref{eq:mv_funct}.
\end{proof}

So we obtain a natural $W'$-filtered morphism
$$\phi^*\colon f^*\MV((i_*i^!)^L G(F))\ar \MV((i_*i^!)^{L'}G(F')).$$
Passing to global sections gives :
	$$\phi^*\colon \Gamma_X(\MV((i_*i^!)^L G(F)))\ar 
	\Gamma_{X'}(\MV((i_*i^!)^{L'}G(F'))).$$
On the level of cohomology we obtain the expected morphism $H^*(U;j^*_U F)\ar H^*(U';j^*_{U'}F')$.
To refine this consider the natural morphisms
$g_{yx}\colon H^*(X;(i_*Ri^!)_x F)\to H^*(X;(i_*Ri^!)_y F)$
for $y\leq x$.
\begin{thrm}\label{thrm:ss_for_lattices}
The spectral sequence 
$${}_{W'}E^{pq}:={}_{W'}E^{pq}(\MV(X,i_*i^!)_\bullet G(F),\Mu)\Rightarrow H^*(U;j^*_U F)$$ is
functorial in $[X,L,F,\Mu]$ and satisfies the following:
\begin{enumerate}
    \item we have
	    $${}_{W'}E^{pq}_1=\bigoplus\limits^{x\in L}_{r(x)=-p} 
	    	H^q(X;(i_*Ri^!)_{L_x}F)\otimes_{\mb{Z}} \Mu_x[-r(x)];$$
    \item 
    the differential $d_1=\sum\limits_{x<:y}g_{yx}\otimes \partial_{xy}$.
\end{enumerate}
The terms ${}_{W'}E_r$ are functorially multiplicative for
multiplicative arrangements $[X,L,F,\Mu]$.
\end{thrm}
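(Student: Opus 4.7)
The plan is to extract the spectral sequence from the rank filtration $W'$ on the global sections of the Mayer-Vietoris complex, and identify $E_1$ and $d_1$ by a direct (if slightly tedious) index bookkeeping.

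First I would pass to global sections. The Godement sheaves $G(F)^n$ are flabby, and for a closed embedding $i_s\colon L_s\to X$ the functor $i_{s*}i_s^!$ preserves flabbiness, so each summand $(i_*i^!)_{L_x}G(F)\otimes \Mu_x$ is flabby and hence so is $\MV$ itself. Combined with the natural quasi-isomorphism $(j_*j^*)_U G(F)\ar \MV$ from Proposition~\ref{prop:mv_resolution}, this gives $H^*(\Gamma(X;\MV))\simeq H^*(X;Rj_*j^*F)=H^*(U;j^*_U F)$, and the spectral sequence of $(\Gamma(X;\MV),W')$ converges to this by standard filtration convergence (the filtration is bounded below and exhaustive in each degree). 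Functoriality of this identification follows tautologically from Proposition~\ref{prop:MV_functorial}, since the morphism $\phi^*$ constructed there is $W'$-filtered and hence passes to spectral sequences from $r=0$ onward.

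Next I would compute $E_0$ and $d_0$. With the sign convention $W'^p=W'_{-p}$, one has $\Gr^p_{W'}\MV=\bigoplus_{r(x)=-p}(i_*i^!)_{L_x}G(F)\otimes \Mu_x$. The key observation is that the combinatorial differential coming from the tensor product formula $\partial^{A\otimes K}_{yx}=g^A_{yx}\otimes \partial^K_{yx}$ of Definition~\ref{dfn:tensor_product} strictly changes the rank (by $1$), and therefore vanishes on the associated graded. Hence $d_0$ is just the internal Godement differential on each summand, and $\Mu_x$ being a free module concentrated in a single degree $-r(x)$ implies
\[
E_1^{pq}=\bigoplus_{r(x)=-p}H^q\bigl(\Gamma(X;(i_*i^!)_{L_x}G(F))\bigr)\otimes\Mu_x
=\bigoplus_{r(x)=-p}H^q(X;(i_*Ri^!)_{L_x}F)\otimes\Mu_x,
\]
where the last equality uses flabbiness of $G(F)$ to identify cohomology of sections with hypercohomology.

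Then I would identify $d_1$. It is induced by the part of the differential of $\MV$ that shifts the filtration index $-r(x)$ by exactly $+1$, which by Definition~\ref{dfn:tensor_product} is precisely the sum over covering pairs $x<:y$ of $g^{(i_*i^!)}_{xy}\otimes \partial^{\Mu}_{xy}$; on cohomology these become the pullback maps $g_{xy}$ induced by $L_y\hookrightarrow L_x$ tensored with the OS-structure maps $\partial_{xy}$, giving the stated formula. Multiplicativity in the multiplicative case is then immediate: $\MV$ is a (possibly non-commutative) sheaf of dg-algebras since $(i_*i^!)_\bullet G(F)$ is an $(L,\le)$-algebra by Proposition~\ref{prop:ii_is_Lalgebra} and $\Mu$ is well-graded, whence $W'$ is multiplicative, and taking global sections preserves this structure; the terms $E_r$ for $r\ge 1$ inherit the multiplicativity in a functorial way.

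The real content here is the index bookkeeping for $E_0$, $d_0$, $d_1$ and the verification that $\MV$ is flabby; no deep obstacle is expected, since all the heavy lifting is done by Proposition~\ref{prop:mv_resolution} (exactness of the resolution) and Proposition~\ref{prop:MV_functorial} (functoriality of the filtered morphism). The mildly delicate point is making sure the sign conventions and the shift $\Mu_x[-r(x)]$ in the statement are compatible with the chosen decreasing-filtration conventions from Section~\ref{sec:notation_etc}.
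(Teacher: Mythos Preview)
Your proposal is correct and follows exactly the route the paper intends. In fact, the paper does not provide a separate proof for this theorem at all: it is stated immediately after the text explaining how Proposition~\ref{prop:MV_functorial} yields a $W'$-filtered morphism on global sections, and the description of $E_1$, $d_1$ and multiplicativity is treated as a direct consequence of the definitions (the rank filtration on the convolution, Proposition~\ref{prop:mv_resolution} applied to the Godement resolution, and Proposition~\ref{prop:ii_is_Lalgebra}). Your write-up simply makes explicit the flabbiness argument, the vanishing of the combinatorial differential on $\Gr_{W'}$, and the identification of $d_1$ with the covering-pair sum---all of which the paper leaves to the reader.
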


\iffalse
\begin{rmrk}\label{rmrk:MV_functorial}
One can reformulate functoriality in the following form.
By Proposition \ref{prop:MV_functorial}, given an arrangement 
$[X,L,F]\in \AP(\Top)$,
the construction $\MV\colon [X,L,F]\lto \MV((i_*i^!)^L G(F))$ 
defines a functorial equipment 
(Definition \ref{dfn:arrangement_posets:functorial_equipment}).
%
Similarly $[X,L,F]\lto \Gamma\MV((i_*i^!)^L G(F))\in \Ho(\Ch(\mb{Z}-\Mod))$ 
defines a presheaf naturally isomorphic to 
$[X,L,F]\lto R\Gamma F|_U$.

The main subject of our study is the functor $E_1\circ \MV\colon \AP(\Top)^{\op}\ar \Ho(\Ch(\mb{Z}-\Mod))$
given by the rule $[X,L,F]\lto E_1(\MV((i_*i^!)^L G(F)),W')$. 
In Section \ref{sect:compl:mvhd} we will see that in case $[X,L,\ul{\mb{Q}}_X]$
is a smooth compact algebraic arrangement its value describes 
the rational homotopy type of the corresponding complement $U$.
\end{rmrk}
\fi

\begin{rmrk}\label{rmrk:ss_latice_equivariant}
Let $G$ be a finite group. An important application of the functoriality of the spectral sequence is a $G$-equivariant intersection poset
$[X,L,F,\OS(L)]$ formed by 
a geometric lattice $L$
and a $G$-equivariant sheaf 
$F$. 
Then the inclusion $U\ar X$
equips $H^*(U;j^* F)$ with a
$G$-invariant rank filtration $W'$.
In this case 
$G$ acts on $\OS(L)$ and on 
${}_{W'}E^{**}_1\Rightarrow \Gr_{W'}H^*(U;j^* F)$ 
in the obvious manner.
\end{rmrk}

\subsection{Lattice spectral sequence}
\label{sect:leray_ss}
More generally, if $(L,\le)$ is a local sup-lattice, not necessary graded, then
one can choose a weak grading $r\colon L\ar \mb{Z}$ and a contraction homomorphism 
$p\colon \mc{Q}\ar L$ from the local cubical lattice \ref{prop:p_contraction_homomorphism}.
In this case $(L,\le)$ may not admit an OS-algebra, so $\MV$ is not defined,
but the definiton of the arrangement poset $[X,L]$ still makes sense 
(omitting the additional data (2) in 
Definition \ref{dfn:arrangement_poset}), and following \ref{order_complex} 
we will use $\mc{D}:=p_*(\OS(\mc{Q}))$ instead of $\Mu$.
For example, one can take $[X,L]$ to be the intersection poset of the arrangement.
Proposition \ref{prop:atomic_complex_ss} implies that 
$\Tot(p^*((i_*i^!)_\bullet G(F))\otimes \OS(\mc{Q}))$ admits 
the \emph{lattice} filtration ${}_{L}W$ defined by $r$
and we immediately obtain
\begin{thrm}\label{thrm:lattice_ss}
	There is a spectral sequence
	$${}_{L}E^{pq}_1=\bigoplus\limits^{x\in L,i,j}_{r(x)=-p,i+j=p+q}H^i((i_*Ri^!)_{L_x}F\otimes H^j(\mc{D}_x)),$$
	converging to $\Gr^p_{{}_{L}W}H^{p+q}(U;Rj^*_U F)$.
\end{thrm}
Of course, if $r$ satisfies $r(x\vee y)\le r(x)+r(y)$ for all $x,y\in L$, then the spectral sequence ${}_{L}E_r$ is multiplicative.
By Proposition \ref{order_complex}
the terms $H^*(\mc{D}_x)$ are isomorphic to the homology of the order complex $L(0,x)$:
$$H^*(\mc{D}_x)=\tilde{H}_{-*-2}(L(0,x)).$$
After this identification we recover a spectral sequence studied in additive 
setting by Tosteson \cite{To}, and in dual terms by Petersen \cite{Petersen}.% see also \ref{sect:lattice_colattice}.
We call ${}_{L}E_1$ the \emph{lattice spectral sequence}.

\iffalse
\begin{rmrk}
In \cite[Theorem 1.8]{To} P.~Tosteson constructed an additive filtered resolution $(K^*,W)$ of
$Rj_*j^* F$ and the corresponding additive spectral sequence ${}_{W}E^{pq}\Rightarrow H^*(U;j^*F)$ 
in terms of a ranked local poset $(L,\leq)^{\op}\to 2^X$.
We want to note that in the case when $(L,\leq)$ is locally geometric, 
both resolutions $(K,W)$ and $(\MV((i_*i^!)_\bullet G(F),W')$ are in fact filtered quasi-isomorphic. 
This follows e.g.\ by \cite[Theorem 4.1]{Fo}.
\end{rmrk}
\fi

\subsection{Goresky-MacPherson formula and Leray spectral sequence}
\label{GorMac}
Consider a collection of real affine subspaces $L_i\subset V$.
The corresponding intersection poset $[V,L]$ admits a weak grading $r(x)=\codim(L_x/V)$ such that
$r(x\vee y)\le r(x)+r(y)$ for all $x,y\in L$, so $[V,L]$ is multiplicative.
In the case when $\codim L_i=c$ and $c$ divides $r(x)$ for all $x\in L$, i.e.\ it is an \emph{arrangement of 
$c$-equal codimension}, $(L,\le)$ is automatically locally geometric.

Since $H^*_{L_x}(V)$ is isomorphic to $\mb{Z}[-\codim(L_x)]$, the lattice spectral sequence provided by Theorem \ref{thrm:lattice_ss} reduces 
to a complex over~$\mb{Z}$ with vanishing differential:
\begin{equation}\label{eq:gormac}
E^{**}_1\simeq \bigoplus\limits^{x\in L} \tau_x\otimes H^{*}(\mc{D}_x)\Rightarrow H^*(V-\cup_i L_i),
\end{equation}
where $\tau_x$ is a formal variable 
of degree $\codim L_x$ corresponding to a generator in
$H^{\codim L_x}_{L_x}(X)$.
The convergence is multiplicative: the product of vertices $x,y$ is trivial 
if $r(x\vee y)\ne r(x)+r(y)$ (i.e.\ when $L_x,L_y$ are not transversal), and is induced by
the equality $\tau_x\cdot \tau_y=\pm \tau_{x\vee y}$ otherwise.

In terms of the order complex \ref{order_complex}, the expression for $E^{**}_1$ coincides with 
the (additive) Goresky-MacPherson formula: 
$$H^n(V-\cup_i L_i;\mb{Z})\simeq \bigoplus\limits_{x\in L}H^{n-\codim(L_x)}(\mc{D}_x).$$
Following \cite{To} it is not hard to prove the formula (which a priori stronger than the degeneration of \eqref{eq:gormac}).
Namely, by degree reasons there is a splitting of the canonical filtration $\tau^{\le}$
on the $(L,\le)$-object $p^*C^*_{L_\bullet}(V)$: all associated graded components are equivalent to 
$p^* H^i_{L_\bullet}(V)[-i]$, and are concentrated at the
vertices $x$ with $r(x)=i$, so all extensions are trivial. 
So there exists a quasi-isomorphism of $(L,\le)$-objects
$p^*C^*_{L_\bullet}(V)\zgqis p^*H^*_{L_\bullet}(V)$.
Hence
$\Tot(p^*H^*_{L_\bullet}(V)\otimes \OS(\mc{Q}))$ and $\Tot(p^*C^*_{L_\bullet}(V)\otimes \OS(\mc{Q}))$
are filtered quasi-isomorphic. 
It follows that 
$$E^{**}_1\simeq \bigoplus\limits_{x\in L} H^{*}(\mc{D}_x)[-\codim(L_x)],$$ 
degenerates and is additively (non canonically) isomorphic to $H^*(V-\cup_i Z_i;\mb{Z})$.
Similarly we obtain that the lattice filtration ${}_{L}W_p H^*(V-\cup_i Z_i)$ is given by the part
of \eqref{eq:gormac} with $r(x)\le p$.
Moreover $E^{**}_1$ in the form \eqref{eq:gormac} is multiplicatively isomorphic
to $\Gr^{{}_{L}W}_\bullet H^*(V-\cup_i L_i;\mb{Z})$.

\iffalse
Consider a collection of vector hyperplanes $H_*\subset V$ in 
the vector space $V=\Bbbk^n$ for $\Bbbk=\mb{R},\mb{C}$. 
Its intersection poset $(L,\le)^{\op}\to 2^V$ is 
geometric with the rank function 
$r(x)=\codim_{\Bbbk} L_x/V$. 
The corresponding spectral sequence for the sheaf $F=\ul{\mb{Z}}$ 
has the form 
$$E^{pq}_1=\bigoplus\limits^{x\in L}_{r(x)=-p} H^q_{L_x}(V;\mb{Z})\otimes \OS(L)_x[-r(x)].$$

1. Let $\Bbbk=\mb{R}$.
Since $H^*_{L_x}(\mb{R}^n;\mb{Z})$ is concentrated in degree $\codim_{\mb{R}} L_x$ 
and is spanned by
the corresponding Thom class, the terms $E^{pq}_1$ vanish outside the diagonal $q=-p$.
In particular $H^*(U;\mb{Z})$ vanishes in degrees $\neq 0$, which corresponds to the fact that
the connected components of $U$ are contractible. We have 
$\rk H^0(U;\mb{Z})=\sum\limits_{x\in L} \dim\OS(x)=\sum\limits_{x\in L}|\mu(L[0,x])|$.
This is Zaslavsky's formula \cite[Theorem A]{Zaslavsky}.

2. Let $\Bbbk=\mb{C}$. Then $E^{pq}_1=0$ unless $q=-2p$.
For dimension reasons we have a natural multiplicative isomorphism 
$H^n(U;\mb{Z})\simeq E^{-n,2n}_1\simeq \bigoplus\limits^{x\in L}_{r(x)=n}\OS(L)_x$. 
This is the Orlik-Solomon theorem.

Note that both examples work in the case of affine hyperplanes as well.
\fi
%\end{exmpl}
We conclude this section with some observations on the Leray spectral sequence.
Consider an arrangement of smooth manifolds $Z_i\subset X$ with clean intersections,
i.e.\ locally $\{Z_i\subset X\}$ at each $p\in X$ is homeomorphic to a vector subspace arrangement.
As usual $j\colon U:=X-\cup_i Z_i\ar X$ is the complement.
Let $(L,\le)$ be the corresponding intersection poset with a weak grading $r(x)=\codim_{\mb{R}}(L_x/X)$.
In this case the stalk of $R^n j_* \ul{\mb{Z}}$ at $p$ is isomorphic to $q$-th cohomology
of the subspace complement. Moreover the sheaf 
$R^n j_*\ul{\mb{Z}}\simeq\mc{H}^n(\Tot(p^*(i_*Ri^!)_{L_\bullet})\otimes \OS(\mc{Q}))$ is equipped
with a filtration $W'$ given by Theorem \ref{thrm:lattice_ss}. 
Let $\nu_x$ denote the rank $1$ orientation local system of the normal bundle $L_x$ in $X$.
The Goresky-MacPherson formula \eqref{eq:gormac} 
provides a natural isomorphism of sheaves
$$\Gr^{p}_{W'} R^n j_*\ul{\mb{Z}}[-n]\simeq \bigoplus\limits^{x\in L}_{r(x)=-p}\tau_{x}\otimes H^{n-r(x)}(\mc{D}_x)[-n]$$
where $\tau_x=(i_x)_* \nu_x$ and $i_x\colon L_x\ar X$ is the inclusion.
Recall that $H^*(R^*j_*\ul{\mb{Z}})$ is isomorphic to the second page of the usual
Leray spectral sequence (omitting the differential).
So there is a spectral sequence $'E^{**}_1$ converging to the Leray spectral sequence:
$$'E^{pq}_1=H^{p+q}(\Gr^p_W \oplus_n R^nj_*\ul{\mb{Z}}[-n])=
\bigoplus\limits^{x\in L}_{r(x)=-p}H^{p+q-n}(L_x;\tau_x\otimes H^{n-r(x)}(\mc{D}_x)).$$
The natural Thom isomorphism 
$H^{*+r(x)}(L_x;\tau_x)\simeq H^*_{L_x}(X;\ul{\mb{Z}})$ gives:
$$'E^{pq}_1\simeq
\bigoplus\limits^{x\in L}_{r(x)=-p}H^{p+q-n+r(x)}_{L_x}(X;H^{n-r(x)}(\mc{D}_x)).
$$

We conclude that $'E^{ij}_1\simeq {}_{L}E^{ij}_1$.
Of course the differentials in both spectral sequences are different, and the spectral sequences converge to different groups.
We will see that in the case of arrangements of algebraic origin with geometric intersection poset, 
the weight considerations, together with the formality of the atomic complex \ref{thrm:atomic_complex} imply that
${}_{L}E_2={}_{L}E_\infty$ over $\mb{Q}$. 
Then the theory of $l$-adic sheaves or mixed Hodge modules suggests that 
$R^n j_*\ul{\mb{Q}}$ is a sheaf of mixed Hodge structures, so its weight filtration 
splits \cite{CH20}. On the other hand, by Proposition \ref{weight_lattice_filtrations}
the weight filtration on 
the MHS $R^n j_*\ul{\mb{Q}}$ coincides with the lattice filtration.
Hence after forgetting the differentials $'E^{**}_1$ in fact isomorphic to
the second page of the usual Leray spectral sequence. 

One can say that in the above assumptions the lattice spectral sequence 
``accumulates'' all differentials $d_r$ of the Leray spectral sequence.
A basic example of this observation is provided by an arrangement of $c$-equal codimension. 
In this case weight considerations, see~\cite{Totaro} or more generally~\cite{Weber}, 
imply that the only term of the corresponding Leray spectral sequence 
with non vanishing differential is $E^{**}_{2c}$. On the other hand, 
$d_{2c}$ appears as the differential of the first term of the lattice spectral sequence ${}_{L}E^{**}_1$.

\subsection{\v{C}ech construction
\texorpdfstring{$\check{C}$}{}}
\label{sect:cech_model}

\subsubsection{The construction}
Assume $\mc{Q}$ is a local cubical lattice and  
$A$ is a $\mc{Q}^{\op}$-object in $\CMon(\ChF)$.
We are going to define a partial $\mc{Q}$-algebra in $\ChF$, 
denoted by $\check{C}(A)_\bullet$.
Using this we will describe M\"obius inversion formula, by means of
an equivalence $\hat{A}^\bullet\zgqis A^\bullet$ of $\mc{Q}^{\op}$-monoids. 
The replacement $A$ by $\hat{A}$ 
is suitable for the construction
of mixed Hodge diagram in \ref{sect:compl:mvhc}.

Recall that the elements of $\mc{Q}$ are the pairs $\bar{I}=(x,I)$ 
where $x\in L,I\subset \Atoms(\mc{Q})$ are
such that $x\in \os{\circ}{\sup}(I)$.
To simplify the notation let us describe $\check{C}(A)$ for cubical lattices 
$\mc{C}$. 
Then it will be clear that all constructions commute with the restriction to $\mc{Q}[0,x]$.
Since each interval $\mc{Q}[0,x]$ is a cubical lattice, we will 
formulate the corresponding statements for an arbitrary local cubical lattice $\mc{Q}$.

Let $\mc{C}$ be a cubical lattice. Let 
$N=\Atoms(\mc{C})$ and
denote by $g_{JI}\colon A^{I}\to A^J$ the structure maps for each $I\subset J$.
For simplicity let us assume that $N$ is ordered.
%We are going to define a \emph{partial} $\mc{C}$-algebra in $\ChF$, 
%denoted by $\check{C}(A)_\bullet$, as follows.
Let $d\tau^K$ be a monomial in the Grassmann algebra 
$\Lambda\langle d\tau_i\mid i\in N\rangle$ with
$\deg d\tau^K=|K|$.
Equip
\begin{equation}
	\check{C}(A)_I=\bigoplus\limits^K_{K\subset I} A^K\cdot d\tau^K
\end{equation}
with the \v{C}ech differential 
\begin{equation}\label{eq:dfn:cech_differential}
	\delta_{\Cech}=\sum\limits^k_{K+\{k\}\subset I}
	g_{K+\{k\},K}(-)\otimes d\tau^k\wedge -.
\end{equation}
Using the notion of the tensor product (\ref{dfn:chf_tensor_product}) 
we have $A^K\cdot d\tau^K\simeq A^K[-|K|]\in \ChF$, and 
$\check{C}(A)_I$ is an iterated cone (Definition \ref{dfn:iterated_cone}) 
in $\ChF$:
$$\check{C}(A)_I=
	\left[A^\emptyset{\longrightarrow}
		\left[\bigoplus\limits^{K\subset I}_{|K|=1}
				A^K{\longrightarrow}\left[
	\bigoplus\limits^{K\subset I}_{|K|=2}A^K{\longrightarrow}
		\left[\ldots\right]\right]\right]\right]
	\in \ChF.$$
For $I\subset I'$, we define the structure map
$$\check{g}_{II'}\colon \check{C}(A)_{I'}\to \check{C}(A)_I$$ 
by mapping the component 
$A^{K'}\cdot d\tau^{K'}\subset \check{C}(A)_{I'}$, $K'\subset I'$
identically if $K'\subset I$, and to zero otherwise.

For all disjoint $I,I'\subset N$ define multiplication tensors
$$\check{m}^{II'}_{I,I'}\colon 
	\check{C}(A)_I\otimes \check{C}(A)_{I'}\to 
		\check{C}(A)_{II'},$$ 
which act on the components
$A^K\cdot d\tau^K,K\subset I$ and 
$A^{K'}\cdot d\tau^{K'},K'\subset I'$
by first applying the inclusion of 
$g_{KK',K}\otimes g_{KK',K'}$ (with the usual sign rule) and then 
the multiplication  $m\colon A^{KK'}\otimes A^{KK'}\to A^{KK'}$.

A straightforward check shows the following:
\begin{prop}\label{prop:cech_construction}
	This construction provides $\check{C}(A)_\bullet$ with
	a structure of a commutative partial $\mc{Q}$-algebra in $\ChF$.
	It is independent of the ordering of $N$.
\end{prop}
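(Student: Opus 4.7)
The plan is to verify directly the three classes of axioms required by Definition \ref{dfn:L_algebra} and \ref{dfn:L_filtered_algebra}: that $\check{C}(A)_I$ are well-defined objects of $\ChF$, that the collection forms a functor $(\mc{C},\le)^{\op}\to\ChF$, and that $\check{m}$ is associative, unital, functorial and satisfies the Leibniz rule. First I would check that the total differential $d_A+\delta_{\check{C}}$ on $\check{C}(A)_I$ squares to zero. The relation $\delta_{\check{C}}^2=0$ reduces, for each pair $k<k'$ in the index set, to the equality $g_{K\cup\{k,k'\},K\cup\{k\}}\circ g_{K\cup\{k\},K}=g_{K\cup\{k,k'\},K\cup\{k'\}}\circ g_{K\cup\{k'\},K}$, which holds because $A$ is an $\mc{C}^{\op}$-object, combined with the antisymmetry $d\tau^{k'}\wedge d\tau^k=-d\tau^k\wedge d\tau^{k'}$. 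Anticommutation $[d_A,\delta_{\check{C}}]=0$ is automatic since each $g_{K\cup\{k\},K}$ is a morphism in $\CMon(\ChF)$. The structure maps $\check{g}_{II'}$ are projections onto sub-iterated cones that respect both differentials and the filtration, so functoriality in $(\mc{C},\le)^{\op}$ is immediate.

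Next I would check the multiplicative data. Associativity and unitality of $\check{m}$ follow from the corresponding properties of $m\colon A^{KK'K''}\otimes A^{KK'K''}\otimes A^{KK'K''}\to A^{KK'K''}$ together with the associativity of the Grassmann wedge; the intermediate structure maps $g$ commute with one another and with $m$, so both bracketings of $\check{m}\circ(\check{m}\otimes\id)$ and $\check{m}\circ(\id\otimes\check{m})$ collapse to the same tensor. The partiality condition $r(I\cup I')=r(I)+r(I')$ holds precisely for disjoint $I,I'$, which is exactly the case in which $\check{m}$ is defined non-trivially. Functoriality of $\check{m}$ with respect to the structure maps $\check{g}$, i.e.\ diagram \eqref{eq:dfn:L_algebra_functoriality}, is transparent because $\check{m}$ preserves the index-set grading by $K$, so restricting to components $K\subset I_1,\,K'\subset I_1'$ before or after multiplying gives the same answer. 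The Leibniz rule splits as a Leibniz rule for $d_A$ (which is just the Leibniz rule for $m$ on each piece $A^{KK'}$) and a Leibniz rule for $\delta_{\check{C}}$; the latter expresses that the operators $g_{K\cup\{k\},K}(-)\wedge d\tau^k$ act as graded derivations on the tensor product, which is a direct consequence of the Koszul sign rule applied to $d\tau^k$ and of $g$ being multiplicative.

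For graded commutativity one combines the commutativity of $m$ with $d\tau^K\wedge d\tau^{K'}=(-1)^{|K|\,|K'|}d\tau^{K'}\wedge d\tau^K$; after accounting for the Koszul signs incurred when sliding $d\tau^K$ past elements of $A^{K'}$ of non-trivial internal degree, the two orders agree. Finally, independence of the chosen ordering of $N$ is obtained by observing that a permutation $\sigma$ of $N$ acts on each component $A^K\cdot d\tau^K$ by the sign of the permutation induced on $K$, and these signs assemble into a natural isomorphism of partial $\mc{Q}$-algebras in $\ChF$ between $\check{C}(A)$ built using two different orderings; the relevant compatibilities (with $d_A$, $\delta_{\check{C}}$, $\check{g}$ and $\check{m}$) are immediate since all structure maps are built from operations which transform predictably under reordering.

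The main obstacle is purely sign-theoretic: one must fix once and for all a convention for commuting Grassmann monomials past $A$-valued degrees when defining $\check{m}$ (the phrase \emph{``with the usual sign rule''} in the statement), and then check that this convention is simultaneously compatible with $\delta_{\check{C}}$ (which is itself odd), with graded commutativity of $m$ on $A$, and with the antisymmetry of the wedge. Once this bookkeeping is set up uniformly, every identity above is mechanical.
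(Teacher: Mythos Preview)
Your proposal is correct and is precisely the ``straightforward check'' the paper alludes to without spelling out; the paper gives no further argument beyond that phrase. Your verification of the axioms (well-definedness of $d_A+\delta_{\check C}$, functoriality of $\check g$, associativity, commutativity, unitality, the Leibniz rule for the internal differential, and independence of the ordering via the sign action of permutations) is exactly what is needed, and your identification of the sign bookkeeping as the only nontrivial point is accurate.
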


%Assume $A^\bullet$ is an $\mc{Q}^{\op}$-object in $\CMon(\ChF)$.
%Then $\check{C}(A^\bullet)$ is a commutative partial $\mc{Q}$-algebra.
\subsubsection{Homological M\"obius inversion}
For each $\bar{N}\in \mc{Q}$ let
\begin{equation}\label{eq:mobius_inversion}
\hat{A}^{\bar{N}}:=
	\check{C}(A)|_{\mc{Q}[0,\bar{N}]}*\OS(\mc{Q}[0,\bar{N}])
	%\bigoplus\limits_{I\subset K}^{K,I\subset N}A|_{}^{I}\otimes 
	%	\mb{Z}d\tau^I d\nu^K
	\in \ChF
\end{equation}
be the convolution equipped with the rank filtration $W'$.
Note that by Remark \ref{rmrk:lattices:product_with_partial},
$\hat{A}^{\bar{I}}$ is equipped with a commutative algebra structure
which however \emph{does not} satisfies the Leibniz rule. On the other hand,
we naturally have $\Gr^{W'}\hat{A}^{\bar{I}}\in \CMon(\ChF)$.

Below we will equip $\hat{A}^{\bar{N}}$ with a natural multiplication 
$\hat{m}$ obeying the Leibniz rule.
Without loss of generality one can replace $A$ by its restriction to
$\mc{Q}_{[0,\bar{N}]}$ and assume that $A$ is cubical.
Let $d\mu_i,i\in N$ be formal variables of degree $-1$
and let $\iota_i$ denote the corresponding derivation of degree $1$.
For a pair $I\subset K\in \mc{C}$ denote by $(d\mu_I)^K$ 
the monomial $d\mu_I$ that corresponds to $K$.

Additively, we have the following isomorphism 
\begin{equation}\label{eq:hat_A_expression}
	\hat{A}^{N}=
\bigoplus\limits_{I\subset K}^{I,K\subset N}
	A_{I}\otimes \mb{Z}\cdot d\tau^I\cdot d\nu^K
\simeq 
\bigoplus\limits_{I\subset K}^{I,K\subset N}
	A^{K\sm I}\otimes \mb{Z}\cdot (d\mu_I)^K,
\end{equation}
where the RHS is equipped with a differential 
$\delta+d=\sum \delta_{I'\subset K'}^{I\subset K}+d$
	described as follows.
	The differential $d$ is the inner differential corresponding 
	to the complexes $A^{K\sm I}\in \ChF$,
	while
	the only non-zero components of $\delta$,
	\begin{equation}\label{eq:dif_cech}
 \delta_{I'\subset K'}^{I\subset K}\colon A^{K\sm I}\otimes \mb{Z}\cdot (d\mu_I)^K\ar
		A^{K'\sm I'}\otimes \mb{Z}\cdot (d\mu_{I'})^{K'},
  \end{equation}
	are given by
$$
\delta_{I-p\subset K}^{I\subset K}\colon 
	A^{K\sm I}\cdot (d\mu_I)^K\ar[g\otimes \iota_p] 
		A^{K\setminus I-p}\cdot (\iota_p d\mu_{I})^K,$$
$$\delta_{I-p\subset K-p}^{I\subset K}\colon 
	A^{K\sm I}\cdot (d\mu_I)^K\ar[g\otimes \iota_p] 
	A^{K\sm I-p}\cdot (-\iota_p d\mu_{I})^{K-p}.$$

	One can describe the components of 
	the identification \eqref{eq:hat_A_expression} 
	by sending 
	$d\tau^I\cdot d\nu_K$ to $(-1)^K (\iota_I d\mu_K)^K$.
	Imposing the usual sign rule we recover the definition of
	$\hat{A}^N$ in terms of \eqref{eq:mobius_inversion}, 
	in particular we have $[d,\delta]=0$.

We will define the product $\hat{m}$ by specifying its components 
\begin{equation}
\label{eq:components_hat_m}
\hat{m}_{I\subset K,I'\subset K'}\colon A^{K\sm I}\cdot (d\mu_I)^K\otimes 
	A^{K'\sm I'}\cdot (d\mu_{I'})^{K'}\ar
	A^{KK'\sm II'}\cdot (d\mu_{I}d\mu_{I'})^{KK'}.
\end{equation}
Note that $K\sm I\scup K'\sm I'\supset KK'\sm II'$ .
If $K\sm I\scup K'\sm I'=KK'\sm II'$ then we define the component
$\hat{m}_{I\subset K,I'\subset K'}$ by using the sign rule and the natural composition
$$A^{K\sm I}\otimes A^{K'\sm I'}\ar[g\otimes g] 
A^{K\sm I \scup K'\sm I'}\otimes A^{K\sm I\scup K'\sm I'}\ar[m] 
A^{K\sm I\scup K'\sm I'}=A^{KK'\sm II'},$$
where $m$ is the multiplication in $A$.
Otherwise we set $\hat{m}_{I\subset K,I'\subset K'}=0$.

This gives us a product $\hat{m}\colon \hat{A}^{\bar{N}}\otimes \hat{A}^{\bar{N}}\ar \hat{A}^{\bar{N}}$.
%by its components \eqref{eq:components_hat_m}.
The proof of the following technical lemma is given in 
Appendix \ref{lemma:mob_inversion_proof}.
\begin{lemma}\label{lemma:mobius_inversion_multiplication}
	The product $\hat{m}$
	provides
$\hat{A}^{\bar{N}}$ with a $\mc{Q}|_{[0,\bar{N}]}$-chain algebra structure.
\end{lemma}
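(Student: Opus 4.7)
The plan is to verify the axioms of Definition \ref{dfn:L_filtered_algebra} for $\hat{A}^{\bar{N}}$ as a $\mc{Q}|_{[0,\bar{N}]}$-chain algebra, using the natural $K$-grading from \eqref{eq:hat_A_expression}: the $K$-component is $(\hat{A}^{\bar{N}})_K = \bigoplus_{I \subset K} A^{K \sm I} \cdot (d\mu_I)^K$. The differential decomposes as $\delta + d = \delta_{\mathrm{in}} + \delta_{\mathrm{out}} + d$, where $\delta_{\mathrm{in}}$ is the first component of \eqref{eq:dif_cech} (removing a $p \in I$ while keeping $K$ fixed), $\delta_{\mathrm{out}}$ is the second component (simultaneously removing $p$ from $I$ and from $K$), and $d$ is the inner differential of $A$. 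The inner differential of the $K$-component is then $d + \delta_{\mathrm{in}}$, while $\delta_{\mathrm{out}}$ plays the role of the combinatorial differential $\partial = \sum_{K' <: K} \partial_{K', K}$ of the chain structure.

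First I would check that $(d + \delta)^2 = 0$; this decomposes into $d^2 = 0$, $\delta_{\mathrm{in}}^2 = 0$, $\delta_{\mathrm{out}}^2 = 0$, $[d, \delta_{\mathrm{in}}] = [d, \delta_{\mathrm{out}}] = 0$, and the mixed Koszul-type relation $\delta_{\mathrm{in}} \delta_{\mathrm{out}} + \delta_{\mathrm{out}} \delta_{\mathrm{in}} = 0$. Each is a routine check using that the structure maps $g$ of $A$ are chain maps and that their compositions respect the cosimplicial identities. This yields a well-defined chain complex $(\hat{A}^{\bar{N}})_K$ with inner differential and legitimate combinatorial maps $\partial_{K-p, K}$ for $K-p <: K$.

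Next I would establish associativity and graded-commutativity of $\hat{m}$. Both follow, on the nonzero components \eqref{eq:components_hat_m}, from the corresponding properties of $m$ in $A$ applied after the combined structure maps $g \otimes g$, together with the standard Koszul sign rule for the Grassmann monomials $(d\mu_I)^K \wedge (d\mu_{I'})^{K'}$. The third and most technical step is the Leibniz rule, which must be checked separately for $d$, $\delta_{\mathrm{in}}$, and $\delta_{\mathrm{out}}$ acting on $\hat{m}(a,b)$. The $d$-Leibniz is inherited from $A$. For $\delta_{\mathrm{in}}$ and $\delta_{\mathrm{out}}$ one case-splits on whether the eliminated index $p$ lies in $I \cap I'$, $I \sm I'$, or $I' \sm I$, and whether it is removed from $K$, from $K'$, or from both; the support condition $K \cap I' \subset I$, $K' \cap I \subset I'$ for nonvanishing of $\hat{m}_{I \subset K, I' \subset K'}$ ensures that the extra contributions appearing on one side of the Leibniz equation are cancelled by matching contributions on the other, via the contraction identity $\iota_p(xy) = (\iota_p x) y \pm x (\iota_p y)$ on the Grassmann factors.

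The $(\mc{Q}|_{[0,\bar{N}]},\le)$-functoriality, i.e.\ the restriction maps $\check{g}_{\bar{M}\bar{N}}$ and their compatibility with $\hat{m}$ and $\delta$, follows directly from the corresponding properties of $\check{C}(A)$ established in Proposition \ref{prop:cech_construction}, since the restrictions act component-wise by truncating the sums over $I, K$. The main obstacle will be the sign bookkeeping in the Leibniz verification for $\delta_{\mathrm{in}}$ and $\delta_{\mathrm{out}}$, where the nonvanishing constraint on $\hat{m}$ creates boundary contributions that must cancel delicately between the two Čech components; modulo this sign tracking, the construction is just the natural convolution of the commutative partial $\mc{Q}$-algebra $\check{C}(A)$ with the cubical OS-algebra.
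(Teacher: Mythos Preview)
Your overall plan matches the paper's: reduce to checking that $\bigoplus_{I\subset K}\mb{Z}\cdot(d\mu_I)^K$ with $\hat m$ and $\delta$ is a cdga (the paper makes this reduction explicit, which spares you from carrying the structure maps of $A$ through the computation), then verify commutativity, associativity, and Leibniz by case analysis. Your identification of the support condition as $K\cap I'\subset I$ and $K'\cap I\subset I'$ is correct and useful.

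There is, however, a genuine pitfall in your Leibniz step. You propose to check Leibniz ``separately for $d$, $\delta_{\mathrm{in}}$, and $\delta_{\mathrm{out}}$''. Taken literally this fails: neither $\delta_{\mathrm{in}}$ nor $\delta_{\mathrm{out}}$ is a derivation of $\hat m$ on its own. The critical case is $I\cap I'=\emptyset$ with the support condition $K\sm I\cup K'\sm I'=KK'\sm II'$ \emph{failing}, and $p\in I$ with $p\in K'$. Then $\hat m$ kills the original product, so $\delta(\hat m(a,b))=0$; but after applying $\delta_{\mathrm{in}}$ to the first factor (replacing $I$ by $I-p$, keeping $K$) the support condition may become an equality, giving a nonzero term in $\hat m(\delta_{\mathrm{in}}a,b)$ landing in the component labelled $KK'$. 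This is not cancelled by anything on the $\delta_{\mathrm{in}}$ side. It is cancelled by the corresponding $\delta_{\mathrm{out}}$ term (replacing $I,K$ by $I-p,K-p$), because $p\in K'$ forces $(K-p)\cup K'=KK'$, so both terms land in the same component with opposite sign. The paper's proof is organized precisely around this cross-cancellation (the terms it calls $\alpha_p$ and $\beta_p$). So you must treat $\delta=\delta_{\mathrm{in}}+\delta_{\mathrm{out}}$ as a single differential when verifying Leibniz, and your case split should be on $|I\cap I'|$ and on whether the support condition holds, rather than on which piece of $\delta$ you are applying.

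A smaller point: associativity is not quite immediate from that of $A$. You also need the combinatorial fact that the support condition composes, i.e.\ $K\sm I\cup K'\sm I'\cup K''\sm I''=KK'K''\sm II'I''$ holds iff both $K\sm I\cup K'\sm I'=KK'\sm II'$ and $KK'\sm II'\cup K''\sm I''=KK'K''\sm II'I''$ hold. This is easy but should be stated.
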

\QEDB

\begin{rmrk}
It seems likely that the cdga $\bigoplus\limits_{I\subset K}^{I,K\subset N}\mb{Z}\cdot (d\mu_I)^K$ from the proof of Lemma~\ref{lemma:mobius_inversion_multiplication} has a description which makes the proof of the associativity and Leibniz rule more straightforward. We tried but failed to find such an interpretation,
\end{rmrk}

In particular $\hat{A}^{\bar{I}}\in \CMon(\ChF)$ admits a multiplicative rank filtration $W'$.
It follows from the construction that the multiplication on $\Gr^{W'}\hat{A}^{\bar{I}}$
induced by $\hat{m}$
coincides with the one induced by the convolution \eqref{eq:mobius_inversion}.
%In other words $\hat{A}^{\bar{I}}$ is a $\mc{Q}|_{[0,\bar{I}]}$-filtered algebra.
By the naturality of restrictions it follows also that $\hat{A}=\check{C}(A)\otimes \OS(\mc{Q})$ 
is a $\mc{Q}$-chain algebra.
\begin{rmrk}
By definition, the differential on the complex $\hat{A}^{\bar{I}}$ 
is the sum $\delta_{\Cech}+\partial+d$ of supercommuting differentials. Here
$\partial$ is the Koszul differential which comes from the convolution with $\OS(\mc{Q}[0,I])$,
and $d$ is the inner differential of $A^\bullet$.
Note that by Remark \ref{rmrk:lattices:product_with_partial}
the partial algebra structure on $\check{C}(A)$ 
\emph{does not}
induce a monoid structure on the convolution $\hat{A}^{\bar{I}}$.
Lemma \ref{lemma:mobius_inversion_multiplication} gives another natural
multplication which satisfies the Leibniz rule with respect to 
the differentials $\delta_{\Cech}+\partial$ and $d$ respectively.

Let us illustrate this by a basic example.
%Assume $U\subset X$ is an open subset and $\Omega^*(-)$ is the sheaf of differential forms.
Consider the cubical diagram $A^0\ar[f] A^1$ on a single atom. 
%$A^0=\Omega^*(X)\ar A^1=\Omega^*(U)$.
Then $\hat{A}^0=A^0$ and
$\hat{A}^1=[A^0\ar A^1]\cdot d\nu\oplus  A^0$ where $[A^0\ar A^1]=A^0\oplus A^1\cdot dx$.
Hence $\hat{A}^1=[A^0\ar[f\oplus -\id] A^1\oplus A^0]=A^0[1]\oplus(A^1\oplus A^0)[0]$.
The usual multiplication on this convolution vanishes on $A^1[0]$ and 
does not satisfies the Leibniz rule:
given $\bar{a}\in A^0[1],b\in A^1[0]$ we have $\bar{a}b=0$, but 
$d\bar{a} b-(-1)^{\deg a}\bar{a}db=(-1)^{\deg a}f(a)b\in A^1[0]$.
On the other hand one can consider $(A^1\oplus A^0)[0]$ as the trivial extension of $A^0$ by 
the algebra $A^1$ over $A^0$. Then $A^0$ is a quotient algebra of 
$(A^1\oplus A^0)[0]$.
The resulting dg-algebra can be described in terms of 
Lemma \ref{lemma:mobius_inversion_multiplication}
as 
$A^0\cdot \mu\oplus (A^0\oplus A^1)$ with the differential 
$\id\otimes \iota-f\otimes \iota$ where $\mu$ is a formal
variable with $\deg \mu=-1$ and $\iota$ is the corresponding contraction.

\end{rmrk}

The following theorem can be thought of as a categorification of 
the M\"obius inversion formula in the local cubical case.
\begin{thrm}\label{thrm:mobius_trick}
	The natural inclusions
	$$g^{\hat{A}}_{\bar{I}\bar{J}}\colon \hat{A}^{\bar{J}}\ar 
		\hat{A}^{\bar{I}}$$ 
	for $\bar{J}\le \bar{I}\in \mc{Q}$ make 
		$\hat{A}^\bullet$ into a $\mc{Q}^{\op}$-object in 
	$\CMon(\ChF)$.
	There is a natural quasi-isomorphism 
	of $\mc{Q}^{\op}$-objects in 
	$\CMon(\ChF)$:
	$$A^\bullet\zgqis \hat{A}^\bullet.$$
\end{thrm}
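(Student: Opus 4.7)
The plan is to prove both claims by constructing a single natural cdga quasi-isomorphism $\pi\colon \hat{A}^{\bullet}\to A^{\bullet}$ in $\CMon(\ChF)$; the required zig-zag $A^{\bullet}\zgqis \hat{A}^{\bullet}$ is then the length-one zig-zag $A^\bullet \xleftarrow{\pi} \hat{A}^\bullet$. Functoriality of the inclusions $g^{\hat{A}}_{\bar{I}\bar{J}}$ follows immediately from the decomposition \eqref{eq:hat_A_expression}: the sub-object $\hat{A}^{\bar{J}} \subset \hat{A}^{\bar{I}}$ is spanned by summands with $K \subset \bar{J}$, and inspection of \eqref{eq:dif_cech} and \eqref{eq:components_hat_m} shows both the differential and $\hat{m}$ act locally in this indexing---the differential either fixes $K$ or passes from $K$ to $K-p \subset K$, and $\hat{m}$ sends $(K_1, K_2)$ to $K_1 \cup K_2$, staying inside $\bar{J}$. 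Composition of inclusions is the inclusion, so the structure is functorial.

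Working in the LHS parametrization $\bigoplus_{I \subset K \subset \bar{I}} A^I\, d\tau^I d\nu^K$, I will define
\[
\pi_{\bar{I}}\big(a\,d\tau^I d\nu^K\big) = \begin{cases} (-1)^{|K|(|K|+1)/2}\, g_{\bar{I},K}(a), & I = K,\\ 0, & I \subsetneq K.\end{cases}
\]
This is degree-preserving because the shifts $+|K|$ from $d\tau^K$ and $-|K|$ from $d\nu^K$ cancel on the diagonal. Naturality in $\bar{I}$ is immediate from $g_{\bar{I}, K} = g_{\bar{I}, \bar{J}} \circ g_{\bar{J}, K}$ whenever $K \le \bar{J} \le \bar{I}$.

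To verify that $\pi$ is a chain cdga map, consider first a diagonal summand $\xi = a\, d\tau^K d\nu^K$: both $\delta_{\Cech}$ and $\partial$ vanish here (both require $p \in K\sm I = \emptyset$), so $d\xi = d_A a \cdot d\tau^K d\nu^K$ and $\pi\circ d = d_A\circ \pi$ follows from functoriality of $g_{\bar{I},K}$. On an off-diagonal summand with $|K|-|I| = 1$, say $K = I \sqcup\{p\}$, one has $\pi(\xi) = 0$ while $\pi(d\xi) = (\eps'_p c_K + \eps_p c_I)\, g_{\bar{I}, I}(a)$, with contributions only from the $\delta_{\Cech}$-component landing at $(K,K)$ and the $\partial$-component landing at $(I,I)$; this vanishes because the elementary identity $\eps'_p\eps_p = (-1)^{|I|}$ (using $\#\{i\in I : i<p\} + \#\{i\in I : i>p\} = |I|$) combines with $c_K/c_I = (-1)^{|I|+1}$ for the chosen $c_K = (-1)^{|K|(|K|+1)/2}$. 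Off-diagonal summands with $|K|-|I|\geq 2$ pose no problem since $d\xi$ has no diagonal components. Multiplicativity and unit preservation are verified analogously: the diagonal-diagonal product under $\hat{m}$ lands again on a diagonal with a sign factor $s(K)s(K')/s(KK')$ coming from the LHS/RHS identification, which telescopes against $c_K = s(K)$. Finally, the natural section $a \mapsto c_{\bar{I}}^{-1}\, a\, d\tau^{\bar{I}} d\nu^{\bar{I}}$ of $\pi$ is a pointwise quasi-isomorphism by the spectral sequence of the filtration $F^p\hat{A}^{\bar{I}} = \bigoplus_{|I|\geq p}$: the $E_0$-differential $\partial + d_A$ produces, for each fixed $I$, a complex isomorphic to $\mb{Z}\, d\nu^I \otimes \Tot(\OS(\mc{Q}[0, \bar{I}\sm I]))$ up to shift, which is acyclic unless $I = \bar{I}$ by the defining acyclicity of OS-complexes; hence $E_1$ concentrates in the single column $p = r(\bar{I})$ with value $H^{r(\bar{I})+q}(A^{\bar{I}})$, and $E_2 = E_\infty$ for dimensional reasons.

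The main obstacle is sign bookkeeping through \eqref{eq:dif_cech} and \eqref{eq:components_hat_m}, particularly keeping track of the Koszul-shuffle signs $\eps'_p, \eps_p$ and the LHS-to-RHS reparametrization sign $s(K)$ used to pass between $d\tau^I d\nu^K$ and $(d\mu_{K\sm I})^K$. Once the combinatorial identities $\eps'_p \eps_p = (-1)^{|I|}$ and $c_K = s(K) = (-1)^{|K|(|K|+1)/2}$ are correctly tracked, all required compatibilities---functoriality of $g^{\hat{A}}$, naturality of $\pi$, chain map property, multiplicativity, unit preservation, and quasi-isomorphism---follow uniformly from this single sign choice, with no auxiliary intermediate object needed for the zig-zag.
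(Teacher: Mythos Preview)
Your approach is correct and genuinely different from the paper's. The paper does not build a single map but a length-two zig-zag
\[
\hat{A}^{J}\xrightarrow{\ \alpha\ }\hat{A}'^{J}\xleftarrow{\ \beta\ }A^{J},
\]
where $A'^\bullet$ is the \emph{constant} $\mc{C}^{\op}$-diagram with value $A^{J}$; each of $\alpha,\beta$ is then shown to be a quasi-isomorphism by the same Koszul acyclicity you use, via two separate filtrations (by the $d\nu$-degree for $\beta$, by the $d\tau$-degree for $\alpha$). Your construction collapses this to a single projection $\pi\colon\hat{A}^{\bullet}\to A^{\bullet}$, which is manifestly natural in $\bar I$ because $g_{\bar I,K}$ is; this sidesteps a mild awkwardness in the paper's argument, where the intermediate object $\hat{A}'^\bullet$ depends on the chosen top level $J$. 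In exchange, you pay with explicit sign bookkeeping, which the paper largely avoids since $\alpha$ and $\beta$ are componentwise either an identity, a structure map, or zero.

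One suggestion: your verification becomes almost sign-free if you work directly in the RHS parametrization $\bigoplus_{I\subset K}A^{K\sm I}(d\mu_I)^K$ and take $\tilde\pi$ to be projection onto the summands with $I=\emptyset$ followed by $g_{\bar I,K}$, with \emph{no} extra sign. Multiplicativity is then immediate (the $\hat m$-image of two factors has $I\cup I'=\emptyset$ iff both $I,I'=\emptyset$, and on that locus $\hat m$ is exactly $m\circ(g\otimes g)$), unitality is clear, and the chain-map check for an off-diagonal element with $I=\{p\}$ reduces to the visible cancellation between $\delta^{I\subset K}_{\emptyset\subset K}$ and $\delta^{I\subset K}_{\emptyset\subset K-p}$. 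Your LHS map $\pi$ with $c_K=(-1)^{|K|(|K|+1)/2}$ is precisely $\tilde\pi$ transported through the paper's identification $d\tau^I d\nu^K\mapsto (-1)^K(\iota_I d\mu_K)^K$, so the sign identities you isolate are exactly the content of that reparametrization; recording it this way would make the ``main obstacle'' disappear.
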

\begin{proof}
	It is enough to show this 
	for all intervals $\mc{C}\subset \mc{Q}$ 
	containing $0\in \mc{Q}$.
	If $N\subset M\in \Atoms(\mc{C})$,  then
	we have a natural morphism of monoids $\hat{A}^N\ar \hat{A}^M$,
	which is easily seen from \eqref{eq:hat_A_expression}.
	For the second assertion we construct a zig-zag.
	Fix $J\subset \Atoms(\mc{C})$.
	Let $A'^\bullet$ be the constant $\mc{C}^{\op}$-object in $\ChF$ 
	with value $A^J$ and the identity structure maps.
	Consider the following zig-zag in $\CMon(\ChF)$:
	$$\hat{A}^J\ar[\alpha] \hat{A}'^J 
		\al[\beta] A^J.$$
	Here  $\alpha$ is $J$-component of the natural morphism 
	$\hat{A}|_{[0,J]}\ar[\alpha] \hat{A'}|_{[0,J]}$,
	and $\beta$ is given by the inclusion 
	$A^J=A^J\cdot (d\mu_{\emptyset})^{\emptyset}\subset \hat{A}'^J$.
	It is clear that $\alpha$ and $\beta$ are morphisms in 
		$\CMon(\ChF)$.

	Showing that $\alpha,\beta$ are quasi-isomorphisms is straightforward.
	It will be easier to work using the additive expression 
	\eqref{eq:mobius_inversion}.
	By definition,
	$$\hat{A}^J=\bigoplus\limits^{I}_{I\subset J}\check{C}_I\cdot d\nu_I=
	\bigoplus\limits^{K,I}_{K\subset I\subset J}
		A^K\cdot d\tau^K\cdot d\nu_I.$$
	The differential acting on $\hat{A}^J$ is equal to the sum 
	$d_A+\delta_{\check{C}}+\partial_{K}$ of super-commuting differentials
	where $d_A$ is the inner differential on 
	$A^\bullet$, $\delta_{\check{C}}$ is the 
	\v{C}ech differential \eqref{eq:dfn:cech_differential}, 
	$\partial_K$ is the Koszul differential
	coming from $\OS(\mc{C}[0,J])$.

		We have $$\hat{A}'^J=\check{C}(A')|_{[0,J]}*\OS(\mc{C}[0,J])=
		\bigoplus\limits^{K,I}_{K\subset I\subset J}
		A^J\cdot d\tau^K\cdot d\nu_I.$$

	Let $F$ and $G$ be the filtrations corresponding to the monomial 
	degrees of 
		$d\nu_*$ and $d\tau^*$ respectively.
	From
	$$\hat{A}'^J=A^J\cdot 
		\bigoplus\limits^I_{I\subset J}
			\bigoplus\limits^K_{K\subset I} d\tau^K\cdot d\nu_I$$
	we have that
	$\beta\colon A^J\to A^J=\Gr^0_{F} \hat{A}'_J$ is the identity 
	while $Gr^{\neq 0}_F \hat{A}'_J$ is acyclic. Hence
	$\beta$ is a quasi-isomoprhism.
	Similarly, from
	$$\hat{A}^J=\bigoplus\limits_{K}A^K\cdot d\tau^K\cdot 
		\bigoplus\limits^I_{K\subset I\subset J}d\nu_I$$
	we immediately see that
	$\Gr^{\neq |J|}_G \hat{A}^J$ as well as 
	$\Gr^{\neq |J|}_G \hat{A}'^J$ are acyclic.
	On the other hand, the induced morphism 
	$\alpha\colon \Gr^{|J|}_G \hat{A}^J\to \Gr^{|J|}_G \hat{A}'^J$ 
	is the identity
	map on $A^J\cdot d\tau^J\cdot d\nu_J$. Hence $\alpha$ 
	is a quasi-isomorphism.
	
	Finally, if $T\hookrightarrow J$, then the above construction gives
	a commutative diagram:
	\begin{equation*}
	\begin{tikzcd}
		\hat{A}^J\arrow[r,"\alpha"]\arrow[d]& 
			\hat{A}'^J\arrow[d] 		&	
				\arrow[l,swap, "\beta"] A^J\arrow[d,"g"]\\
		\hat{A}^T\arrow[r]             & 
			\hat{A}'^T                    &
				\arrow[l]     A^T.\\ 
	\end{tikzcd}
	\end{equation*}
	This provides a natural quasi-isomorphism 
	$A^\bullet\zgqis \hat{A}^\bullet$ of $\mc{C}^{\op}$-objects
	in $\CMon(\ChF)$.
\end{proof}
\begin{rmrk}
	Clearly all results above hold additively or 
	without commutativity assumption.
\end{rmrk}

Finally, let us note that the construction is functorial 
with respect to a homomorphism $g\colon \mc{Q}\ar\mc{Q}'$ of 
locally cubical lattices such that $r(g(\bar{I}))=r(\bar{I})$ for all $\bar{I}\in \mc{Q}$.
We have the tautological order-preserving map of posets 
$g^{\op}\colon \mc{Q}^{\op}\ar \mc{Q}'^{\op}$.
Let $B'$ be a $\mc{Q}'$-object in $\CMon(\ChF)$.
\begin{prop}\label{prop:cech:functoriality_along_lattices}
	There is a natural morphism of partial $\mc{Q}$-algebras in $\ChF$:
	$$G\colon \check{C}({g^{\op}}^*B')\ar g^* \check{C}(B').$$
\end{prop}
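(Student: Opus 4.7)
The map $G$ should be defined componentwise on each $\bar{I} \in \mc{Q}$, using the fact that the restriction of $g$ to $\Atoms(\mc{Q}[0,\bar{I}])$ is injective. This local injectivity follows from the rank-preserving hypothesis on $g$ together with Remark~\ref{rmrk:lattices:geom_homomorphism_preserves_r}, ensuring $g|_{\Atoms(\mc{Q})}$ is locally injective in the sense of Definition~\ref{dfn:lattices:locally_injective}. Since the construction $\check{C}$ is independent of the orderings on atom sets (Proposition~\ref{prop:cech_construction}), I fix orderings of $\Atoms(\mc{Q})$ and $\Atoms(\mc{Q}')$ once and for all. For $\bar{I} \in \mc{Q}$, the source is
\[
\check{C}((g^{\op})^*B')_{\bar{I}} = \bigoplus_{\bar{K} \subset \bar{I}} B'^{g(\bar{K})} \cdot d\tau^{\bar{K}},
\]
while the target is
\[
g^*\check{C}(B')_{\bar{I}} = \check{C}(B')_{g(\bar{I})} = \bigoplus_{\bar{K}' \subset g(\bar{I})} B'^{\bar{K}'} \cdot d\tau^{\bar{K}'}.
\]
I will define $G_{\bar{I}}$ by sending the $\bar{K}$-summand to the $g(\bar{K})$-summand via the identity on $B'^{g(\bar{K})}$ coupled with the relabeling $d\tau^{\bar{K}} \mapsto \varepsilon(\bar{K})\, d\tau^{g(\bar{K})}$, where $\varepsilon(\bar{K}) \in \{\pm 1\}$ is the sign of the unique order-preserving bijection $\bar{K} \to g(\bar{K})$.

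Next I would verify compatibility with the three pieces of structure of a partial $\mc{Q}$-algebra. Commutativity with the \v{C}ech differential $\delta_{\Cech}$ reduces to the observation that $g$ sends a covering relation $\bar{K} <: \bar{K} \cup \{k\}$ bijectively to $g(\bar{K}) <: g(\bar{K}) \cup \{g(k)\}$, thereby matching summand-by-summand the sum in \eqref{eq:dfn:cech_differential}; the signs agree because the factors $\varepsilon$ are multiplicative with respect to adjoining a single atom. Commutativity with the structure maps $\check{g}_{\bar{J}\bar{I}}$ is immediate: both retain the $\bar{K}$-summand for $\bar{K} \subset \bar{J}$ and kill the rest, and $g$ is order-preserving so $\bar{K} \subset \bar{J}$ implies $g(\bar{K}) \subset g(\bar{J})$. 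For multiplicativity, the partial product $\check{m}^{\bar{T}}_{\bar{I},\bar{I}'}$ is nonzero only for $\bar{T} \in \os{\circ}{\sup}(\bar{I},\bar{I}')$ with $r(\bar{T}) = r(\bar{I}) + r(\bar{I}')$; since $g$ is a rank-preserving homomorphism, $g(\bar{T})$ lies in $\os{\circ}{\sup}(g(\bar{I}),g(\bar{I}'))$ at maximal rank, and the identification of summands reduces the check to a Koszul-sign comparison for $d\tau^{\bar{K}} \wedge d\tau^{\bar{K}'}$ against $d\tau^{g(\bar{K})} \wedge d\tau^{g(\bar{K}')}$.

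The main obstacle is the sign bookkeeping, especially for multiplicativity: the shuffle signs on both sides must be shown to agree after accounting for the factors $\varepsilon(\bar{K})$, $\varepsilon(\bar{K}')$ and $\varepsilon(\bar{K} \cup \bar{K}')$ introduced by the relabeling. The match is forced by the fact that the Koszul signs are pulled back unchanged under any order-preserving injection on atoms, which is precisely what $g|_{\Atoms(\mc{Q}[0,\bar{T}])}$ provides; nevertheless this is the step requiring the most careful calculation. Finally, naturality of $G$ in $\bar{I}$ (i.e.\ that the $G_{\bar{I}}$ assemble into a morphism of $\mc{Q}$-objects) is automatic from the compatibility with structure maps already established.
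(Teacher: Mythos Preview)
Your approach is correct and essentially identical to the paper's: both define $G_{\bar I}$ componentwise via $d\tau^{\bar K}\mapsto \pm\, d\tau^{g(\bar K)}$ using the (local) injectivity of $g$ on atoms, then check compatibility with $\delta_{\Cech}$, the structure maps $\check g$, and the partial product. One small correction: $g$ need not be order-preserving on atoms (the orderings on $\Atoms(\mc{Q})$ and $\Atoms(\mc{Q}')$ are fixed independently), so your closing justification for the multiplicativity of signs is slightly off; the paper sidesteps all the $\varepsilon(\bar K)$ bookkeeping by packaging the relabeling as the induced \emph{algebra} homomorphism $\Lambda\langle d\tau^i\mid i\in\Atoms(\mc{C})\rangle\to\Lambda\langle d\tau^j\mid j\in\Atoms(\mc{C}')\rangle$, which makes multiplicativity and compatibility with $\delta_{\Cech}$ automatic regardless of how $g$ interacts with the chosen orderings.
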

\begin{proof}
	As above, it suffices to consider the case of a morphism of cubical lattices 
	$g\colon \mc{C}\ar \mc{C}'$.
	More generally, let $B$ be a $\mc{C}^{\op}$-object in $\CMon(\ChF)$, and let 
	$\psi\colon B\ar {g^{\op}}^* B'$ be a morphism.
	By our assumption on $g$,
	the restriction $g\colon \Atoms(\mc{C})\ar \Atoms(\mc{C}')$ is a well-defined 
	injective map.
We define $G$ component-wise using the following diagram
\begin{equation*}
\begin{tikzcd}
	\check{C}(B)_I\arrow[r,"="]& 
	\bigoplus\limits^{K}_{K\subset I} B^{K}\otimes_{\mb{Z}} \mb{Z}\cdot d\tau^K\arrow[d,"G_I"]\\
	g^*\check{C}(B')_I\arrow[r,"="]&
	\bigoplus\limits^{K'}_{K'\subset g(I)}B'^{K'}\otimes_{\mb{Z}} \mb{Z}\cdot d\tau^{K'},\\
\end{tikzcd}
\end{equation*}
where $G_I$ is determined by $\psi^K\colon B^K\ar B'^{g(K)}$, and 
the morphism of algebras
$$\Lambda\langle d\tau^i\mid i\in \Atoms(\mc{C})\rangle\ar
	\Lambda\langle d\tau^j\mid j\in \Atoms(\mc{C}')\rangle$$
 induced by $g\colon \Atoms(\mc{C})\ar \Atoms(\mc{C}')$.
 So the component $B^{K}\otimes \mb{Z}\cdot d\tau^K$ maps to $B'^{K'}\otimes \mb{Z}\cdot d\tau^{K'}$ 
for $K'=g(K)$ as $\pm \id$.
In this case it is straightforward to check that the components $G_I$ commute 
with $\delta_{\Cech}$ and together give us a morphism $G$ of partial $\mc{C}$-algebras.
\end{proof}

Note that the results of this section are independent on the ordering of $\Atoms(\mc{Q})$ and also 
have straightforward additive versions, 
which do not involve multiplication.

\subsection{\v{C}ech model for \texorpdfstring{
$i_*Ri^!\mc{F}$}{}}
Here we apply the previous construction to $\mc{A}=\Sh_X$ 
in the following form.
Let $[X,\mc{Q},\Mu]$ be an arrangement.
\iffalse
Let $[X,L,\Mu]$ be an arrangement.
As in section~\ref{} enote by $\mc{Q}$ the local cubical lattice corresponding to $L$,
i.e.\ the elements of $\mc{Q}$ are the pairs $\bar{I}=(x,I)$ where $x\in L,I\subset \Atoms(\mc{Q})$
are such that $x\in \os{\circ}{\sup}(I)\in L$.
\fi
We have a collection of open subsets
$U_i=X-\mc{Q}_i\subset X, i\in\Atoms(\mc{Q})$. 
%We have the cubical lattice $\mc{C}=(2^N,\subseteq)$
%Set $Z_i=X-U_i$.
Set $U_{\bar{I}}:=U_I=\cap_{i\in I}U_i$, and consider 
$U_\bullet$ as a $\mc{Q}$-variety.
We write $j_{\bar{I}}\colon U_{\bar{I}}\ari X$
and $i_{\bar{I}}\colon Z_{\bar{I}}:=X-U^I\ari X$. 
As in Section \ref{sect:compl:mv_resolution} 
for an open $V\subset X$ and a closed $Z\subset X$
we write ${(j_*j^*)}_{V}={j_V}_* j^*_V$,  $(i_*i^!)_Z={i_Z}{i^!_Z}$.

%In the following $\mc{C}\subset \mc{Q}$ is a cubical lattice of the form
%$\mc{C}=\mc{Q}[0,t]$ for some $t\in L$.
Pick an
$F\in \CMon(\ChF(\Sh_X))$.
Then
$(Rj_*j^*)^{\mc{Q}^{\op}}_{U_\bullet}F$
form a $\mc{Q}^{\op}$-monoid in $D(\Sh_X)$. 
The natural transformations
$(Rj_*j^*)_{U_{\bar{I}}}\to (Rj_* j^*)_{U_{\bar{J}}}$ 
for all $\bar{I}\le \bar{J}$ define the structure maps.
We want to model 
$(Rj_*j^*)^{\mc{Q}^{\op}}_{U_\bullet} F$ on the level of complexes.

Namely, we will consider a $\mc{Q}^{\op}$-object $\mc{F}^{\mc{Q}^{\op}}$ in 
$\CMon(\ChF(\Sh_X))$ satisfying some conditions.
Set $\mc{F}^{\mc{Q}}:=\check{C}(\mc{F}^{\mc{Q}^{\op}})$ and 
let $\hat{\mc{F}}^{\mc{Q}^{\op}}_\bullet$ be the $\mc{Q}^{\op}$-monoid 
provided by Theorem \ref{thrm:mobius_trick}.
%$$\hat{\mc{F}}^{\mc{Q}^{\op}}_{\bar{M}}=
%	\mc{F}^{\mc{Q}^{\op}}\otimes \mb{Z}d\mu_I)^,\bar{I}\in \mc{Q}^{\op}.$$
We have
\begin{cor}\label{cor:mobius_trick_sheaves}
 There is a quasi-isomorphism of $\mc{Q}^{\op}$-objects in $\CMon(\ChF(\Sh_X))$:
	$$\mc{F}^{\mc{Q}^{\op}}_\bullet\zgqis 
	\hat{\mc{F}}^{\mc{Q}^{\op}}_\bullet.$$	
\end{cor}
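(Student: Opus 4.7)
The plan is to apply Theorem \ref{thrm:mobius_trick} directly with the category $\Bbbk-\mc{A} := \mb{Q}-\Sh_X$. All the constructions in Section \ref{sect:cech_model} — the partial $\mc{Q}$-algebra $\check{C}(A)$, the $\mc{Q}$-chain algebra structure on $\hat{A}^\bullet$ supplied by Lemma \ref{lemma:mobius_inversion_multiplication}, the structure maps $g^{\hat{A}}_{\bar{I}\bar{J}}$, and the zig-zag
\[
\hat{A}^J \xrightarrow{\alpha} \hat{A}'^J \xleftarrow{\beta} A^J
\]
in $\CMon(\ChF)$ — are formulated in an arbitrary $\Bbbk$-linear symmetric monoidal abelian category and therefore specialize to $\mb{Q}-\Sh_X$ without modification. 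Setting $A^\bullet := \mc{F}^{\mc{Q}^{\op}}_\bullet$ produces directly the desired zig-zag of $\mc{Q}^{\op}$-objects in $\CMon(\ChF(\Sh_X))$, where $\hat{\mc{F}}'^J$ is the constant $\mc{Q}^{\op}$-object (restricted to $\mc{Q}[0,\bar{J}]$) with value $\mc{F}^J$.

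The only point I would verify is that $\alpha$ and $\beta$ are genuine quasi-isomorphisms of complexes of sheaves, rather than merely quasi-isomorphisms in some abstract sense. But the argument in the proof of Theorem \ref{thrm:mobius_trick} introduces two finite filtrations $F$ and $G$ (by the degrees of $d\nu_\bullet$ and $d\tau^\bullet$ respectively) and shows that on the relevant associated graded pieces the maps $\alpha$ and $\beta$ either reduce to identities or have acyclic source and target. The formation of associated gradeds along finite filtrations commutes with stalks, and stalkwise quasi-isomorphisms of bounded-below sheaf complexes are quasi-isomorphisms, so the verification transfers intact. The commutativity asserted in Proposition \ref{prop:cech_construction} ensures that we land in $\CMon(\ChF(\Sh_X))$ and not merely in $\Mon(\ChF(\Sh_X))$.

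The hard part here is really nothing: all the substantive work — the construction of the product $\hat{m}$, the check that it satisfies the Leibniz rule, and the acyclicity computations underlying $\alpha$ and $\beta$ — has been done in Theorem \ref{thrm:mobius_trick} and Lemma \ref{lemma:mobius_inversion_multiplication}. If there is any subtlety it is bookkeeping: one must confirm that the commutative square at the end of the proof of Theorem \ref{thrm:mobius_trick}, which shows that $\alpha$ and $\beta$ assemble into morphisms of $\mc{Q}^{\op}$-objects, applies verbatim in the sheaf setting.
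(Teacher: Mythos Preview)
Your proposal is correct and matches the paper's approach exactly: the paper states this corollary with no separate proof, treating it as an immediate specialization of Theorem~\ref{thrm:mobius_trick} to $\ChF(\Sh_X)$, which is precisely what you do. Your additional remarks about stalkwise verification of the quasi-isomorphisms are more careful than the paper bothers to be, but harmless.
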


%Assume we are provided with a morphism
%$\eps\colon F\to \mc{F}^{\mc{C}^{\op}}$,
%where $F$ is viewed
%as a constant $\mc{C}^{\op}$-object in $\CMon(\ChF(\Sh_X))$, 
Assume that for each $\bar{I}\in \mc{Q}^{\op}$ the following holds:
\begin{enumerate}\label{dfn:FCop:conditions}
	\item $\mc{F}^{\mc{Q}^{\op}}_0=F$.
	\item The natural morphism 
		$\mc{F}^{\mc{Q}^{\op}}_{\bar{I}}\to 
(Rj_* j^*)^{\mc{Q}^{\op}}_{U_{\bar{I}}}\mc{F}^{\mc{Q}^{\op}}_{\bar{I}}$
		is a quasi-isomorphism.
	\item $\eps$ induces a quasi-isomorphism 
		$(Rj_*j^*)^{\mc{Q}^{\op}}_{U_{\bar{I}}}\mc{F}\to 
		(Rj_*j^*)^{\mc{Q}^{\op}}_{U_{\bar{I}}}\mc{F}^{\mc{C}^{\op}}_{\bar{I}}$.
\end{enumerate}
%The following is an appropriate form of the excision triangle 
%$i_*Ri^!\to id\to Rj_*j^*$ directly generalized to \v{C}ech coverings:
\begin{prop}\label{prop:cech_for_sheaves}
	For $\mc{F}^{\mc{Q}^{\op}}$ and $F=\mc{F}^{\mc{Q}^{\op}}_0$ as above, 
	there are natural quasi-isomorphisms 
	\begin{enumerate}
	\item of $\mc{Q}^{\op}$-objects in $\Mon(\ChF(\Sh_X))$:
		$$(j_*j^*)_{U_\bullet} G(F)\zgqis 
			\mc{F}^{\mc{Q}^{\op}}_\bullet;$$
	\item of $\mc{Q}^{\op}$-objects in $\CMon(\ChF(\mb{Q}-\Sh_X))$:
		$$(R_{TW}j_*j^*)_{U_\bullet} (F)\zgqis 
			\mc{F}^{\mc{Q}^{\op}}_\bullet;$$
	\item of partial $\mc{Q}$-algebras in $\ChF(\Sh_X)$:
		\begin{equation}\label{eq:cech_iso}
			(i_*i^!)_{Z_\bullet} G(F)\zgqis 
				\check{C}(\mc{F}^{\mc{Q}^{\op}})_\bullet.
		\end{equation}
	\item of $\mc{Q}$-chain algebras in $\ChF(\Sh_X)$:
		\begin{equation}
			(i_*i^!)_{Z_\bullet}G(F)*\OS(\mc{Q})
			\zgqis  
			\check{C}(\mc{F}^{\mc{Q}^{\op}})*\OS(\mc{Q}).
		\end{equation}
	\end{enumerate}
\end{prop}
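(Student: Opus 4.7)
The plan is to build each of the four quasi-isomorphisms by threading a zig-zag through the Godement resolution (over $\mb{Z}$), or its Thom--Whitney--Sullivan variant $G_{TW}$ (over $\mb{Q}$), of $\mc{F}^{\mc{Q}^{\op}}_\bullet$, and then feeding the resulting zig-zag successively into the \v{C}ech construction $\check{C}$ and the convolution with $\OS(\mc{Q})$.

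For (1), applying $G$ componentwise to $\mc{F}^{\mc{Q}^{\op}}_\bullet$ yields a $\mc{Q}^{\op}$-object $G(\mc{F}^{\mc{Q}^{\op}})_\bullet$ in $\Mon(\ChF(\Sh_X))$ via Alexander--Whitney. I would then use the intermediate object $(j_*j^*)_{U_\bullet} G(\mc{F}^{\mc{Q}^{\op}})_\bullet$ and the two natural morphisms
\[
(j_*j^*)_{U_\bullet} G(F) \longrightarrow (j_*j^*)_{U_\bullet} G(\mc{F}^{\mc{Q}^{\op}})_\bullet \longleftarrow \mc{F}^{\mc{Q}^{\op}}_\bullet,
\]
where the left arrow comes from the unit $F = \mc{F}^{\mc{Q}^{\op}}_0 \to \mc{F}^{\mc{Q}^{\op}}_{\bar{I}}$ and the right one factors through the Godement augmentation composed with $\eta_U\colon \id \to (j_*j^*)_{U_\bullet}$. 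The left arrow is a quasi-isomorphism by condition~(3) of \ref{dfn:FCop:conditions}, and the right one by condition~(2) together with the fact that $(j_*j^*)_{U_{\bar{I}}} G(-)$ models $Rj_*j^*$ for flabby Godement resolutions. Part (2) is identical but uses $G_{TW}$ in place of $G$ to obtain commutativity, invoking Proposition~\ref{prop:s_tw} for lax symmetric monoidality.

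For (3), I would apply $\check{C}$ componentwise to the zig-zag from~(1). Since $\check{C}(A)_{\bar{I}}$ is an iterated cone in $\ChF$ over the components $A^K$ for $K \subset I$, it preserves componentwise quasi-isomorphisms, and by Proposition~\ref{prop:cech_construction} the output lies in partial $\mc{Q}$-algebras in $\ChF(\Sh_X)$. What remains is to identify $\check{C}((j_*j^*)_{U_\bullet} G(F))_{\bar{I}}$ with $(i_*i^!)_{Z_{\bar{I}}} G(F)$. Exploiting that $G(F)$ is a Godement sheaf, $G(F)(V) = \prod_{x \in V} G(F)_x$, the stalk at $x$ of $\check{C}((j_*j^*)_{U_\bullet} G(F))_{\bar{I}}$ reduces to
\[
G(F)_x \otimes \Lambda\langle d\tau^i \mid i \in I,\ x \in U_i\rangle
\]
with Koszul differential $\sum_i d\tau^i \wedge -$; this is acyclic unless $x \in Z_{\bar{I}} = \bigcap_{i \in I}(X - U_i)$, in which case it equals $G(F)_x$ in degree~$0$. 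This exactly matches the stalk of $(i_*i^!)_{Z_{\bar{I}}} G(F)$, and the $K = \emptyset$ inclusion $(i_*i^!)_{Z_{\bar{I}}} G(F) \hookrightarrow G(F) \subset \check{C}((j_*j^*)_{U_\bullet} G(F))_{\bar{I}}$ realizes the identification as a natural quasi-isomorphism. One then verifies that this is compatible with both the structure maps $\check{g}_{\bar{J}\bar{I}}$ and the partial algebra multiplication $\check{m}^{II'}_{I,I'}$ of Proposition~\ref{prop:cech_construction}. Part (4) follows from (3) by tensoring componentwise with $\OS(\mc{Q})$ and totalizing; since everything is $\mc{Q}$-chain and the filtration is multiplicative, this preserves quasi-isomorphisms.

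The main obstacle lies in (3): the stalkwise acyclicity argument must be upgraded to compatibility with the partial $\mc{Q}$-algebra structure on $\check{C}$, which requires matching the Godement stalk description of $(j_*j^*)_{U_\bullet}$ with the multiplication tensors $\check{m}^{II'}_{I,I'}$ built from shuffles of the $d\tau^K$'s. A secondary difficulty is ensuring that the whole zig-zag in (1)--(2) is monoidal, which forces one to use $G_{TW}$ rather than $G$ as soon as commutativity enters the picture.
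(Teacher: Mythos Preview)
Your proposal is correct and follows essentially the same route as the paper: the zig-zag through $(j_*j^*)_{U_\bullet}G(\mc{F}^{\mc{Q}^{\op}})_\bullet$ for parts (1)--(2), the inclusion $(i_*i^!)_{Z_{\bar I}}G(F)\hookrightarrow G(F)\cdot d\tau^\emptyset\subset\check{C}((j_*j^*)_{U_\bullet}G(F))_{\bar I}$ together with the pointwise Koszul acyclicity for part (3), and passage to the convolution for part (4) are exactly what the paper does. One terminological caution: in (3) you speak of the ``stalk at $x$'', but the argument really uses the sectionwise product decomposition of a Godement sheaf, $G(F)(V)=\prod_{x\in V}\tilde F_x$, so that $\check{C}((j_*j^*)_{U_\bullet}G(F))_{\bar I}(V)$ splits as a product over $x\in V$ of the Koszul complexes $\tilde F_x\otimes\bigoplus_{K\subset I,\,x\in U_K}\mb{Z}\cdot d\tau^K$; the honest stalk of $(j_*j^*)_{U_K}G(F)$ at $x\notin U_K$ is not zero, so a literal stalkwise argument would not go through.
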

\begin{proof}
	In the following diagram
	\begin{equation}\label{eq:domik}
	\begin{tikzcd}
(j_*j^*)_{U_{\bar{I}}} G(F)\arrow[rd]&
	&\mc{F}^{\mc{Q}^{\op}}_{\bar{I}}\arrow[ld]\\
	&(j_*j^*)_{U_{\bar{I}}}G(\mc{F}^{\mc{Q}^{\op}})_{\bar{I}}&
	\end{tikzcd}
	\end{equation}
	both arrows are quasi-isomorphisms for each $\bar{I}\in \mc{Q}^{\op}$.
	The natural tranformations 
	$(j_*j^*)_{U_{\bar{I}}}\to (j_*j^*)_{U_{\bar{J}}}$ for all 
	$\bar{I}\le \bar{J}$ 
	make the terms 
	$(j_*j^*)_{U_{\bar{I}}}G(\mc{F}^{\mc{Q}^{\op}}_{\bar{I}})$ into 
	$\mc{Q}^{\op}$-monoids. 
	This gives us a quasi-isomorphism of $\mc{Q}^{\op}$-objects
	in $\Mon(\ChF(\Sh_X))$:
	$$(j_*j^*)_{U_\bullet} G(F)\zgqis \mc{F}^{\mc{C}^{\op}}_\bullet.$$
	This proves the first assertion. 
	Replacing $G$ by $G_{TW}$ and repeating the argument 
	we obtain the second. 
	In particular we have a quasi-isomorphism of partial 
	$\mc{Q}$-algebras:
	$$\mc{F}^{\mc{Q}}\zgqis 
		\check{C}((j_*j^*)^{\mc{Q}^{\op}}_{U_\bullet} G(F)).$$
	
	For the third assertion, consider the natural morphism of 
		partial $\mc{Q}$-algebras
	\begin{equation}\label{eq:supp_to_cech}
	(i_*i^!)^{\mc{Q}}_{Z_\bullet} G(F)\ar 
		\check{C}((j_*j^*)^{\mc{Q}^{\op}}_{U_\circ} G(F))_\bullet,
	\end{equation}
	induced by 
	$$(i_*i^!)^{\mc{Q}}_{Z_I} G(F)\hookrightarrow 
		G(F)\otimes \mb{Z}\cdot d\tau^0\subset 
	\check{C}((j_*j^*)^{\mc{Q}}_\circ G(F))_{\bar{I}}.$$
	To prove the assertion it is enough to show 
	for an arbitrary Godement sheaf
	$\mc{F}\in \Sh_X$, that the morphism
	$(i_*i^!)_{Z_I}F\ar 
		\check{C}((j_*j^*)^{\mc{Q}^{\op}}_{U_\circ} \mc{F})_{\bar{I}}$
	is a quasi-isomorphism.
	We proceed similarly to the proof of 
		Proposition \ref{prop:mv_resolution}.
	Assume $F$ is a Godement sheaf, 
	so $F(V)=\prod\limits_{p\in V}\tilde{F}_p$ for some sheaf 
		$\tilde{\mc{F}}$.
	Let
	$$C:=\check{C}((j_*j^*)^{\mc{Q}^{\op}}_{U_\bullet} F)_{\bar{I}}=
		\bigoplus\limits^K_{K\subset I}
		(j_*j^*)_{U_K} F\otimes \mb{Z}\cdot d\tau^K.$$

	We will show that the composition
	\begin{equation}\label{eq:mobius_trick_prop_1}
		(i_*i^!)_{Z_I} F\hookrightarrow F\otimes 
		\mb{Z}\cdot d\tau^{\emptyset}\subset C^0\subset C
	\end{equation}
	is a quasi-isomorphism.
	Since $(j_*j^*)_{U_K}F\ (V)=
	\prod\limits_{p\in V\cap U_K}\tilde{F}_p$, 
	we have the decomposition
	$$C(V)=\prod\limits_{p\in V}\left(\tilde{F}_p\otimes
	\bigoplus\limits^{K}_{\substack{K\subset I\\ p\in U_K}} 
	\mb{Z}\cdot d\tau^K\right )\in \Ch(\mb{Z}-\Mod)$$
	as a direct product of Koszul complexes.
	The complexes 
	$\tilde{F}_p\otimes 
		\bigoplus\limits^K_{\substack{K\subset I\\ p\in U_K}} 
			\mb{Z}\cdot d\tau^K$
	are acyclic for $p\in U^I$, and are equal to
	$\tilde{F}_p\cdot d\tau^\emptyset$ for $p\in Z_I=X-U^I$.	
	\
	Hence the extension by zero of \eqref{eq:mobius_trick_prop_1} 
	$$(i_*i^!)_{Z_I}F\ (V)\simeq 
	\prod\limits_{p\in Z_I\cap V}\tilde{F}_p\otimes 
		\mb{Z}\cdot d\tau^\emptyset
		\hookrightarrow \prod\limits_{p\in V}
		\left(\tilde{F}_p\otimes
		\bigoplus\limits^{K\subset I}_{p\in U_K} 
		\mb{Z}\cdot d\tau^K\right)=C(V)$$
	is a quasi-isomorphism for all $V\subset X$.

	The last assertion follows by the fact that
	\eqref{eq:supp_to_cech} induces a quasi-isomorphism of
	$\mc{Q}$-chain algebras
	$$(i_*i^!)^{\mc{Q}}_{Z_\bullet}G(F)*\OS(\mc{Q})\ar
	\check{C}((j_*j^*)^{\mc{Q}^{\op}}G(F))*\OS(\mc{Q}).$$
	Together with \eqref{eq:domik} this proves the claim.
\end{proof}
An easy commutative example over $\mb{Q}$ is given by
$\mc{F}^{\mc{Q}^{\op}}=(R_{TW}j_*j^*)^{\mc{Q}^{\op}}_{U_\bullet}F$.

\subsection{Constructions
\texorpdfstring{
	$\MVCC(X,L,\mc{F}^{\mc{C}^{\op}},\Mu)$ 
	and 
$\MVC(X,L,\mc{F}^{\mc{C}^{\op}},\Mu)$}{}}
\label{sect:mv_commutative}

Let
$\phi\colon [X',L',F',\Mu']\ar[] [X,L,F,\Mu]$
be a morphism of arrangements corresponding to a morphism
$f\colon X'\ar X$, 
a homomorphism $g\colon L'\ar L$ preserving the rank function
 and a morphism 
$T_0\colon f^*F\ar F'$ (Definition \ref{dfn:arrangement_posets:morphism}).
Let $\mc{Q},\mc{Q}'$ be the local 
cubical lattices
corresponding to $L,L'$ respectively.
The homomorphism $g\colon L\ar L'$ induces a homomorphism
$\tilde{g}\colon \mc{Q}\ar \mc{Q}'$ given by $\tilde{g}(\tilde{I})=(g(x),g(I)),\tilde{I}=(x,I)\in \mc{Q}$.
Following  \ref{dfn:FCop:conditions},
consider sheaves $\mc{F}^{\mc{Q}^{\op}}_\bullet$ and 
$\mc{F}^{\mc{Q}^{'\op}}_\bullet$ 
on $X$ and $X'$ respectively,
equipped with a morphism 
$T\colon f^*\mc{F}^{\mc{Q}^{\op}}\ar 
	\tilde{g}^{\op*} \mc{F}^{\mc{Q}^{'\op}}$ in 
	$\CMon(\Ch(\Sh_{X'}))$ such that 
	$T_0\colon f^*\mc{F}_{0}\ar \mc{F}'_{0}$ 
is the restriction of $T$.

%One can consider the corresponding category as we did in Definition \ref{dfn:arrangement_posets:category}.
\iffalse
Let 
$\phi\colon [X',\mc{C}']\ar[] [X,\mc{C}]$ be a morphism of cubical arrangements
corresponding to a morphism $f\colon X'\ar X$ and 
a contracting homomorphism $g\colon \mc{C}'\ar \mc{C}$ (Definition \ref{dfn:arrangement_posets:morphism}).
Let $\mc{F}^{\mc{C}^{\op}}$ and $\mc{F}^{\mc{C}^{'\op}}$ be sheaves satisfying \ref{dfn:FCop:conditions} 
on $X$ and $X'$ respectively.
Assume that we have a morphism $T\colon f^*\mc{F}^{\mc{C}^{\op}}\ar g^*{\mc{F}}^{\mc{C}^{'\op}}$ in 
$\CMon(\Ch(\Sh_{X'}))$.\fi

Proposition \ref{prop:cech:functoriality_along_lattices} 
provides a composition 
\begin{equation}\label{eq:cech:functoriality_along_lattices}
f^*\check{C}(\mc{F}^{\mc{Q}^{\op}})=
	\check{C}(f^*\mc{F}^{\mc{Q}^{\op}})\ar 
	\check{C}({{\tilde{g}}^{\op*}}{\mc{F}}^{\mc{Q}^{'\op}})\ar
{\tilde{g}}^{*}\check{C}(\mc{F}^{\mc{Q}^{'\op}}).
\end{equation}
%This provides a morphism of arrangements induced by $\phi$ and $T$:
%$$[X',L',\check{C}(\mc{F}^{\mc{C}^{'\op}}),\Mu']\ar[] [X,L,\check{C}(\mc{F}^{\mc{C}^{\op}}),\Mu].$$
On the other hand we have 
\begin{equation}\label{eq:support:functoriality_along_lattices}
f^*(i_*i^!)^L G(\mc{F})\ar 
	g^*(i_*i^!)^{L'}f^*G(\mc{F})\ar[T] 
		g^*(i_*i^!)^{L'}G(\mc{F}'),
\end{equation}
where the first morphism exists by the definition of $\phi$ and 
the second is induced by $T$.

Let $I\colon (L,\le)\ar \mc{Q}$ be the order-preserving map 
given by $x\ar I(x)=(x,\{a\in \Atoms(L)\mid a\le x\})$.
Recall that $L_J:=\cap_{j\in J}L_j\subset X, J\subset \Atoms(L)$.
By the definition of the arrangement $L_{I(x)}=L_x\subset X$.
Let 
$p\colon \mc{Q}\to (L,\le)$ be the contraction homomorphism 
given by $p(x,I)=x\in L$
(Proposition \ref{prop:p_contraction_homomorphism}).
Similarly we define maps $I',p'$ for $(L',\le)$.

There are two straightforward ways to associate an $(L,\le)$-chain 
object to $\mc{F}^{\mc{Q}^{\op}}$. Set
\begin{equation}\label{eq:mvcc}
\MVCC(X,L,\mc{F}^{\mc{Q}^{\op}}) :=
	\Tot(p_*\big(\check{C}(\mc{F}^{\mc{Q}^{\op}})\otimes \OS(\mc{Q}))\big)) \in \CMon(\ChF(\Sh_X)),
\end{equation}
for a multiplicative locally geometric arrangement $[X,L,F]$,
and
\begin{equation}
	\MVC(X,L,\mc{F}^{\mc{Q}^{\op}}) :=
	\MV(X,L,I^*\check{C}(\mc{F}^{\mc{Q}^{\op}}),\Mu)=
	\Tot\big(I^*(\check{C}(\mc{F}^{\mc{Q}^{\op}}))\otimes \Mu\big) \in \ChF(\Sh_X),
\end{equation} 
for an arbitrary arrangement $[X,L,F,\Mu]$ respectively.
Note that ignoring the rank filtration $W'$ we have
$$\MVCC(X,L,\mc{F}^{\mc{Q}^{\op}})=\MVCC(X,\mc{Q},\mc{F}^{\mc{Q}^{\op}}).$$ 
Here the RHS is a monoid by Lemma \ref{lemma:mobius_inversion_multiplication}.
Since $p$ is a homomorphism, \eqref{eq:mvcc} defines a monoid equiped with a multiplicative filtration $W'$.

\begin{rmrk}\label{rmrk:I_pullback}
	The map $I\colon L\to \mc{Q}$ is rarely a homomorphism of lattices, 
	hence $\MVC$ in general is \emph{not} a monoid.
	Since $\Mu$ viewed as an OS-complex is essentially unique, 
	we do not include it 
	in the notation for $\MVC(-)$.
\end{rmrk}
Let us consider the data $[X,L,\mc{F}^{\mc{Q}^{\op}},\Mu]$ as an equipped arrangement.
By definition, a morphism $[X',L',\mc{F}^{\mc{Q'}^{\op}},\Mu']\ar[] 
	[X,L,\mc{F}^{\mc{Q}^{\op}},\Mu]$
is given by a morphism $\phi\colon [X',L',F',\Mu']\ar[] 
	[X,L,F,\Mu]$, where $F=\mc{F}^{\mc{Q}^{\op}}_0$,
and a morphism $T\colon f^*\mc{F}^{\mc{Q}^{\op}}\ar 
	\tilde{g}^*\mc{F}^{\mc{Q}^{'\op}}$ is a morphism of 
$\mc{Q}^{\op}$-objects in $\ChF(\Sh_{X'})$.

Using our notation we have
\begin{prop}\label{prop:mvc_is_functorial}
	The constructions $\MVCC(X,L,\mc{F}^{\mc{Q}^{\op}})$, 
		$\MVC(X,L,\mc{F}^{\mc{Q}^{\op}})$ are functorial.
	Namely, a homomorphism
	$\phi\colon [X',L',\mc{F}^{\mc{Q}^{'\op}}]\ar[] 
		[X,L,\mc{F}^{\mc{Q}^{\op}}]$
	induces $W'$-filtered morphisms:
	$$f^*\MVCC(X,L,\mc{F}^{\mc{Q}^{\op}})\ar 
	\MVCC(X',L',\mc{F}^{\mc{Q}^{'\op}})\in \CMon(\ChF(\Sh_{X'})),$$
	in the case when $\phi$ is multiplicative and 
	$(L,\le)$, $(L',\le)$ are locally geometric, and
	$$f^*\MVC(X,L,\mc{F}^{\mc{Q}^{\op}})\ar 
		\MVC(X',L',\mc{F}^{\mc{Q}^{'\op}},\Mu')\in\ChF(\Sh_{X'})$$
	in the case when $g\colon (L,\le)\ar (L',\le)$ is a homomorphism
	with complete image.
\end{prop}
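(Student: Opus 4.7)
The plan is to build both morphisms by combining the naturality of the \v{C}ech construction (Proposition \ref{prop:cech:functoriality_along_lattices}), the functoriality of OS-complexes and OS-algebras (Proposition \ref{prop:morphism_of_OS_complexes}, Corollary \ref{cor:morphism_of_OS_algebras}) and the projection formula for lattice pushforwards (Lemma \ref{lemma:lattice:projection_formula}), and then to verify that the resulting maps respect the rank filtration $W'$.

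The common starting point is as follows. Since the induced map $\tilde{g}\colon \mc{Q}\to\mc{Q}'$, $(x,I)\mapsto(g(x),g(I))$, is a rank-preserving homomorphism of locally cubical lattices whenever $g$ is, Proposition \ref{prop:cech:functoriality_along_lattices} supplies a morphism of partial $\mc{Q}$-algebras
$$f^*\check{C}(\mc{F}^{\mc{Q}^{\op}})\longrightarrow \tilde{g}^*\check{C}(\mc{F}^{\mc{Q}^{'\op}})\in \CMon(\ChF(\Sh_{X'})),$$
while Proposition \ref{prop:morphism_of_OS_complexes} produces $g_*\Mu\to\Mu'$; in the locally geometric case Corollary \ref{cor:morphism_of_OS_algebras} promotes the latter to a morphism of $(\mc{Q}',\le)$-chain algebras $\tilde{g}_*\OS(\mc{Q})\to\OS(\mc{Q}')$. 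For the MVCC case I would tensor these data and apply the projection formula along $\tilde{g}$ to form the chain
\begin{align*}
f^*\MVCC(X,L,\mc{F}^{\mc{Q}^{\op}})
&= \Tot\bigl(f^*\check{C}(\mc{F}^{\mc{Q}^{\op}})\otimes\OS(\mc{Q})\bigr)\\
&\longrightarrow \Tot\bigl(\tilde{g}^*\check{C}(\mc{F}^{\mc{Q}^{'\op}})\otimes\OS(\mc{Q})\bigr)\\
&\simeq \Tot\bigl(\check{C}(\mc{F}^{\mc{Q}^{'\op}})\otimes\tilde{g}_*\OS(\mc{Q})\bigr)\\
&\longrightarrow \Tot\bigl(\check{C}(\mc{F}^{\mc{Q}^{'\op}})\otimes\OS(\mc{Q}')\bigr)=\MVCC(X',L',\mc{F}^{\mc{Q}^{'\op}}),
\end{align*}
living in $\CMon(\ChF(\Sh_{X'}))$ thanks to the multiplicativity of each ingredient and Lemma \ref{lemma:mobius_inversion_multiplication}. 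The commutativity $p'\circ\tilde{g}=g\circ p$ together with rank preservation of $g$ shows the composite is $W'$-filtered.

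For MVC, the crucial extra input is that under the completeness hypothesis on $g(L)\subset L'$, Remark \ref{rmrk:I_commutes} yields the equality $\tilde{g}\circ I=I'\circ g$ as maps $L\to\mc{Q}'$, whence a natural identification $I^*\tilde{g}^*\simeq g^*I'^*$ on $\mc{Q}'$-objects. Pulling the \v{C}ech morphism back along $I$, tensoring with the $(L,\le)$-chain complex $\Mu$, and invoking the projection formula for $g$ now gives the chain
\begin{align*}
f^*\MVC(X,L,\mc{F}^{\mc{Q}^{\op}})
&= \Tot\bigl(I^*f^*\check{C}(\mc{F}^{\mc{Q}^{\op}})\otimes\Mu\bigr)\\
&\longrightarrow \Tot\bigl(g^*I'^*\check{C}(\mc{F}^{\mc{Q}^{'\op}})\otimes\Mu\bigr)\\
&\simeq \Tot\bigl(I'^*\check{C}(\mc{F}^{\mc{Q}^{'\op}})\otimes g_*\Mu\bigr)\\
&\longrightarrow \Tot\bigl(I'^*\check{C}(\mc{F}^{\mc{Q}^{'\op}})\otimes\Mu'\bigr)=\MVC(X',L',\mc{F}^{\mc{Q}^{'\op}},\Mu'),
\end{align*}
which is again $W'$-filtered because $g$ preserves ranks.

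The main technical obstacle will be precisely the identity $\tilde{g}\circ I=I'\circ g$ needed in the MVC step: without completeness of $g(L)$, an atom $a'\le g(x)$ of $L'$ need not lie in $g(\Atoms(L[0,x]))$, so the atom sets of $I'(g(x))$ and $\tilde{g}(I(x))$ genuinely differ and no comparison of $I^*\tilde{g}^*$ with $g^*I'^*$ is available. The remaining verifications — coherence of tensor products, the projection formula, and compatibility with $W'$ — are routine diagram chases, though in the MVCC case one should carefully track the multiplication supplied by Lemma \ref{lemma:mobius_inversion_multiplication} to check that each arrow above is a morphism in $\CMon(\ChF(\Sh_{X'}))$.
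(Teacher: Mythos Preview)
Your proposal is correct and follows essentially the same route as the paper: both parts assemble the comparison map from the \v{C}ech functoriality of Proposition~\ref{prop:cech:functoriality_along_lattices}, the projection formula along $\tilde{g}$ (respectively $g$), and the OS-map $\tilde{g}_*\OS(\mc{Q})\to\OS(\mc{Q}')$ (respectively $g_*\Mu\to\Mu'$), and the $\MVC$ step hinges on $\tilde{g}\circ I=I'\circ g$ via Remark~\ref{rmrk:I_commutes} exactly as you say. The only substantive addition in the paper is that, for $\MVCC$, it tracks the $p_*$/$p'_*$ pushforwards explicitly and then checks multiplicativity by writing the composite on components in the $(d\mu_I)^K$ basis of Lemma~\ref{lemma:mobius_inversion_multiplication}; you flag this as the point requiring care but do not carry it out.
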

\begin{proof}
	We argue as in the proof of Proposition \ref{prop:MV_functorial}.
	Let $\mc{F}^{\mc{Q}}=\check{C}(\mc{F}^{\mc{Q}^{\op}})$ and define 
		$\mc{F}^{\mc{Q}'}$ similarly.
	By \eqref{eq:cech:functoriality_along_lattices} we have 
	a morphism $f^*\mc{F}^{Q}\ar \tilde{g}^*\mc{F}^{\mc{Q}'}$.

	1. By considering a commutative square
	\begin{equation*}
	\begin{tikzcd}
		\mc{Q}\arrow[d,"p"]\arrow[r,"\tilde{g}"]& \mc{Q}'\arrow[d,"p'"]\\
		L\arrow[r,"g"]&L'
	\end{tikzcd}
	\end{equation*}
	we obtain a composition in $\ChF(\Sh_{X'})$
	\begin{multline}\label{eq:ugly_composition}
	f^*\Tot(p_*\big(\mc{F}^{\mc{Q}}\otimes \OS(\mc{Q})\big))=
	\Tot(p_*\big(f^*\mc{F}^{\mc{Q}}\otimes \OS(\mc{Q})\big))\ar[\eqref{eq:cech:functoriality_along_lattices}]
	\Tot(p_*\big(\tilde{g}^*\mc{F}^{\mc{Q}'}\otimes\OS(\mc{Q})\big))\ar\\
	\ar[\Tot\to \Tot\circ g_*]
		\Tot (g_*p_*\big(\tilde{g}^*\mc{F}^{\mc{Q}'}\otimes\OS(\mc{Q})\big))=
		\Tot(p'_*\tilde{g}_*\big(\tilde{g}^*\mc{F}^{\mc{Q}'}\otimes\OS(\mc{Q})\big))=
			\\\os{\substack{projection\\ formula}}{=}
	\Tot(p'_*\big(\mc{F}^{\mc{Q}'}\otimes \tilde{g}_*\OS(\mc{Q})\big)) \ar
	\Tot(p'_*\big(\mc{F}^{\mc{Q}'}\otimes \OS(\mc{Q}')\big)).
\end{multline}
	Here the last arrow is induced by $\tilde{g}$ 
		(Corollary \ref{cor:morphism_of_OS_algebras}).
	In order to show that the composition is multiplicative
	consider $K_\bullet:=f^*\mc{F}^{\mc{Q}}\otimes \OS(\mc{Q})$ as a $\mc{Q}$-chain algebra
	as in Lemma \ref{lemma:mobius_inversion_multiplication}.
	$$K_{(x,M)}=\bigoplus\limits_{I\subset K\subset M} f^*\mc{F}^{\mc{Q}^{\op}}_{K\sm I}\cdot (d\mu_I)^{K}.$$
	Similarly $K'_\bullet:=\mc{F}^{\mc{Q'}}\otimes\OS(\mc{Q}')$ is an $\mc{Q}'$-chain algebra.
	The components of \eqref{eq:ugly_composition} are given explicitely:
	$$f^*\mc{F}^{\mc{Q}^{\op}}_{K\sm I}\cdot (d\mu_I)^{K}\ar 
\mc{F}^{\mc{Q'}^{\op}}_{\tilde{g}(K)\sm \tilde{g}(I)}\cdot 
	(\tilde{g}(d\mu_{I}))^{\tilde{g}(K)}, I\subset K\subset M,$$
	where $\tilde{g}(d\mu_I)$ is the image of $d\mu_I$ 
	under the morphism $\Tot(\OS(\mc{Q}))\ar \Tot(\OS(\mc{Q}'))$.
	The morphism is clearly mutliplicative with respect to 
	the rank filtration $W'$ induced by $p$ and $p'$ respectively.
	Hence the composition \eqref{eq:ugly_composition} is 
	a morphism in $\CMon(\ChF)$.

2. Similarly, since $g\colon (L,\le)\ar (L',\le)$ has complete image, 
by Remark \ref{rmrk:I_commutes}
	the following square commutes
\begin{equation*}
\begin{tikzcd}
	L\arrow[d,"I"]\arrow[r,"g"]& L'\arrow[d,"I'"]\\
	\mc{Q}\arrow[r,"\tilde{g}"]& \mc{Q}',
\end{tikzcd}
\end{equation*}
and we obtain a composition of $W'$-filtered morphisms:
\begin{multline*}
	f^*\Tot(I^*\mc{F}^{\mc{Q}'}\otimes\Mu)\ar
	\Tot(I^*\tilde{g}^*\mc{F}^{\mc{Q}'}\otimes \Mu)=
	\Tot(g^*I'^*\mc{F}^{\mc{Q}'}\otimes\Mu)\ar\\
	\Tot(g_*\big(g^*I'^*\mc{F}^{\mc{Q}'}\otimes \Mu\big))
	\ar\Tot(I'^*\mc{F}^{\mc{Q}'}\otimes \Mu').
\end{multline*}
\end{proof}

By Part 1 of Proposition \ref{prop:cech_for_sheaves}, there is a natural quasi-isomorphism
	$\mc{F}^{\mc{Q}}\zgqis (i_*i^!)^{\mc{Q}}_{Z_\bullet}G(F)$
	where $F=\mc{F}^{\mc{Q}^{\op}}_0$ and $Z_{\tilde{I}}=Z_I$ for each $\tilde{I}\in \mc{Q}$,
	which provides a $W'$-filtered quasi-isomorphism:
	$$\MV(X,L,\mc{F}^{\mc{Q}^{\op}},\Mu)\zgqis 
	\MV(X,L,(i_*i^!)^L_{L_\bullet}G(F),\Mu)\in \ChF(\Sh_X)$$
		functorial in all equipped arrangements $[X,L,\mc{F}^{\mc{Q}^{\op}},\Mu]$ 
		in the sense of Proposition \ref{prop:mvc_is_functorial}.
	
\begin{prop}\label{prop:mvc_iso_mvc}
	There are functorial $W'$-filtered quasi-isomorphisms:
	\begin{enumerate}
%		\item
%	$$\MVCC(X,L,\mc{F}^{\mc{C}^{\op}},\Mu)\zgqis 
%		\MVC(X,L,\mc{F}^{\mc{C}^{\op}})\in \ChF(\Sh_X),$$
%		in case of arrangements $[X,L,\mc{F}^{\mc{C}^{\op}},\Mu]$. 
		\item
$$\MVC(X,L,\mc{F}^{\mc{Q}^{\op}})\zgqis 
	\MVCC(X,L,\mc{F}^{\mc{Q}^{\op}})\in \ChF(\Sh_X),$$
				for equipped locally geometric arrangements $[X,L,\mc{F}^{\mc{Q}^{\op}},\OS(L)]$, and
				with respect to morphisms $\phi$ such that $g$ is complete;
	%	\item
	%$$\MV(X,L,\mc{F}^{\mc{C}^{\op}},\Mu)\zgqis 
	%\MV(X,L,(i_*i^!)^L_{L_\bullet}G(F),\Mu)\in \ChF(\Sh_X),$$
	%		in case of all arrangements $[X,L,\mc{F}^{\mc{C}^{\op}},\Mu]$, where $F=\mc{F}^{\mc{C}^{\op}}_0$.
		\item 
			$$\MVC(X,L,\mc{F}^{\mc{Q}^{\op}})\zgqis 
				\MV(X,L,(i_*i^!)^L_{L_\bullet}G(F),\Mu)\in \ChF(\Sh_X),$$
				for equipped arrangements $[X,L,\mc{F}^{\mc{Q}^{\op}},\Mu]$, and
				with respect to morphisms $\phi$ such that $g$ is complete;
		\item 
	$$\MVCC(X,L,\mc{F}^{\mc{Q}^{\op}})\zgqis 
		\MV(X,L,(i_*i^!)^L_{L_\bullet}G(F),\OS(L))\in \Mon(\ChF(\Sh_X)),$$
		for equipped multiplicative locally geometric arrangements $[X,L,\mc{F}^{\mc{Q}^{\op}},\Mu]$, with respect to arbitrary morphisms of such arrangements.
	\end{enumerate}
\end{prop}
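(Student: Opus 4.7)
My plan is to establish Parts 2 and 3 by leveraging Proposition \ref{prop:cech_for_sheaves} together with the projection formula and the formality of the atomic complex; Part 1 will then follow as a formal consequence.

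I would first prove Part 2, which is the simplest. The key observation is that $(i_*i^!)^L_{L_\bullet}G(F) = I^*(i_*i^!)^{\mc{Q}}_{Z_\bullet}G(F)$, since $Z_{I(x)}=L_x$ by construction of $I\colon (L,\le)\to \mc{Q}$. Pulling back the quasi-isomorphism of partial $\mc{Q}$-algebras from Proposition \ref{prop:cech_for_sheaves}(3) along $I^*$ yields a quasi-isomorphism of $(L,\le)$-objects; tensoring with $\Mu$ and totalizing then gives the required $W'$-filtered quasi-isomorphism, the filtrations agreeing since both sides are $L$-graded by the same rank function.

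Next, for Part 3, the crucial ingredient will be the projection formula. Starting from Proposition \ref{prop:cech_for_sheaves}(4) and the identification $(i_*i^!)^{\mc{Q}}_{Z_\bullet}G(F) = p^*(i_*i^!)^L_{L_\bullet}G(F)$, one obtains a quasi-isomorphism of $\mc{Q}$-chain algebras $\check{C}(\mc{F}^{\mc{Q}^{\op}})\otimes \OS(\mc{Q})\zgqis p^*(i_*i^!)^L_{L_\bullet}G(F)\otimes \OS(\mc{Q})$. Applying the pushforward $p_*$ along the contraction homomorphism $p\colon \mc{Q}\to L$ and invoking Lemma \ref{lemma:lattice:projection_formula} transforms the right hand side into $(i_*i^!)^L_{L_\bullet}G(F)\otimes p_*\OS(\mc{Q}) = (i_*i^!)^L_{L_\bullet}G(F)\otimes \mc{D}$. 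The formality quasi-isomorphism $\mc{D}\zgqis \OS(L)$ of the atomic complex from Theorem \ref{thrm:atomic_complex} then yields a quasi-isomorphism of $(L,\le)$-chain algebras to $(i_*i^!)^L_{L_\bullet}G(F)\otimes \OS(L)$. Totalizing, and noting that the rank filtration is preserved at every step, gives the desired $W'$-filtered multiplicative quasi-isomorphism.

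Part 1 will follow by concatenating the zig-zags of Parts 2 and 3: in the locally geometric case with $\Mu=\OS(L)$, both $\MVC(X,L,\mc{F}^{\mc{Q}^{\op}})$ and $\MVCC(X,L,\mc{F}^{\mc{Q}^{\op}})$ are $W'$-filtered quasi-isomorphic to $\MV(X,L,(i_*i^!)^L_{L_\bullet}G(F),\OS(L))$. The most delicate point, which I expect to be the main obstacle, is verifying that the quasi-isomorphism in Part 3 respects the monoid structures: $\MVCC$ carries the product supplied by Lemma \ref{lemma:mobius_inversion_multiplication}, which is a nontrivial perturbation of the naive tensor-product multiplication, and matching it with the straightforward multiplication on $\MV(X,L,(i_*i^!)^L_{L_\bullet}G(F),\OS(L))$ requires carefully tracking all intermediate $\mc{Q}$-chain and $(L,\le)$-chain algebra structures through the projection formula and the formality map. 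Functoriality will follow from the naturality of each ingredient; the completeness hypothesis on $g$ in Parts 1 and 2 is needed precisely so that $I^*$ commutes with pullback along $\tilde{g}$ by Remark \ref{rmrk:I_commutes}, exactly as used in the proof of Proposition \ref{prop:mvc_is_functorial}, while in Part 3 no such hypothesis is required because the arguments go entirely through $p_*$ on $\mc{Q}$.
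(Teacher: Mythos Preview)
Your arguments for Parts 2 and 3 match the paper's proof essentially line for line: Part 2 via pulling back Proposition \ref{prop:cech_for_sheaves}(3) along $I^*$ and using $L_{I(x)}=L_x$, and Part 3 via Proposition \ref{prop:cech_for_sheaves}(4), the identification $(i_*i^!)^{\mc{Q}}_{L_\bullet}G(F)\simeq p^*(i_*i^!)^L_{L_\bullet}G(F)$, the projection formula, and Theorem \ref{thrm:atomic_complex}. Your identification of the multiplicativity check in Part 3 as the delicate point is also on target; the paper dispatches it by noting that once one passes through Proposition \ref{prop:cech_for_sheaves}(4) (which already furnishes a quasi-isomorphism of $\mc{Q}$-chain algebras), all remaining objects involve the genuine $(L,\le)$-algebra $(i_*i^!)^L G(F)$ rather than a merely partial one, so the subsequent arrows are automatically multiplicative.

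The one genuine difference is in Part 1. You deduce it by concatenating the zig-zags of Parts 2 and 3 through the common target $\MV(X,L,(i_*i^!)^L_{L_\bullet}G(F),\OS(L))$, which is perfectly valid and yields the statement in $\ChF(\Sh_X)$ with the correct functoriality (the completeness hypothesis on $g$ being inherited from Part 2). The paper instead gives a direct proof that avoids passing through $\MV$: it applies Lemma \ref{lemma:lattice:on_endomorphism} to the endomorphism $I\circ p\colon \mc{Q}\to \mc{Q}$ to produce a natural quasi-isomorphism $p^*I^*\mc{F}^{\mc{Q}}\to \mc{F}^{\mc{Q}}$ (using that $L_{I(p(\tilde{K}))}=L_{\tilde{K}}$ for locally geometric arrangements), and then runs the projection formula and atomic-complex formality directly on the $\check{C}$ side:
\[
p_*(\mc{F}^{\mc{Q}}\otimes \OS(\mc{Q}))\longleftarrow p_*(p^*I^*\mc{F}^{\mc{Q}}\otimes \OS(\mc{Q}))= I^*\mc{F}^{\mc{Q}}\otimes p_*\OS(\mc{Q})\longrightarrow I^*\mc{F}^{\mc{Q}}\otimes \OS(L).
\]
The paper's route is a bit more intrinsic (it never leaves the $\check{C}$ construction and does not invoke the Godement model $G(F)$), while yours is more economical in that it reuses Parts 2 and 3 wholesale. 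Both are correct.
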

\begin{proof}
	1.
	Let $\mc{F}^{\mc{Q}}:=\check{C}(\mc{F}^{\mc{Q}^{\op}})$.
	Applying Lemma \ref{lemma:lattice:on_endomorphism} to the order preserving map $I\circ p\colon \mc{Q}\ar \mc{Q}$
	we obtain a natural morphism $p^*I^*\mc{F}^{\mc{Q}}\ar \mc{F}^{\mc{Q}}$ of $\mc{Q}$-objects.
	This is a quasi-isomorphism, since by the definition of a 
	locally geometric arrangement we have 
	$L_{I(p(\tilde{K}))}=L_{\tilde{K}}\subset X$ for all $\tilde{K}\in \mc{Q}$.
	We obtain the required zig-zag of $W'$-filtered quasi-isomorphisms:
	\begin{align*}
	p_*(\mc{F}^{\mc{Q}}\otimes \OS(\mc{Q}))\al p_*(p^*I^*\mc{F}^{\mc{Q}}\otimes \OS(\mc{Q}))=
		I^*\mc{F}^{\mc{C}}\otimes p_*\OS(\mc{Q})\ar I^*\mc{F}^{\mc{C}}\otimes \OS(L),
	\end{align*}
	where the last arrow follows by the formality of the atomic complex $p_*\OS(\mc{Q})$ in 
	Theorem \ref{thrm:atomic_complex}.
	
	2. By Part 3 of Proposition \ref{prop:cech_for_sheaves} we have 
	a natural quasi-isomorphism $\check{C}(\mc{F}^{\mc{Q}^{\op}})\zgqis (i_*i^!)^{\mc{Q}}_{L_\bullet}G(F)$.
	On the other hand, since $L_{I(x)}=L_x\subset X$, we have $I^*(i_*i^!)^{\mc{Q}}_{L_\bullet}=(i_*i^!)^L_{L_\bullet}$,
	which gives us the required $W'$-filtered quasi-isomorphism
	$$\Tot(I^*\check{C}(\mc{F}^{\mc{Q}^{\op}})\otimes\OS(L))\zgqis 
	\Tot(I^*(i_*i^!)^{\mc{Q}}_{L_\bullet}G(F)\otimes \OS(L))=\Tot((i_*i^!)^L G(F)\otimes \OS(L)).$$

	3. By the definition of a locally geometric arrangement 
	we have $L_J=L_x$ if $x\in \os{\circ}{\sup}(J)$.
	So there is a natural isomorphism
	$(i_*i^!)^{\mc{Q}}_{L_\bullet}G(F)\ar 
	p^*(i_*i^!)^L_{L_\bullet}G(F)$. We obtain
	a zig-zag of $W'$-filtered quasi-morphisms
	\begin{multline}\label{eq:another_zigzag}
		\Tot(p_*(\check{C}(\mc{F}^{\mc{Q}^{\op}})\otimes\OS(\mc{Q})))\zgqis
	\Tot(p_*((i_*i^!)^{\mc{Q}}_{L_\bullet}G(F)\otimes\OS(\mc{Q})))\simeq
	\Tot(p_*(p^*(i_*i^!)^{L}G(F)\otimes\OS(\mc{Q})))=\\
	=\Tot((i_*i^!)^L G(F)\otimes p_*\OS(\mc{Q}))\ar \Tot((i_*i^!)^L G(F)\otimes \OS(L)),
	\end{multline}
	where the last arrow is the quasi-isomorphism given 
	by Theorem \ref{thrm:atomic_complex}.
	Here the first zig-zag is a quasi-isomorphism of $(L,\le)$-chain 
	algebras by Proposition \ref{prop:cech_for_sheaves} Part 4. 
	Since $(i_*i^!)^{\mc{Q}}G(F)$ 
	and $(i_*i^!)^L G(F)$ are algebras, not just partial algebras, 
	all other arrows are morphisms of algebras as well.
	Thus \eqref{eq:another_zigzag} is a multiplicative quasi-isomorphism.
\end{proof}

We finish this subsection by considering the case 
$\mc{F}\zgqis \ul{\Bbbk}_X\in \CMon(\Ch(\Bbbk-\Sh_X))$
for $\Bbbk\supset \mb{Q}$.

Assume $\mc{F}^{\mc{Q}^{\op}}$ is $U_\bullet/X$-adapted, i.e.
there are quasi-isomorphisms in $\CMon(\ChF(\Bbbk-\Sh_X))$:
$$\mc{F}^{\mc{Q}^{\op}}_I\zgqis (j_*j^*)_{U_I}G_{TW}(\ul{\Bbbk}_X).$$

Recall that $U_N$ is the complement of the arrangement.
\begin{cor}
\label{cor:mvhd_is_adapted}
	There is a quasi-isomorphism of cdga's 
	$$\Gamma_{TW}\MVCC(X,L,\mc{F}^{\mc{Q}^{\op}})\zgqis A^*_{\PL}(U_N)\otimes \Bbbk\in \CDGA_{\Bbbk}.$$
\end{cor}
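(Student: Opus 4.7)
The plan is to show that $\MVCC(X,L,\mc{F}^{\mc{Q}^{\op}})$, equipped with its multiplicative rank filtration $W'$, is itself a $U_N$-adapted cdga of sheaves on $X$. Once this is established, Corollary~\ref{cor:global_sections_of_adapted_sheaf} applied to the resulting filtered adapted sheaf immediately produces the desired quasi-isomorphism $\Gamma_{TW}\MVCC \zgqis A^*_{\PL}(U_N)\otimes \Bbbk$ in $\CDGA_{\Bbbk}$. Thus everything reduces to exhibiting a zig-zag of quasi-isomorphisms in $\CMon(\ChF(\Bbbk-\Sh_X))$ from $\MVCC(X,L,\mc{F}^{\mc{Q}^{\op}})$ to $(j_*j^*)_{U_N}\ul{A}^*_{\PL}\otimes \Bbbk$, where $j\colon U_N \hookrightarrow X$.

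The first half of this zig-zag is supplied by Proposition~\ref{prop:mvc_iso_mvc}(3), which gives a multiplicative $W'$-filtered quasi-isomorphism
$$\MVCC(X,L,\mc{F}^{\mc{Q}^{\op}}) \zgqis \MV(X,L,(i_*i^!)^L_{L_\bullet} G(F),\OS(L))$$
with $F:=\mc{F}^{\mc{Q}^{\op}}_0$. The intermediate terms of the zig-zag \eqref{eq:another_zigzag} are built from $(i_*i^!)^L G(F)$ (a commutative $L$-algebra by Proposition~\ref{prop:ii_is_Lalgebra}, because $F$ is commutative and support-type functors preserve commutativity), the graded-commutative Orlik-Solomon algebras $\OS(\mc{Q})$ and $\OS(L)$, and projection-formula identifications; each such morphism in fact lives in $\CMon$, so the whole zig-zag is multiplicative in the commutative sense. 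The second half uses the augmentation $\eta\colon \MV \to j_*j^*G(F)$ from Proposition~\ref{prop:mv_resolution}, which is a multiplicative quasi-isomorphism, and then the adaptedness hypothesis at $I=\emptyset$ (where $U_\emptyset = X$): we have $F \zgqis G_{TW}(\ul{\Bbbk}_X) \sim \ul{A}^*_{\PL}\otimes \Bbbk$ in $\CMon$, because $\ul{A}^*_{\PL}\otimes\Bbbk$ is a soft $X$-adapted sheaf. Softness of $\ul{A}^*_{\PL}\otimes \Bbbk$ ensures that $j_*$ preserves the quasi-isomorphism class, yielding $j_*j^*G(F)\sim (j_*j^*)_{U_N}\ul{A}^*_{\PL}\otimes \Bbbk$ in $\CMon$. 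Composing all the pieces, we obtain $\MVCC \sim (j_*j^*)_{U_N}\ul{A}^*_{\PL}\otimes \Bbbk$, so that $\MVCC$ is $U_N$-adapted in $\CMon(\ChF(\Bbbk-\Sh_X))$.

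The main technical obstacle is verifying at every step that the zig-zags borrowed from Propositions~\ref{prop:mvc_iso_mvc}(3) and \ref{prop:mv_resolution}, which are sometimes phrased only in $\Mon$, actually lift to $\CMon$. This requires unpacking the individual morphisms (the comparison $(i_*i^!)^{\mc{Q}}\simeq p^*(i_*i^!)^L$, the projection formula, and the natural map $p_*\OS(\mc{Q})\to \OS(L)$ from Theorem~\ref{thrm:atomic_complex}) and checking in each case that commutativity of the underlying algebra structure is preserved; this is where the commutativity of $\check{C}$ (Proposition~\ref{prop:cech_construction}) and the care taken in Lemma~\ref{lemma:mobius_inversion_multiplication} pay off. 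Once the adaptedness of $\MVCC$ is in hand, Corollary~\ref{cor:global_sections_of_adapted_sheaf} closes the argument without further work.
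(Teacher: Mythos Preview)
Your approach has a genuine gap at the point where you claim the zig-zag from Proposition~\ref{prop:mvc_iso_mvc}(3) lifts to $\CMon$. The intermediate terms of \eqref{eq:another_zigzag} are built out of $(i_*i^!)^L G(F)$, where $G$ is the \emph{Godement} resolution. Even when $F$ is commutative, $G(F)$ is only a (non-commutative) monoid: the product on the Godement cosimplicial object is assembled via the Alexander--Whitney map, which the paper explicitly notes is lax but not symmetric monoidal (this is precisely why $s_{TW}$ and $G_{TW}$ are introduced in \S\ref{sect:tws_functor}). So your assertion that ``$(i_*i^!)^L G(F)$ is a commutative $L$-algebra \ldots\ because $F$ is commutative and support-type functors preserve commutativity'' is false at the level of honest cdga's, and the zig-zag through $\MV((i_*i^!)^L G(F),\OS(L))$ cannot be made to live in $\CMon$. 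Replacing $G$ by $G_{TW}$ does not save the argument either, since Proposition~\ref{prop:mv_resolution} and the stalkwise analysis in Proposition~\ref{prop:cech_for_sheaves} genuinely use that the sheaf is of Godement type.

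The paper's proof sidesteps this entirely. After forgetting $W'$ one has $\MVCC(X,L,\mc{F}^{\mc{Q}^{\op}})=\MVCC(X,\mc{Q},\mc{F}^{\mc{Q}^{\op}})=\check{C}(\mc{F}^{\mc{Q}^{\op}})*\OS(\mc{Q})$, which by definition \eqref{eq:mobius_inversion} is exactly the top M\"obius object $\hat{\mc{F}}^{\mc{Q}^{\op}}_N$. Corollary~\ref{cor:mobius_trick_sheaves} (i.e.\ Theorem~\ref{thrm:mobius_trick}) then provides a quasi-isomorphism $\hat{\mc{F}}^{\mc{Q}^{\op}}_N\zgqis \mc{F}^{\mc{Q}^{\op}}_N$ \emph{inside} $\CMon(\ChF(\Sh_X))$, never touching the non-commutative Godement side. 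Finally, the adaptedness hypothesis is invoked at the \emph{top} index $N$ (not at $\emptyset$): $\mc{F}^{\mc{Q}^{\op}}_N\zgqis (j_*j^*)_{U_N}G_{TW}(\ul{\Bbbk}_X)$ in $\CMon$, so $\MVCC$ is $U_N$-adapted and Corollary~\ref{cor:global_sections_of_adapted_sheaf} finishes. Thus the missing idea in your argument is precisely the M\"obius inversion step, which is what allows one to stay in the commutative world.
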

\begin{proof}
	After forgetting the rank filtration we have $\MV_{com}(L)=\MV_{com}(\mc{Q})$. 
	Then by Corollary \ref{cor:mobius_trick_sheaves},
	$\MVCC(\mc{Q})=\check{C}(\mc{F}^{\mc{Q}^{\op}})*\OS(\mc{Q})$,
	which
	is quasi-isomorphic to $\mc{F}^{\mc{Q}^{\op}}_N\zgqis 
	(j_*j^*)_{U_N}\ul{\mb{Q}}|_X\otimes \Bbbk$, i.e.
	$\MVCC(L)$ is a $U_N$-adapted sheaf. It remains to apply 
	Corollary \ref{cor:global_sections_of_adapted_sheaf}.
\end{proof}

%stopped here on 16 September 2023.

\newpage
\section{Rational homotopy type of the complement}
\label{sect:main_sect}
In this section we prove our main results. 
Here $[X,L,\Mu]$ 
is a smooth compact algebraic 
arrangement over $X\in \Var$ 
with an OS-algebra $\mc{M}$ underlying $(L,\le)$.
As usual, 
$U=X-\bigcup\limits_{s>0\in L} L_s$ 
is the complement of the arrangement.

In Section \ref{sect:compl:mvhc} we introduce
the (rank) Mayer-Vietoris Hodge complex 
$(\MVHC(X,L),F,W')$ such that 
${}_{W'}E^{pq}_1\MVHC(X,L)_{\Bbbk}$ is naturally isomorphic to 
${}_{W'}E^{pq}_1 \MV((i_*i^!)G(\ul{\Bbbk}_X),\Mu)$ for $\Bbbk=\mb{Q},\mb{C}$ (see \ref{sect:compl:mv_resolution}).
This will allow us to identify the associated pure Hodge structure 
$\Gr^{\Dec W} H^*(U)$ with ${}_{W'}E^{**}_2$ appearing in 
Theorem \ref{thrm:ss_for_lattices}.
As a corollary, we describe the multiplicative structure of 
$H^*(U;\mb{Q})$.
A similar result appeared in \cite{JCW}, but we note that our
proof does not require the intersections 
of the varieties $L_x\subset X$ to be clean. 
%in the 
%arrangement $[X,L]$ corresponding to the presheaf $(L,\leq)^{\op}\to 2^X$ 
%to be "clean" in the sense of \cite{JCW}.

To model the rational homotopy type of $U$,
we define the cubical Mayer-Vietoris Hodge diagram
$(\MVHD(X,\mc{C}),F,W')$ whose
${}_{W'}E_1$-term is described by Theorem \ref{thrm:ss_for_lattices}.

\subsection{\texorpdfstring{Mixed Hodge complexes $(\MVHC(X,L),F,W')$}{} and 
\texorpdfstring{$(\MVHD(X,\mc{Q}),F,W')$}{}}
\label{sect:compl:mvhc}

\subsubsection{Definition}
Recall that, for a subset $I\subset \Atoms(L)$ we write 
$L^I=\bigcup\limits_{i\in I} L_i$ and 
$L_I=\bigcap\limits_{i\in I} L_i\subset X$.
Consider the corresponding locally cubical lattice
$\mc{Q}$ (Definition \ref{dfn:cubical_lattice_of_L}) and the cubical lattice $\mc{C}$.
Let $U_\bullet\subset X$ be the open $\mc{Q}$-subspace 
of $X$ with vertices
$U_{\tilde{I}}=X-L^I\subset X$ for $\tilde{I}=(x,I)\in \mc{Q}$. 
Recall the order-preserving map $I\colon (L,\le)\ar (\mc{Q},\le)$
is given by
$I(x):=(x,\{t\in \Atoms(L)\mid t\leq x\})\in \mc{Q}.$
%Since $I_x\subset I_y$ for $x\leq y$, we may consider 
%$I\colon (L,\leq)\to (\mc{C},\leq)$ as an order preserving map. 
By the definition of an arrangement, we have $L_{I(x)}=L_x\subset X$.

Since $U_\bullet\ar X$ is a diagram of open subsets over a smooth algebraic $X$,
Corollary \ref{cor:functorial_mhd} provides a natural $\mc{Q}^{\op}$-object in $\CMon(\MHC(X))$:
\begin{equation}\label{eq:cubical_hodge_complex}
\mc{K}^{\mc{Q}^{\op}}_\bullet:=(\mc{K}_{U_\bullet/X},F,W).
\end{equation}
This defines a $\mc{Q}$-object in $\MHC(X)$:
\begin{equation}
	\mc{K}^{\mc{Q}}:=\check{C}(\mc{K}^{\mc{Q}^{\op}}).
\end{equation}
By Proposition \ref{prop:cech_for_sheaves} and 
the naturality of mixed Hodge structures, for each $\bar{I}=(x,I)\in \mc{Q}$ 
we have an isomorphism
of mixed Hodge structures
$$(\mb{H}^*(\mc{K}^{\mc{Q}}_{\tilde{I}}),F,\Dec W)\simeq H^*_{L_x}(X)$$ for each $\tilde{I}\in \mc{Q}$. 
Since $L_x\subset X$ is smooth, $\mc{K}^{\mc{C}}_{\tilde{I}}$
is a pure Hodge complex of weight $0$.

%If $[X,L,\OS(L)]$ is a locally geometric poset, then by definition 
%$|\os{\circ}{\sup}(I)|\le 1,I\subset \Atoms(L)$, i.e.
%$\mc{Q}$ is just a sublattice of the cubical lattice on $\Atoms(L)$ and 
%$L_I=L_x$ for $x\in \os{\circ}{\sup}(I)$. 
%If $[X,L]$ is locally geometric
%We consider $[X,\mc{Q},\OS(\mc{Q})]$ as a multiplicative arrangement, 
%where $\mc{Q}_{\bar{I}}=L_I=L_x$ for $\bar{I}=(x,I)$ and $x=\os{\circ}{\sup}(I)\in L$.
For an arrangement $[X,L]$ we define
\begin{equation}
	\MVHC(X,L):=\MVC(X,L,\mc{K}^{\mc{Q}^{\op}})=I^*\mc{K}^{\mc{Q}}*\Mu.
\end{equation}
By Lemma \ref{lemma:convolution_is_hodge} we obtain $(\MVHC(X,L),F,W)\in \MHC(X)$.
On the other hand, the rank filtration $W'$ on the convolution and 
Lemma \ref{lemma:two_weight_filtrations} allow us to consider
the \emph{Mayer-Vietoris Hodge complex} $(\MVHC(X,L),F,W')\in \MHC(X)$.
We omit $\Mu$ from the notation for $\MVHC(X,L)$ since it is essentially 
unique by Proposition \ref{prop:morphism_of_OS_complexes}.

By the same argument as in the case of a locally geometric arrangement with 
$L=\mc{Q}$, i.e. $I(-)=\id$, the M\"obius inversion formula in
Lemma \ref{lemma:mobius_inversion_multiplication} gives us a commutative monoid
with an additional rank filtration $W'$:
\begin{equation}
	\MVHD(X,\mc{Q}):=\MVCC(X\mc{Q},\mc{K}^{\mc{Q}^{\op}})=\check{C}(\mc{K}^{\mc{Q}^{\op}})*\OS(\mc{Q}).
\end{equation}
Then both  $(\MVHD(X,\mc{Q}),F,W)$ and $(\MVHD(X,\mc{Q}),F,W')$ define 
\emph{Mayer-Vietoris Hodge diagrams} in $\MHD(X)$. 
\begin{rmrk}
Note that by our definition of a locally geometric arrangement, 
 $\mc{Q}$ is just a 
sublattice in the cubical lattice $\mc{C}$ on $\Atoms(L)$. 
Tautologically we have
$\MVHD(X,\mc{Q})=\MVHD(X,\mc{C})$.
\end{rmrk}

\subsubsection{Functoriality}
Assume $\phi\colon [X',L',\Mu']\ar[] [X,L,\Mu]$ 
is a morphism of
arrangements corresponding to a map $f\colon X'\ar X$,
a rank-preserving homomorphism $g\colon L\ar L'$ and
the natural morphism $g_*\Mu'\ar \Mu$.

Let $U_\bullet$ and $U'_\bullet$ be the corresponding complements considered as 
$\mc{Q}$- and $\mc{Q}'$-varieties respectively.
By definition, $f$ maps $U'_{g(x)}$ to $U_x$ for all $x\in L$,
hence there is a well-defined morphism $U'_{g(I)}\ar U_{I}$ for $I\subset \Atoms(L)$, i.e.
the morphism $f$ naturally restricts to a morphism 
$f_\bullet\colon \tilde{g}^*U'_\bullet\ar U_\bullet$ of 
$\mc{Q}$-varieties, 
where $\tilde{g}\colon \mc{Q}\ar \mc{Q}'$ is the induced morphism over $g\colon L\ar L'$.
By the naturality of $\mc{K}^{\mc{Q}^{\op}}$, there is 
a natural morphism 
$f^*\mc{K}^{\mc{Q}^{\op}}\ar \tilde{g}^{\op*}\mc{K}^{\mc{Q}^{'\op}}$,
which by Proposition \ref{prop:mvc_is_functorial} and Remark \ref{rmrk:pullback_for_mhc}
induces a morphism $$\MVHC(X,L)\ar R_{TW}f_* \MVHC(X',L')\in \MHC(X),$$
i.e. $\MVHC(X,L)\in \MHC(X)$ is functorial in the arrangement $[X,L]$.
Similarly, one can see that $\MVHD(X,\mc{Q})\in \MHD(X)$ is functorial in 
multiplicative arrangement $[X,\mc{Q},\OS(\mc{Q})]$.

Proposition \ref{prop:mvc_iso_mvc} implies that there is a natural $W'$-filtered quasi-isomorphism:
\begin{equation}
	\MVHC(X,L)\zgqis \MV(X,L,(i_*i^!)^L_\bullet G(\ul{\mb{Q}}_X),\Mu)\in \Ch(\Sh_X),
\end{equation}
while by Proposition \ref{prop:MV_functorial} the RHS is quasi-isomorphic to $(Rj_* j^*)_U\ul{\mb{Q}_X}$.
Finally, by Lemma \ref{lemma:two_weight_filtrations}, the filtrations induced by 
$W$ and $W'$ on $\mb{H}^*(\MVHC(X,L))\simeq H^*(U)$ coincide.
If $[X,\mc{Q},\OS(\mc{Q})]$ is locally geometric, 
which is the case when 
$[X,L]$ is locally geometric,
then the same holds for $\MVHD(X,\mc{Q})$.
On the other hand we have
\begin{lemma}\label{lemma:mvhc_natural}
	There are natural quasi-isomorphisms
	$$(\MVHC(X,L),F,W)\zgqis (\mc{K}_{U/X},F,W)\in \MHC(X),$$
	and
	$$(\MVHD(X,\mc{Q}),F,W)\zgqis (\mc{K}_{U/X},F,W)\in \MHD(X)$$
	when $[X,\mc{Q},\OS(\mc{Q})]$ is locally geometric.
\end{lemma}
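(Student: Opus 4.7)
The strategy is to apply the homological M\"obius inversion of Corollary \ref{cor:mobius_trick_sheaves} to the $\mc{Q}^{\op}$-object $\mc{K}^{\mc{Q}^{\op}}_\bullet = (\mc{K}_{U_\bullet/X}, F, W)$ from \eqref{eq:cubical_hodge_complex}, viewed as an object of $\CMon(\ChF_{F,W}(\Sh_X))$ via Remark \ref{rmrk:ChF}. By construction each $\mc{K}_{U_{\tilde{I}}/X}$ satisfies the hypotheses preceding Proposition \ref{prop:cech_for_sheaves}, so Corollary \ref{cor:mobius_trick_sheaves} produces a natural zig-zag
$$\mc{K}^{\mc{Q}^{\op}}_\bullet \zgqis \hat{\mc{K}}^{\mc{Q}^{\op}}_\bullet$$
of $\mc{Q}^{\op}$-objects in $\CMon(\ChF_{F,W}(\Sh_X))$.

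I would then evaluate this zig-zag at the top element $N$ of the ambient cubical lattice $\mc{C}$ on $\Atoms(L)$, using the tautological identity $\MVHD(X,\mc{Q}) = \MVHD(X,\mc{C})$ recorded just before the lemma. The left-hand side becomes $\mc{K}^{\mc{C}^{\op}}_N = \mc{K}_{U_N/X} = \mc{K}_{U/X}$, since $U_N = X - \bigcup_i L_i = U$, and the right-hand side becomes $\hat{\mc{K}}^{\mc{C}^{\op}}_N = \check{C}(\mc{K}^{\mc{C}^{\op}}) * \OS(\mc{C}) = \MVHD(X,\mc{Q})$. To lift this to a quasi-isomorphism in $\MHD(X)$, I would observe that each $\mc{K}^{\mc{Q}^{\op}}_{\tilde{I}}$ is a pure Hodge complex of weight $0$, since $L_{p(\tilde{I})} \subset X$ is smooth; Lemma \ref{lemma:convolution_is_hodge} therefore endows $\MVHD(X,\mc{Q})$ with a natural MHC structure $(F,W)$, and Lemma \ref{lemma:mobius_inversion_multiplication} supplies the compatible cdga product. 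The zig-zag itself respects $F$ and $W$ because it was constructed entirely inside $\ChF_{F,W}$.

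For the $\MVHC$ statement, in the locally geometric case I would invoke Proposition \ref{prop:mvc_iso_mvc}(1), which provides a natural quasi-isomorphism
$$\MVHC(X,L) = \MVC(X,L, \mc{K}^{\mc{Q}^{\op}}) \zgqis \MVCC(X,L,\mc{K}^{\mc{Q}^{\op}})$$
built from the atomic-complex formality $p_*\OS(\mc{Q}) \zgqis \OS(L)$ of Theorem \ref{thrm:atomic_complex}. The target $\MVCC(X,L,\mc{K}^{\mc{Q}^{\op}})$ is literally equal to $\MVHD(X,\mc{Q}) = \check{C}(\mc{K}^{\mc{Q}^{\op}})*\OS(\mc{Q})$ as an object of $\ChF_{F,W}(\Sh_X)$, since the two constructions differ only in how they package the rank filtration $W'$ and this plays no role for the $(F,W)$-assertion. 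Composing with the quasi-isomorphism from the previous step then yields $\MVHC(X,L) \zgqis \mc{K}_{U/X}$ in $\MHC(X)$.

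The main delicate point will be tracking the filtrations $F$ and $W$ through the M\"obius zig-zag in the proof of Theorem \ref{thrm:mobius_trick}: the intermediate objects $\hat{A}'$ are built by tensoring with Koszul-type algebras on auxiliary formal variables $d\tau, d\nu, d\mu$, and I must check that extending $F$ and $W$ trivially along these variables preserves the pure MHC structure of the correct weight along every leg of the zig-zag. This is essentially formal, since the M\"obius construction uses only the ambient symmetric-monoidal additive structure of $\ChF_{F,W}$ and the inputs are already pure, but it is the point where care is required.
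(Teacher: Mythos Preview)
Your approach to the $\MVHD$ statement is essentially the paper's: both arguments use the M\"obius inversion zig-zag of Theorem~\ref{thrm:mobius_trick} (the paper explicitly says ``similarly to the proof of Theorem~\ref{thrm:mobius_trick}'') through the constant $\mc{Q}^{\op}$-diagram with value $\mc{K}_{U/X}$, and then observe that evaluating at the top recovers the convolution $\check{C}(\mc{K}^{\mc{Q}^{\op}})*\OS(\mc{Q})=\MVHD(X,\mc{Q})$. Your identification of the delicate point (tracking $F,W$ through the auxiliary Grassmann variables) is accurate and matches what the paper handles via Lemma~\ref{lemma:convolution_is_hodge}.

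There is, however, a genuine gap in your treatment of the $\MVHC$ statement. The first quasi-isomorphism is asserted for an \emph{arbitrary} arrangement $[X,L,\Mu]$---the clause ``when $[X,\mc{Q},\OS(\mc{Q})]$ is locally geometric'' qualifies only the second display---and this generality is needed downstream in Theorem~\ref{thrm:mv_spectral_sequence}, which is stated for any multiplicative arrangement. Your route via Proposition~\ref{prop:mvc_iso_mvc}(1) passes through the formality of the atomic complex (Theorem~\ref{thrm:atomic_complex}), and both that theorem and the proposition are only available in the locally geometric case. So you have proved the $\MVHC$ statement only under an extra hypothesis.

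The paper avoids this by running the M\"obius-style zig-zag \emph{directly} on $I^*\check{C}(-)*\Mu$, for arbitrary $\Mu$: it maps $\mc{K}^{\mc{Q}^{\op}}$ to the constant diagram $q^*\mc{K}$ via the restrictions $r_{\tilde{I}}\colon \mc{K}_{U_{\tilde{I}}/X}\to\mc{K}_{U/X}$, obtaining
\[
I^*\check{C}(\mc{K}^{\mc{Q}^{\op}})*\Mu\;\xrightarrow{\;r\;}\;I^*\check{C}(q^*\mc{K})*\Mu\;\xleftarrow{\;i\;}\;\mc{K},
\]
and checks that all three terms are MHC's quasi-isomorphic to $Rj_*j^*\ul{\Bbbk}$. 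This requires no hypothesis on $L$ beyond the existence of an OS-complex $\Mu$. The multiplicativity needed for the $\MVHD$ statement then falls out as the special case $L=\mc{Q}$, where $I(-)=\id$ and $r,i$ become morphisms of monoids. The fix to your argument is simply to imitate this: run the zig-zag at the level of $I^*\check{C}(-)*\Mu$ rather than detouring through $\MVCC$.
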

\begin{proof}
	Here $\mc{K}:=(\mc{K}_{U/X},F,W)$ is the 
	natural Hodge diagram associated with the complement $U$.
	We have natural restriction morphisms
$r_I\colon \mc{K}^{\mc{Q}^{\op}}_{\tilde{I}}\ar \mc{K}$ 
for all $\tilde{I}\in\mc{Q}^{\op}$.
	Consider $q^*\mc{K}$ as a constant $\mc{Q}^{\op}$-object.
	Similarly to the proof of Theorem \ref{thrm:mobius_trick},
we construct a zig-zag

\begin{equation}\label{eq:mvhc:zigzag}
(\MVHC(X,L),F,W)=(I^*\check{C}(\mc{K}^{\mc{Q}^{\op}})*\Mu,F,W)\ar[r]
	(I^*\check{C}(q^*\mc{K})*\Mu,F,W)
	\al[i]
		(\mc{K},F,W)\in \MHC(X).
	\end{equation}
	Here the equality is given by the definition, 
	the left arrow is induced by the
	$r_I$'s, and the right arrow $i$ is the natural inclusion of
	$\mc{K}=I^*\check{C}(q^*\mc{K})_0\otimes \Mu_0$.  
	%The compatability with filtrations $F,W$ is clear.
	
	By Lemma \ref{lemma:convolution_is_hodge} 
	all three terms  \eqref{eq:mvhc:zigzag} are MHC's
	and are naturally 
	quasi-isomorphic to $Rj_*j^*\ul{\Bbbk}$.
	It remains to note that in the case $L=\mc{Q}$, i.e. $I(-)=\id$,
	the morphisms $r$ and $i$ are multiplicative.
	This gives the required zig-zag of quasi-isomorphisms.
\end{proof}
We conclude that the filtrations $F$ and $\Dec W'$ induce on 
$\mb{H}^*(\MVHC(X,L))\simeq H^*(U)$ the natural mixed Hodge structure,
which in turn coincides with the one induced by $F$ and $\Dec W$.

\subsection{Cohomology}
Here $[X,L,\Mu]$ is a multiplicative smooth 
compact algebraic arrangement.
By the above, $\MVHC(X,L)$ is functorial in 
$[X,L,\Mu]$.
Recall that $\Mu_x$ sits in cohomological degree $-r(x)$.
Let us summarize the previous results in the following
generalization of Theorem \ref{thrm:ss_for_lattices}:
\begin{thrm}\label{thrm:mv_spectral_sequence}
	The spectral sequence 
	${}_{W'}E^{pq}:={}_{W'}E^{pq}(\MV(X,L,(i_*i^!)^L G(\ul{\mb{Q}}),\Mu))$
	is functorial in the multiplicative arrangement $[X,L,\Mu]$
	and satisfies the following:
	\begin{enumerate}
		\item We have ${}_{W'}E^{**}_1=
  H^*_{L_\bullet}(X)*\Mu$, or explicitely:
			$${}_{W'}E^{pq}_1=
			\bigoplus\limits^{x\in L}_{r(x)=-p}
			H^q_{L_x}(X)\otimes \mc{M}_x[-r(x)].$$
		\item The differential $d_1$ is equal to 
			$d_1=\sum g_{x'x}\otimes \partial_{x'x}$,
			where $g_{x'x}\colon H^*_{L_x}(X)\to H^*_{L_{x'}}(X)$
			are the natural homomorphisms for $x'<:x\in L$ and
			$\partial_{x'x}$ are the structure maps of $\Mu$.

		\item $({}_{W'}E^{**}_r,d_r),r>0$ 
			is a complex of Hodge structures
			with $({}_{W'}E^{pq}_r,F),r>0$ 
			a pure Hodge structure of weight $q$.

		\item There is a natural isomorphism of 
			pure Hodge structures of weight $q$:
			$${}_{W'}E^{pq}_2=
		{}_{W'}E^{pq}_\infty\simeq \Gr^{\Dec W}_q H^{p+q}(U).$$ 

		\item The spectral sequence is 
				multiplicative with respect to the
		    mixed Hodge structures.

		\item There is a functorial isomorphism of algebras:
			$$({}_{W'}E^{**}_2,W')\simeq (H^*(U;\mb{Q}),W).$$

	\end{enumerate}
	%If in addition $\Mu$ is an OS-%algebra, then the spectral sequence is %multiplicative with respect
	%to the Hodge structures.
\end{thrm}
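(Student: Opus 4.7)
The plan is to derive all six parts from the mixed Hodge complex $(\MVHC(X,L), F, W')$ of Section~\ref{sect:compl:mvhc}. By Lemma~\ref{lemma:mvhc_natural} this computes the rational cohomology of $U$ additively, and by Lemma~\ref{lemma:two_weight_filtrations} the rank filtration $W'$ realizes the same mixed Hodge structure on $H^*(U)$ as the intrinsic filtration $W$.

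For parts~1 and~2, I would compute $E_1(\MVHC(X,L), W')$ directly. By construction $\MVHC(X,L) = I^*\mc{K}^{\mc{Q}} * \Mu$, and the rank filtration gives
$$\Gr^p_{W'} \MVHC(X,L) = \bigoplus_{r(x)=-p} \mc{K}^{\mc{Q}}_{I(x)} \otimes \Mu_x$$
with vanishing combinatorial differential. Proposition~\ref{prop:cech_for_sheaves} identifies $\mb{H}^*(\mc{K}^{\mc{Q}}_{I(x)})$ with $H^*_{L_x}(X)$ (using $L_{I(x)} = L_x$), and since $\Mu_x$ is concentrated in degree $-r(x)$, this yields the stated formula for $E^{pq}_1$. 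The differential $d_1$ is then read off the combinatorial part of the $(L,\le)$-chain algebra differential on $I^*\mc{K}^{\mc{Q}} \otimes \Mu$, which is $\sum g^{I^*\mc{K}^{\mc{Q}}}_{x'x} \otimes \partial_{x'x}$; by naturality of the \v{C}ech construction the first factor descends on cohomology to the restriction map $g_{x'x}\colon H^*_{L_x}(X) \to H^*_{L_{x'}}(X)$ induced by the open inclusion $U_{I(x)} \hookrightarrow U_{I(x')}$.

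For parts~3 and~4, the argument in the proof of Lemma~\ref{lemma:two_weight_filtrations} shows that $\Gr^p_{W'}\MVHC$ is a pure Hodge complex of weight $-p$, so $F$ is $q$-conjugate on $E^{pq}_1$ and each $d_r$ is a morphism of pure Hodge structures. For $r \geq 2$, $d_r$ maps weight $q$ to weight $q-r+1$ and hence vanishes, giving $E_2 = E_\infty$. Combining the d\'ecalage relation $\Gr^{\Dec W'}_q H^n = \Gr^{n-q}_{W'} H^n$ (Remark~\ref{rmrk:decalage}) with the equality $\Dec W = \Dec W'$ on $H^*(U)$ (Lemma~\ref{lemma:two_weight_filtrations}) and the identification of MHS from Lemma~\ref{lemma:mvhc_natural} yields the natural isomorphism $E^{pq}_2 \simeq \Gr^{\Dec W}_q H^{p+q}(U)$ of pure Hodge structures.

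The main obstacle is parts~5 and~6, since $\MVHC(X,L)$ is \emph{not} a commutative monoid in $\MHC(X)$: the map $I \colon L \to \mc{Q}$ is rarely a lattice homomorphism, so $I^*\mc{K}^{\mc{Q}}$ carries only a partial algebra structure (Remark~\ref{rmrk:I_pullback}). My approach would be to produce the multiplicative structure from the cubical Mayer-Vietoris Hodge diagram $(\MVHD(X,\mc{C}), F, W')$ with $\mc{C} = 2^{\Atoms(L)}$, which is a genuine commutative MHD by Lemma~\ref{lemma:mobius_inversion_multiplication} and yields a multiplicative spectral sequence with the cubical $E_1$-description. Combining the contraction $\mc{C} \to L$ with the morphism $\OS(\mc{C}) \to \Mu$ from Corollary~\ref{cor:morphism_of_OS_algebras} produces a morphism of multiplicative MHCs descending to the expected map on $E_1$-pages, thereby transferring multiplicativity to $[X,L,\Mu]$ and establishing part~5. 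For part~6, the Cirici--Horel functorial splitting of Corollary~\ref{cor:cirici_splitting} applied to $\MVHD(X,\mc{C})$ yields the natural algebra isomorphism $E^{**}_2 \simeq H^*(U;\mb{Q})$, which descends to $[X,L,\Mu]$ by functoriality. The technical heart is verifying that the comparison between $\MVHD(X,\mc{C})$ and $\MVHC(X,L)$ preserves the multiplicative structure on the $E_1$-page, which rests on the functoriality statements of Proposition~\ref{prop:mvc_iso_mvc}.
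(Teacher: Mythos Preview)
Your approach for parts~1--4 is essentially correct, but you are working harder than the paper does. The paper does not compute $E_1(\MVHC,W')$ directly; instead it invokes the $W'$-filtered quasi-isomorphism $(\MVHC,W')\zgqis(\MV,W')$ from Proposition~\ref{prop:mvc_iso_mvc} and then reads off parts~1, 2, \emph{and} 5 simultaneously from Theorem~\ref{thrm:ss_for_lattices}, which already describes $E_1$, $d_1$, and the multiplicative structure for $\MV$. Degeneration (parts~3--4) is obtained by citing Theorem~\ref{thrm:ss_degeneration_mhc} for the MHC $(\MVHC,F,W')$, together with Lemma~\ref{lemma:mvhc_natural} to identify $W'$ with the intrinsic $W$ on $H^*(U)$.

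Your detour through $\MVHD(X,\mc{C})$ for parts~5--6 is where you diverge most from the paper, and it introduces an unnecessary complication. Your worry that $\MVHC(X,L)$ is not a commutative monoid is misplaced: the paper never needs it to be one. Multiplicativity of the spectral sequence comes from $\MV((i_*i^!)^L G(\ul{\mb{Q}}),\Mu)$ itself, which \emph{is} a (non-commutative) monoid whenever $\Mu$ is an OS-algebra (Proposition~\ref{prop:ii_is_Lalgebra} and Theorem~\ref{thrm:ss_for_lattices}); the filtered quasi-isomorphism then identifies the two spectral sequences. For part~6, the paper applies the Cirici--Horel splitting (Corollary~\ref{cor:cirici_splitting}) directly to $H^*(U)$: since $H^*(U)$ with its mixed Hodge structure arises from the genuine MHD $\mc{K}_{U/X}$ (Lemma~\ref{lemma:mvhc_natural}), the lax symmetric monoidal nature of the splitting in Theorem~\ref{thrm:mhd:E_1_qis} makes $\Gr^{\Dec W}H^*(U)\simeq H^*(U)$ a multiplicative isomorphism without ever touching $\MVHD(X,\mc{C})$.

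There is also a technical slip in your invocation of Corollary~\ref{cor:morphism_of_OS_algebras}: that corollary applies to homomorphisms of locally geometric lattices, which by Remark~\ref{rmrk:lattices:geom_homomorphism_preserves_r} preserve the rank function, whereas the contraction $p\colon\mc{C}\to L$ does not. The relevant comparison for that map is Theorem~\ref{thrm:atomic_complex}, which however requires $L$ locally geometric and $\Mu=\OS(L)$, a hypothesis not assumed in the theorem you are proving.
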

\begin{proof}
	Let $\MVHC:=\MVHC(X,L)$, $\MV:=\MV((i_*i^!)G(\ul{\mb{Q}}_X),\Mu)$.
	By Proposition %\ref{prop:mv_com_mv}
 \ref{prop:mvc_iso_mvc}
 we have a natural $W'$-filtered quasi-isomorphism:
	$$(\MVHC,W')\zgqis (\MV,W')\in \Ch(\Sh_X).$$
	By Theorem \ref{thrm:ss_degeneration_mhc} we have
	${}_{W'}E^{**}_2\MVHC={}_{W'}E^{**}_{\infty}\MVHC$.
	The terms $E^{pq}_r(\MV,W'),r>0$ are described by 
	Theorem \ref{thrm:ss_for_lattices},	
	providing
	a natural isomorphism 
	${}_{W'}E^{pq}_{\infty}(\MV)\simeq \Gr^p_{W'} H^{p+q}(U;\mb{Q})$
	which is automatically \emph{multiplicative}.
	
	By Lemma \ref{lemma:mvhc_natural} the filtrations $W$ and $W'$
	on $H^{p+q}(U;\mb{Q})$ coincide, hence we have 
	a natural multiplicative isomorphism
	${}_{W'}E^{pq}_2(\MV)\simeq 
		\Gr^p_W H^{p+q}(U)=\Gr^{\Dec W}_q H^{p+q}(U)$
	which is compatible with the Hodge filtration $F$ on both sides.
	
	Finally, the splitting provided by 
	Corollary \ref{cor:cirici_splitting} implies a functorial
	multiplicative isomorphism 
	$\Gr^{\Dec W} H^*(U;\mb{Q})\simeq H^*(U,\mb{Q})$.
\end{proof}
\begin{rmrk}
Assertions 1-6 of the previous Proposition hold without assuming $\Mu$ has a multiplicative structure, 
i.e.\ additively it is sufficient for $\Mu$ to be an OS-complex.  

	This result is also stated in \cite{JCW}. 
	We want to note that the \emph{functoriality} of 
	the splitting over $\mb{Q}$ is a non-trivial 
	fact based on Theorem \ref{thrm:mhd:E_1_qis} due to Cirici and Horel.
	For example over $\mb{R}$ 
	the Deligne splitting of the weight filtration is not 
	conjugation invariant, 
	while the Morgan-Sullivan theorem \cite[Theorem 10.1]{M}
	a priori provides
	only a non-canonical splitting.
\end{rmrk}

\subsection{CDGA model}
%\label{sect:compl:mvhd}
%Assume $[X,L,\OS(L)]$ is locally geometric arrangement in a smooth compact algebraic $X$
%and $[X,\mc{Q},\OS(\mc{Q})]$ the corresponding locally cubical arrangement.
%Let $p\colon \mc{Q}\ar L$ be the natural contraction homomorphism $p(\bar{I})=x$ for $\bar{I}=(x,I)$
%where $x\in \os{\circ}{\sup}(I)$. Recall that by definition of 
%a locally geometric arrangement $|\os{\circ}{\sup}(I)|\le 1$.
Assume $[X,\mc{C},\OS(\mc{C})]$ is 
a smooth compact algebraic cubical arrangement
given by the subvarieties $Z_i\subset X$. As usual we set $Z_I=\cap_{i\in I}Z_{i}$ for 
$I\subset \Atoms(\mc{C})$. 
By definition
$$\MVHD(X,\mc{C})=\MVCC(X,\mc{C},\mc{K}^{\mc{C}^{\op}})=\check{C}(\mc{K}^{\mc{C}^{\op}})*\OS(\mc{C}),$$
so by Corollary \ref{cor:mvhd_is_adapted} we have a natural quasi-isomorphism:
\begin{equation}\label{eq:mvhd_qis_Apl}
\Gamma_{TW}(\MVHD(X,\mc{C}))\zgqis A_{\PL}(U)\in \CDGA_{\mb{Q}}.
\end{equation}

By the naturality of the construction,
$(\MVHD(X,\mc{C}),F,W')$ and $(\MVHD(X,\mc{C}),F,W)$ 
are functorial equipments in $[X,\mc{C},\OS(\mc{C})]$ with values in $\MHD(X)$.

Recall that $U=X-\cup_i Z_i$ is the complement.
\begin{thrm}[The main theorem]\label{thrm:main_cubical}
	There is a multiplicative spectral sequence $E^{pq}_r$ 
	with the following properties.
    \begin{enumerate}
		\item We have $E^{pq}_1=
	\bigoplus_{|I|=-p}H^{q}_{Z_I}(X)\otimes_{\mb{Z}}
		\mb{Z}\cdot d\nu_I$ where 
  		$d\nu_I\in \Lambda\langle d\nu_i\mid i\in N\rangle$ are  
  		Grassmann monomials with $\deg d\nu_I=-|I|$.

		\item The differential 
		$d_1=\sum g_{I-\{i\},I}\otimes \iota_i$
		where 
		$$g_{I'I}\colon H^{*}_{Z_I}(X)\to H^{*}_{Z_{I'}}(X)$$
	 	is the natural homomorphism
		induced by the inclusion $Z_I\to Z_{I'}$ for $I'\subset I$, 
		and $\iota_i$ is the derivation on the Grassmann 
		algebra satisfying $\iota_i d\nu_j=\delta_{ij}$.
	
		\item The multiplication
		$$H^q_{Z_I}(X)\otimes \mb{Z}\cdot d\nu_I \otimes 
		H^{q'}_{Z_J}(X)\otimes \mb{Z}\cdot d\nu_J\to 
		H^{q+q'}_{Z_{IJ}}(X)\otimes \mb{Z}\cdot d\nu_{I}\wedge d\nu_J$$
		with the usual sign rule makes $E^{pq}_1$ into a cdga.

		\item There is a functorial quasi-isomorphism 
			$$E^{**}_1\zgqis A_{\PL}(U)\in \CDGA_{\mb{Q}}.$$
	\end{enumerate}
\end{thrm}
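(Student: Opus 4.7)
The plan is to deduce the theorem from the cubical Mayer-Vietoris Hodge diagram $(\MVHD(X,\mc{C}), F, W') \in \MHD(X)$ constructed in Section~\ref{sect:compl:mvhc}, combined with the $E_1$-formality result of Cirici-Horel (Theorem~\ref{thrm:mhd:E_1_qis}). First I would apply the lax symmetric monoidal functor $R_{TW}\Gamma$ of Theorem~\ref{thrm:mhd_sheaves_sections} to pass from $\MHD(X)$ to $\MHD$. By Corollary~\ref{cor:mvhd_is_adapted}, the rational part of the resulting object is quasi-isomorphic in $\CDGA_{\mb{Q}}$ to $A_{\PL}(U)$. Theorem~\ref{thrm:mhd:E_1_qis} then produces a functorial zig-zag of cdga quasi-isomorphisms
$$A_{\PL}(U) \;\zgqis\; \Gamma_{TW}\bigl(\MVHD(X,\mc{C})\bigr)_{\mb{Q}} \;\zgqis\; E_1\bigl(\Gamma_{TW}\MVHD(X,\mc{C})_{\mb{Q}},\, W'\bigr),$$
and the spectral sequence of the theorem is the rank-filtration spectral sequence of $\MVHD(X,\mc{C})$.

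Second, I would identify the $E_1$-page with the cdga described in (1)-(3). Because $\MVHD(X,\mc{C})$ is a mixed Hodge complex of sheaves, the hypercohomology spectral sequence for the rank filtration degenerates at $E_2$ by Theorem~\ref{thrm:ss_degeneration_mhc}, and more importantly the $E_1$-term of the $\Gamma_{TW}$-sections coincides with the global sections of the sheaf-level $E_1$-term. Plugging $L = \mc{C}$ and $\mc{M} = \OS(\mc{C})$ into Theorem~\ref{thrm:mv_spectral_sequence} yields precisely the claimed description: the Orlik-Solomon algebra of the cubical lattice is the Grassmann algebra $\Lambda\langle d\nu_i \mid i\in N\rangle$ with $\OS(\mc{C})_I = \mb{Z}\cdot d\nu_I$, the structure maps $\partial_{I-\{i\},I}$ are the contractions $\iota_i$ (Example~\ref{exmpl:os_algebras}), and the multiplication is the wedge product, which matches (3) up to the usual sign rule. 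Combined with $H^q_{Z_I}(X) = H^q_{L_x}(X)$ for $x = I \in \mc{C}$, this gives assertions (1) and (2), and the multiplicativity of $W'$ on $\MVHD(X,\mc{C})$ makes the identification of algebras compatible.

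Finally, for assertion (4) I would chain the functorialities already established: that of $\mc{K}_{U_\bullet/X}$ in the arrangement (Corollary~\ref{cor:functorial_mhd}), that of the \v{C}ech construction $\check{C}(-)$ along homomorphisms of cubical lattices (Proposition~\ref{prop:cech:functoriality_along_lattices}), and that of $\MVCC$ encoded by Proposition~\ref{prop:mvc_is_functorial}, together with the natural (lax symmetric monoidal) character of the Cirici-Horel zig-zag. The main technical obstacle is of a bookkeeping nature: one must verify that all these naturalities compose coherently on the nose as a zig-zag in $\CDGA_{\mb{Q}}$, in particular that $\Gamma_{TW}$ preserves the multiplicative structure on both ends of the zig-zag and that the Cirici-Horel quasi-isomorphism commutes with the morphism of arrangements. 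This is precisely the reason Navarro Aznar's Thom-Whitney functor $s_{TW}$ replaces ordinary totalization throughout, and why the functorial-equipment formalism of Section~\ref{sect:lattices:arrangement_poset} was set up in advance.
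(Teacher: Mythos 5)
Your proposal is correct and follows essentially the same approach as the paper's proof: define $E_r = {}_{W'}E_r(\MVHD(X,\mc{C}))$, read off the $E_1$-page from Theorem~\ref{thrm:mv_spectral_sequence} with $L := \mc{C}$ (using the identification $\OS(\mc{C}) = \Lambda\langle d\nu_i\rangle$), and combine $\Gamma_{TW}(\MVHD(X,\mc{C}),F,W')\in \MHD$ with Theorem~\ref{thrm:mhd:E_1_qis} and \eqref{eq:mvhd_qis_Apl} for the final quasi-isomorphism. The only extraneous remark is your invocation of $E_2$-degeneration via Theorem~\ref{thrm:ss_degeneration_mhc}, which is not needed for assertions (1)--(4) of this theorem (it is part of Theorem~\ref{thrm:main_cubical_intro} but not of the statement being proved here).
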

\begin{proof}
	Let $E_r={}_{W'}E_r(\MVHD(X,\mc{C}))$.
	The description of $E_1$ is given by 
	Theorem \ref{thrm:mv_spectral_sequence} by setting $L:=\mc{C}$. This proves the first three assertions of the Theorem.
	Since $\Gamma_{TW}(\MVHD(X,\mc{C}),F,W')\in \MHD$, 
	Theorem \ref{thrm:mhd:E_1_qis} together with 
	\eqref{eq:mvhd_qis_Apl} proves the last assertion.
\end{proof}

Finally, assume $[X,L]=[X,L,\OS(L)]$ is a locally geometric arrangement.
We have
\begin{thrm}[The main theorem for lattices]\label{thrm:main_lattices}
For the multiplicative spectral sequence $E_1$ in Theorem~\ref{thrm:mv_spectral_sequence} 
	there is a quasi-isomorphism 
	$$E^{**}_1\simeq A_{\PL}(U)\in \CDGA_{\mb{Q}},$$
		which is functorial in the locally geometric arrangement $[X,L]$.
\end{thrm}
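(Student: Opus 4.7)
The plan is to deduce Theorem \ref{thrm:main_lattices} from Theorem \ref{thrm:main_cubical} by passing to the locally cubical lattice $\mc{Q}$ associated with $L$ and then transferring the resulting quasi-isomorphism from $\OS(\mc{Q})$-coefficients back to $\OS(L)$-coefficients via the formality of the atomic complex (Theorem \ref{thrm:atomic_complex}). Since the locally cubical lattice is itself geometric (Proposition \ref{prop:p_contraction_homomorphism}) and the construction $\MVHD(X,\mc{Q})\in\MHD(X)$ of Section \ref{sect:compl:mvhc} is available, we have a genuine cdga model for $A_{\PL}(U)$ coming from $\mc{Q}$; the task is to match the $E_1$ term of its rank filtration with the cdga $E_1^{**}$ built from $L$.

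First I would apply the Navarro Aznar functor to obtain $\Gamma_{TW}\MVHD(X,\mc{Q})\in\MHD$. By Corollary \ref{cor:mvhd_is_adapted} the underlying rational cdga is quasi-isomorphic to $A_{\PL}(U)$. Invoking Theorem \ref{thrm:mhd:E_1_qis} (Cirici--Horel) applied to the rank filtration $W'$ gives a natural zig-zag of quasi-isomorphisms of cdga's
\[
E_1(\Gamma_{TW}\MVHD(X,\mc{Q}),W')\zgqis \Gamma_{TW}\MVHD(X,\mc{Q})_{\mb{Q}}\zgqis A_{\PL}(U).
\]
Next I would compute this $E_1$-term. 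By Theorem \ref{thrm:mv_spectral_sequence} applied to the multiplicative arrangement $[X,\mc{Q},\OS(\mc{Q})]$, one has an isomorphism of cdga's
\[
E_1(\Gamma_{TW}\MVHD(X,\mc{Q}),W')\simeq \Tot\bigl(H^*_{L_{\bullet}}(X)\otimes_{\mc{Q}}\OS(\mc{Q})\bigr).
\]
Because the arrangement is locally geometric, $L_{\tilde I}=L_{p(\tilde I)}$, so $H^*_{L_{\bullet}}(X)$ on $\mc{Q}$ equals $p^{*}$ of the $L$-graded object $H^*_{L_{\bullet}}(X)$. The projection formula (Lemma \ref{lemma:lattice:projection_formula}) then rewrites the above as
\[
\Tot\bigl(H^*_{L_{\bullet}}(X)\otimes_{L} p_{*}\OS(\mc{Q})\bigr),
\]
multiplicatively. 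Finally the formality of the atomic complex gives a natural multiplicative quasi-isomorphism $p_{*}\OS(\mc{Q})\zgqis \OS(L)$ of $(L,\leq)$-chain algebras; tensoring with $H^*_{L_{\bullet}}(X)$ and totalizing yields a quasi-isomorphism of cdga's onto $E_1^{**}$ as defined in Theorem \ref{thrm:mv_spectral_sequence} with $\Mu=\OS(L)$. Composing all the zig-zags produces $E_1^{**}\zgqis A_{\PL}(U)$. Functoriality in $[X,L]$ is then a consequence of the functoriality of $\mc{K}^{\mc{Q}^{\op}}_{\bullet}$ (Corollary \ref{cor:functorial_mhd}), of $\Gamma_{TW}$, of the Cirici--Horel equivalence (it is a zig-zag of lax symmetric monoidal natural transformations), of the projection formula, and of the morphism $p_{*}\OS(\mc{Q})\to\OS(L)$ provided by Corollary \ref{cor:morphism_of_OS_algebras}.

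The main obstacle is keeping control of the multiplicative structure at each step, since the direct object $\MVHC(X,L)=I^{*}\mc{K}^{\mc{Q}}*\OS(L)$ is not naturally a monoid (see Remark \ref{rmrk:I_pullback}): the map $I\colon L\to\mc{Q}$ fails to be a homomorphism. So one really must go through $\MVHD(X,\mc{Q})$ on the cubical side, verify that $W'$ is a multiplicative filtration on that diagram, and verify that both the projection-formula identification and the atomic-complex quasi-isomorphism respect the product. In particular the proof depends essentially on the multiplicative statement of Theorem \ref{thrm:atomic_complex} and on the fact that the Cirici--Horel zig-zag $E_1\zgqis U$ is a zig-zag of lax symmetric monoidal functors, so that the resulting quasi-isomorphism $E_1^{**}\zgqis A_{\PL}(U)$ lives in $\CDGA_{\mb{Q}}$ rather than merely in $\Ch(\Vect_{\mb{Q}})$.
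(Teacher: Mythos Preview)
Your proposal is correct and follows essentially the same route as the paper: pass to the locally cubical lattice $\mc{Q}$, use $\MVHD(X,\mc{Q})$ together with Cirici--Horel to obtain ${}_{W'}E_1(\MVHD(X,\mc{Q}))\zgqis A_{\PL}(U)$, and then identify this $E_1$ with the lattice model via the projection formula and Theorem \ref{thrm:atomic_complex}. The only cosmetic difference is that the paper cites Theorem \ref{thrm:main_cubical} for the first step (and uses $\MVHD(X,\mc{Q})=\MVHD(X,\mc{C})$), whereas you unpack that step directly; the core computation---$p^*H^*_{L_\bullet}(X)\otimes\OS(\mc{Q})\Rightarrow H^*_{L_\bullet}(X)\otimes p_*\OS(\mc{Q})\to H^*_{L_\bullet}(X)\otimes\OS(L)$---is identical.
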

\begin{proof}
	%As above the description of $E_r$ is provided by Proposition \ref{prop:mv_spectral_sequence}.
	%The last assertion will follow from a natural quasi-isomorphism $E_1\zgqis {}_{W'}E_1(\MHD(X,\mc{Q}}))$.
	Let $[X,\mc{Q},\OS(\mc{Q})]$ be the locally cubical 
	arrangement corresponding to $[X,L,\OS(L)]$.
	Recall that $|\os{\circ}{\sup}(I)|\le 1$ for $I\subset \Atoms(I)$, hence
	$\mc{Q}$ is in fact a sublattice of the cubical lattice $\mc{C}$ corresponding to $(L,\le)$.
	Thus $\Tot(H^*(X,X-\mc{Q}_\bullet)\otimes \OS(\mc{Q}))=\Tot(H^*(X,X-\mc{C}_\bullet)\otimes \mc{C})$.
	%Then the contrction homomorphism $p\colon \mc{Q}\ar L$ 
	%defines an $(L,\le)$ $\Tot(p_*(H^*(X,X-\mc{C}_\bullet)))$.
	On the other hand we have a zig-zag of quasi-isomorphisms of 
	$(L,\le)$-chain algebras
	\begin{multline*}
		p_*(H^*(X,X-\mc{Q}_\bullet)\otimes\OS(\mc{Q}))
  \ar[r]
		 p_*(p^*H^*(X,X-L_\bullet)\otimes \OS(\mc{Q}))=
		H^*(X,X-L_\bullet)\otimes p_*\OS(\mc{Q})\ar \\
		\ar H^*(X,X-L_\bullet)\otimes \OS(L),
	\end{multline*}
 where $r$ is an isomorphism,
	the equality is given by the projection formula (Lemma \ref{lemma:lattice:projection_formula}), and the last arrow is a quasi-isomorphism provided by the formality of the atomic complex $p_*\OS(\mc{Q})$ (Theorem \ref{thrm:atomic_complex}).
	Applying $\Tot(-)$ we obtain a quasi-isomorphism of cdga's 
	$${}_{W'}E_1(\MVHD(X,\mc{C}))=
	\Tot(H^*(X,X-\mc{C}_\bullet)\otimes \OS(\mc{C}))\zgqis 
		\Tot(H^*(X,X-L_\bullet)\otimes\OS(L))=E_1.$$
\end{proof}

\begin{rmrk}\label{rmrk:weaker_assumptions}
%In fact it is possible to define $\MHD(X,L)\in \MHD(X)$ for locally geometric
%arrangement $[X,L]$ by 
One can check that for a locally geometric  $[X,L]$, 
the formula $$\MVHD(X,L):=\MVCC(X,L,\mc{K}^{\mc{Q}^{\op}})=\Tot(p_*(\check{C}(\mc{K}^{\mc{Q}^{\op}})\otimes \OS(L))),$$
defines $(\MVHD(X,L),F,W')\in \MHD(X)$ such that ${}_{W'}E_1(\MHD(X,L))$ is described by 
Theorem \ref{thrm:mv_spectral_sequence}.
\end{rmrk}
\begin{rmrk}
Let us note that the requirement that the base algebraic variety $X$
should be compact can be weakened. 
It is enough to require
the mixed Hodge structure $H^k_{L_x}(X)$
to be pure of weight $k$ for all $x\in L$. 
\end{rmrk}

\newpage
\section{Applications}\label{sect:applications}
In Section \ref{sect:app:subspaces} we provide a model for 
affine subspace complements and then establish multiplicative version of the 
Goresky-Macpherson formula.
In particular we generalize a formality result due to 
Feichtner-Yuzvinsky \cite{FY} stating 
that the complement of a complex vector subspace 
arrangement with geometric intersection lattice is formal. 
%This will be done by applying elementary homological algebra together 
%with our main theorem.

Then we consider the chromatic configuration
space $F(X,G)$ (Example \ref{exmpl:arrangementss}) 
given by the complement of an arrangement in $X^{V(G)}$ where $X$ is a topological space, $G$ is a finite graph and $V(G)$ is the set of the vertices of $G$.
In Section \ref{sect:mv_totaro} we will show that the spectral
sequence $E^{**}_1\Rightarrow H^*(F(X,G);\mb{Z})$ provided 
by Theorem \ref{thrm:ss_for_lattices} generalizes the Totaro spectral sequence 
\cite[Theorem 1]{Totaro}.
Finally, in Section \ref{sect:app:conf_homotopy_type}
we combine this result with our main theorem
and describe an equivariant model for the $\mb{Q}$-homotopy type of
$F(X,G)$ in the case when $X$ is a smooth compact algebraic variety over $\mb{C}$.

\subsection{Subspace arrangements}
\label{sect:app:subspaces}
Here we describe a generalization of Yuzvinsky's results on vector subspace arrangements 
to the affine case.
Consider a collection $\{Z_i\subset V:=\mb{C}^n\}$ of affine subspaces.
Let $(L,\leq)$ be the corresponding intersection poset, i.e.\ 
the elements of $L$ are given by
all \emph{different} non-empty intersections $Z_I\subset \mb{C}^n,I\subset \Atoms(L)$.
The corresponding arrangement poset $[V,L]$ was considered in \ref{GorMac},
it is multiplicative and equipped with the weak grading $c(x)=\codim_{\mb{C}}(L_x/V)$.

First we will describe a rational model of the complement $U:=V-\cup_i Z_i$. 
In particular we will establish formality of complements to arrangements with locally geometric intersection
lattice.
Then we will show that the Goresky-MacPherson formula can be upgraded to an isomorphism of 
algebras over $\mb{Q}$:
$$H^*(V-\cup_i Z_i)\simeq \bigoplus^{x\in L}_{} H^{*-2\codim(L_x)}(\mc{D}_x),$$
where $H^*(\mc{D}_x)\simeq \tilde{H}_{-*-2}(L(0,x))$ according to Theorem \ref{order_complex}.
This isomorphism identifies the natural weight filtration on $H^*(V-\cup_i Z_i)$ with
the lattice filtration \ref{GorMac}.
This implies an isomorphism of algebras in pure Hodge structures:
$$\Gr^{\Dec W} H^*(V-\cup_i Z_i)\simeq \bigoplus^{x\in L}_{} H^{*-2\codim(L_x)}(\mc{D}_x)(-\codim L_x),$$
\iffalse
This is equivalent to 
requiring that for all $p\in \mb{C}^n$ the lattice 
$\{x\in L\mid L_x\ni p\}\subset L$ is geometric (Definition \ref{dfn:lattice:geometric}).
In this section we will prove the following
\begin{thrm}\label{thrm:fy_formality}
	The complement $U=\mb{C}^n-\bigcup\limits_{i}Z_i$ is 
	formal. 
\end{thrm}
\fi
Since the mixed Hodge structure $H^k_{L_x}(V)$ is pure of weight $k$, 
according to Remark \ref{rmrk:weaker_assumptions} one can omit properness assumption
and use the cubical model immediately.
For completeness we also derive it in by using a certain compactification
in the end of the section (Proposition \ref{prop:nonproper_model}).  

For now we assume that the cubical model $E^{**}_1=\bigoplus_{I} H^*_{Z_I}(X)\otimes dx_I$
with grassman monomials $\deg dx_I=-|I|$ 
models the complement $V-\cup_i Z_i=V-\cup_{x>0}L_x$. It
is isomorphic to cdga \ref{GorMac}:
$\bigoplus_{x\in L}\tau_x\mc{D}^*_x$,
where $\tau_x$ is a formal variable (the Thom class) of degree $2\codim L_x$ corresponding to the generator of 
MHS $H^{2\codim L_x}_{L_x}(V)(-\codim L_x)$, so that $\tau_x \tau_y=\tau_{x\vee y}$ if $c(x\vee y)=c(x)+c(y)$
(i.e. $L_x,L_y$ are transversal) and is zero otherwise.
Unwinding the definition of the atomic complex $\mc{D}$ we obtain:
\begin{thrm}\label{thrm:model_subspaces}
	A cdga $$\bigoplus\limits^{x\in L,I}_{p(I)=x}\tau_x \otimes dx_I,$$
	with differential $\partial(dx_I)=\sum_{i|p(I-\{i\})=p(I)}\iota_i$
	is quasi-isomorphic to $A^*_{\PL}(V-\cup_i Z_i)$.
	If $(L,\le)$ is locally geometric, then $V-\cup_i Z_i$ is formal and modeled by:
	$$\bigoplus_{x\in L}\tau_x \OS(L).$$
\end{thrm}
\begin{proof}
	The first assertion is clear. The formality in locally geometric case follows from
	formality of atomic complex $\mc{D}\zgqis \OS(L)$ by Theorem \ref{thrm:atomic_complex}.
\end{proof}

Passing to the cohomology and taking the Hodge structure into account we obtain:
$$E^{pq}_2=\bigoplus\limits^{x\in L}_{2\codim L_x=q}H^p(\mc{D}_x)(-\codim L_x).$$
By our main theorem $\oplus_{p+q=n}E^{pq}_2\simeq H^n(U)$, hence:
\begin{thrm}
	There is a functorial isomorphism of algebras:
	$$H^*(V-\cup_i Z_i)\simeq \bigoplus\limits^{x\in L} \tau_x H^*(\mc{D}_x)\simeq
	\bigoplus\limits^{x\in L}\tau_x \tilde{H}_{-*-2}(L(0,x)).$$
\end{thrm}
\begin{rmrk}
	The multiplication on the order complex can be described by shuffle-type formula \cite{YuzAtomic}.
\end{rmrk}
Recall that the lattice filtration ${}_{L}W_p$ on $E^{pq}_1$
\begin{prop}\label{weight_lattice_filtrations}
	The weight and the lattice filtration ${}_{L}W$ on $H^n(U)$ coincide
	after application of the Goresky-MacPherson formula.
\end{prop}
\begin{proof}
	Let $W'$ denotes the column filtration from the cubical model. 
	By the main theorem 
the weight filtration on $H^n(U)$ coincide with $\Dec W'$, where
$(\Dec W')_q H^n(U)=W'_{q-n}H^n(U)$.
By definition we have:
$$W'_{p}H^n(U)=\bigoplus\limits^{x\in L,i\le p}H^{n-i}_{L_x}(X)\otimes H^{i}(\mc{D}_x)=
\bigoplus^{x\in L}_{2c(x)\le n+p}H^{n-2c(x)}(\mc{D}_x),$$
for $p=q-n$ we obtain:
$$(\Dec W')_q H^n(U)=\bigoplus\limits^{x\in L}_{2c(x)\le q}H^{n-2c(x)}(\mc{D}_x).$$
On the other hand
$${}_{L}W_q H^n(U)=\bigoplus\limits^{x\in L,i}_{c(x)\le q}H^{n-i}_{L_x}(X)\otimes H^{i}(\mc{D}_x)=
\bigoplus\limits^{x\in L}_{c(x)\le q}H^{n-2c(x)}(\mc{D}_x).$$

\end{proof}
\begin{rmrk}\label{rmrk:purity}
	In case of $C$-equal codimension arrangement the intersection poset $(L,\le)$
	is geometric lattice with the rank function $r(x)=c(x)/C$.
	Then $H^*(\mc{D}_x)$ is concentrated in degree $-r(x)$ and 
	$\Gr^{\Dec W}_q H^n(U)\ne 0$ implies that $n-q=-\frac{q}{2C}$ or $q=\frac{2Cn}{2C-1}$.
\end{rmrk}

Now let us show that all assertions above can be derived from a certain compactification,
which will provide a model quasi-isomorphic in category of complexes of 
MHS's to the given above \ref{thrm:model_subspaces}.

Let 
$X:=\mb{P}^n\supset \mb{C}^n$ and
$\tilde{L}$ be the intersection poset obtained by taking 
the closure of $\{Z_i\}$ in $X$
and adding the hyperplane at infinity
$\tilde{L}_\infty:=\mb{P}^{n-1}\subset \mb{P}^n$.
So $\Atoms(\tilde{L})=\Atoms(L)\sqcup \{\infty\}$.
Clearly, $$U=\mb{P}^n-\bigcup\limits_{x\in \tilde{L}}\tilde{L}_x=
\mb{C}^n-\bigcup\limits_{x\in L}L_x.$$

\begin{rmrk}
	Note that $\tilde{L}$ is not locally geometric in general. 
	For example consider the intersection poset $(L,\le)$ given by
	a plane and a line in $\mb{A}^3$ with empty intersection. 
	In this case $\tilde{L}$ is not graded and its description is given by 
	Part 5 of Example \ref{exmpl:arrangementss}.
\end{rmrk}
Let us first describe a simple model for the $\mb{Q}$-homotopy type of $U$ 
without any assumptions on $(L,\le)$ using the cubical model.
By Theorem \ref{thrm:main_cubical}, 
the following cdga models the rational 
homotopy type of $U$:
$$K:=\left(\bigoplus\limits_{I\subset \Atoms(\tilde{L})}
H^*_{\tilde{L}_I}(\mb{P}^n)\cdot d\nu_I,\partial\right).$$

Recall that $d\nu_I$ has degree $-|I|$ and the 
differential $\partial=\sum\limits_i \iota_i$ is
the whole Koszul differential.

Since $H^*(\mb{P}^n)=\mb{Q}[t]/t^{n+1},\deg t =2$ 
and $\tilde{L}_I$
is a projective subspace, one can identify 
the cohomology with support $H^*_{\tilde{L}_I}(\mb{P}^n)\subset H^*(\mb{P}^n)$ 
with 
$t^{\codim \tilde{L}_I/\mb{P}^n}H^*(\mb{P}^n)\subset H^*(\mb{P}^n)$ as a graded $H^*(\mb{P}^n)$-module, where by definition 
$\codim \emptyset/\mb{P}^n=n+1$.
This description is compatible with the algebra structure on $K$. 
The element 
$t^{\codim L_I}\in H^*(\mb{P}^n)$ 
corresponds to the Thom class $\tau_I\in H^*_{\tilde{L}_I}(\mb{P}^n)$.

Further, since taking the direct product with an affine space 
does not change the homotopy type of the affine complement
and the codimension of the subspaces in the arrangement,
one may assume from the beginning that 
for any $x\in L\subset \tilde{L}$, we have 
$\tilde{L}_x\cap \tilde{L}_{\infty}\neq \emptyset\subset \mb{P}^n$, i.e.\ all 
subspaces $L_x$ are at least one dimensional. Note that for $I\subset \Atoms(L)\subset \Atoms(\tilde{L})$ we have by definition $\tilde{L}_I=\cap_{i\in I}\tilde{L}_i$, which could be non-empty even if $L_I=\emptyset$.

Consider the following  complexes over $\mb{Q}[t]$,
\begin{enumerate}
	\item $K=\bigoplus\limits^
		{I\subset \Atoms(\tilde{L})}
		t^{\codim \tilde{L}_I}
		\mb{Q}[t]/t^{n+1}\otimes\mb{Z}\cdot d\nu_I$,
	\item
		$K'=\bigoplus\limits^
		{I\subset \Atoms(\tilde{L})}_{\tilde{L}_I\neq\emptyset}
		t^{\codim \tilde{L}_I}\mb{Q}[t]\otimes\mb{Z}\cdot d\nu_I$,
	\item
		$K''=\bigoplus\limits^
		{I\subset \Atoms(\tilde{L})}_{\tilde{L}_I\neq\emptyset}
		t^{n+1}\mb{Q}[t] \otimes\mb{Z}\cdot d\nu_I$
\end{enumerate}
equipped with the Koszul differential.
We define a cdga structure on $K'$ by setting $d\nu_I\cdot d\nu_J=s(I,J)d\nu_{IJ}$
where $s(I,J)=0$ if $\tilde{L}_{IJ}=\emptyset$ and 
the sign of the shuffle $(I,J)$ in $IJ$ otherwise. The multiplication 
is well defined since
$\codim L_I+\codim L_J\ge \codim L_{IJ}$.

Clearly the natural projection $K'\ar K$ is a morphism of cdga's 
with the kernel $K''$.
\begin{lemma}\label{lemma:subspaces:acyclic}
	The complex $K''$ is acyclic.
\end{lemma}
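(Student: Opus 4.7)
The plan is to identify $K''$, up to the inert coefficient factor $t^{n+1}\mb{Q}[t]$, with the augmented simplicial chain complex of an abstract simplicial complex $\Sigma$, and then to exhibit $\Sigma$ as a cone, from which acyclicity is automatic. Concretely, set $\Sigma := \{I \subset \Atoms(\tilde{L}) \mid \tilde{L}_I \neq \emptyset\}$. Since $I' \subset I$ implies $\tilde{L}_{I'} \supset \tilde{L}_I$, $\Sigma$ is closed under subsets and hence an abstract simplicial complex on $\Atoms(\tilde{L})$; it contains $\emptyset$ because $\tilde{L}_\emptyset = \mb{P}^n$. Under the Koszul differential $\partial = \sum_i \iota_i$ the module $V_\bullet := \bigoplus_{I \in \Sigma}\mb{Z}\cdot d\nu_I$ is precisely the augmented simplicial chain complex of $\Sigma$ (the action $\iota_i d\nu_I = \pm\, d\nu_{I\setminus\{i\}}$ is the simplicial boundary with signs), and $K'' \cong t^{n+1}\mb{Q}[t]\otimes_{\mb{Z}}V_\bullet$ as complexes. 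So it will suffice to show $\tilde{H}_*(\Sigma;\mb{Q}) = 0$.

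The crucial step will be to verify that $\infty \in \Atoms(\tilde{L})$ is a cone apex of $\Sigma$, i.e.\ that $I \in \Sigma$ implies $I \cup \{\infty\} \in \Sigma$. This is where the standing hypothesis ``$\dim L_x \geq 1$ for all $x \in L$'' (equivalently, $\tilde{L}_x \cap \tilde{L}_\infty \neq \emptyset$) is used. If $\infty \in I$ the claim is trivial, so assume $I \subset \Atoms(L)$ and split on whether $L_I := \bigcap_{i\in I} L_i$ is empty affinely. When $L_I = \emptyset$, every point of $\tilde{L}_I$ lies at infinity, so $\tilde{L}_I \cap \tilde{L}_\infty = \tilde{L}_I \neq \emptyset$. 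When $L_I \neq \emptyset$, we have $L_I = L_x$ for some $x \in L$, so $\overline{L_x} \subset \tilde{L}_I$, and by hypothesis $\overline{L_x}\cap \tilde{L}_\infty \neq \emptyset$.

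I do not foresee a real obstacle beyond this case analysis; the subtle point to watch is that $\tilde{L}_I = \bigcap_i \tilde{L}_i$ can strictly contain the closure $\overline{L_I}$ of the affine intersection (think parallel hyperplanes meeting only at infinity), which is exactly why the two cases must be separated. Once the cone property is in hand, one writes down an explicit contracting homotopy $h$ on $V_\bullet$ by setting $h(d\nu_I) := (-1)^{|I|}\, d\nu_I \wedge d\nu_\infty$ when $\infty \notin I$ and $h(d\nu_I) := 0$ otherwise; this map lands in $V_\bullet$ precisely because $\infty$ is a cone vertex. A short sign check, splitting into the two cases $\infty \in I$ and $\infty \notin I$, yields $\partial h + h \partial = \id$, and tensoring with $t^{n+1}\mb{Q}[t]$ gives a contracting homotopy on $K''$, completing the proof.
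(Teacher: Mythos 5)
Your proof is correct and takes essentially the same approach as the paper's. Both rest on the key observation that $\tilde{L}_I\neq\emptyset$ implies $\tilde{L}_{I\cup\{\infty\}}\neq\emptyset$ (you argue directly by splitting on whether $L_I=\emptyset$; the paper argues by contradiction, noting that otherwise $\tilde{L}_I=L_I$ would be a compact linear subspace of $\mb{C}^n$ of dimension $\geq 1$), and then both deduce acyclicity from the resulting cone structure on the indexing set --- your explicit contracting homotopy onto the apex $\infty$ is the paper's ``acyclic two-column bicomplex under $\iota_\infty$'' in different language.
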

\begin{proof}
	Note that for $I\subset \Atoms(L)$, we have $\tilde{L}_I=\emptyset$ 
	iff $\tilde{L}_{I\infty}=\emptyset$.
	Indeed, if $\tilde{L}_{I}\cap \tilde{L}_\infty=\emptyset$ and $\tilde{L}_I\neq\emptyset$,
	then $\tilde{L}_I$ is a compact subset of $\mb{C}^n$. Then $\tilde{L}_I=L_I$, but by our 
	assumption $L_I\neq\emptyset$ is at least one dimensional.

	We can write 
	$$K''=\left[\bigoplus\limits_{I\subset \Atoms(L)}d\nu_{I\infty}\ar[\iota_{\infty}] 
		\bigoplus\limits_{I\subset \Atoms(L)}d\nu_{I}\right]\cdot t^{n+1}\mb{Q}[t]$$
	as a bicomplex with two columns. 
	Since the horizontal differential $\iota_{\infty}$ is acyclic, the claim follows.
\end{proof}

\iffalse
Note that the homotopy type of the complement $U\subset \mb{C}^n$
does not change after multiplying the arrangement $L$ by an affine line $\mb{C}$.
It is natural to think of $K'$ 
as a stable version of the model $K$ obtained by taking 
the direct product of the initial arrangement by $\mb{C}^m,m\to \infty$.
\fi
%
Let $s\colon K'\to K'[-1]$ be multiplication by
the element
$t d\nu_\infty\in K'$.
Note that in general $sK'\not\subset tK'$, 
(e.g.\ $d\nu_\infty\not\in K'$).

\begin{lemma}
\begin{enumerate}
	\item  The commutator $[s,\partial]\colon K'\to K'$ 
		is multiplication by $t$.
	\item  The subcomplex $sK'+tK'\subset K'$ is an acyclic ideal. 
\end{enumerate}
\end{lemma}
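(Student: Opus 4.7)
My plan is to treat the two assertions separately, with part (1) being a direct computation and part (2) reducing acyclicity to a purity statement inside $sK'$.

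For part (1), I would compute $[s,\partial]$ directly, reading $[-,-]$ as the graded commutator since both operators have odd (total) degree. Because $d\nu_\infty$ has degree $-1$ and $\iota_i$ is a degree $+1$ derivation with $\iota_i d\nu_j=\delta_{ij}$, the Leibniz rule gives $\partial(d\nu_\infty\wedge\omega)=\omega-d\nu_\infty\wedge\partial\omega$. Multiplying by the central element $t$ and using $s(\omega)=td\nu_\infty\wedge\omega$ yields $\partial s(\omega)=t\omega-s\partial(\omega)$, which is exactly $[s,\partial]=t\cdot$.

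For part (2), I would first verify that $sK'+tK'$ is an ideal and a subcomplex. The ideal property is clear for $tK'$; for $sK'$, graded-commutativity lets one move $d\nu_\infty$ past any element of $K'$ up to sign, so $\omega'\cdot s(\eta)=\pm s(\omega'\eta)$. Closure under $\partial$ follows from part (1): $\partial(sK')\subset sK'+tK'$ since $\partial s=-s\partial+t$, and $\partial(tK')\subset tK'$.

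The heart of the argument is acyclicity, and here the key move is to use $[s,\partial]=t$ to rewrite closed elements. Given a closed $\omega=t\alpha+s\beta$, I would use the identity $t\alpha=\partial(s\alpha)+s\partial\alpha$ to get
\[
\omega=\partial(s\alpha)+s(\beta+\partial\alpha),
\]
so that $\xi:=\omega-\partial(s\alpha)=s(\beta+\partial\alpha)$ is a closed element of $sK'$; note that $s\alpha\in sK'\subset sK'+tK'$, so showing $\xi=0$ will finish the proof. The main obstacle is then the purity claim that every $\partial$-closed element of $sK'$ vanishes, but this is where the degree-$-1$ nature of $d\nu_\infty$ helps: a general element of $sK'$ can be written $s\eta=td\nu_\infty\wedge\eta_0$ with $\eta_0\in K'$ not involving $d\nu_\infty$ (the $d\nu_\infty$-containing part is killed by $s$ since $s^2=0$). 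Then $\partial(s\eta)=t\eta_0-td\nu_\infty\wedge\partial\eta_0$, and the component without $d\nu_\infty$ reads $t\eta_0=0$. Since multiplication by $t$ is injective on $K'$ (each summand is a free $\mathbb{Q}[t]$-module), $\eta_0=0$ and hence $s\eta=0$. Applying this to $\xi$ yields $\omega=\partial(s\alpha)$, proving acyclicity.
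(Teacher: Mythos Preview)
Your proof is correct and follows essentially the same strategy as the paper. For part (1) the paper simply says ``straightforward'', and your Leibniz-rule computation is exactly what is meant. For part (2), the paper also picks a decomposition $\omega=sc_1+tc_2$, applies $s$ to $\partial\omega=0$ (using $s^2=0$) to obtain $t(sc_1+s\partial c_2)=0$, invokes $t$-injectivity to get $sc_1=-s\partial c_2$, and then verifies $\partial(sc_2)=\omega$; your rewriting $\omega=\partial(s\alpha)+s(\beta+\partial\alpha)$ followed by the purity claim that closed elements of $sK'$ vanish is the same argument unpacked, with the same primitive $s\alpha=sc_2$ and the same use of $t$-injectivity on the free $\mathbb{Q}[t]$-summands.
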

\begin{proof}
	1. This is straighforward. 

	2. Clearly, $sK'+tK'$ is a subcomplex and an ideal of $K'$. 
	Let $sc_1+tc_2, c_1,c_2\in K'$ be a cocycle, i.e.\ 
	$$0=\partial(sc_1+tc_2)=-s\partial c_1+tc_1+t\partial c_2.$$
	Then, since $s^2=0$, we have $t(sc_1+s\partial c_2)=0$, and hence 
	$sc_1+s\partial c_2=0$.
	So $$\partial(sc_2)=-s\partial c_2+tc_2=sc_1+tc_2$$ is
	a coboundary.
\end{proof}
By the lemmas above, the rational homotopy type of $U$
is given by the cdga $K'/(sK'+tK')$.

\begin{lemma}
	There is an isomorphism of complexes:
	\begin{equation}\label{eq:subspaces:eq1}
		K'/(sK'+tK')\simeq 
		\bigoplus\limits^{I\subset \Atoms(L)}_{\tilde{L}_I\neq \emptyset}
			\mb{Q}\cdot t^{\codim \tilde{L}_I}\otimes \mb{Z}\cdot d\nu_I
		\oplus
		\bigoplus\limits^{I\subset\Atoms(L)}_{\tilde{L}_I\neq\emptyset,L_I=\emptyset}
			\mb{Q}\cdot t^{\codim\tilde{L}_I}\otimes\mb{Z}\cdot d\nu_{I\infty}.
	\end{equation}
\end{lemma}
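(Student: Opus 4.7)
The plan is to compute $K'/(sK'+tK')$ in two steps: first quotient by $tK'$ to collapse each graded component to its lowest $t$-power, then identify the image of $s$ in this quotient.

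First I would describe $K'/tK'$ explicitly. Since $K'=\bigoplus_{\tilde{L}_I\neq\emptyset} t^{\codim \tilde{L}_I}\mb{Q}[t]\otimes \mb{Z}\cdot d\nu_I$, the subcomplex $tK'$ kills all powers of $t$ strictly above $t^{\codim\tilde{L}_I}$ in each summand, so
\[
K'/tK'\simeq \bigoplus\limits^{I\subset \Atoms(\tilde{L})}_{\tilde{L}_I\neq\emptyset}\mb{Q}\cdot t^{\codim\tilde{L}_I}\otimes\mb{Z}\cdot d\nu_I.
\]
Each multi-index $I\subset \Atoms(\tilde{L})$ is either a subset of $\Atoms(L)$ or of the form $I=I'\infty$ with $I'\subset \Atoms(L)$, so the right hand side splits into two pieces indexed accordingly.

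Next I would compute the image of $s$ in $K'/tK'$. Since $d\nu_\infty\wedge d\nu_\infty=0$, the operator $s$ vanishes on summands with $\infty\in I$, so $s$ only acts on summands $d\nu_{I'}$ with $I'\subset \Atoms(L)$. For such $I'$,
\[
s\bigl(t^{\codim\tilde{L}_{I'}}\cdot d\nu_{I'}\bigr)=\pm\, t^{\codim\tilde{L}_{I'}+1}\cdot d\nu_{I'\infty}.
\]
The comparison between $\codim\tilde{L}_{I'}+1$ and $\codim\tilde{L}_{I'\infty}$ splits into two cases. If $L_{I'}\neq\emptyset$, then $\tilde{L}_{I'}$ is not contained in $\tilde{L}_\infty$, so $\tilde{L}_{I'\infty}=\tilde{L}_{I'}\cap\tilde{L}_\infty$ is a hyperplane section and $\codim\tilde{L}_{I'\infty}=\codim\tilde{L}_{I'}+1$; in this case $s$ hits the generator $t^{\codim\tilde{L}_{I'\infty}}\cdot d\nu_{I'\infty}$ of the corresponding summand of $K'/tK'$, killing it modulo $sK'+tK'$. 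If on the other hand $L_{I'}=\emptyset$, then $\tilde{L}_{I'}\subset\tilde{L}_\infty$, so $\tilde{L}_{I'\infty}=\tilde{L}_{I'}$ and $\codim\tilde{L}_{I'\infty}=\codim\tilde{L}_{I'}$; in this case $t^{\codim\tilde{L}_{I'}+1}\cdot d\nu_{I'\infty}$ lies in $tK'$, so $s$ contributes nothing and the $d\nu_{I'\infty}$ summand survives.

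Combining these observations, in the quotient $K'/(sK'+tK')$ the surviving summands are exactly $\mb{Q}\cdot t^{\codim\tilde{L}_I}\otimes\mb{Z}\cdot d\nu_I$ for all $I\subset \Atoms(L)$ with $\tilde{L}_I\neq\emptyset$, together with $\mb{Q}\cdot t^{\codim\tilde{L}_{I'\infty}}\otimes\mb{Z}\cdot d\nu_{I'\infty}$ for those $I'\subset\Atoms(L)$ with $\tilde{L}_{I'}\neq\emptyset$ and $L_{I'}=\emptyset$ (recall from Lemma~\ref{lemma:subspaces:acyclic} that $\tilde{L}_{I'}\neq\emptyset$ is equivalent to $\tilde{L}_{I'\infty}\neq\emptyset$ in this case). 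This matches the claimed decomposition. The only delicate point is the codimension bookkeeping in the two cases $L_{I'}=\emptyset$ and $L_{I'}\neq\emptyset$; once this dichotomy is settled the rest of the argument is a straightforward inspection.
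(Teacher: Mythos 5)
Your proof is correct and takes essentially the same route as the paper's: quotient first by $tK'$ to reduce each summand to its lowest $t$-power, then split into the two cases $L_I=\emptyset$ and $L_I\neq\emptyset$ and track whether $s(t^{\codim\tilde{L}_I}d\nu_I)$ survives modulo $tK'$. In fact you are slightly more careful than the published proof, which contains a typo in the codimension bookkeeping for the case $L_I\neq\emptyset$ (it reads $\codim\tilde{L}_I=1+\codim\tilde{L}_{I\infty}$ where it should be $\codim\tilde{L}_{I\infty}=1+\codim\tilde{L}_I$, as you correctly state); the displayed formula for $s$ there is nonetheless consistent with the correct relation, so this is cosmetic.
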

\begin{proof}
	Assume $\tilde{L}_I\neq\emptyset$ for some $I\subset \Atoms(L)$.
	There are two cases. If $L_I=\emptyset$, then $\tilde{L}_I=\tilde{L}_{I\infty}$, and
	$$s(t^{\codim \tilde{L}_I}d\nu_I)=\pm t^{\codim \tilde{L}_{I\infty}+1}d\nu_{I\infty}.$$
	If $L_I\neq\emptyset$, then $\codim \tilde{L}_I=1+\codim \tilde{L}_{I\infty}$, and
	$$s(t^{\codim \tilde{L}_I}\cdot d\nu_I)=\pm t^{\codim\tilde{L}_{I\infty}}d\nu_{I\infty}.$$
	This immediately implies the required description for $K'/(sK'+tK')$ and also equips
	the RHS in \eqref{eq:subspaces:eq1} with an obvious cdga structure.
\end{proof}

To proceed let $\mc{Q}$ denote the locally cubical lattice corresponding to $L$, and let
$p\colon (\mc{Q},\le)\ar (L,\le)$ be the natural 
projection (Proposition \ref{prop:p_contraction_homomorphism}).
Since $(L,\le)$ is an intersection poset, 
the vertices of $\mc{Q}$ are given by $I\in \mc{Q}$, where
$I\subset \Atoms(L)$ are such that $L_I\neq\emptyset$.
The cdga $\Tot\big(p^*H_{L_\bullet}(\mb{C}^n)\otimes\OS(\mc{Q})\big)$ is given by 
$$\bigoplus\limits^{I\subset \Atoms(L)}_{L_I\neq \emptyset}t^{\codim L_I}\otimes \mb{Z}\cdot d\nu_I,$$
where the product of $t^{\codim L_I}\cdot d\nu_I$ with $t^{\codim L_J}\cdot d\nu_J$ 
is zero unless $\codim L_I+\codim L_J=\codim L_{IJ}$.

Finally, for all local sup-lattices $L$ we obtain the following:
\begin{prop}\label{prop:nonproper_model}
	There is a quasi-isomorphism of cdga's:
	$$\phi\colon K\ar \Tot\big(p^*H_{L_\bullet}(\mb{C}^n)\otimes\OS(\mc{Q})\big).$$
\end{prop}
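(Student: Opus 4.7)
My plan is to construct $\phi$ at the level of $K'$, factor through the quasi\-isomorphisms $K \twoheadleftarrow K' \twoheadrightarrow M := K'/(sK'+tK')$ established in the preceding two lemmas, and finally reduce the proof to showing that the resulting map $\psi \colon M \to N$ has acyclic kernel. Concretely I would define a $\mb{Q}$-linear map $\phi'\colon K' \to N$ on the basis of $K'$ by
\[
\phi'(t^{a}\, d\nu_I) = \begin{cases} t^{\codim L_I}\, d\nu_I & \text{if } I\subseteq \Atoms(L),\ L_I\neq\emptyset,\ \text{and } a = \codim L_I,\\ 0 & \text{otherwise.} \end{cases}
\]
Conceptually, $\phi'$ is induced slot-by-slot by the restriction $H^{*}_{\tilde L_I}(\mb{P}^n) \to H^{*}_{L_I}(\mb{C}^n)$: it kills every monomial containing $d\nu_\infty$ (because $\infty\in I$ forces $L_I = \emptyset$) and truncates $t$-powers exceeding $\codim L_I$ because $H^*_{L_I}(\mb{C}^n)$ is concentrated in a single degree. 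Multiplicativity is routine: a product in $N$ is nonzero precisely when the codimensions add, $\codim L_I + \codim L_J = \codim L_{I\cup J}$, which coincides with the condition for the image of a product in $K'$ to survive the $t$-degree prescription of $\phi'$. Commutation with $\partial$ is equally direct: in $\partial(t^{\codim L_I}\, d\nu_I) = \sum_{i\in I} \pm t^{\codim L_I}\, d\nu_{I-i}$, only those $i$ with $L_{I-i}=L_I$ (equivalently $\codim L_{I-i} = \codim L_I$) survive $\phi'$, which is exactly the combinatorial differential of $p^* H_{L_\bullet}(\mb{C}^n)\otimes \OS(\mc{Q})$.

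\textbf{Descending the map.} The next step is to observe that $\phi'$ annihilates each of the three acyclic subcomplexes already identified in this subsection: $\phi'$ kills $K''$ and $tK'$ for $t$-degree reasons, and kills $sK'$ because every generator of $sK'$ carries a factor $d\nu_\infty$. Consequently $\phi'$ descends simultaneously to the desired $\phi\colon K\to N$ and to $\psi\colon M\to N$, fitting into the commutative diagram
\[
K\ \twoheadleftarrow\ K'\ \twoheadrightarrow\ M\ \xrightarrow{\ \psi\ }\ N,
\]
whose two leftmost arrows are quasi\-isomorphisms by the preceding lemmas. It therefore suffices to prove that $\psi$ is a quasi\-isomorphism.

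\textbf{Main step: acyclicity of $\ker\psi$.} Using the explicit basis of $M$ computed immediately above, the kernel $J$ of $\psi$ is spanned by the Type A generators $t^{\codim\tilde L_I}\, d\nu_I$ with $L_I = \emptyset$ together with all Type B generators $t^{\codim\tilde L_I}\, d\nu_{I\infty}$ (for which $L_I = \emptyset$ holds automatically). The geometric fact that makes $J$ a subcomplex is the following: if $L_I = \emptyset$ but $L_{I-i}\neq\emptyset$, then $\tilde L_{I-i}$ is the projective closure of a nonempty affine subspace and hence is not contained in $\tilde L_\infty$; therefore $\tilde L_{I-i}\supsetneq\tilde L_I$ strictly, $\codim \tilde L_{I-i} < \codim \tilde L_I$, and the corresponding boundary term $d\nu_{I-i}$ is killed in $M$ by the $t$-degree truncation. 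Grouping by the subspace $Y = \tilde L_I \subseteq \tilde L_\infty$ gives a decomposition $J = \bigoplus_{Y} J_Y$. For each $Y$, setting $\mc{I}_Y = \{I\subseteq \Atoms(L) : \tilde L_I = Y\}$, the subcomplex $J_Y$ is the mapping cone of the identity between the graded $\mb{Q}$-module $C_Y = \bigoplus_{I\in \mc{I}_Y} \mb{Q}\, d\nu_I$ (equipped with the combinatorial differential $d\nu_I \mapsto \sum \pm d\nu_{I-i}$) and its shift spanned by $\{d\nu_{I\infty} : I\in \mc{I}_Y\}$, the horizontal identification being the bijective map $d\nu_{I\infty}\mapsto \pm d\nu_I$ coming from the $\iota_\infty$ component of $\partial$. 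Mapping cones of identities are acyclic, so $J_Y$ and hence $J$ is acyclic, which concludes the argument. The main obstacle I anticipate is the sign bookkeeping needed to verify that this horizontal map really is an isomorphism of graded modules (together with the multiplicativity check of paragraph one); the rest of the plan is essentially a mechanical unwinding once $M$ has been described.
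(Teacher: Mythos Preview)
Your proposal is correct and follows essentially the same route as the paper: define the obvious projection (the paper specifies $\phi$ directly on $K$ rather than lifting to $K'$, but this is cosmetic), observe that it factors through $M=K'/(sK'+tK')$, and prove that the kernel of the induced map on $M$ is acyclic via the $\iota_\infty$-contraction, exactly as in the proof that $K''$ is acyclic. Your extra grouping of $\ker\psi$ by the subspace $Y=\tilde L_I$ is harmless but unnecessary---the paper simply notes that $\ker\phi'$ is a two-column bicomplex $\bigl[\bigoplus d\nu_{I\infty}\xrightarrow{\iota_\infty}\bigoplus d\nu_I\bigr]$ (indexed over $I\subset\Atoms(L)$ with $\tilde L_I\neq\emptyset$ and $L_I=\emptyset$) whose horizontal differential is bijective, which is your mapping-cone observation without the stratification.
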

\begin{proof}
	Note that $H^*_{L_\bullet}(\mb{C}^n)$ is a $H^*(\mb{P}^n)$-module where
	$t\in H^2(\mb{P}^n)$ acts by zero.
	The required homomorphism of algebras is given by 
	\begin{enumerate}
		\item $\phi(t^{\codim \tilde{L}_I}\cdot d\nu_I)=0$
			if $I\ni \infty$ or $L_I=\emptyset$, and
		\item $\phi(t^{\codim \tilde{L}_I}\cdot d\nu_I)=t^{\codim \tilde{L}_I}\cdot d\nu_I$
			otherwise.
	\end{enumerate}
	So $\phi$ factorizes through $\phi'\colon K'/(sK'+tK')\ar p^* H^*_{L_\bullet}(\mb{C}^n)*\OS(\mc{Q})$.
	In the notation of \eqref{eq:subspaces:eq1} we have:
	$$\ker \phi'=\bigoplus\limits^{I\in\mc{Q}}_{L_I=\emptyset}t^{\codim \tilde{L}_I}\cdot d\nu_I\oplus
	\bigoplus\limits^{I\in \mc{Q}}_{L_I=\emptyset}t^{\codim \tilde{L}_{I}}\cdot d\nu_{I\infty}.$$
	Similarly to the proof of Lemma \ref{lemma:subspaces:acyclic}, it is easy to see that $\ker\phi'$ is acyclic,
	and the claim follows.
\end{proof}
\subsection{Chromatic configuration spaces}
\label{sect:mv_totaro}
For a topological space $X$ and a finite (non-oriented) graph $G$ with the set of 
vertices $V(G)=\{1,\ldots, n\}$ we define the 
{\it chromatic} (or {\it graph}) {\it configuration space} $F(X;G)$ by setting
$$F(X;G):=X^n-\bigcup\limits_{\{a,b\}\in E(G)}\Delta_{\{ab\}}$$ 
where $E(G)$ is the set of edges of $G$ and
$\Delta_{\{ab\}}=\{(x_1,\ldots,x_n)\in X^n\mid x_a=x_b\}$.
We have a natural action of $Aut(G)$ on $F(X;G)$.
As an immediate application of Theorem \ref{thrm:ss_for_lattices} 
let us describe a spectral sequence converging to $H^*(F(X;G);\mb{Z})$.

Following part 2 of Example \ref{exmpl:os_algebras} we consider the intersection poset
$\Delta:=\Delta^G=(\Delta_*,\le)$ with $\Atoms(\Delta)$ given by 
$\Delta_{\{ab\}}$ for each $\{a,b\}\in E(G)$.
An element $x=\Delta_{S_1,\ldots,S_k}\in \Delta$ is an unordered partition
$\sqcup_{i\le k} S_i=V(G)$ such that each $S_i\subset G$ is a non-empty and connected subset.
If $T=\{e_1,\ldots,e_t\}
\subset E(G)$ is a collection of edges, 
then $\sup(T)\in \Delta$ corresponds to a partition of $V(G)$ given by the
vertices of the connected components of the graph $G_T:=(V(G_T)=V(G),E(G_T)=T)$. 

The rank function $r$ is defined by $r(\Delta_{S_1,\ldots,S_k})=\sum\limits_{i\le k} (|S_i|-1)$.
A set $\{\Delta_{e_1},\ldots,\Delta_{e_k}\}$ of atoms for $e_i\in E(G)=\Atoms(\Delta)$ is independent iff
the union of the edges $e_1,\ldots,e_k$ contains no cycles.

Recall that $\OS(\Delta)$ is the quotient $\Lambda\langle \Delta_{\{ab\}}\rangle/\mc{J}$ where $\deg(\Delta_{\{ab\}})=-1$ and $\mc{J}$ is the ideal generated by the elements
$$\partial(\Delta_{\{i_1 i_2\}}\wedge\ldots \wedge\Delta_{\{i_{k} i_1\}}),$$
for all cycles given by a sequence of edges $\{i_1,i_2\},\ldots,\{i_{k-1},i_k\},\{i_k,i_1\}\in E(G)$.

By Theorem \ref{thrm:ss_for_lattices}, 
for a paracompact, locally contractible topological space $X$ and 
for any coefficient ring $\Bbbk$ we have an $Aut(G)$-equivariant spectral sequence
$$E^{pq}_1=\bigoplus\limits^{x\in \Delta}_{r(x)=-p} 
    H^q_{\Delta_x}(X^n;\Bbbk)\otimes_{\mb{Z}} \OS(\Delta)\Rightarrow H^*(F(X;G);\Bbbk).$$
Now assume $\Bbbk$ is a field and $X$ is a $\Bbbk$-oriented topological manifold 
of dimension $m$, i.e.\ that an identification $H^m(X;\Bbbk)=\Bbbk$ is chosen.

\iffalse
We require the ``perfect'' K\"unneth formula $H^*(X^n;\Bbbk)=H^*(X;\Bbbk)^{\otimes_{\Bbbk} n}$. 
Examples includes oriented manifolds and $\Bbbk$ is an arbitrary field, 
oriented manifolds with torsion-free cohomology
and arbitrary $\Bbbk$. From now on all coefficients are taken in $\Bbbk$ if not stated otherwise.
\fi

The rank function satisfies $m\cdot r(x)=\codim \Delta_x:=\codim_{\mb{R}}\Delta_x/X^n$, and
elements $x,y\in \Delta$ are independent iff $\codim(\Delta_x\cap \Delta_y)=\codim(\Delta_x)+\codim(\Delta_y)$.

The aim of this subsection is to express the cdga $(E^{**}_1,d_1)$ in terms of generators and relations.
From now on we replace the set $E(G)$ of edges of $G$ by its ordered version, i.e.\ for each unordered pair $\{a,b\}$ in the old $E(G)$ we have two ordered ones $(a,b),(b,a)$ in the new one.\label{redefining_edges}

\iffalse
consider $G$ as a bidirected graph, i.e.
we denote by $(a,b),(b,a)\in E(G)$ two distinct
edges.
 we replace each unordered pair $\{a,b\}\in E(G)$ by
two ordered pairs of $(a,b),(b,a)$.
\fi

Let $p_a\colon X^n\to X$ denote the projection onto the $a$-th component.
Similarly we let $p_{ab}\colon X^n\to X^2$
to be the obvious projection for an ordered pair $(a,b)\in E(G)$.
Denote by $cyc(i_1,\ldots,i_k)$ the subgroup of $\Sigma_{V(G)}$ that cyclically permutes the arguments.
We say that a sequence $[i_1,\ldots,i_k]$ of vertices $i_t\in V(G)$ is a \textit{cycle} in $G$ if
$(i_1,i_2),\ldots, (i_{k-1},i_k),(i_k,i_1)\in E(G)$. A cycle is {\it simple} if it contains no proper subcycles.
For a permutation $\sigma$ 
we will write $(-1)^\sigma:=\mathop{\mathrm{sign}}(\sigma)$ and 
$(-1)^{k\cdot \sigma}=((-1)^{\sigma})^{k}=(-1)^{\sigma^k}$.

\begin{thrm}\label{thrm:totaro_ss}
    For a manifold $X$, $m=\dim_{\mb{R}} X$, graph $G$ and 
	the coefficient ring $\Bbbk$ as above,
    there is an $Aut(G)$-equivariant spectral sequence $E^{**}_1\Rightarrow H^*(F(X;G))$ such that
    the cdga $(E^{**}_1,d_1)$ is isomorphic to the following cdga over $H^*(X^n)$:
    $$(H^*(X^n)[\tilde{\Delta}_{ab}]/\mc{J},d),$$
    where
    $\deg\tilde{\Delta}_{ab}=m-1$ for each $(a,b)\in E(G)$ and $\mc{J}$ is the ideal generated by the following relations:
    \begin{enumerate}
        \item $\tilde{\Delta}_{ab}=(-1)^m\tilde{\Delta}_{ba}$ for each $(a,b)\in E(G)$;
        \item 
            $\sum\limits_{\sigma\in cyc(i_1,\ldots,i_k)}
                (-1)^{(m+1)\cdot \sigma}\sigma.(\tilde{\Delta}_{i_1,i_2}\cdot\ldots\cdot \tilde{\Delta}_{i_{k-1},i_k})=0$
                for each simple cycle $[i_1,\ldots,i_k]$ in $G$;
        \item $\tilde{\Delta}^2_{ab}=0$;
        \item $p^*_a(\gamma)\tilde{\Delta}_{ab}=p^*_b(\gamma)\tilde{\Delta}_{ab}$ for all $\gamma\in H^*(X)$.
    \end{enumerate}
    Under the isomorphism, the natural action of $\sigma\in Aut(G)\subset \Sigma_{V(G)}$ on $(E^{**}_1,d_1)$
    is uniquely determined by $\sigma.\tilde{\Delta}_{ab}=
    \tilde{\Delta}_{\sigma(a),\sigma(b)}$.
    
    The differential $d$ is zero on $H^*(X^n)$ and is given by 
    $d\tilde{\Delta}_{ab}=p^*_{ab}[\Delta]\in H^m(X^n)$ on $\tilde\Delta_{ab}$'s where $[\Delta]\in H^m(X^2)$ is the Poincar\'e dual class of the diagonal.
\end{thrm}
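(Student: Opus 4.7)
The plan is to apply Theorem~\ref{thrm:ss_for_lattices} to the arrangement $[X^n,\Delta^G,\ul{\Bbbk},\OS(\Delta^G)]$ described in Part~2 of Example~\ref{exmpl:os_algebras}, and then translate the abstract answer into the explicit presentation stated above. Since $X$ is a $\Bbbk$-oriented topological $m$-manifold, every diagonal $\Delta_x$ for $x\in\Delta^G$ is a topological submanifold of $X^n$ of real codimension $m\cdot r(x)$, canonically oriented by a product of copies of the orientation on $X$ together with a choice of ordering of the edges in a spanning forest of the graph underlying $x$. The Thom isomorphism then supplies an isomorphism of $H^*(X^n)$-modules
\[
H^*_{\Delta_x}(X^n)\;\simeq\;H^{*-m\,r(x)}(\Delta_x)\cdot \tau_x,
\]
where $\tau_x\in H^{m\,r(x)}_{\Delta_x}(X^n)$ is the Thom class. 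After the K\"unneth decomposition $H^*(\Delta_x)\simeq H^*(X)^{\otimes (n-r(x))}$, the $E_1$-term of the spectral sequence becomes a free $H^*(X^n)$-module generated by the classes $\tau_x\otimes d\nu_x$, with degrees determined by the Thom shift and by $\deg d\nu_x=-r(x)$.

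Next I would construct a morphism $\Phi$ from the presented cdga $H^*(X^n)[\tilde{\Delta}_{ab}]/\mc{J}$ to $E_1^{**}$ by sending $\gamma\in H^*(X^n)$ to itself and $\tilde\Delta_{ab}$ to $\tau_{ab}\otimes\Delta_{ab}$, which indeed has degree $m-1$. Relation~(4) is immediate from the module structure of $H^*_{\Delta_{ab}}(X^n)$ over $H^*(X^n)$ via restriction to the diagonal $\Delta_{ab}\simeq X^{n-1}$, on which $p_a^*=p_b^*$. Relation~(3) follows from $\Delta_{ab}\wedge\Delta_{ab}=0$ in the Grassmann algebra underlying $\OS(\Delta^G)$. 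Relation~(1) is the orientation comparison for the two identifications of $\Delta_{\{ab\}}$ as a codimension-$m$ submanifold: transposing $a$ and $b$ changes the Thom class by $(-1)^m$, while the Grassmann generator is unchanged. For relation~(2), observe that in $\OS(\Delta^G)$ the Arnold relation $\partial(\Delta_{\{i_1i_2\}}\wedge\cdots\wedge\Delta_{\{i_{k-1}i_k\}}\wedge\Delta_{\{i_ki_1\}})=0$ holds whenever $[i_1,\dots,i_k]$ is a simple cycle; after tensoring with the (partial) Thom class for the partition $\sup(\Delta_{\{i_1i_2\}},\dots,\Delta_{\{i_ki_1\}})$, expanding the Koszul differential $\partial$ and re-expressing each omitted edge via relation~(1) produces exactly the cyclic sum displayed in~(2), with the sign $(-1)^{(m+1)\sigma}$ arising as the product of the Koszul sign $(-1)^\sigma$ from the Grassmann side and the sign $(-1)^{m\sigma}$ from reorienting the Thom class to match a chosen reference spanning tree of the cycle.

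Having exhibited $\Phi$, I would check it is an isomorphism by a direct dimension count: Theorem~\ref{thrm:os_existence} (together with the description of $\OS(\Delta^G)$ in Example~\ref{exmpl:os_algebras}) says that, for each partition $x\in\Delta^G$, the summand $\OS(\Delta^G)_x$ has a $\Bbbk$-basis indexed by broken-circuit-free forests on the underlying vertex set of $x$; correspondingly, on the presented side, relations~(1)--(3) together with the cyclic relations~(2) reduce any monomial in the $\tilde\Delta_{ab}$ to an $H^*(X^n)$-linear combination of monomials indexed by such forests with a chosen edge-orientation, and relation~(4) turns the $H^*(X^n)$-coefficient into an $H^*(\Delta_x)$-coefficient on each $x$-component via the K\"unneth formula. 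Matching these bases with $\tau_x\otimes d\nu_x\otimes H^*(\Delta_x)$ proves that $\Phi$ is bijective. Finally, for the differential, Theorem~\ref{thrm:ss_for_lattices} gives $d_1=\sum_{y<:x}g_{yx}\otimes\partial_{yx}$; applied to $\tilde\Delta_{ab}=\tau_{ab}\otimes\Delta_{ab}$ the only contribution comes from $y=0$, where $\partial_{0,\Delta_{ab}}\Delta_{ab}=1$ and $g_{0,\Delta_{ab}}\tau_{ab}=p_{ab}^*[\Delta]\in H^m(X^n)$, yielding $d\tilde\Delta_{ab}=p_{ab}^*[\Delta]$ as stated, while $d$ vanishes on $H^*(X^n)$ by the Leibniz rule and degree reasons. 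The $Aut(G)$-equivariance is automatic from the functoriality part of Theorem~\ref{thrm:ss_for_lattices} (Remark~\ref{rmrk:ss_latice_equivariant}), since $Aut(G)$ acts on the arrangement by permuting the atoms and hence on both $\tau_{ab}$ and $\Delta_{ab}$ by $\sigma\cdot\tilde\Delta_{ab}=\tilde\Delta_{\sigma(a)\sigma(b)}$.

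The step I expect to be the main obstacle is relation~(2): verifying that the combination of Koszul signs from $\OS(\Delta^G)$ and of Thom-class orientation signs, once one passes from the unordered to the ordered edge convention, assembles into precisely the factor $(-1)^{(m+1)\sigma}$ for $\sigma\in cyc(i_1,\dots,i_k)$. This is a bookkeeping exercise but with several sign conventions that must be chosen consistently (orientation of a spanning tree of the cycle, sign rule in the Grassmann monomial, sign of $g_{yx}$ induced by inclusion of diagonals of different parities); once a reference orientation of $\Delta_{\sup[i_1,\dots,i_k]}$ is fixed, the computation is purely combinatorial.
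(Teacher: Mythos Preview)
Your overall architecture matches the paper's: apply Theorem~\ref{thrm:ss_for_lattices}, build the map $\Phi$ by $\tilde\Delta_{ab}\mapsto\tau_{ab}\otimes\Delta_{\{ab\}}$, check that the relations lie in the kernel, and then argue bijectivity. Your treatment of relations (1), (3), (4), of the differential, and of the $Aut(G)$-equivariance is essentially the same as the paper's, and your verification that relation~(2) lands in $\ker\Phi$ is the content of the paper's preparatory lemma.

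The gap is in your injectivity step. You propose a ``direct dimension count'' by asserting that relations (1)--(3) reduce every monomial in the $\tilde\Delta_{ab}$ to a combination of monomials indexed by broken-circuit-free forests. For even $m$ this is fine: relation~(2) then coincides with the usual Orlik--Solomon relation and the NBC basis is classical. For odd $m$, however, the sign pattern in relation~(2) is genuinely different, and the quotient $F/\mc{I}$ is \emph{not} the Orlik--Solomon algebra of $\Delta^G$; it is the Orlik--Terao algebra of the real hyperplane arrangement $\{x_a=x_b:(a,b)\in E(G)\}$ in $\mathbb{R}^{V(G)}$. That this algebra has graded dimension $\mu_\Delta([0,x])$ in each component (equivalently, that the NBC monomials span it) is a separate theorem of Orlik and Terao, not a consequence of Theorem~\ref{thrm:os_existence} or Example~\ref{exmpl:os_algebras}. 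The paper makes precisely this even/odd distinction in Proposition~\ref{prop:even_os} and invokes \cite[Theorem~4.3]{OT} for the odd case. You have misidentified the obstacle: the issue with relation~(2) is not the sign bookkeeping (which the paper does in two lines), but rather proving that the relations you have written down are \emph{all} the relations, and for odd $m$ this requires the Orlik--Terao result or an equivalent argument.
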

\begin{rmrk}
If $m$ is even, the second relation is equivalent to
$\partial(\tilde{\Delta}_{i_1 i_2}\wedge\ldots\wedge \tilde{\Delta}_{i_{k-1}i_k})=0$.
Which is not the case when $m$ is odd and $k$ is even. 
\end{rmrk}
This generalizes Totaro's spectral sequence described in \cite{Totaro} for 
the case when $G$ is the complete graph $K_n$.
In the course of the proof it will be clear that in case $G=K_n$
it is enough to consider only cycles of length $3$ in the generating set for $\mc{J}$, 
which corresponds to Arnold's relations.

For simplicity we omit the case when $X$ is a non-orientable manifold
and $\chr\Bbbk=2$, in which the above theorem is also true.

Let $\tau_x\in H^{\codim \Delta_x}_{\Delta_x}(X^n)$ denote the Thom class.
For $x,y\in \Delta$ independent we have $\tau_x\cup \tau_y=\pm \tau_{x\vee y}$.
%Consider $(E^{**}_1,d_1)$ as cdga over $H^*(X^n;\mb{Z})$.
For $(a,b)\in E(G)$ we set $\tau_{ab}=\tau_{\Delta_{\{ab\}}}$ if $a<b$
and $\tau_{ab}=(-1)^m\tau_{\Delta_{\{ab\}}}$ otherwise. 
So the natural action of $\sigma\in Aut(G)\subset \Sigma_{V(G)}$ 
on $\tau_{ab}\in H^*(X^n)$ is given by  
$\sigma.\tau_{ab}=\tau_{\sigma(a) \sigma(b)}$.
\begin{rmrk}
Note that by our convention the generators $\Delta_{\{ab\}}$ of $\OS(\Delta)$
are parametrized by \textit{unordered} pairs,
i.e.\ $\Delta_{\{ab\}}=\Delta_{\{ba\}}$ for $(a,b)\in E(G)$.
\end{rmrk}
Let $F$ be the free super-commutative $\Bbbk$-algebra on the generators $\tilde\Delta_{ab}, 
(a,b)\in E(G)$ of degree $m-1$ quotiented by $\tilde\Delta_{a b}=(-1)^m \tilde\Delta_{b a}$; 
we denote the resulting generators of $F$ again by $\tilde\Delta_{a b}$. Note that $F$ is again free.
\iffalse
Consider the 
free super-commutative algebra 
$$F=\Bbbk[\tilde{\Delta}_{ab}]/\sim$$
generated by
elements with
$\deg \tilde{\Delta}_{ab}=m-1$ for all 
$(a,b)\in E(G)$ modulo $\tilde{\Delta}_{ab}=(-1)^m\tilde{\Delta}_{ba}$. 
\fi
The cdga $(E^{**}_1,d_1)$ is generated over $H^*(X^n)$ by the
elements $\tau_{ab}\otimes \Delta_{\{ab\}}$.
Sending $\tilde{\Delta}_{ab}$ to 
$\tau_{ab}\otimes \Delta_{\{ab\}}$ defines a surjection of 
$\Delta$-graded algebras:
$$H^*(X^n)\otimes_{\Bbbk}F\ar E^{**}_1.$$
The morphism is clearly $Aut(G)$-equivariant.
We want to show that its kernel is the ideal generated
by the remaining relations of Theorem \ref{thrm:totaro_ss}.
%In case $G=K_n$ the ideal is generated by all triangles, which corresponds to Arnold relations.

Let $\mc{T}\subset E^{**}_1$ denote the $\Bbbk$-submodule spanned by the elements of the form 
$\tau_x\otimes \OS(\Delta)_x$. Forgetting the differential, $\mc{T}$ 
becomes a $\Bbbk$-subalgebra. 
\iffalse
In fact, since $X$ is oriented, algebra $\mc{T}$ has a natural
lifting to $\mb{Z}$-algebra
$$\mc{T}^{\mb{Z}}=\bigoplus\limits_{x\in \Delta}\tau^{\mb{Z}}_x\otimes \OS(\Delta)$$ 
where $\tau^{\mb{Z}}_{x}\in H^*_{\Delta_x}(X^n;\mb{Z})$
are Thom classes.
It is usefull that $\mc{T}^{\mb{Z}}$ is torsion-free $\mb{Z}$-module.
\fi

\begin{prop}\label{prop:even_os}
The natural morphism $f\colon F\to \mc{T}$
induced by $\tilde{\Delta}_{ab}\to \tau_{ab}\otimes \Delta_{\{ab\}}$
is surjective and the kernel is equal to the ideal $\mc{I}$ generated by the elements
\begin{enumerate}
    \item $\tilde{\Delta}^2_{ab}$,
    \item $u(i_1,\ldots,i_k):=\sum\limits_{\sigma\in cyc(i_1,\ldots,i_k)}(-1)^{(m+1)\cdot \sigma}
        \sigma.(\tilde{\Delta}_{i_1 i_2}\cdot\ldots\cdot \tilde{\Delta}_{i_{k-1}i_k})$,
            for all simple cycles $[i_1,\ldots, i_k]$ in $G$.
\end{enumerate}
\end{prop}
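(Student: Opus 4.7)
The plan is to prove the two inclusions $\mc{I}\subset\ker f$ and $\ker f\subset \mc{I}$ separately; the first reduces to a direct (but sign-intensive) computation in $E^{**}_1$, while the second reduces to the standard presentation of the Orlik--Solomon algebra of a graphical arrangement.

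For the inclusion $\mc{I}\subset\ker f$, the generator $\tilde\Delta_{ab}^2$ maps to $(\tau_{ab}\otimes\Delta_{\{ab\}})^2$, which vanishes because $\Delta_{\{ab\}}^2=0$ in the Grassmann algebra underlying $\OS(\Delta)$. For the cycle relation $u(i_1,\ldots,i_k)$ my plan is to expand $f(u(i_1,\ldots,i_k))$ and show that, up to a single global sign, it equals $\tau_x\otimes \partial(\Delta_{\{i_1i_2\}}\wedge\ldots\wedge\Delta_{\{i_ki_1\}})$, where $x=\sup\{\Delta_{\{i_ji_{j+1}\}}\}\in \Delta$ and $\partial=\sum\iota_e$ is the Koszul differential used in the construction of $\OS(\Delta)$ in Example~\ref{exmpl:os_algebras}. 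This sum is already zero in $\OS(\Delta)_x$ by the definition of the Orlik--Solomon algebra, so the image vanishes. The signs $(-1)^{(m+1)\sigma}$ in the definition of $u$ are designed precisely to absorb the Koszul signs produced by commuting the Thom classes $\tau_{i_ji_{j+1}}$ (of cohomological degree $m$) past the generators $\Delta_{\{i_ji_{j+1}\}}$ (of cohomological degree $-1$) when rearranging the cyclic representatives: in $E^{**}_1=H^*_{\Delta_\bullet}(X^n)\otimes \OS(\Delta)$ the tensor-product sign rule introduces a factor $(-1)^{m-1}=(-1)^{m+1}$ per transposition of two generators $\tau_{i_j i_{j+1}}\otimes \Delta_{\{i_j i_{j+1}\}}$, matching the $(-1)^{(m+1)\sigma}$ weight on $\sigma\in\mathrm{cyc}(i_1,\ldots,i_k)$.

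For the converse inclusion $\ker f\subset \mc{I}$, the approach is to exhibit a linear inverse. By the standard presentation of the Orlik--Solomon algebra of a graphical matroid we have $\OS(\Delta)=\Lambda\langle\Delta_{\{ab\}}\rangle/\mc{J}'$, where $\mc{J}'$ is generated by $\Delta_{\{ab\}}^2$ and by the Koszul-image relations $\partial(\Delta_{\{i_1i_2\}}\wedge\ldots\wedge\Delta_{\{i_ki_1\}})=0$ for each simple cycle. Moreover $\OS(\Delta)$ splits as $\bigoplus_x\OS(\Delta)_x$, each $\OS(\Delta)_x$ is free of rank $|\mu(\Delta[0,x])|$ and admits an explicit basis indexed by no-broken-circuit (NBC) bases of the matroid on $\Atoms(\Delta[0,x])$. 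The plan is to define a $\Bbbk$-linear map $g\colon \OS(\Delta)\to F/\mc{I}$ by sending a product $\Delta_{\{a_1b_1\}}\wedge\ldots\wedge\Delta_{\{a_rb_r\}}$ to $\pm \tilde\Delta_{a_1b_1}\cdots\tilde\Delta_{a_rb_r}$ with the same sign bookkeeping as in the previous paragraph, and to verify that $g$ descends to the quotient by $\mc{J}'$ by replaying that same sign analysis in reverse. Then $f\circ g$ (extended $H^*(X^n)$-linearly) is readily identified with the tautological isomorphism $H^*(X^n)\otimes \OS(\Delta)\xrightarrow{\sim}\mc{T}$ sending $\gamma\otimes\omega_x$ to $\gamma\cup\tau_x\otimes\omega_x$ using the cup-module structure $H^*_{\Delta_x}(X^n)=\tau_x\cdot H^*(X^n)$; consequently $g$ is injective, forcing $\ker f=\ker(g^{-1}\circ f)=\mc{I}$.

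The main obstacle is the sign computation: both the anti/commutativity of the generators $\tilde\Delta_{ab}$ (which is Grassmann when $m$ is even and polynomial when $m$ is odd) and the Koszul signs coming from commuting $\tau$-classes past $\Delta$-classes depend on the parity of $m$, and they must be shown to combine into the uniform exponent $(m+1)$ appearing in the statement. I expect that the cleanest way to organize the bookkeeping is to introduce a single shifted Grassmann algebra whose generators have bidegree $(m,-1)$ with the total Koszul sign rule, verify directly in that algebra that $f$ is a morphism of algebras, and reduce the cycle identity to the identity $\partial(\Delta_{\{i_1i_2\}}\wedge\ldots\wedge\Delta_{\{i_ki_1\}})=0$ in $\OS(\Delta)$; everything else will be matroid-theoretic comparison of the two $\Delta$-graded modules.
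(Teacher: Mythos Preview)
Your treatment of the inclusion $\mc{I}\subset\ker f$ is essentially the paper's own Lemma: one expands $f(u(i_1,\ldots,i_k))$ and, after absorbing the Koszul signs, obtains $\pm\tau_x\otimes\partial(\Delta_{\{i_1i_2\}}\wedge\cdots\wedge\Delta_{\{i_ki_1\}})$, which vanishes in $\OS(\Delta)_x$.

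For the converse inclusion your plan diverges from the paper's and, as written, has a genuine gap in the odd case. When $m$ is odd the generators $\tilde{\Delta}_{ab}$ have \emph{even} degree $m-1$, so they commute in $F$, while the generators $\Delta_{\{ab\}}$ of $\Lambda\langle\Delta_{\{ab\}}\rangle$ anti-commute. Your proposed section $g$ therefore cannot be an algebra homomorphism; it is only $\Bbbk$-linear. Consequently, verifying $g(\mc{J}')=0$ on \emph{ideal generators} $\partial(\text{circuit monomial})$ does not suffice: you would need to check it on a $\Bbbk$-linear spanning set of $\mc{J}'=\langle dep\rangle+\partial\langle dep\rangle$, in particular on all dependent monomials. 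But showing that every dependent monomial $\tilde{\Delta}_{e_1}\cdots\tilde{\Delta}_{e_k}$ vanishes in $F/\mc{I}$ is exactly the statement you are trying to prove. (The paper's remark that for $m$ odd and $k$ even the relation $u$ is \emph{not} equivalent to $\partial(\tilde{\Delta}_{i_1i_2}\wedge\cdots)=0$ is a symptom of this asymmetry.) Your final sentence, ``everything else will be matroid-theoretic comparison of the two $\Delta$-graded modules'', is thus where the actual content lies, and it is not discharged by the sign analysis alone.

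The paper's argument avoids this entirely. For $m$ even it observes that $F/\mc{I}$ \emph{is} an Orlik--Solomon algebra on the nose, so $\bar{f}$ is an isomorphism trivially. For $m$ odd it identifies $F/\mc{I}$ with the \emph{Orlik--Terao} algebra of the real graphical arrangement $\{x^a-x^b=0\}\subset\mb{R}^{V(G)}$: the commuting generators with square zero and the relations $u$ (with all signs $+1$ since $m+1$ is even) match the Orlik--Terao presentation for the dependencies $L_{i_1i_2}+\cdots+L_{i_ki_1}=0$. One then invokes \cite[Theorem 4.3]{OT}, which gives $\dim_\Bbbk(F/\mc{I})_x=|\mu_\Delta([0,x])|=\dim_\Bbbk\OS(\Delta)_x$, so the surjection $\bar f$ is an isomorphism by dimension count. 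If you want to salvage your approach without citing Orlik--Terao, you would effectively have to reprove that dimension bound.
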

\begin{rmrk}
In the case $G=K_n$ one can check that this ideal is generated by the usual Arnold's relations, i.e.\ by all cycles of length $3$.
\end{rmrk}
Let us postpone the proof and deduce the main result first. 
Set $\Mu':=F/\mc{I}$.
\begin{proof}[Proof of Theorem \ref{thrm:totaro_ss}]
By Proposition \ref{prop:even_os}, $\Mu'=F/\ker{f}$. Note that $F$ and $\ker f$ are naturally $\Delta$-graded.
By the same proposition we have an isomorphism of $\Delta$-graded algebras
$$\bigoplus\limits_{x\in \Delta} \Mu'_x\simeq \bigoplus\limits_{x\in \Delta} \tau_x\otimes_{\mb{Z}} \OS(\Delta)_x=\mc{T}.$$
It extends to a surjective map of $\Delta$-graded algebras
$$\bigoplus\limits_{x\in \Delta}H^*(X^n)\otimes_{\mb{Z}} \Mu'_x
\ar
 \bigoplus\limits_{x\in \Delta}H^*_{\Delta_x}(X^n)\otimes_{\mb{Z}} \OS(\Delta)_x.$$
Pick an $x=\Delta_{S_1,\ldots,S_k}\in \Delta$.
It remains to show that the kernel of the last map is equal to the $H^*(X^n)$-submodule
$K_x\subset H^*(X^n)\otimes\Mu'_x$ generated by
\begin{equation}\label{eq:totaro_rel}
p^*_a(\gamma) \tilde{\Delta}_{e_1}\cdot\ldots\cdot \tilde{\Delta}_{e_k}-p^*_b(\gamma)\tilde{\Delta}_{e_1}\cdot\ldots \cdot \tilde{\Delta}_{e_k},
\end{equation}
for all collections $\{e_1,\ldots,e_k\}\subset \Atoms(\Delta)$ with $\sup(e_1,\ldots,e_k)=x$, 
$\gamma\in H^*(X)$ and $e_1=(a,b)$.
Clearly the last condition can be replaced by requiring that $a,b\in V(G)$ 
are connected by a path in $G$ going along some (unoriented) chain of $e_t$'s. Equivalently,
$K_x$ is equal to the $H^*(X^n)$-submodule spanned by
$\im[p^*_a-p^*_b]\otimes \Mu'_x$
for all $a,b\in S_i$ for some $i$. 

On the other hand,
using the K\"unneth formula 
$H^*(X^n)=H^*(X)^{\otimes_{\Bbbk} k}$
it is easy to see that there is a natural isomorphism of $H^*(X^n)$-modules
$$H^*_{\Delta_x}(X^n)\simeq H^*(X^n)\otimes_{\Bbbk} \tau_x/K'_x$$ 
where $K'_x$ 
is the $H^*(X^n)$-submodule spanned by 
$\im[p^*_a-p^*_b]\otimes \tau_x$ for all $a,b\in S_i$ for some $i$.

Clearly this implies $K_x=K'_x$ and hence we get a cdga isomorphism between $E^{**}_1$ and the algebra stated in the theorem. 
Since $d_1(\tau_{ab}\otimes \Delta_{\{ab\}})=p^*_{ab}[\Delta]$, the isomorphism is $Aut(G)$-equivariant.
\end{proof}

Now we prove Proposition \ref{prop:even_os}.
\begin{lemma}
For each cycle $[i_1,\ldots,i_k]$ in $G$ we have 
$u(i_1,\ldots,i_k)\in \ker f$.
\end{lemma}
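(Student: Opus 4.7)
The approach is a direct calculation: unwind the definition of $u(i_1,\ldots,i_k)$, push it through $f$, and show the resulting expression in $\mc{T}\subset E^{**}_1$ vanishes by virtue of the Arnold-type relation in $\OS(\Delta)$ arising from $\partial(\Delta_{\{i_1i_2\}}\wedge\Delta_{\{i_2 i_3\}}\wedge\cdots\wedge\Delta_{\{i_k i_1\}})=0$. More concretely, writing $\sigma\in cyc(i_1,\ldots,i_k)$ as a shift by $j$, one has
$$u(i_1,\ldots,i_k)=\sum_{j=0}^{k-1}(-1)^{(m+1)j}\,\tilde{\Delta}_{i_{j+1}i_{j+2}}\cdot\tilde{\Delta}_{i_{j+2}i_{j+3}}\cdots\tilde{\Delta}_{i_{j+k-1}i_{j+k}},$$
where indices are taken modulo $k$. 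Each summand uses exactly $k-1$ of the $k$ edges of the cycle, precisely the edge $e_j:=\{i_j,i_{j+1}\}$ (with $e_0:=e_k$) being omitted in the $j$-th term.

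The first key step is the product computation in $\mc{T}$. Since each $\tau_{ab}\otimes \Delta_{\{ab\}}$ lives in bidegree $(m,-1)$, the Koszul sign rule gives
$$f\bigl(\tilde{\Delta}_{i_{j+1}i_{j+2}}\cdots\tilde{\Delta}_{i_{j+k-1}i_{j+k}}\bigr)=\eps_{j,k,m}\bigl(\tau_{i_{j+1}i_{j+2}}\cup\cdots\cup\tau_{i_{j+k-1}i_{j+k}}\bigr)\otimes\bigl(\Delta_{\{i_{j+1}i_{j+2}\}}\wedge\cdots\wedge\Delta_{\{i_{j+k-1}i_{j+k}\}}\bigr),$$
with an explicit sign $\eps_{j,k,m}$ depending only on $m$ and $k-2$. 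Because the omitted edge $e_j$ does not alter the support $x=\sup(e_1,\ldots,e_k)\in\Delta$, all $k$ cup-product factors compute the Thom class $\tau_x\in H^{m(k-1)}_{\Delta_x}(X^n)$ up to a uniform sign coming from the conventions $\tau_{ab}=(-1)^m\tau_{ba}$ and from matching normal-bundle orientations; one checks (using the fact that the edges form a cycle, so each vertex $i_t$ appears in exactly two $\tau$'s) that these Thom-class signs are again $j$-independent.

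The second key step is the comparison with Arnold's relation. The Arnold relation in $\OS(\Delta)$ attached to the cycle reads
$$\sum_{j=1}^{k}(-1)^{j-1}\Delta_{e_1}\wedge\cdots\widehat{\Delta_{e_j}}\cdots\wedge\Delta_{e_k}=0,$$
whereas the $j$-th $\OS(\Delta)$-factor in $f(u)$ is a cyclic rotation of this summand, namely $\Delta_{e_{j+1}}\wedge\cdots\wedge\Delta_{e_k}\wedge\Delta_{e_1}\wedge\cdots\wedge\Delta_{e_{j-1}}$. Since $\Delta_e$ has odd degree $-1$, bringing this back to the standard order $\Delta_{e_1}\wedge\cdots\widehat{\Delta_{e_j}}\cdots\wedge\Delta_{e_k}$ costs a sign $(-1)^{(j-1)(k-j)}$ by anticommutativity.

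Collecting all contributions, one shows that the product
$$(-1)^{(m+1)j}\cdot\eps_{j,k,m}\cdot(-1)^{(j-1)(k-j)}$$
equals $(-1)^{j-1}$ up to a global constant depending only on $k$ and $m$. The explicit form of $\eps_{j,k,m}$ coming from the Koszul rule (roughly $m$ times the number of transpositions needed to pair up adjacent $\tau$'s and $\Delta$'s) is precisely what the factor $(-1)^{(m+1)j}$ was designed to compensate; this is in fact the motivation for that particular sign in Theorem~\ref{thrm:totaro_ss}. Consequently $f(u(i_1,\ldots,i_k))=\pm\tau_x\otimes\bigl(\text{Arnold's relation}\bigr)=0$, as required. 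The main obstacle is precisely this sign bookkeeping, in particular verifying uniformly in $j$ that the Thom-class signs, the Koszul sign, and the cyclic-reordering sign in $\OS(\Delta)$ combine to reproduce Arnold's signs $(-1)^{j-1}$; all other steps are routine.
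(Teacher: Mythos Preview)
Your approach is essentially the paper's: compute $f(u)$ directly and recognize it as $\pm\tau_x$ tensored with the Orlik--Solomon relation $\partial(\Delta_{e_k}\wedge\Delta_{e_1}\wedge\cdots\wedge\Delta_{e_{k-1}})$, which vanishes in $\OS(\Delta)$. However, your sign bookkeeping contains a genuine error.

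You claim that the signs relating $\tau_{i_{j+1}i_{j+2}}\cup\cdots\cup\tau_{i_{j+k-1}i_{j+k}}$ to $\tau_x$ are $j$-independent, and justify this by saying ``each vertex $i_t$ appears in exactly two $\tau$'s.'' Both the claim and the justification are wrong. First, the two endpoints of the omitted edge $e_j$ each appear in only one $\tau$, not two. Second, and more importantly, the sign \emph{is} $j$-dependent: since $\sigma\in cyc(i_1,\ldots,i_k)$ preserves $\Delta_x\subset X^n$ setwise, one has $\sigma.\tau_x=(-1)^{m\cdot\sigma}\tau_x$, where $(-1)^\sigma$ is the sign of the permutation. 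For the shift by $j$ this is $(-1)^{m(k-1)j}$, which is nontrivial precisely when $m$ is odd and $k$ is even. Without this factor your asserted sign product $(-1)^{(m+1)j}\cdot\eps_{j,k,m}\cdot(-1)^{(j-1)(k-j)}$ cannot match the Arnold signs $(-1)^{j-1}$ in general; and you do not actually carry out this verification anyway.

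The paper sidesteps the explicit reordering entirely. Writing
\[
f(u(I))=\pm\sum_{\sigma}(-1)^{(m+1)\sigma}\,\sigma.(\tau_{i_1i_2}\cup\cdots\cup\tau_{i_{k-1}i_k})\otimes\sigma.(\Delta_{\{i_1i_2\}}\wedge\cdots\wedge\Delta_{\{i_{k-1}i_k\}})
\]
and using $\sigma.(\pm\tau_x)=(-1)^{m\sigma}(\pm\tau_x)$, the Thom factor comes out of the sum with combined sign $(-1)^{(m+1)\sigma}(-1)^{m\sigma}=(-1)^\sigma$. The remaining sum $\sum_\sigma(-1)^\sigma\sigma.(\Delta_{\{i_1i_2\}}\wedge\cdots\wedge\Delta_{\{i_{k-1}i_k\}})$ is then directly identified with $\partial(\Delta_{\{i_ki_1\}}\wedge\Delta_{\{i_1i_2\}}\wedge\cdots\wedge\Delta_{\{i_{k-1}i_k\}})$, with no cyclic reordering needed. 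This is both shorter and avoids the error.
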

\begin{proof}
Let $I=[i_1,\ldots,i_k]$ and $x=\sup((i_1,i_2),\ldots, (i_{k-1},i_k))$.
Then
\begin{equation}\label{eq:lemma:inker1}
f(u(I))=
(-1)^{m\cdot\binom{k-1}{2}}\sum\limits_{\sigma\in cyc(I)}(-1)^{(m+1)\cdot \sigma}
\sigma.(\tau_{i_1 i_2}\cup\ldots \cup \tau_{i_{k-1} i_k})\otimes \sigma.(\Delta_{\{i_1 i_2\}}\wedge\ldots\wedge 
\Delta_{\{i_{k-1} i_k\}})
\end{equation}
inside $\tau_{x}\otimes \OS(\Delta)_x$.
Assume $e_1=(i_1,i_2),\ldots,e_{k-1}=(i_{k-1},i_k)$ are independent atoms as otherwise 
each term of the sum vanishes.
Let $x=\sup(e_1,\ldots,e_k)\in \Delta$.
Note that $\tau_{i_1 i_2}\cup\ldots\cup \tau_{i_{k-1} i_k}=\pm\tau_{x}$. Since $\sigma$ acts trivially on $\Delta_x\subset X^n$, it is easy to see that $\sigma.\tau_x=(-1)^{m\cdot \sigma}\tau_x$. 
On the other hand we have
$$\sum\limits_{\sigma}(-1)^{\sigma}\sigma.(
\Delta_{\{i_1 i_2\}}\wedge\ldots \wedge \Delta_{\{i_{k-1} i_k\}})=
\partial(\Delta_{\{i_k i_1\}}\wedge \Delta_{\{i_1 i_2\}}\wedge\ldots\wedge \Delta_{\{i_{k-1} i_k\}}).$$
So 
$f(u(I))=\pm \tau_x\otimes \partial(\Delta_{\{i_k i_1\}}\wedge \Delta_{\{i_1 i_2\}}\wedge\ldots \wedge 
\Delta_{\{i_{k-1} i_k\}})$ vanishes in $\tau_x\otimes \OS(\Delta)_x$.
\end{proof}

\begin{proof}[Proof of Proposition \ref{prop:even_os}]
Since $\mc{I}$ is $\Delta$-graded, we have a surjective map of $\Delta$-graded algebras over $\mb{Z}$:
$$\bar{f}\colon \Mu'=F/\mc{I}\ar \mc{T}^{\mb{Z}}.$$
Note that if $m$ is even, after forgetting the $\mb{Z}$-grading, both sides 
become Orlik-Solomon algebras which are trivially isomorphic.
By the same argument, $\bar{f}\otimes \mb{F}_2$ is an isomorphism for all $d$.

Suppose $m$ is odd. Passing to the $\mb{Z}/2$-grading,
it is straightforward to check that $F/\mc{I}$ is the Orlik-Terao algebra (see \cite{OT}) corresponding to 
the complement of the vector hyperplane arrangement in $\mb{R}^{V(G)}$, which is the configuration space $F(\mb{R};G)$. Then \cite[Theorem 4.3]{OT} implies that 
$$\dim_{\Bbbk}(F/\mc{I})_x=\mu_{\Delta}([0,x])=\dim_{\Bbbk}(\OS(\Delta)_x).$$ So the surjectivity of
$(F/\mc{I})_x\to \tau_x\otimes \OS(\Delta)_x$ implies
that $F/\mc{I}\to \mc{T}$ is an isomorphism.

Let us explain in more detail how $F/\mc{I}$ is identified with the Orlik-Terao algebra. Let $x^a$ be a coordinate function on $\mb{R}^{V(G)}$.
Set $L_{ab}=x^a-x^b$.
A simple cycle $[i_1,\ldots,i_k]$ in $G$ defines
a dependent set
$e_1=(i_1,i_2),\ldots,e_{k-1}=(i_{k-1},i_k),e_k=(i_{k},i_1)\in \Atoms(\Delta)$. Such sets are precisely circuits in $\Delta$ (see \cite{OS} and \cite{OT}). 
Note that all linear dependencies
$\alpha_1\cdot  L_{i_1 i_2}+\ldots+\alpha_{k-1}\cdot L_{i_{k-1} i_k}+\alpha_k\cdot L_{i_k i_1}=0$ are proportional to 
$L_{i_1 i_2}+\ldots+L_{i_{k-1} i_k}+L_{i_k i_1}=0$.
It is easy to check that the relations in \cite[Proposition 2.3]{OT}
coincide with the relations stated in the proposition.
\end{proof}

Let $X=\mb{R}^m$. Then
$E^{**}_1\simeq \mc{T}$
vanishes outside of the diagonal 
$q=-mp$.
Hence $E^{pq}_t=E^{pq}_1$ for all $t\ge 1$, and we have an isomorphism 
of algebras $(H^*(F(X;G)),0)\simeq (E^{**}_1,d_1)$.
We get
\begin{cor}
In the case $X=\mb{R}^m$, 
the cdga $E^{**}_1$ from
Theorem \ref{thrm:totaro_ss}
is isomorphic to $H^*(F(X;G))$.
\end{cor}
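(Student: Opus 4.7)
The plan is to show that for $X=\mb{R}^m$ the spectral sequence of Theorem~\ref{thrm:totaro_ss} is both concentrated and collapses immediately, after which multiplicative convergence identifies $E^{**}_1$ with $H^*(F(X;G))$ as a graded algebra. Because both sides will carry the zero differential, the statement "isomorphic as cdga" reduces to an isomorphism of graded algebras.

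First I would describe $E^{**}_1$ very explicitly using Theorem~\ref{thrm:ss_for_lattices}. Since $X^n=\mb{R}^{mn}$ is contractible and each diagonal $\Delta_x\subset X^n$ is a linear subspace of real codimension $m\cdot r(x)$, the Thom isomorphism gives $H^q_{\Delta_x}(X^n;\Bbbk)=\Bbbk\cdot\tau_x$ concentrated in the single degree $q=m\,r(x)$. Under the identification $p=-r(x)$, the bigraded piece $E^{pq}_1$ thus vanishes unless it lies on the line $q=-mp$, and there it equals $\tau_x\otimes\OS(\Delta)_x$, i.e.\ the subalgebra $\mc{T}\subset E^{**}_1$ from the proof of Theorem~\ref{thrm:totaro_ss}. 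Moreover $d_1$ vanishes by Theorem~\ref{thrm:totaro_ss}: on $H^*(X^n)=\Bbbk$ it is zero, and $d_1(\tilde{\Delta}_{ab})=p^*_{ab}[\Delta]\in H^m(X^n)=0$ because $X^n$ is contractible.

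The key step will then be a pure degree check showing $E^{pq}_r=E^{pq}_1$ for all $r\ge 1$. The differential $d_r$ has bidegree $(r,1-r)$, so it sends a nonzero class at $(p,-mp)$ into the position $(p+r,-mp+1-r)$. Evaluating $q+mp$ at the target gives $1+(m-1)r\ge 1$ for every $m\ge 1$ and $r\ge 1$, so the target sits strictly off the diagonal where every $E_r$-term vanishes. Hence all higher differentials are zero by support considerations, and $E^{pq}_\infty=E^{pq}_1$.

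Finally I would invoke multiplicative convergence. Since $E^{pq}_\infty$ is concentrated on the single line $q=-mp$, each total degree $n$ contains at most one non-zero $E^{p,n-p}_\infty$, so the rank filtration $W'$ on $H^n(F(X;G))$ has only one non-trivial successive quotient; this forces $H^n(F(X;G))\simeq E^{p,n-p}_\infty$ additively with no extension ambiguity. Multiplicativity of $W'$ together with the same one-line concentration then forces the product of two classes of filtration levels $p_1$, $p_2$ to live in the unique non-zero level $p_1+p_2$ of $H^{n_1+n_2}$, hence to agree with the product in $E^{**}_1$. I do not foresee a genuine obstacle here: the argument is really just unpacking the degree geometry of the spectral sequence, and the only place one has to be careful is the convergence-to-filtration step, where the fact that each total degree sees only one filtration piece removes any extension issue and turns the associated graded isomorphism into an honest isomorphism of graded algebras.
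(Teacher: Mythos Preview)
Your proposal is correct and follows essentially the same approach as the paper: observe that for $X=\mb{R}^m$ the $E_1$-page reduces to $\mc{T}$ concentrated on the line $q=-mp$, so all differentials vanish for bidegree reasons and the multiplicative spectral sequence collapses to give $H^*(F(X;G))\simeq E^{**}_1$ as algebras. You have simply spelled out more carefully the degree arithmetic and the absence of extension problems that the paper leaves implicit.
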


\subsection{Generalization of Kriz-Totaro model}
\label{sect:app:conf_homotopy_type}
Let $M$ be a smooth proper algebraic variety over $\mb{C}$ and consider the 
chromatic configuration space $F(M,G)$ for a graph $G$. 
(Recall that $E(G)\subset V(G)\times V(G)$ is a symmetric subset, see p.~\pageref{redefining_edges}.) 
As in Section \ref{sect:mv_totaro},
let $\Delta$ be the arrangement of all diagonals $\Delta_*\subset M^{V(G)}$. 
As we saw, Theorem \ref{thrm:totaro_ss} 
describes the Mayer-Vietoris spectral sequence 
${}_{W'}E^{**}_1\Rightarrow H^*(F(M;G);\mb{Q})$, and 
Theorem \ref{thrm:main_lattices} implies that ${}_{W'}E^{**}_1$ describes 
the $\mb{Q}$-homotopy type of $F(M;G)$. The description is $Aut(G)$-equivariant in the following sense:
there is a zig-zag of $Aut(G)$-equivariant quasi-isomorphisms
${}_{W'}E_1\zgqis A_{PL}(F(M;G))$.

As a direct corollary, remembering that $\dim_{\mb{R}}M$ is even, we obtain the chromatic version of the Kriz-Totaro model:
\begin{thrm}\label{thrm:kriz_model}
	The rational homotopy type of $F(M,G)$ 
        has a model equal to the cdga over $H^*(M^{V(G)})$ given by the quotient 
	$$H^*(M^{V(G)})\otimes 
	\Lambda^*\langle \tilde{\Delta}_{ab}\rangle/\mc{J}$$
        where $\langle \tilde{\Delta}_{ab}\rangle$ is the $\mathbb{Q}$-vector space spanned by generators
	$\tilde{\Delta}_{ab}$ of degree 
 $2\dim_{\mb{C}} M-1$ for all \emph{ordered} pairs $(a,b)\in E(G)$,
	and $\mc{J}$ is the ideal generated by the following relations
	\begin{enumerate}
    \item $\tilde{\Delta}_{ab}=\tilde{\Delta}_{ba}$;
    
		\item $\partial(
		\tilde{\Delta}_{i_1i_2}\wedge\ldots\wedge \tilde{\Delta}_{i_{k-1}i_k})=0$
		for each simple cycle $(i_1,i_2),\ldots,(i_{k-1},i_k),(i_k,i_1)\in E(G)$ in $G$;

		\item $p^*_a(\gamma)\cdot \tilde{\Delta}_{ab}=
			p^*_b(\gamma)\cdot \tilde{\Delta}_{ab}$ for all $\gamma\in H^*(M)$ and $(a,b)\in E(G)$.
	\end{enumerate}

	The differential $d$ is zero on $H^*(M^{V(G)})$, and we have
	$d\tilde{\Delta}_{ab}=[\Delta_{ab}]\in H^{2\dim M}(M^{V(G)})$ where $[\Delta_{ab}]$ 
	is the Poincar\'e dual class of the diagonal $[\Delta_{ab}]$.

    We define an action of $Aut(G)$ on our model by setting 
	$\sigma.\tilde{\Delta}_{ab}=\tilde{\Delta}_{\sigma(a)\sigma(b)}, \sigma\in Aut(G)$ and using the permutation action on $M^{V(G)}$. With this definition, the model is $Aut(G)$-equivariant in the sense described above.
\end{thrm}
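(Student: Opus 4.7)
The plan is to deduce this from the combination of Theorem \ref{thrm:totaro_ss} (which identifies the $E_1$-page of the Mayer--Vietoris spectral sequence in purely topological terms) and Theorem \ref{thrm:main_lattices} (which upgrades this to a model for the rational homotopy type in the smooth proper algebraic setting). First, I would set up the arrangement data: take $[M^{V(G)},\Delta,\OS(\Delta)]$ where $\Delta$ is the intersection lattice of the diagonals $\Delta_{ab}\subset M^{V(G)}$ indexed by edges $(a,b)\in E(G)$, as in Example \ref{exmpl:os_algebras}. Since this lattice is independent of the base space (one can verify geometricity by specializing to $M=\mathbb{C}$, c.f.\ part 4 of Example \ref{exmpl:arrangements}), it is geometric, hence locally geometric, with rank function $r(\Delta_x)=\codim_{\mathbb{C}}\Delta_x/M^{V(G)}$ divided by $\dim_{\mathbb{C}} M$. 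Moreover each diagonal $\Delta_x$ is smooth and all set-theoretic intersections are smooth (they are partial diagonals), so $[M^{V(G)},\Delta]$ is a smooth compact algebraic arrangement in the sense of Definition \ref{dfn:arrangement}, and the hypotheses of Theorem \ref{thrm:main_lattices} are satisfied.

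Next I would identify the cdga $E_1^{**}$ appearing in Theorems \ref{thrm:mv_spectral_sequence} and \ref{thrm:main_lattices} with the cdga stated in Theorem \ref{thrm:kriz_model}. The identification is provided, over an arbitrary coefficient field, by Theorem \ref{thrm:totaro_ss} applied with $X=M$ and $m=2\dim_{\mathbb{C}} M$. Since $M$ is a complex variety, $m$ is even, and the general relations of Theorem \ref{thrm:totaro_ss} simplify as follows: the relation $\tilde\Delta_{ab}=(-1)^m\tilde\Delta_{ba}$ becomes the symmetric relation $\tilde\Delta_{ab}=\tilde\Delta_{ba}$ (relation 1); the cyclic relation, by the remark immediately following Theorem \ref{thrm:totaro_ss} (since $(-1)^{(m+1)\sigma}=(-1)^{\sigma}$ when $m$ is even), reduces to $\partial(\tilde\Delta_{i_1i_2}\wedge\ldots\wedge\tilde\Delta_{i_{k-1}i_k})=0$ on simple cycles (relation 2); the relation $\tilde\Delta_{ab}^2=0$ in Theorem \ref{thrm:totaro_ss} is automatic given supercommutativity since $\deg\tilde\Delta_{ab}=2\dim_{\mathbb{C}}M-1$ is odd; and relation 3 of Theorem \ref{thrm:kriz_model} matches relation 4 of Theorem \ref{thrm:totaro_ss} verbatim. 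The description of the differential $d\tilde\Delta_{ab}=p_{ab}^*[\Delta]=[\Delta_{ab}]\in H^{2\dim M}(M^{V(G)})$ is also given there.

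Then Theorem \ref{thrm:main_lattices} provides the required zig-zag of quasi-isomorphisms $E_1^{**}\zgqis A_{\PL}(F(M,G))\in\CDGA_{\mathbb{Q}}$, establishing that the cdga in the statement is a rational homotopy model for $F(M,G)$. For the $Aut(G)$-equivariance, I would invoke functoriality: an element $\sigma\in Aut(G)$ induces an automorphism of $M^{V(G)}$ by permuting factors, and this automorphism sends $\Delta_{ab}$ to $\Delta_{\sigma(a)\sigma(b)}$, defining an automorphism of the arrangement $[M^{V(G)},\Delta,\OS(\Delta)]$ in the sense of Definition \ref{dfn:arrangement_posets:morphism}. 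The functoriality clause of Theorem \ref{thrm:main_lattices} then produces an $Aut(G)$-equivariant zig-zag, and the induced action on $E_1^{**}$ is uniquely determined by its action on the generators $\tilde\Delta_{ab}=\tau_{ab}\otimes\Delta_{\{ab\}}$, which by construction is $\sigma.\tilde\Delta_{ab}=\tilde\Delta_{\sigma(a)\sigma(b)}$.

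The main point requiring care is the compatibility of signs in the $Aut(G)$-action on the Thom classes $\tau_{ab}$: when $\sigma$ permutes factors of $M^{V(G)}$, it acts on $\tau_x$ by $(-1)^{m\sigma_x}$ where $\sigma_x$ is its restriction, and one must check that these signs cancel against the signs in the cyclic relations so that the stated formula $\sigma.\tilde\Delta_{ab}=\tilde\Delta_{\sigma(a)\sigma(b)}$ (with no sign) indeed defines an algebra automorphism of the quotient. Since $m$ is even these signs are all trivial, so the verification reduces to the functoriality already built into Theorem \ref{thrm:totaro_ss}, but this is the one step where the evenness of $m=2\dim_{\mathbb{C}} M$ is essential beyond simplifying the presentation.
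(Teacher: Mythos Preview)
Your proposal is correct and follows essentially the same approach as the paper: the paper states Theorem \ref{thrm:kriz_model} as a direct corollary of Theorem \ref{thrm:totaro_ss} together with Theorem \ref{thrm:main_lattices}, noting only that $\dim_{\mb{R}}M$ is even so the relations simplify, and that the $Aut(G)$-equivariance follows from the functoriality already established. Your write-up is in fact more detailed than the paper's (you spell out why $\tilde{\Delta}_{ab}^2=0$ is redundant in odd degree and trace the sign simplifications explicitly), but the logical route is identical.
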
\QEDB

\newpage
\begin{appendices}

%\section{Appendix}
\section{Technical lemma}
\label{sect:tech_lemma}
Here we give a proof of Lemma \ref{lemma:mobius_inversion_multiplication}.
\begin{lemma}\label{lemma:mob_inversion_proof}
	The product $\hat{m}$
	provides
$\hat{A}^{\bar{N}}$ with a $\mc{Q}|_{[0,\bar{N}]}$-chain algebra structure.
\end{lemma}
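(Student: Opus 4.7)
The plan is to reduce to the cubical case, isolate the combinatorial core as an auxiliary cdga $\mathscr{R}_N$ on the monomials $(d\mu_I)^K$, and then verify associativity, graded commutativity and Leibniz on $\mathscr{R}_N$ directly.

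Since the \v{C}ech construction, convolution, and M\"obius inversion all commute with restriction to subintervals, it suffices to prove the lemma in the cubical case $\bar{N}\in\mc{C}$ with $\mc{Q}[0,\bar{N}]=2^N$. Using the additive identification \eqref{eq:hat_A_expression}, I will write $\hat{A}^N=\bigoplus_{I\subset K\subset N}A^{K\sm I}\otimes\mb{Z}\cdot(d\mu_I)^K$ and factor the product $\hat{m}$ from \eqref{eq:components_hat_m} as the multiplication of $A$ (applied after the structure maps $g$) paired with a product on the free graded $\mb{Z}$-module $\mathscr{R}_N$ on symbols $(d\mu_I)^K$ of degree $-|I|$, defined by $(d\mu_I)^K\cdot(d\mu_{I'})^{K'}=\varepsilon(I,I')(d\mu_{I\sqcup I'})^{K\cup K'}$ when $I\cap I'=\emptyset$, $K\cap I'\subset I$ and $K'\cap I\subset I'$, and zero otherwise, where $\varepsilon(I,I')$ is the Koszul sign of the shuffle $(I,I')$ in $I\sqcup I'$. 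Equipping $\mathscr{R}_N$ with the differential $\delta_{\mathscr{R}}$ dictated by \eqref{eq:dif_cech}, the whole lemma reduces to showing that $(\mathscr{R}_N,\delta_{\mathscr{R}})$ is a commutative dga and that the structure maps $g^{\hat{A}}_{\bar{J}\bar{I}}$ for $\bar{J}\le\bar{I}$ act by summand-inclusion along $K\subset J$; the Leibniz rule for $\hat{m}$ with respect to the inner differential $d$ and the Koszul part $\partial$ of the total differential on $\hat{A}^N$ is then automatic because $A$ is a cdga and $g$ commutes with both $d$ and $\partial$.

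Associativity and graded commutativity on $\mathscr{R}_N$ will follow from a symmetry check: the support condition for a triple product $(d\mu_I)^K\cdot(d\mu_{I'})^{K'}\cdot(d\mu_{I''})^{K''}$ in either parenthesization collapses to the symmetric condition ``$I,I',I''$ pairwise disjoint and $K_a\cap I_b\subset I_a$ for all $a\ne b$'', and the signs agree by the usual triple-shuffle identity for Koszul signs. The main point is the Leibniz rule for $\delta_{\mathscr{R}}$: on a product whose support condition is satisfied, expanding $\delta_{\mathscr{R}}$ yields a sum indexed by $p\in I\sqcup I'$, with two terms per $p$ coming from the two kinds of summand in \eqref{eq:dif_cech} (the one that keeps $K\cup K'$ and the one that removes $p$ from it). I will split this sum according to whether $p\in I$ or $p\in I'$; each half matches exactly one of the two Leibniz terms, the Koszul sign for bringing $d\mu_p$ to the front of $d\mu_{I\sqcup I'}$ being equal to the sign of the same operation inside $d\mu_I$ or $d\mu_{I'}$, times $(-1)^{|I|}$ in the latter case. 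Compatibility with $g^{\hat{A}}_{\bar{J}\bar{I}}$ is immediate from the description, and well-gradedness of $W'$ follows from $|K\cup K'|\le |K|+|K'|$.

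The principal difficulty will be the sign bookkeeping in the Leibniz identity on $\mathscr{R}_N$. I plan to fix the ordering of $N$ underlying the Koszul signs once, carry out the verification for a single generic $p\in I'$, and recover all remaining cases from the symmetry of the product and of the support condition.
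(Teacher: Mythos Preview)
Your reduction to the cubical case and your isolation of the combinatorial core as a cdga on the symbols $(d\mu_I)^K$ are exactly what the paper does, and your associativity/commutativity argument is correct: the triple support condition is indeed symmetric in the three factors.

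However, there is a genuine gap in your Leibniz verification. You say you will check Leibniz ``on a product whose support condition is satisfied,'' i.e.\ when $(d\mu_I)^K\cdot(d\mu_{I'})^{K'}\neq 0$. But the Leibniz identity $\delta(ab)=(\delta a)b\pm a(\delta b)$ must also hold when $ab=0$, and in that case the right-hand side is \emph{not} obviously zero. The paper's proof spends most of its effort precisely on these vanishing cases:
\begin{itemize}
\item When $I\cap I'=\{p\}$, the product is zero, but applying $\iota_p$ to one factor makes $(I-p)\cap I'=\emptyset$, so the terms $(\iota_p d\mu_I)^{K-p}\cdot(d\mu_{I'})^{K'}$ and $(d\mu_I)^K\cdot(\iota_p d\mu_{I'})^{K'-p}$ can each be nonzero. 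One must check that they either vanish separately (when the new support condition fails) or cancel (when it holds, using that $(K-p)\cup K'=K\cup(K'-p)=KK'$ and that $\iota_p(d\mu_I d\mu_{I'})=0$).
\item When $I\cap I'=\emptyset$ but $K\sm I\cup K'\sm I'\supsetneq KK'\sm II'$, the product is zero, and one must verify that each term of the Leibniz expansion either has a strictly failing support condition or cancels against another (the paper distinguishes $p\in K'$ from $p\notin K'$).
\end{itemize}
Your plan treats the sign bookkeeping as the principal difficulty, but the actual content of the proof is this case analysis for zero products. Without it, the argument is incomplete.
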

\begin{proof}
We need to check that 
$\hat{m}$
is associative, commutative and satisfies the Leibniz rule. In order to do this it suffices show that the complex
$\bigoplus\limits_{I\subset K}^{I,K\subset N}\mb{Z}\cdot (d\mu_I)^K$, obtained from $\hat{A}^N$ by replacing all $A^I$'s by $\mb{Z}$,
with the multiplication $\hat{m}$ and differential $\delta$ defined as in \eqref{eq:dif_cech} 
is a commutative associative dg-algebra.

1. Commutativity is obvious. For associativity let
$$m^2\colon 
\mb{Z}\cdot (d\mu_I)^K\otimes \mb{Z}\cdot (d\mu_{I'})^{K'}
\otimes \mb{Z}\cdot (d\mu_{I''})^{K''}\ar 
\mb{Z}\cdot (d\mu_I d\mu_{I'} d\mu_{I''})^{KK'K''}$$
be the morphism equal to the product of monomials 
in the case $$K\sm I\scup K'\sm I'\scup K''\sm I''=
KK'K''\sm II'I''$$ and to zero otherwise.
The former is the case iff 
both equalities $$
K\sm I\scup K'\sm I'=KK'\sm II',
$$
and
$$
KK'\sm II'\scup K''\sm I''=KK'K''\sm II'I''
$$
hold.
So $m^2=m\circ(m\otimes \id)$. Similarly $m^2=m\circ(\id\otimes m)$, i.e.
$m$ is associative.

2. Let us prove the Leibniz rule. We need to check that the following diagram commutes:
\begin{equation*}
\begin{tikzcd}
	\mb{Z}\cdot (d\mu_I)^K\otimes 
	\mb{Z}\cdot (d\mu_{I'})^{K'}\arrow[d,"\delta"]
	\arrow[r,"m"] & \mb{Z}\cdot (d\mu_I d\mu_{I'})^{KK'}\arrow[d,"\delta"]\\
	\substack{\mb{Z}\cdot \sum_p\big(\overbrace{(\iota_p d\mu_I)^K\otimes (d\mu_{I'})^{K'}}^{\alpha_p}
+\overbrace{(-1)^{I}(d\mu_I)^K\otimes (\iota_{p}d\mu_{I'})^{K'}}^{\alpha'_p}
\\
-\underbrace{(\iota_p d\mu_I)^{K-p}\otimes (d\mu_{I'})^{K'}}_{\beta_p}
-\underbrace{(-1)^I(d\mu_I)^{K}\otimes (\iota_{p}d\mu_{I'})^{K'-p}}_{\beta_p}
\big)}
		\arrow[r,dashed,"m"] & 
\substack{\mb{Z}\cdot \sum_{p}\big( 
\overbrace{(\iota_p (d\mu_I d\mu_{I'}))^{KK'}}^{\bar{\alpha}_p}
\\-\underbrace{(\iota_p(d\mu_I d\mu_{I'}))^{KK'-p})}_{\bar{\beta}_p}\big)}.
\end{tikzcd}
\end{equation*}
Let $A$ and $B$ be the images of 
$(d\mu_I)^K\otimes (d\mu_{I'})^{K'}$ under the top and bottom paths 
respectively.
So 
$A=m(\sum_p \alpha_p+\alpha'_p-\beta_p-\beta'_p)$ and $B=\sum_p \bar{\alpha}_p-\bar{\beta}_p$.

If $|I\cap I'|>1$, then $A=0=B$.
Assume $I\cap I'=\{p\}$. Then $B=0$.
On the other hand,
$m(\alpha_q+\alpha'_q-\beta_q-\beta'_q)=0$ for all $q\neq p$.
From 
$$p\in K\sm(I-p)\scup K'\sm I'\supsetneq KK'\sm (I-p\scup I')\not\ni p$$
we get $m(\alpha_p)=0$. By symmetry $m(\alpha'_p)=0$ as well.
We have
$$(K-p)\sm (I-p)\scup K'\sm I'=K\sm I\scup K'\sm I' \mbox{ }\supseteq\mbox{ } 
(K-p\scup K')\sm (I-p\scup I')=KK'\sm II'.$$
In the case of strict inclusion we obtain $m(\beta_p)=0$ and similarly
$m(\beta'_p)=0$.
Otherwise $$K\sm I\scup K'\sm I'=KK'\sm II',$$ and 
since $(K-p)\scup K'=K\scup (K'-p)=KK'$,
we obtain
$m(\beta_p+\beta'_p)=(\iota_p(d\mu_I d\mu_{I'}))^{KK'}=0$.
Thus in any case we have $A=0=B$.

\bigskip

Finally, assume $I\cap I'=\emptyset$.
Consider the case $K\sm I\scup K'\sm I'=KK'\sm II'$.
For $p\in I$, we have $p\not\in K'$ and
$$K\sm I-p\scup K'\sm I'=p\sqcup (K\sm I\scup K'\sm I')=
	p\ssqcup KK'\sm II'=KK'\sm (I-p\scup I'),$$
hence $m(\alpha_p)=((\iota_p d\mu_I) d\mu_{I'})^{KK'}$.
By symmetry we have $m(\sum_p \alpha_p+\alpha'_p)=
\sum_p (\iota_p (d\mu_I d\mu_{I'}))^{KK'}$.
Similarly we have $$K-p\sm I-p\scup K'\sm I'=K\sm I\scup K'\sm I'=KK'\sm II'
\mbox{ } =\mbox{ } 
(K-p\scup K')\sm(I-p\scup I'),$$
where the last equality follows from $p\not\in K'$.
Using $K-p\scup K'=KK'-p$ and arguing as above we get
$m(\beta_p+\beta'_p)=(\iota_p(d\mu_I d\mu_{I'}))^{KK'-p}$.
Thus $A=\sum_p m(\alpha_p+\alpha'_p-\beta_p-\beta'_p)=B$.

\smallskip

It remains to cover the case $K\sm I\scup K'\sm I'\supsetneq KK'\sm II'$ and
$I\cap I'=\emptyset$. Clearly $B=0$.
Pick $p\in I$. If $p\in K'$, then the inclusion
$$K\sm I-p\scup K'\sm I'=K\sm I\scup K'\sm I'\mbox{ }\supset\mbox{ } 
	p\ssqcup (KK'\sm II')=KK'\sm (I-p\scup I')$$
is strict iff so is
$$K-p\sm I-p\scup K'\sm I'=K\sm I\scup K'\sm I'\mbox{ }\supset\mbox{ } p\ssqcup (KK'\sm II')=
	(K-p\scup K')\sm (I-p\scup I').$$
In particular $m(\alpha_p-\beta_p)=
(\iota_p d\mu_I d\mu_{I'})^{KK'}-
	(\iota_p d\mu_I d\mu_{I'})^{(K-p) \cup K'}=0$	since $K-p\scup K'=KK'$.

If $p\not\in K'$, then using
$$K\sm I-p\scup K'\sm I'=p\sqcup (K\sm I\scup K'\sm I')
\mbox{ }\supsetneq\mbox{ } 
	p\sqcup (KK'\sm II')=KK'\sm (I-p\ssqcup I')$$
we get $m(\alpha_p)=0$, while
$$K-p\sm I-p\scup K'\sm I'=K\sm I\scup K'\sm I'\mbox{ }\supsetneq\mbox{ } KK'\sm II'=
(K-p\scup K')\sm (I-p\ssqcup I')$$
implies $m(\beta_p)=0$.
Similarly we obtain $m(\alpha'_p)=m(\beta'_p)=0$.
Thus $A=0=B$.

This completes the proof that $\hat{A}^N$ is a monoid in $\ChF$.
The component $\hat{m}_{I\subset K;I'\subset K'}$
takes values in $W'_{|KK'|}\hat{A}^N$.
Hence $\Gr^{W'}\hat{m}$ is well defined and it is easy to see that
it is equal to the associated graded of 
the convolution multiplication \eqref{eq:mobius_inversion}.
\end{proof}

\section{Cup product formula for clean intersections}\label{cup_product}
In this section by a (sub)manifold we understand an oriented smooth (sub)manifold.
Let $X$ be a manifold.
Our main model
is described in terms $H^*_W(X)=H^*(X,X-W)$, where $W\subset X$ is
a submanifold. Let $c(W)=\codim(W/X)$.
It is convenient to have a description 
of the model
in terms of the natural Thom isomorphism
$\Th_W\colon H^*_W(X)\simeq H^{*-c(W)}(W)$.
If $i\colon W_1\ar W_2$ is an inclusion,
then the induced morphism $i_*\colon H^*_{W_1}(X)\ar H^*_{W_2}(X)$ corresponds to
the Gysin map $i_!\colon H^{*-c(W_1)}(W)\ar H^{*-c(W_2)}(W)$.

Now we shall describe the multiplication. For simplicity we will assume that $W_1,W_2\subset X$ 
are submanifolds with clean intersection $W_{12}=W_1\cap W_2$, i.e.\ the intersection is locally modeled by an intersection of 
vector subspaces.
Note that $\Th_W$ is naturally an isomorphism of $H^*(W)$-modules. 
The cup product 
$$H^*_{W_1}(X)\otimes H^*_{W_2}(X)\ar H^{*}_{W_1\cap W_2}(X)\ar H^{p+q}_{W_{12}}(X)$$
is a morphism of $H^*(W_1)\otimes H^*(W_2)$-modules in the obvious way, and it is compatible 
with the Thom isomorphisms.
Denote the tangent bundle of $W$ by $TW$.
The quotient $$D:=TX|_{W_{12}}/(TW_1|_{W_{12}}+TW_2|_{W_{12}})$$ 
is called the \emph{excess bundle} (or \emph{defect bundle}) 
over the intersection $W_{12}$. It has rank $c(W_1)+c(W_2)-c(W_{12})$.
Manifolds $W_1,W_2$ are transversal in $X$ if and only if 
the excess bundle is zero.
We leave a proof of the following proposition to the reader.
\begin{prop}\label{prop:euler}
	Under the Thom isomorphism the cup product is identified with the unique morphism
	$$m\colon H^*(W_1)\otimes H^*(W_2)\ar H^{*+c(W_1)+c(W_2)-c(W_{12})}(W_{12})$$
	of $H^*(W_1)\otimes H^*(W_2)$-modules
	such that 
	$$m(1,1)=e(D)\in H^{c(W_1)+c(W_2)-c(W_{12})}(W_{12})$$
	where $e(D)$ is the Euler class of $D$.
\end{prop}

\iffalse
\section{Lattice and colattice spectral sequences}
\label{sect:lattice_colattice}
\input{lattice_colattice}
\fi 

\section{Computational examples}\label{first_example}
In this section, in order to illustrate the main theorem \ref{thrm:main_lattices} as a computational device, 
we consider a couple of examples.
\begin{exmpl}\label{exmpl:thrm:for_lines}
	Consider the complement $U=\mb{P}^2-\cup_i L_i$ of a collection of 
	distinct lines $\{L_i\subset \mb{P}^2|i\leq n\}$
	passing through $p\in \mb{P}^2$.
The corresponding intersection poset $(L,\leq)$ is a geometric lattice 
with $n$ atoms corresponding to $L_i$ and one
maximal element corresponding to $p=\cap_i L_i$. Then $\Mu=\OS(L)$ is the 
cdga with $\Mu(0)=\mb{Q}$, 
$\Mu(1):=\bigoplus_i \Mu_{L_i}=\mb{Q}\langle \nu_i\rangle$, the 
$\mb{Q}$-vector space spanned by $\nu_i,i=1,\ldots, n$, and
$\Mu(2):=\Mu_{p}=\ker[\Sigma\colon \mb{Q}\langle e_i\rangle\to \mb{Q}]$. Here $\nu_i$ and $e_i$ are formal variables of degree $-1$ and $-2$ respectively. The multiplication $-\cdot -$ is determined by $\nu_i\cdot \nu_j=e_i-e_j\in \Mu(2)$, while the differential is given by $\partial(e_i)=\nu_i\in \Mu(1),\partial(\nu_i)=1\in \Mu(0)$.

The second quadrant spectral sequence $E^{pq}_1$ of the main theorem \ref{thrm:main_lattices} 
takes the following form.
\begin{equation*}
\begin{tabular}{c c c|r}
	%\hline
$H^0(p)\otimes \Mu(2)[-2]$  &  $H^2(\mb{P}^1)\otimes \Mu(1)[-1]$  &  $H^4(\mb{P}^2)\otimes \Mu(0)$   & $4$  \\
%\hline
$0$   				  &  $0$  			      &  $0$  		     		   & $3$  \\
%\hline
$0$   				  &  $H^0(\mb{P}^1)\otimes \Mu(1)[-1]$  &  $H^2(\mb{P}^2)\otimes \Mu(0)$   & $2$  \\
%\hline
$0$   				  &  $0$  			      &  $0$  		     		   & $1$  \\
%\hline
$0$   				  &  $0$  			      &  $H^0(\mb{P}^2)\otimes \Mu(0)$   & $0$  \\
\hline
$-2$  				  & $-1$  			      &  0  		& 
	%\resizebox{5mm}{!}{\diagbox{p}{q}}\\
    \diagbox{$p$}{$q$}\\
%\hline
\end{tabular}
\end{equation*}

The differential $d_1$ has degree $(1,0)$ and can be written as $d_1=g_!\otimes \partial$, where $g_!$ is the pushforward with compact support of 
the corresponding inclusion. The multiplicative structure is induced by the product in $H^*(\mb{P}^2)\otimes\Mu$. 
It is immediate to see that that the top row is acyclic and $\dim_{\mb{Q}} E^{-1,2}_2=n-1, \dim_{\mb{Q}} E^{0,0}=1$, while
the other entries in $E^{pq}_2$ vanish, hence $E^{**}_2\simeq \mb{Q}[0]\oplus \mb{Q}^{\oplus (n-1)}[-1]=:A$. 
Thus we get a natural quasi-isomorphism of complexes $E^{pq}_2\to E^{pq}_1$. 
This quasi-isomorphism is multiplicative since 
by definition 
$$(\nu_i-\nu_{i'})\cdot (\nu_j-\nu_{j'})=(e_i-e_j)-(e_i-e_{j'})-(e_{i'}-e_j)+(e_{i'}-e_{j'})=0$$ 
So $H^*(U;\mb{Q})\simeq A$, and $U=\mb{P}^2-\cup_i L_i$ is formal.

This is what we expect geometrically: under the identification 
$\mb{P}^2-\{p\}\simeq \Tot(\mc{O}_{\mb{P}^1}(1))$, 
the lines $L_i-\{p\}$ correspond to the fibers, thus $$\mb{P}^2-\cup_i L_i\sim \mb{P}^1-\cup_i p_i\sim \bigvee^{n-1} S^1$$ 
where $\{p_i\}$ is a set of $n$ distinct points.
\end{exmpl}
\begin{exmpl}\label{exmpl:thrm_for_quadric}
Let $Q\subset \mb{P}^3$ be a smooth quadric and $L_\tau\subset \mb{P}^3$ be a line such that
$L_\tau\cap Q$ consists of exactly one point $p_\tau$. Let $L_t\subset \mb{P}^3$ be a line transversal to $Q$ such that 
$L_\tau\cap L_t\cap Q=\{p_\tau\}$.
Then $E^{pq}_1$ takes this form:
\begin{equation*}
\resizebox{\columnwidth}{!}{%
\begin{tabular}{c c c c |r}
$H^0(L_\tau\cap L_t\cap Q)$ & $H^0(L_t\cap Q)\oplus H^0(L_\tau\cap Q)\oplus H^0(L_\tau\cap L_t)$ & 
	$H^2(L_t)\oplus H^2(L_\tau)\oplus H^4(Q)$ & $H^6(\mb{P}^3)$ & $6$\\
	$0$		  &		$0$& $0$                                       & $0$                & $5$\\
	$0$	  &                     $0$& $H^0(L_\tau)\oplus H^0(L_t)\oplus H^2(Q)$ & $H^4(\mb{P}^3)$ & $4$\\
	$0$	  &			$0$&                $0$                        &    $0$              & $3$\\
	$0$	  &                     $0$ &       $H^0(Q)$                            & $H^2(\mb{P}^3)$ & $2$\\
	$0$	  &                                     $0$ &                $0$                        & $0$                 & $1$\\
	$0$	  &                                     $0$ &                $0$                        & $H^0(\mb{P}^3)$ & $0$\\
	\hline
	$-3$	  &			 	      $-2$ &		$-1$	                   &	$0$ &
		%\resizebox{5mm}{!}{\diagbox{p}{q}} \\
    \diagbox{$p$}{$q$}\\
\end{tabular}%
}
\end{equation*}

To describe the differential and the multiplication consider the
Grassmann algebra $\Lambda^* V$ generated by a vector space
$V$ with a basis $d\nu_1,d\nu_2,d\nu_3$ corresponding to 
$Z_1=L_t,Z_2=L_\tau,Z_3=Q\subset \mb{P}^3$.
Thus $\Lambda^* V$ is spanned by the monomials 
$d\nu_I=d\nu_{i_1}\wedge\ldots \wedge d\nu_{i_{|I|}}$, 
$I=\{i_1<\ldots<i_{|I|}\}$ of degree $-|I|$.
For each $I\subset \{1,2,3\}$, put $p=|I|$ and
$Z_I=\cap_{i\in I}Z_i\subset \mb{P}^3$.
We view the term 
$H^{q-2\codim_{\mb{C}}Z_I}(Z_I)\simeq H^q_{Z_I}(\mb{P}^3)
\subset E^{pq}_1$ as 
$H^q_{Z_I}(\mb{P}^3)\cdot d\nu_I\subset E^{**}_1$.
Then $$d_1=\sum\limits^{i,I}_{i\subset I\subset \{1,2,3\}}
g_{I-\{i\},I}\otimes \iota_i,$$ where $g_{J,I}\colon 
H^*_{Z_I}(\mb{P}^3)\to H^*_{Z_J}(\mb{P}^3)$ is induced by $Z_I\subset Z_J$ and $\iota_i$ is the substitution of the dual of $d\nu_i$ in the Grassmann monomials.
The multiplication $$H^*_A(\mb{P}^3)\otimes H^*_B(\mb{P}^3)\to H^*_{A\cap B}(\mb{P}^3)$$ together with the product in the Grassmann algebra induces a cdga structure on $(E^{**}_1,d_1)$. Note that $H^*_{Z_I}(\mb{P}^3)\to H^*(\mb{P}^3)$ is injective for each $I$.

Let us show that $E^{pq}_1$ is almost acyclic
and deduce from this the formality of $\mb{P}^3-(L_\tau\cup L_t\cup Q)$.

Clearly the complex $E^{*,2}_1$ is acyclic and the cohomology of
$E^{*,4}_1$ is concentrated in one degree.
The top row $E^{*,6}_1$ is the following complex:
$$\mb{Q}\hookrightarrow \mb{Q}^4\os{f}{\to} 
	\mb{Q}^3\twoheadrightarrow \mb{Q}.$$
A simple calculation shows that $f$ has rank $2$. 
Namely, let $a\in H^0(L_\tau\cap Q), b_t,b_\tau\in H^0(L_t\cap Q)$ and $c\in H^0(L_t\cap L_\tau)$ correspond to $p_\tau, b_t,b_\tau$ and $p_\tau$ respectively. 
%
%,
%$b_t,b_\tau\in H^0(L_t\cap Q)$ to 
%$p_t,p_\tau$ and 
%$c\in H^0(L_t\cap L_\tau)$ to $p_\tau$, respectively.
Consider $$f\colon H^0(L_\tau\cap Q)\oplus H^0(L_t\cap Q)\oplus H^0(L_t\cap L_\tau)
	\to H^2(L_t)\oplus H^2(L_\tau)\oplus H^4(Q).$$
Choosing the natural basis of the RHS allows us to write a matrix of $f$:
$\begin{pmatrix}
	0 &-1& 0&-1\\
	-1&0 &-1& 1\\
	1 &1 & 1& 0\\
\end{pmatrix}$.
Clearly, $\rk f=2$, hence $E^{-2,6}_2\simeq \mb{Q}$ is 
the only non-trivial entry for $q=6$.

Thus $E^{pq}_2$ has the form:
\begin{equation*}
\begin{tabular}{c c c| r}
	$\mb{Q}$ & $0$		& $0$         & $6$\\	
	$0$   & $0$		& $0$	    & $5$	   \\			
	$0$   & $\mb{Q}^3$& $0$         & $4$    \\
	$0$   & 	  $0$     & $0$         & $3$    \\
	$0$   & $0$		& $0$	    & $2$    \\		
	$0$   & $0$		& $0$	    & $1$	   \\
	$0$   & $0$		& $\mb{Q}$& $0$    \\
	\hline
	$-2$   & $-1$		& $0$	    &
    %\resizebox{5mm}{!}{\diagbox{p}{q}}\\
    \diagbox{$p$}{$q$}\\
\end{tabular}
\end{equation*}

By degree counting it is clear that there is a quasi-isomorphism 
$E^{**}_2\to E^{**}_1$ which is multiplicative, so 
$U=\mb{P}^3-(L_\tau\cup L_t\cup Q)$
is formal and its cohomology is given by $E^{pq}_2$.

\end{exmpl}
\
\end{appendices}

\newpage
\printbibliography[title=Bibliography]
\addcontentsline{toc}{section}
{Bibliography}
%\iffalse
\bigskip
\noindent

Faculty of Mathematics, Higher School of Economics, 6 Usacheva ulitsa, Moscow, Russia 119048; 

\smallskip 
%\noindent
%
%l'Institut de Math\'ematiques de Jussieu, 4 place Jussieu, Paris, France 75005;
%
%\smallskip
Chennai Mathematical Institute, H1, SIPCOT IT Park, Siruseri, Kelambakkam, India, 
603103;
\smallskip
\noindent

email: xaxa3217 ( •\_•) gmail com, azakharov ( •\_•) cmi ac in
%\fi
\end{document}